\date{}
\newlength{\defbaselineskip}
\newcommand{\setlinespacing}[1]%
           {\setlength{\baselineskip}{#1 \defbaselineskip}}
\newcommand{\actaqed}{\hfill $\actabox$}
{\medskip\noindent \textit{Proof of #1. }}%
{\actaqed \medskip}
\def\cA{{\mathcal A}}
\def\cC{{\mathcal C}}
\def\cD{{\mathcal D}}
\def\cG{{\mathcal G}}
\def\cK{{\mathcal K}}
\def\cM{{\mathcal M}}
\def\cT{{\mathcal T}}
\def\cV{{\mathcal V}}
\def\cX{{\mathcal X}}
\def\bbC{{\mathbb C}}
\def\bbN{{\mathbb N}}
\def\bbR{{\mathbb R}}
\def\bbT{{\mathbb T}}
\def\bbZ{{\mathbb Z}}
\def\bA{{\mathbf A}}
\def\bF{{\mathbf F}}
\def\bH{{\mathbf H}}
\def\bN{{\mathbf N}}
\def\bW{{\mathbf W}}
\def\bj{\mathbf j}
\def\bk{\mathbf k}
\def\bs{\mathbf s}
\def\bt{\mathbf t}
\def\btt{\mathbf t}
\def\bw{\mathbf w}
\def\bx{\mathbf x}
\def\by{\mathbf y}
 \def \<{\langle}
\def\>{\rangle}
\def \Og{\Omega}
\def \ep{\epsilon}
\def \e{\varepsilon}
\def \va{\varepsilon}
\def \ff{\varphi}
\def\al{\alpha}
\def\bt{\beta}
\def \ro{\varrho}
\def\th{\theta}
\def\vi{\varphi}
\def \conv{\operatorname{conv}}
\newcommand\LD{\mathcal{LD}}
\def \sp{\operatorname{span}}
\def\bt{\beta}
\newtheorem{Theorem}{Theorem}[section]
\newtheorem{Lemma}{Lemma}[section]
\newtheorem{Definition}{Definition}[section]
\newtheorem{Proposition}{Proposition}[section]
\newtheorem{Remark}{Remark}[section]
\newtheorem{Corollary}{Corollary}[section]
\numberwithin{equation}{section}
\newcommand{\be}{\begin{equation}}
\newcommand{\ee}{\end{equation}}
\begin{document}

\title{A survey on sampling recovery}

\author{F. Dai  and   V. Temlyakov 	\footnote{
		This work was supported by the Russian Science Foundation under grant no. 23-71-30001, https://rscf.ru/project/23-71-30001/, and performed at Lomonosov Moscow State University.
  }}

\newcommand{\Addresses}{{
  \bigskip
  \footnotesize

  F.~Dai, \textsc{ Department of Mathematical and Statistical Sciences\\
University of Alberta\\ Edmonton, Alberta T6G 2N8, Canada\\
E-mail:} \texttt{fdai@ualberta.ca }

 \medskip
  V.N. Temlyakov, \textsc{ Steklov Mathematical Institute of Russian Academy of Sciences, Moscow, Russia;\\ Lomonosov Moscow State University; \\ Moscow Center of Fundamental and Applied Mathematics; \\ University of South Carolina.
  \\
E-mail:} \texttt{temlyakovv@gmail.com}

}}

\maketitle

\begin{abstract}{The reconstruction of unknown functions from a finite number of samples is a fundamental challenge in pure and applied mathematics. This survey provides a comprehensive overview of recent developments in sampling recovery, focusing on the accuracy of various algorithms and the relationship between optimal recovery errors, nonlinear approximation, and the Kolmogorov widths    of function classes. A central theme is the synergy between the theory of universal sampling discretization and Lebesgue-type inequalities for greedy algorithms. We discuss three primary algorithmic frameworks: weighted least squares and $\ell_p$ minimization, sparse approximation methods, and greedy algorithms such as the Weak Orthogonal Matching Pursuit (WOMP) in Hilbert spaces and the Weak Tchebychev Greedy Algorithm (WCGA) in Banach spaces. These methods are applied to function classes defined by structural conditions, like the $A_\beta^r$ and Wiener-type classes, as well as classical Sobolev-type classes with dominated mixed derivatives. Notably, we highlight recent findings showing that nonlinear sampling recovery can provide superior error guarantees compared to linear methods for certain multivariate function classes.
		}
\end{abstract}

\newpage

 \tableofcontents

 \newpage

\section{Introduction}
\label{RI}

The problem of recovery (reconstruction) of an unknown function defined on a subset of  $\bbR^d$ from its samples at a finite number of points is a fundamental problem of pure and applied mathematics. We would like to construct recovering operators (algorithms) which are good in the sense of accuracy, stability, and computational complexity. In this paper we discuss
the issue of accuracy. Following a standard  approach in approximation theory we define some
optimal characteristics -- the Kolmogorov widths and errors of optimal recovery -- for a given function class and establish relations between them. Recently, it was understood that in the case of recovery in
the $L_2$ norm the weighted least squares algorithms are reasonably good recovering methods.
Later, it was discovered that greedy type algorithms are also very good recovering methods.
We discuss these results in our paper.  The main point of this discussion is to explain that for obtaining those new results on sampling recovery researchers combined two deep and powerful techniques -- Lebesgue-type inequalities for the greedy type algorithms and theory of the universal sampling discretization.

 We begin with  very brief comments on classical problem of interpolation and recovery. We discuss the univariate periodic case. The main point is the step from the case of continuous functions and the uniform norm (see, for instance, \cite{Z}, volume 2, Ch. X) to the case of the integral norms $L_p$, $1\le p<\infty$,
(see, for instance, \cite{VT23}, \cite{VTbookMA}, Ch. 2, and \cite{DTU}).

We use $C$, $C'$ and $c$, $c'$ to denote various positive constants. Their arguments indicate the parameters, which they may depend on. Normally, these constants do not depend on a function $f$ and running parameters $m$, $v$, $u$. We use the following symbols for brevity. For two nonnegative sequences $a=\{a_n\}_{n=1}^\infty$ and $b=\{b_n\}_{n=1}^\infty$ the relation $a_n\ll b_n$ means that there is  a number $C(a,b)$ such that for all $n$ we have $a_n\le C(a,b)b_n$. Relation $a_n\gg b_n$ means that
 $b_n\ll a_n$ and $a_n\asymp b_n$ means that $a_n\ll b_n$ and $a_n \gg b_n$.
 For a real number $x$,  denote by $\lfloor x \rfloor$ the integer part of $x$, and by  $\lceil x\rceil$  the smallest integer that is greater than or equal to $x$.

Functions of the form
$$
t(x) = \sum_{|k|\le n}c_k e^{ikx} =a_0/2+\sum_{k=1}^n
(a_k\cos kx+b_k\sin kx)
$$
are called trigonometric polynomials of order at most  $n$. The set of such polynomials is denoted
by $\cT(n)$.

The  Dirichlet kernel of order $n$
$$
\mathcal D_n (x):= \sum_{|k|\le n}e^{ikx} = e^{-inx} (e^{i(2n+1)x} - 1)
(e^{ix} - 1)^{-1} =\frac{\sin (n + 1/2)x}{\sin (x/2)}
$$
is an even trigonometric polynomial.
Denote
$$
x^j := 2\pi j/(2n+1), \qquad j = 0, 1, ..., 2n.
$$
Clearly,  the points $x^j$,  $j = 1, \dots, 2n$,  are zeros of the Dirichlet
kernel $\mathcal D_n$ on $[0, 2\pi]$.
Therefore,  for any continuous function $f:[0, 2\pi)\to\mathbb{C}$,
 $$
I_n(f)(x) := (2n+1)^{-1}\sum_{j=0}^{2n} f(x^j) \mathcal D_n (x - x^j)
$$
interpolates $f$ at points $x^j$; that is,  $I_n(f)(x^j)= f(x^j)$, $j=0, 1, ..., 2n$.

It is easy to check that for any $t\in \cT(n)$ we have $I_n(t)=t$. Using this and the inequality
$$
\bigl|\mathcal D_n (x)\bigr|\le \min \bigl(2n+1, \pi/|x|\bigr), \qquad
 |x|\le \pi,
$$
we obtain the following Lebesgue inequality
$$
\|f-I_n(f)\|_\infty \le C(\ln(n+1))E_n(f)_\infty,
$$
where $E_n(f)_p$ is the best approximation of $f$ in the $L_p$ norm by polynomials from $\cT(n)$.

The de la Vall\'ee Poussin kernels are defined as follows
$$
\mathcal V_{n} (x) := n^{-1}\sum_{k=n}^{2n-1} \mathcal D_k (x) = \frac{\cos nx - \cos 2nx}{n (\sin (x/2))^2}.
$$

The  de la Vall\'ee Poussin kernels $\mathcal V_{n}$ are even trigonometric
polynomials of order $2n - 1$ with the majorant
$$
\left| \mathcal V_{n} (x) \right| \le C \min \left(n, \
 1/(nx^2)\right), \ |x|\le \pi.
$$

Consider the following  recovery operator (see \cite{VT23} and \cite{VT51})
\begin{equation}\label{rco}
R_n (f) := (4n)^{-1}\sum_{j=1}^{4n} f\left(x(j)\right)\mathcal V_n
\left(x - x(j)\right), \qquad x(j) := \pi j/(2n).
\end{equation}
It is easy to check that for any $t\in \cT(n)$ we have $R_n(t)=t$. Using this and the above majorant,
we obtain the following Lebesgue-type inequality (the de la Vall\'ee Poussin inequality)
$$
\|f-R_n(f)\|_\infty \le C E_n(f)_\infty.
$$

We now turn to the error in the $L_p$, $p\in [1,\infty)$. Let $\e:= \{\ep_k\}_{k=0}^\infty$ be a non-increasing sequence  of nonnegative numbers. Define
$$
E(\e,p) :=\{f\in \cC(\bbT):\, E_k(f)_p \le \ep_k,\, k=0,1,\dots\}.
$$
Here and throughout, $\bbT$ denotes the unit circle in $\mathbb{R}^2$, and as usual,  every function on $\bbT$ is identified as a $2\pi$-periodic function  on $\mathbb{R}$.

\begin{Theorem}[{\cite[Theorem 2]{VT23}}] Assume that a  non-increasing sequence  $\e$ of nonnegative numbers satisfies the following condition for some constants $B, D>0$: for all $s=0,1,\dots$,
$$
\sum_{\nu=s+1}^\infty \ep_{2^\nu} \le B\ep_{2^s},\qquad \ep_s\le D\ep_{2s}.
$$
Then for any $p\in [1,2]$, we have
$$
\sup_{f\in E(\e,p)} \|f-R_n(f)\|_p \asymp \sum_{\nu=0}^\infty 2^{\nu/p} \ep_{n2^\nu}.
$$
\end{Theorem}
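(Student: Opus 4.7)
\medskip\noindent\textbf{Proof plan.} The claim has two directions, and I would handle them separately, with the upper bound being the structurally richer one.

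For the upper bound, I would exploit the fact that the recovery operator $R_n$ reproduces $\cT(n)$. Pick near-best approximants $T_m(f)\in \cT(m)$ with $\|f-T_m(f)\|_p\le 2\ep_m$ and write
\[
 f-R_n(f)=(f-T_n(f))-R_n(f-T_n(f)),
\]
then perform a dyadic telescoping
\[
 f-T_n(f)=\sum_{\nu=0}^{\infty}\bigl(T_{n2^{\nu+1}}(f)-T_{n2^{\nu}}(f)\bigr),
\]
so that each block $g_\nu:=T_{n2^{\nu+1}}(f)-T_{n2^{\nu}}(f)$ lies in $\cT(n2^{\nu+1})$ and satisfies $\|g_\nu\|_p\le C\ep_{n2^\nu}$. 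The first piece $\|f-T_n(f)\|_p\le 2\ep_n$ absorbs into the $\nu=0$ term. Applying $R_n$ term by term, the problem reduces to estimating $\|R_n(g_\nu)\|_p$.

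The heart of the argument is a Marcinkiewicz--Zygmund style inequality of the form
\[
 \|R_n(g)\|_p\le C\, 2^{\nu/p}\,\|g\|_p\qquad\text{for every }g\in \cT(n2^{\nu+1}),\ p\in[1,2].
\]
Here $4n$ equally spaced samples only resolve $\cT(n)$ exactly, and the factor $2^{\nu/p}$ is the $L_p$ price of under-sampling a polynomial of degree $n2^{\nu+1}$ at the rate $\sim n$. I would derive this by using the sharp majorant $|\mathcal V_n(x)|\le C\min(n,1/(nx^2))$ to replace $R_n(g)$ by an averaged modulus-of-continuity-style expression and then invoke the classical Marcinkiewicz inequality $\|g\|_p^p \asymp (n2^{\nu+1})^{-1}\sum_j|g(y_j)|^p$ at the finer grid, along with an $\ell_p\to\ell_p$ estimate for the interpolation matrix linking the two grids; the gain $2^{\nu/p}$ is exactly the oversampling ratio between the two grids raised to $1/p$. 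Combining,
\[
 \|f-R_n(f)\|_p\le C\sum_{\nu=0}^{\infty} 2^{\nu/p}\,\ep_{n2^\nu},
\]
which is the upper half of the asymptotic; convergence of this sum is guaranteed precisely by the summability hypothesis on $\e$.

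For the lower bound I would construct an explicit bad function as a lacunary-type sum
\[
 f=\sum_{\nu=0}^{\infty} c_\nu\,\varphi_\nu,\qquad \varphi_\nu\in \cT(n2^{\nu+1})\setminus\cT(n2^\nu),
\]
with $\varphi_\nu$ chosen so that (i) its frequency blocks are disjoint, (ii) $\|\varphi_\nu\|_p\asymp 1$, and (iii) $R_n(\varphi_\nu)$ is a prescribed polynomial of degree $\le 2n-1$ of $L_p$-norm $\asymp 2^{\nu/p}$ — the natural candidate is a shifted/aliased Dirichlet-type kernel supported on the frequencies where the grid $\{x(j)\}$ aliases into low frequencies. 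Choosing the coefficients $c_\nu\asymp \ep_{n2^\nu}$, the disjointness of frequency supports plus the doubling assumption $\ep_s\le D\ep_{2s}$ ensure $E_{n2^\nu}(f)_p\le C\ep_{n2^\nu}$, so $f\in E(\e,p)$ up to a constant, while orthogonality/disjointness of the images $R_n(\varphi_\nu)$ (or a Littlewood--Paley-type lower bound in $L_p$ for $p\in[1,2]$) yields
\[
 \|f-R_n(f)\|_p\ge c\sum_{\nu=0}^{\infty} 2^{\nu/p}\,\ep_{n2^\nu}.
\]

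\textbf{Main obstacle.} Everything hinges on the Marcinkiewicz-type bound $\|R_n(g)\|_p\le C\,2^{\nu/p}\|g\|_p$ for $g\in\cT(n2^{\nu+1})$ and $1\le p\le 2$; the factor $2^{\nu/p}$ must be \emph{sharp}, otherwise neither direction lines up. Getting this correctly, in particular making sure the constant $C$ is independent of $n,\nu,p$, is the technical core. The second-hardest step is verifying that the test function in the lower bound actually belongs to $E(\e,p)$; this is where the two regularity conditions on $\e$ (the Abel-type summability and the doubling from below) are both essential — the former to make the series converge in $L_p$, the latter to convert the dyadic bounds $\ep_{n2^\nu}$ into valid bounds $E_k(f)_p\le C\ep_k$ for all intermediate $k$.
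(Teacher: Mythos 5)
Your upper-bound plan is essentially the intended one: since $R_n$ reproduces $\cT(n)$, the dyadic telescoping into blocks $g_\nu\in\cT(n2^{\nu+1})$ with $\|g_\nu\|_p\le C\ep_{n2^\nu}$ reduces everything to the operator bound $\|R_n(g)\|_p\le C(s/n)^{1/p}\|g\|_p$ for $g\in\cT(s)$, $s\ge n$, which is exactly the inequality $\|R_nV_s\|_{L_p\to L_p}\le C(s/n)^{1/p}$ quoted in the survey (from \cite{VT51}); your Marcinkiewicz--Zygmund/undersampling derivation of it is a legitimate route. The only point you gloss over is the term-by-term application of $R_n$ to the series: $R_n$ uses point values, so you need the telescoping series to converge uniformly (not just in $L_p$); this follows from Nikol'skii, $\|g_\nu\|_\infty\le C(n2^\nu)^{1/p}\|g_\nu\|_p$, whenever the right-hand side $\sum_\nu 2^{\nu/p}\ep_{n2^\nu}$ is finite, which is the only case in which the upper bound has content.

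The genuine gap is in the lower bound, in the step where you combine the images $R_n(\varphi_\nu)$. You need a single $f\in E(\e,p)$ with $\|f-R_n(f)\|_p\gtrsim\sum_\nu 2^{\nu/p}\ep_{n2^\nu}$, i.e.\ an $\ell_1$-type aggregation of the block contributions $\|c_\nu R_n(\varphi_\nu)\|_p\asymp \ep_{n2^\nu}2^{\nu/p}$. But the mechanisms you invoke --- orthogonality/disjointness of the spectra of the images, or a Littlewood--Paley square-function lower bound --- can only give $\|\sum_\nu h_\nu\|_p\gtrsim\bigl(\sum_\nu\|h_\nu\|_p^2\bigr)^{1/2}$ for $p\in(1,2]$ (and Littlewood--Paley is unavailable at $p=1$, which is in your range); this $\ell_2$ aggregate is strictly weaker than the required sum $\sum_\nu\|h_\nu\|_p$, so the asymptotics do not line up. What actually makes the lower bound work is \emph{alignment}, not orthogonality: choose $\varphi_\nu$ with spectrum contained in $4n\mathbb{Z}$ inside the $\nu$-th band (e.g.\ suitably modulated and $L_p$-normalized Fej\'er-type kernels $\cK_{m_\nu}(4nx)$ with $m_\nu\asymp 2^\nu$), so that $\varphi_\nu$ is constant on the sampling grid $x(j)=\pi j/(2n)$ and $R_n(\varphi_\nu)$ is a \emph{positive constant} (or a fixed positive low-degree profile) of size $\asymp 2^{\nu/p}$; then the images add with no cancellation and $\|R_n(f)\|_p\asymp\sum_\nu c_\nu 2^{\nu/p}$, after which one subtracts $\|f\|_p\lesssim\ep_n$ (handling by a two-case argument the regime where the whole sum is comparable to its first term). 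With that replacement, your verification of membership $f\in E(\e,p)$ via the tail-summability and doubling hypotheses goes through as you describe.
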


Operators $R_n$ are not defined on $L_p$, when $p<\infty$.   Historically, the first idea was to
consider the operator $R_n J_r$ where
$$
J_r(f)(x) := (2\pi)^{-1}\int_\bbT f(x-y)F_r(y)dy,
$$
$$
 F_r(y):= 1+2\sum_{k=1}^\infty k^{-r}\cos(ky-r\pi/2).
$$
It was proved in \cite{VT23} that for $p\in [1,\infty]$ and $r>1/p$ we have ($I$ is the identity operator)
$$
\|I-R_nJ_r\|_{L_p\to L_p} \le C(r,p)n^{-r}.
$$
  The following inequalities turn out to be more convenient. Define
$$
V_s(f)(x) := (2\pi)^{-1} \int_\bbT f(x-y)\cV_s(y)dy,\  \ x\in\mathbb{R}.
$$
Then (\cite{VT51}) we have for $s\ge n$
$$
\|R_nV_s\|_{L_p\to L_p} \le C(s/n)^{1/p},\quad 1\le p\le \infty
$$
and
$$
\|I_nV_s\|_{L_p\to L_p} \le C(p)(s/n)^{1/p},\quad 1< p< \infty.
$$

We now proceed to the general setting.
Let $\Omega$ be a compact subset of $\bbR^d$ equipped with a Borel probability measure $\mu$. By $L_p$ norm of a real or complex valued function defined on $\Omega$ for $1\le p< \infty$,  we understand
$$
\|f\|_p:=\|f\|_{L_p(\Omega,\mu)} := \left(\int_\Omega |f|^pd\mu\right)^{1/p}.
$$
By $L_\infty$ norm we understand the uniform norm of continuous functions:
$$
\|f\|_\infty := \max_{\bx\in\Omega} |f(\bx)|,
$$
and with a little abuse of notations we sometimes write $L_\infty(\Omega)$ for the space $\cC(\Omega)$ of continuous functions on $\Omega$.

Recall the setting for the theory of optimal linear recovery. For a fixed $m\in\mathbb{N}$ and a set of points  $\xi:=\{\xi^j\}_{j=1}^m\subset \Omega$, let $\Phi $ be a linear operator from $\bbC^m$ into $L_p(\Omega,\mu)$.
For a class $\bF$ of functions on $\Omega$ (which is typically a  centrally symmetric and compact subset of $L_p(\Omega,\mu)$), define
$$
\varrho_m(\bF,L_p) := \inf_{\xi} \inf_{\text{linear}\, \Phi } \sup_{f\in \bF} \|f-\Phi(f(\xi^1),\dots,f(\xi^m))\|_p.
$$
The above described recovery procedure is a linear procedure.
The following modification of the above recovery procedure is also of interest. We now allow any mapping $\Phi : \bbC^m \to X_m \subset L_p(\Omega,\mu)$ where $X_m$ is a linear subspace of dimension $m$ and define
$$
\varrho_m^*(\bF,L_p) :=\inf_{\xi}  \inf_{\Phi;  X_m} \sup_{f\in \bF}\|f-\Phi(f(\xi^1),\dots,f(\xi^m))\|_p.
$$

In both of the above cases we build an approximant, which comes from a linear subspace of dimension at most $m$.
It is natural to compare quantities $\varrho_m(\bF,L_p)$ and $\varrho_m^*(\bF,L_p)$ with the
Kolmogorov widths. Let $\bF\subset L_p$ be a centrally symmetric compact. The quantities
$$
d_n (\bF, L_p) := \operatornamewithlimits{inf}_{\{u_i\}_{i=1}^n\subset L_p}
\sup_{f\in \bF}
\operatornamewithlimits{inf}_{c_i} \left \| f - \sum_{i=1}^{n}
c_i u_i \right\|_p, \quad n = 1, 2, \dots,
$$
are called the {\it Kolmogorov widths} of $\bF$ in the $L_p$. In the definition of
the Kolmogorov widths,  we take,  for each  $f\in \bF$, the element of best
approximation from $U := \sp \{u_i \}_{i=1}^n$  as an approximating element. This means
that in general (i.e. if $p\neq 2$) this method of approximation is not linear.

We have the following obvious inequalities:
\be\label{I1}
d_m (\bF, L_p)\le \varrho_m^*(\bF,L_p)\le \varrho_m(\bF,L_p).
\ee

Further, for a function class $\bF\subset \cC(\Omega)$,  we  define
$$
\varrho_m^o(\bF,L_p) := \inf_{\xi } \inf_{\cM} \sup_{f\in \bF}\|f-\cM(f(\xi^1),\dots,f(\xi^m))\|_p,
$$
where $\cM$ ranges over all  mappings $\cM : \bbC^m \to   L_p(\Omega,\mu)$  and
$\xi$ ranges over all subsets $\{\xi^1,\cdots,\xi^m\}$ of $m$ points in $\Omega$.
Here, we use the index {\it o} to mean optimality. Clearly, the above characteristic is a characteristic of nonlinear recovery.

The characteristics $\varrho_m$, $\varrho_m^*$, $\varrho_m^o$ and their variants are well studied for many particular classes of functions. For an exposition of known results we refer to the books \cite{TWW}, \cite{NoLN}, \cite{DTU}, \cite{VTbookMA}, \cite{NW1}--\cite{NW3} and references therein. The characteristics $\varrho_m^*$ and $\varrho_m$ are inspired by the concepts of the Kolmogorov width and the linear width. The quantity $\varrho_m^*$ appears to have been introduced in \cite{Dinh90}, $\varrho_m$ in \cite{VT51}, and $\varrho_m^o$   in \cite{TWW}.

In this paper we focus on the study of the two characteristics $\varrho_m$ and $\varrho_m^o$.
We show how the bounds for $\varrho_m(\bF,L_p)$ and $\varrho_m^o(\bF,L_p)$ can be derived from more delicate results on the Lebesgue-type inequalities. We now explain the setting of the corresponding problem.

{\bf Problem.} How to design a practical algorithm that gives  a  sampling recovery approximant  with an error comparable to the best possible approximation?

We discuss the setting of the above problem in the coming Subsection \ref{RILi}

\subsection{Lebesgue-type inequalities}
\label{RILi}

We need to introduce some  definitions  from the theory of the Lebesgue-type inequalities for greedy algorithms  (see  \cite[Section 8.7]{VTbookMA}).
Let $X$ be a Banach space with the norm $\|\cdot\|_X$,  and let $Y\subset X$ be a subspace of $X$ equipped with a stronger norm $\|\cdot\|_Y$ satisfying  $\|f\|_Y\ge \|f\|_X$ for all $f\in Y$.
 In a general setting,  we consider an algorithm (i.e., an approximation method) $\cA: = \{A_v(\cdot)\}_{v=1}^\infty$, which is  a  sequence
of mappings (linear or nonlinear) $A_v: Y\to A_v(Y) \subset Y$, $v=1,2,\cdots$, where $A_v(Y)$ is the range of the mapping $A_v$.  Clearly, for any $f\in Y$, we have
$$
\|f-A_v(f)\|_Y \ge  d(f,A_v(Y))_Y :=\inf_{g\in A_v(Y)}\|f-g\|_Y,\quad  v=1,2,\cdots .
$$
We are interested in those algorithms $\cA$, which provide approximation in the $X$-norm that is close to the best possible approximation in the stronger $Y$-norm.
To be more precise, we give the following  two  definitions (similar definitions can be found in \cite[Section 8.7]{VTbookMA} for  the case of $X=Y$).

\begin{Definition}\label{GD1} For a given integer $u\in\bbN$, we say that an algorithm  $\cA = \{A_v(\cdot)\}_{v=1}^\infty$ satisfies a Lebesgue-type inequality or  de la Vall{\'e}e Poussin inequality  of depth $u$  for the pair $(X,Y)$ of Banach spaces  if there exist constants $C_1\ge 1$ and   $C_2>0$ such that for any $f\in Y$,
\begin{equation}\label{AG}
\|f-A_{\lceil C_1v\rceil}(f)\|_X \le C_2 d(f,A_v(Y))_Y,\quad v=1,\dots,u.
\end{equation}
In the case $C_1=1$ we call them the Lebesgue inequalities.
\end{Definition}

More generally, we have

\begin{Definition}\label{GD2}  Let   $\mathbf{a}=\{a(j)\} _{j=1}^\infty$ be a given  sequence of positive numbers in $[1,\infty)$.  For a given integer  $u\in\bbN$,
	we say that an algorithm  $\cA = \{A_v(\cdot)\}_{v=1}^\infty$  satisfies
the Lebesgue $\mathbf{a}$-type inequalities   of depth $u$ for the pair $(X,Y)$ if there exists a constant $C_3>0$ such that for all $f\in Y$,
\begin{equation}\label{aG2}
\|f-A_{\lceil a(v)v\rceil}(f)\|_X \le C_3d(f,A_v(Y))_Y,\quad v=1,\dots,u.
\end{equation}
\end{Definition}

We discuss different algorithms $\cA$ here. In some cases (see \cite{VT183}), $\{A_v(Y)\}_{v=1}^\infty$ is
a sequence of $v$-dimensional subspaces, while in other cases (see, for instance, \cite{DTM3}),   $A_v(Y)$ is the set of all
$v$-term linear combinations of a given system $\cD$. We use two different types of sampling algorithms. The first type of algorithms is based on the $\ell_p$ minimization, whereas
the second type  is based on greedy algorithms. We now describe them in detail.

Let $X_N$ be an $N$-dimensional subspace of the space $\cC(\Omega)$ of continuous functions on $\Omega$. For a fixed $m\in\bbN$ and a set of points  $\xi:=\{\xi^\nu\}_{\nu=1}^m\subset \Omega$,  we associate with each  function $f\in \cC(\Omega)$ a vector (sample vector)
$$
S(f,\xi) := (f(\xi^1),\dots,f(\xi^m)) \in \bbC^m.
$$
Denote
$$
\|S(f,\xi)\|_p:=\|S(f,\xi)\|_{L_p^m}:= \left(\frac{1}{m}\sum_{\nu=1}^m |f(\xi^\nu)|^p\right)^{1/p},\quad 1\le p<\infty,
$$
and
$$
\|S(f,\xi)\|_\infty := \max_{1\leq \nu\leq m}|f(\xi^\nu)|.
$$
For a positive weight $\bw:=(w_1,\dots,w_m)\in \bbR_+^m$,  consider the following norm
$$
\|S(f,\xi)\|_{p,\bw}:= \left(\sum_{\nu=1}^m w_\nu |f(\xi^\nu)|^p\right)^{1/p},\quad 1\le p<\infty.
$$

Now we consider the following well known recovery operator (algorithm).\vspace{2mm}

{\bf Algorithm 1 ($\ell p\bw(\xi,X_N))$.}
$$
\ell p\bw(\xi)(f) := \ell p\bw(\xi,X_N)(f):=\underset {g\in X_N}{ \operatorname{argmin}}\,  \|S(f-g,\xi)\|_{p,\bw}.
$$
In the case $\bw_m:= (1/m,\dots,1/m)$,  we drop $\bw_m$ from the notation, and write
$$
\ell p(\xi,X_N)(f):=\ell p\bw_m(\xi,X_N)(f).
$$

For a given system $\cD_N= \{g_j\}_{j=1}^N$ of functions on $\Omega$,  and a positive  integer $v$, we denote by $\mathcal{X}_v(\cD_N)$ the collection of all linear subspaces spanned by   $v$ elements from $\cD_N$.

\vspace{2mm}
{\bf Algorithm 2 ($ \ell p^s(\xi,\cX_v(\cD_N))$, $B_v(\cdot,\cD_N,L_p(\xi))$.} For a given system $\cD_N$ and a set of points  $\xi:=\{\xi^\nu\}_{\nu=1}^m\subset \Omega$, define the algorithm
$$
L^s(\xi,f) :=   \underset {L\in \cX_v(\cD_N)} {\operatorname{argmin}} \|S(f-\ell p(\xi,L)(f),\xi)\|_p,
$$
\be\label{Alg2}
  \ell p^s(\xi,\cX_v(\cD_N))(f):= \ell p(\xi,L^s(\xi,f))(f).
\ee
Index $s$ here  stands for {\it sample} to stress that this algorithm only uses the sample vector $S(f,\xi)$.
Clearly, $\ell p^s(\xi,\cX_v(\cD_N))(f)$ is the best $v$-term approximation of $f$ with respect to $\cD_N$ in
the space $L_p(\xi) := L_p(\xi,\mu_m)$, where $\mu_m(\{\xi^\nu\}) =1/m$ for  $\nu=1,\dots,m$.
To stress this fact,  we use the notation $B_v(f,\cD_N,L_p(\xi)) := \ell p^s(\xi,\cX_v(\cD_N))(f)$.
Both notations $\ell p^s(\xi,\cX_v(\cD_N))$ and $B_v(\cdot,\cD_N,L_p(\xi))$ are used in the literature.

\vspace{2mm}
{\bf Algorithm 3.} This algorithm is a well known greedy algorithm -- the Weak Tchebychev Greedy Algorithm (the definition is given in Section \ref{gr}), which coincides with the Weak Orthogonal Matching Pursuit (Weak Orthogonal Greedy Algorithm) (see Section \ref{womp}) in the case of the Hilbert space $L_2(\Omega,\mu)$. We apply this algorithm
in the space $L_p(\xi,\mu_m)$, which means that we only use $S(f,\xi)$ and the restriction of the system $\cD_N$ on the set  $\xi$.

\vspace{2mm}

Algorithm 3 is good from the point of view of practical realization.  At each iteration this greedy algorithm searches over at most $N$ dictionary elements for choosing a new one and performs the $\ell_p$ projections on the appropriate subspace (alike the other two algorithms). On the other hand, the Algorithm 2 performs  $\binom{N}{v}$
iterations of the $\ell_p$ projections on the $v$-dimensional subspaces.

\subsection{Sampling recovery on function classes}
\label{RIfc}

In this subsection we emphasize importance of classes, which are defined by structural conditions rather than by smoothness conditions. We give a brief history of the development of this idea. The first result, which connected the best approximations $\sigma_v(f,\Psi)_2$ (see the definition in Section \ref{def}) with convergence of the series $\sum_{k}|\<f,\psi_k\>|$, was obtained by
S.B. Stechkin \cite{Stech} in 1955 in the case of an orthonormal system $\Psi$. We formulate his result and a more general result momentarily. The following classes were introduced in \cite{DeTe} in the  study of sparse approximation with respect to arbitrary systems (dictionaries) $\cD$ in a Hilbert space $H$. For a general dictionary $\cD\subset H$, and a parameter $\bt>0$,
define the class of functions (elements)
$$
\cA^o_\bt(\cD,M) := \left\{\sum_{k\in \Lambda} c_k g_k: \  \  g_k\in \cD,\ \ k\in\Lambda,  \  |\Lambda| <\infty,\
\sum_{k\in \Lambda} |c_k|^\bt \le M^\bt\right\},
$$
where $M>0$ is a constant, and  define $\cA_\bt(\cD,M)$  to be  the closure  of $\cA^o_\bt(\cD,M)$ in the Hilbert space $H$. Furthermore, we define $\cA_\bt(\cD)$ as the union of the classes $\cA_\bt(\cD,M)$ over all $M>0$.
In the case when  $\cD$ is an orthonormal system,  S.B. Stechkin \cite{Stech} proved (see a discussion of this result in \cite{DTU}, Section 7.4)
\be\label{Di1}
f\in \cA_1(\cD) \quad \text{if and only if}\quad \sum_{v=1}^\infty (v^{1/2}\sigma_v(f,\cD)_{H})\frac{1}{v} <\infty.
\ee
A version of (\ref{Di1}) for the classes $\cA_\bt(\cD)$, $\bt\in (0,2)$, was obtained in \cite{DeTe}:
\be\label{Di2}
f\in \cA_\bt(\cD) \quad \text{if and only if}\quad \sum_{v=1}^\infty (v^{\alpha}\sigma_v(f,\cD)_H)^\bt\frac{1}{v} <\infty,
\ee
where $\al := 1/\bt-1/2$. In particular, (\ref{Di2}) implies that for $f\in \cA_\bt(\cD)$, $\bt\in (0,2)$, we have
\be\label{Di3}
\sigma_v(f,\cD)_H \ll v^{1/2-1/\bt}.
\ee

 We recall that relations (\ref{Di1}) -- (\ref{Di3}) were proved for an orthonormal system $\cD$.
 It is very interesting to note that (\ref{Di3}) actually holds for a general normalized system $\cD$ provided
 $\bt \in (0,1]$. In the case $\bt =1$ it was proved by B. Maurey (see \cite{Pi}). For $\bt \in (0,1]$ it was proved in \cite{DeTe}. We note that classes $\cA_1(\cD)$ and their generalizations for the case of Banach spaces play a fundamental role in studying best $v$-term approximation and convergence properties of greedy algorithms with respect to general dictionaries $\cD$ (see \cite{VTbook}). This fact encouraged experts to introduce and study function classes defined by restrictions on the coefficients of the functions' expansions. We explain this approach in detail.

 Let    $\Psi:=\{\psi_\bk\}_{\bk\in \bbZ^d}$ be a system indexed by $\bk\in\bbZ^d$. Consider a sequence of subsets $\cG:=\{G_j\}_{j=1}^\infty$, $G_j \subset \bbZ^d$, $j=1,2,\dots$, such that
 \be\label{Di4}
 G_1\subset G_2\subset \cdots \subset G_j\subset G_{j+1} \subset \cdots,\qquad \bigcup_{j=1}^\infty G_j =\bbZ^d.
 \ee
Consider functions representable in the form of absolutely convergent series:
\be\label{Direpr}
f = \sum_{\bk\in\bbZ^d} a_\bk(f)\psi_\bk,\qquad \sum_{\bk\in\bbZ^d} |a_\bk(f)|<\infty.
\ee
For $\bt \in (0,1]$ and $r>0$ consider the  class $\bA^r_\bt(\Psi,\cG)$ of all functions $f$ which have representations (\ref{Direpr}) satisfying  the  conditions
\be\label{DiAr}
  \left(\sum_{  \bk\in G_j\setminus G_{j-1}} |a_\bk(f)|^\bt\right)^{1/\bt} \le 2^{-rj},\quad j=1,2,\dots,\quad G_0 :=\emptyset  .
\ee
One can also consider the following narrower version $\bA^r_\bt(\Psi,\cG,\infty)$,  the class of all functions $f$ which have representations (\ref{Direpr}) satisfying the  condition
\be\label{Di7}
\sum_{j=1}^\infty 2^{r\bt j} \sum_{  \bk\in G_j\setminus G_{j-1}} |a_\bk(f)|^\bt \le 1, \quad G_0 :=\emptyset  .
\ee

Probably, the first realization of the idea of the classes $\bA^r_\bt(\Psi,\cG)$ was done in \cite{VT150} in the special case, when $\Psi$ is the trigonometric system $\cT^d := \{e^{i(\bk,\bx)}\}_{\bk\in\bbZ^d}$. We now proceed to the definition of classes $\bW^{a,b}_A(\cT^d)$ from \cite{VT150}, which corresponds to the case $\bt=1$. Introduce the necessary notations.
Let $\mathbf s=(s_1,\dots,s_d )\in\mathbb{N}_0^d$ be a  vector  whose  coordinates  are
nonnegative integers, where $\mathbb{N}_0=\mathbb{N}\cup \{0\}$. Let
$$
\rho(\mathbf s) := \bigl\{ \mathbf k\in\mathbb Z^d:\  \lfloor 2^{s_j-1}\rfloor \le
|k_j| < 2^{s_j},\qquad j=1,\dots,d \bigr\}.
$$
For $f\in L_1 (\bbT^d)$, define
$$
\delta_{\mathbf s} (f,\mathbf x) :=\sum_{\mathbf k\in\rho(\mathbf s)}
\hat f(\mathbf k)e^{i(\mathbf k,\mathbf x)},\quad \hat f(\mathbf k) := (2\pi)^{-d}\int_{\bbT^d} f(\bx)e^{-i(\mathbf k,\mathbf x)}d\bx.
$$
Consider functions with absolutely convergent Fourier series. For such functions,  define the Wiener norm (the $A$-norm or the $\ell_1$-norm)
$$
\|f\|_A := \sum_{\mathbf k\in\mathbb{Z}^d}|\hat f({\mathbf k})|.
$$
The following classes, which are convenient in studying sparse approximation with respect to the trigonometric system,
were introduced and studied in \cite{VT150}. Define, for $f\in L_1(\bbT^d)$,
$$
f_j:=\sum_{\|\bs\|_1=j}\delta_\bs(f), \quad j\in \bbN_0.
$$
For parameters $ a\in \bbR_+$ and  $ b\in \bbR$,  define the class
$$
\bW^{a,b}_A:=\Big\{f\in L^1(\bbT^d):\  \  \|f_j\|_A \le 2^{-aj}(\bar j)^{(d-1)b},\quad \forall  j\in \bbN_0\Big\},
$$
where $\bar j:=\max(j,1)$.
In this case, define
$$
G_j := \bigcup_{\bs:\  \|\bs\|_1 \le j} \rho(\bs), \quad j=1,2,\dots,
$$
and the classes $\bW^{a,b}_A$ with $b=0$ are similar to the above defined classes $\bA^a_1(\cT^d,\cG)$. A more narrow version $\bA^a_1(\cT^d,\cG,\infty)$ of these classes
was studied recently in \cite{JUV}.

The classes $\bA^r_\bt(\Psi)$ studied in \cite{VT203} correspond to the classes $\bA^r_\bt(\Psi,\cG)$ with
\be\label{Di8}
G_j:= \{\bk\in \bbZ^d\,:\, \|\bk\|_\infty <2^j\},\quad j=1,2,\dots.
\ee
Note that the classes $\bA^r_\bt(\cT^d)$ are related to the periodic isotropic Nikol'skii classes $H^r_q$. There are several equivalent definitions of the classes $H^r_q$. We give a  definition that is most convenient to us. Let $r>0$ and $1\le q\le \infty$. The class $H^r_q$ consists of all periodic functions $f$ in $d$ variables satisfying the conditions
$$
\left\|\sum_{\lfloor2^{j-1}\rfloor \le \|\bk\|_\infty< 2^j} \hat f(\bk) e^{i(\bk,\bx)}\right\|_q \le 2^{-rj}, \quad j=0,1,\dots.
$$
For instance, it is easy to see that in the case when  $q=2$ and $r >d/2$,  the class $H^r_q$ is embedded into the class $\bA^{r-d/2}_1(\cT^d)$. However, the class $\bA^{r-d/2}_1(\cT^d)$ is
substantially larger than $H^r_2$. For instance, take $\bk \in G_j\setminus G_{j-1}$ with $G_j$ defined in (\ref{Di8}). Then the function $g_\bk := 2^{-(r-d/2)j}e^{i(\bk,\cdot)}$ belongs to $\bA^{r-d/2}_1(\cT^d)$ but does not belong to any $H^{r'}_2$ with $r'> r-d/2$.

We now give a brief comparison of sampling recovery results for classes $H^r_q$ and $\bA^r_\bt(\cT^d)$.
It is known (see \cite{VTbookMA}, Theorem 3.6.4, p.125) that for all $1\le p,q\le \infty$, $r >d/q$, we have
\be\label{Di9}
\varrho_m(H^r_q,L_p) \asymp m^{-r/d +(1/q-1/p)_+},
\ee
where $ (a)_+ := \max(a,0)$ for $a\in\mathbb{R}$.
Clearly, (\ref{Di9}) implies the same upper bound for the $\varrho^o_m(H^r_q,L_p)$.
Results on the lower bounds for $\varrho^o_m(\cT(\bN,d)_q,L_p)$ (see the definition of  $\cT(\bN,d)_q$ in Subsection \ref{Lb}) from \cite{VT202} (see Lemma 4.3 there), Lemma 4.1 of \cite{VT203}, and Lemma \ref{NLL1} below show that the following relation holds
\be\label{Di10}
\varrho^o_m(H^r_q,L_p) \asymp m^{-r/d +(1/q-1/p)_+}.
\ee
In particular,
\be\label{Di11}
\varrho^o_m(H^r_2,L_p) \asymp m^{-r/d +(1/2-1/p)_+}.
\ee
The lower bound (1.5) from \cite{VT202} and the lower bound in (1.13) of \cite{VT203} give the following bound in the case $\bt=1$:
\be\label{Di12}
\varrho^o_m(\bA^r_1(\cT^d,L_p) \gg m^{-1/2 +(1/2-1/p)_+ -r/d}.
\ee
The upper bound in (1.13) of \cite{VT203} gives the following bound
\be\label{Di13}
\varrho^o_m(\bA^r_1(\cT^d,L_p) \ll \left(\frac{m}{(\log m)^3}\right)^{-1/2 +(1/2-1/p)_+ -r/d}.
\ee
Relations (\ref{Di11})--(\ref{Di13}) mean close bounds for the class $H^r_2$
and for the larger class $\bA^{r-d/2}_1(\cT^d)$, $r>d/2$. Note that for the class $H^r_2$ we
obtain the same bounds for the linear sampling recovery. It is proved in \cite{VT202} that for
the linear sampling recovery in $\bA^r_\bt(\cT^d)$ we have
\be\label{Di14}
\varrho_m(\bA^r_\bt(\cT^d,L_2) \gg m^{-r/d}.
\ee
Inequalities (\ref{Di14}) with $\bt=1$ and (\ref{Di13}) with $p=2$ demonstrate that nonlinear
sampling recovery provides better error guarantees than linear sampling recovery.

{\bf A brief comment on important steps.} We now list some steps, which played an important role in the very recent development of the theory of nonlinear sampling recovery theory.

{\bf Step 1.} The authors of \cite{JUV} discovered that for some classes $\bF$ of periodic functions,
the best $v$-term approximation in the uniform norm of these classes $\sigma_v(\bF,\cT^d)_{L_\infty}$ with respect to the trigonometric system
can be used for estimating from above the $\varrho_m^o(\bF,L_2)$.

{\bf Step 2.} The authors of the papers \cite{DTM1}--\cite{DTM3} started to use the universal
discretization of integral norms in the sampling recovery.

{\bf Step 3.} The greedy type and the $\ell_p$ minimization type algorithms, which provide
sparse approximants, were used in the papers \cite{DTM2} and \cite{DTM3}. The Lebesgue-type inequalities for these algorithms were proved.

{\bf Step 4.} It was observed in \cite{DTM1} and \cite{DTM3} that one can use the best $v$-term approximation in
the norm $L_p(\Omega,\mu_\xi)$ with $p<\infty$ (see the definition of the $\mu_\xi$ in (\ref{muxi}) below), which is weaker than the uniform norm, instead of the uniform norm. This allowed
us to use known deep results on sparse approximation in Banach spaces  to improve
the known upper bounds for $\varrho_m^o(\bF,L_p)$.

\section{Some definitions and notations}
\label{def}

\subsection{Universality and incoherence}

  We begin with a brief
description of  some  necessary concepts on
sparse approximation.   Let $X$ be a Banach space with norm $\|\cdot\|:=\|\cdot\|_X$, and let $\cD=\{g_i\}_{i=1}^\infty $ be a given (countable)  system of elements in $X$. Given a finite subset $J\subset \bbN$, we define $V_J(\cD):=\sp\{g_j:\  \ j\in J\}$.
For a positive  integer $ v$, we denote by $\mathcal{X}_v(\cD)$ the collection of all linear spaces $V_J(\cD)$  with   $|J|=v$, and
denote by $\Sigma_v(\cD)$  the set of all $v$-term approximants with respect to $\cD$; that is,
$
\Sigma_v(\cD):= \bigcup_{V\in\cX_v(\cD)} V.
$
Given $f\in X$,  we define
$$
\sigma_v(f,\cD)_X := \inf_{g\in\Sigma_v(\cD)}\|f-g\|_X=d(f,\Sigma_v(\cD))_X,\  \ v=1,2,\cdots.
$$
Moreover,   for a function class $\bF\subset X$, we define $\sigma_0(\bF,\cD)_X := \sup_{f\in\bF} \|f\|_X
 $, and
$$
 \sigma_v(\bF,\cD)_X := \sup_{f\in\bF} \sigma_v(f,\cD)_X,\quad  v=1,2,\cdots.$$

 In this paper we mostly consider the case where  $X=L_p(\Omega,\mu)$ and $1\le p<\infty$.  In this case,  we sometimes write for brevity $\sigma_v(\cdot,\cdot)_p$ instead of
$\sigma_v(\cdot,\cdot)_{L_p(\Omega,\mu)}$.

We study systems, which have  the universal sampling discretization property.

  \begin{Definition}\label{ID1} Let $1\le p<\infty$. We say that a finite subset  $\xi:= \{\xi^j\}_{j=1}^m \subset \Omega $ provides the {\it  $L_p$-universal sampling discretization}   for the collection $\cX:= \{X(n)\}_{n=1}^k$ of finite-dimensional  linear subspaces $X(n)$ if
 \be\label{ud}
\frac{1}{2}\|f\|_p^p \le \frac{1}{m} \sum_{j=1}^m |f(\xi^j)|^p\le \frac{3}{2}\|f\|_p^p\quad \text{for any}\quad f\in \bigcup_{n=1}^k X(n) .
\ee
We denote by $m(\cX,p)$ the minimal integer  $m$ such that there exists a set $\xi$ of $m$ points which
provides  the $L_p$-universal sampling discretization (\ref{ud}) for the collection $\cX$.

We will use a brief form $L_p$-usd for the $L_p$-universal sampling discretization (\ref{ud}).
\end{Definition}

In \cite{DTM3} the following one-sided universal discretization condition on the collection was used.

 \begin{Definition}[\cite{DTM3}]\label{ID1a}
     Let $1\le p <\infty$. We say that a set $\xi:= \{\xi^j\}_{j=1}^m \subset
     \Omega $ provides the {\it one-sided $L_p$-universal sampling discretization with constant $D\ge 1$}   for a
     collection $\cX:= \{X(n)\}_{n=1}^k$ of finite-dimensional  linear subspaces
     $X(n)$  if we have
     \be\label{I3a}
     \|f\|_p \le D\left(\frac{1}{m} \sum_{j=1}^m |f(\xi^j)|^p\right)^{1/p} \quad \text{for any}\quad f\in \bigcup_{n=1}^k X(n) .
     \ee
 \end{Definition}
  Here is the definition of somewhat weaker condition than the above one.

\begin{Definition}[{\cite{LMT}}]\label{UDD1}
    Let $1\le p <\infty$. We say that a set $\xi:= \{\xi^j\}_{j=1}^m \subset
    \Omega $ provides  universal LDI$(p,\infty)$ (LDI stands for Left Discretization Inequality)  with constant  $D\ge 1$ for a collection $\cX:=
    \{X(n)\}_{n=1}^k$ of finite-dimensional linear subspaces $X(n)$ if
    \be\label{I3}
    \|f\|_p \le D\max_{1\le j\le m} |f(\xi^j)| \quad \text{for any}\quad f\in \bigcup_{n=1}^k X(n),
    \ee
    in which case we write $\bigcup\limits_{n=1}^k X(n)\in \LD(m,p,\infty,D)$.
\end{Definition}

The  property given in  Definition \ref{ID2} below concerns incoherence property of the system, which is
known to be useful in approximation by the WCGA (see, for instance, \cite{VTbookMA}, Section 8.7).

\begin{Definition}\label{ID2}  Let $X$ be a Banach space with a norm $\|\cdot\|$. Let $v, S\in\mathbb{N}$ be given integers such that $v\leq S$.  We say that a system $\cD=\{g_i\}_{i=1}^\infty\subset X$ has   ($v,S$)-incoherence property with parameters   $V>0$ and $r>0$ in the space $X$  if, whenever  $A\subset B\subset \mathbb{N}$,   $|A|\le v$,   $|B|\le S$ and $\{c_i\}_{i\in B}\subset \bbC$,  we have
\be\label{incoh}
\sum_{i\in A} |c_i| \le V|A|^r\left\|\sum_{i\in B} c_ig_i\right\|.
\ee
We will use a brief form ($v,S$)-ipw($V,r$)   for the ($v,S$)-incoherence property with parameters   $V>0$ and $r>0$ in $X$.
\end{Definition}

We gave Definition \ref{ID2} for a countable system $\cD$. Similar definition can be given for any system $\cD$ as well.

 Denote, for a set $\xi:=\{\xi^1,\cdots, \xi^m\}$ of $m$ points from $\Omega$,
\be\label{muxi}
\Omega_m:=\{\xi^1,\cdots, \xi^m\},\   \  \mu_m:= \frac1{m}\sum_{j=1}^m \delta_{\xi^j},\  \  \ \text{and}\  \ \mu_\xi := \frac{\mu+\mu_m}{2},
\ee
where $\delta_\bx$ denotes the Dirac measure supported at a point $\bx$.

 As in  previous  papers (see, for instance, \cite{VT202}),  we study a special case when $\cD$   satisfies certain restrictions. Following notations used in the literature, for convenience, in this case we use the notation $\Psi$ for a system $\cD$.
 We formulate these restrictions in the form of conditions imposed on $\Psi$.

{\bf Condition A.} Assume that $\Psi:=\{\ff_j\}_{j=1}^\infty$ is a system of uniformly bounded functions on $\Omega \subset \bbR^d$ satisfying
\be \label{ub}
\sup_{\bx\in\Omega} |\vi_j(\bx)|\leq 1,\   \ 1\leq j<\infty.
\ee

{\bf Condition B1.} Assume that  $\Psi:=\{\ff_j\}_{j=1}^\infty$ is an orthonormal system in $L_2(\Omega, \mu)$.

{\bf Condition B2.} Assume that $\Psi:=\{\ff_j\}_{j=1}^\infty$ is a Riesz  system in the space $L_2(\Omega, \mu)$;  i.e. there exist two positive constants $0< R_1 \le R_2 <\infty$ such that
for any $N\in\bbN$ and any $(a_1,\cdots, a_N) \in\bbC^N,$
\begin{equation}\label{Riesz}
R_1 \left( \sum_{j=1}^N |a_j|^2\right)^{1/2} \le \left\|\sum_{j=1}^N a_j\ff_j\right\|_2 \le R_2 \left( \sum_{j=1}^N |a_j|^2\right)^{1/2}.
\end{equation}

{\bf Condition B3.} Assume that  $\Psi$ is a Bessel system, i.e. there exists a constant $K>0$ such that for any $N\in\bbN$ and   for any $(a_1,\cdots, a_N) \in\bbC^N,$
\begin{equation}\label{Bessel}
  \sum_{j=1}^N |a_j|^2 \le K  \left\|\sum_{j=1}^N a_j\ff_j\right\|^2_2 .
\end{equation}

Clearly, Condition B1 implies Condition B2 with $R_1=R_2=1$. Condition B2 implies Condition B3 with
$K=R_1^{-2}$.

We often use the concept of Nikol'skii inequality. For the reader's convenience we formulate it here. Other definitions and notations are introduced below in the text.

{\bf Nikol'skii-type inequalities.} Let $1\le p\le q\le\infty$,  and let $X_N\subset L_q(\Omega)$ be a subspace of dimension $N$. The inequality
\begin{equation}\label{I4NI}
\|f\|_q \leq H\|f\|_p,\   \ \forall f\in X_N
\end{equation}
is called the Nikol'skii inequality for the pair $(p,q)$ with the constant $H$.
In such a case,  we write $X_N \in NI(p,q,H)$. Typically, $H$ depends on $N$, for instance, $H$ can be of order $N^{\frac{1}{p}-\frac{1}{q}}$.

 In addition to the above formulated Nikol'skii inequality we need one more property of a similar nature.

{\bf $u$-term Nikol'skii inequality.} Let $1\le p\le q\le \infty$ and let $\Psi$ be a system from $L_q:=L_q(\Omega,\mu)$. For a natural number $u$, we say that the system $\Psi$ has
the $u$-term Nikol'skii inequality for the pair $(p,q)$ with the constant $H$ if the following
inequality holds
\be\label{vNI}
\|f\|_q \le H\|f\|_p,\qquad \forall f\in \Sigma_u(\Psi).
\ee
 In such a case,  we write $\Psi \in NI(p,q,H,u)$.

 Note, that obviously $H\ge 1$.

\subsection{Sparse approximation in Banach spaces}

Recall that  the modulus of smoothness of a Banach space  $X$ is the function on $(0,\infty)$ defined as
\begin{equation}\label{CG1}
\eta(X,w):=\sup_{x,y\in X;
		\|x\|= \|y\|=1}\left(\frac{\|x+wy\|+\|x-wy\|}{2} -1\right),\  \ w>0,
\end{equation}
and that $X$ is called uniformly smooth  if  $\eta(w)/w\to 0$ when $w\to 0+$.
It is well known that the $L_p$ space with $1< p<\infty$ is a uniformly smooth Banach space with
\be\label{CG2}
\eta(L_p,w)\le \begin{cases}(p-1)w^2/2, & 2\le p <\infty,\\   w^p/p,& 1\le p\le 2.
\end{cases}
\ee

We will use some known general results on best $v$-term approximations with respect to
an arbitrary system in a Banach space. Usually, these results are proved in the case of real Banach spaces. It is convenient for us to consider complex Banach spaces, partially because of our applications to the special case when the system of interest is the trigonometric system $\cT^d$. We now prove the complex version of the result known in the case of real Banach spaces. For a system $\cD\subset X$,  denote
 $$
 A_1(\cD) := \left\{f\in X:\, f=\sum_{i=1}^\infty a_ig_i,\quad g_i\in \cD,\quad a_i\in\mathbb{C},\  \  \sum_{i=1}^\infty |a_i| \le 1 \right\}.
 $$

\begin{Lemma}[{\cite{VT205}}]\label{NLv} Let $\cD$ be a system of elements in a complex Banach space $X$ satisfying $\sup_{g\in \cD}\|g\|_X \le 1$. Assume that
the modulus of smoothness of  $X$ satisfies the following condition for  some constants $1<q\le 2$ and $\gamma>0$:
  $$\eta(X,w) \le \gamma w^q\  \  \text{ for all $w>0$}.$$  Then, there exists a constant $C(q,\gamma)>0$, which may only depend on $q$ and $\gamma$, such that
\be\label{Nv}
\sigma_v(A_1(\cD),\cD)_X \le C(q,\gamma) (v+1)^{1/q-1}\   \ \text{for any $v\in\mathbb{N}_0$}.
\ee
Moreover, the bound in (\ref{Nv}) is provided by a constructive method based on greedy algorithms.
\end{Lemma}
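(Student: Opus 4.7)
The plan is to reduce the complex-scalar statement to the classical real-scalar argument by symmetrizing the dictionary under all unimodular phases. Define
$$\cD^\circ := \{e^{i\theta} g : g \in \cD,\ \theta \in [0,2\pi)\},$$
which still satisfies $\sup_{h\in \cD^\circ}\|h\|_X \le 1$ and obeys $\Sigma_v(\cD^\circ)\subset \Sigma_v(\cD)$. Every $f \in A_1(\cD)$ admits a representation $f = \sum_i |a_i|\,(e^{i\arg a_i} g_i)$ with $\sum_i |a_i| \le 1$, so $f$ lies in the closed absolutely convex hull of $\cD^\circ\cup\{0\}$ described by nonnegative real coefficients. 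Hence it suffices to prove $\sigma_v(A_1(\cD^\circ),\cD^\circ)_X \le C(q,\gamma)(v+1)^{1/q-1}$, where now the class $A_1$ can be treated as generated by real convex combinations.

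Next, I would apply a greedy procedure of Weak Relaxed Greedy Algorithm (WRGA) type to $f$ against $\cD^\circ$, generating approximants $G_0 = 0$ and $G_k = (1-\alpha_k)G_{k-1} + \alpha_k \varphi_k$ with $\varphi_k \in \cD^\circ$ and $\alpha_k \in [0,1]$, so that automatically $G_k \in \Sigma_k(\cD^\circ)$. Given the residual $f_{k-1} := f - G_{k-1}$, choose a norming functional $F_{k-1} \in X^*$ with $\|F_{k-1}\| = 1$ and $F_{k-1}(f_{k-1}) = \|f_{k-1}\|$. The crucial averaging step is that, since $f$ lies in the closed convex hull above, one can select $\varphi_k \in \cD^\circ$ with
$$\mathrm{Re}\,F_{k-1}(\varphi_k - G_{k-1}) \ge \mathrm{Re}\,F_{k-1}(f - G_{k-1}) = \|f_{k-1}\|.$$
It is exactly here that the phase closure of $\cD^\circ$ pays off: for each $g\in\cD$ and each complex number $F_{k-1}(g)$, the rotated element $e^{i\arg F_{k-1}(g)}\,\overline{\phantom{.}}\,g \in \cD^\circ$ realizes the modulus $|F_{k-1}(g)|$ as a real part, so the real-scalar averaging argument transfers verbatim.

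The recursion is then closed via the smoothness hypothesis $\eta(X,w) \le \gamma w^q$. The standard uniformly-smooth Banach space inequality yields
$$\|f_{k-1} - \alpha_k(\varphi_k - G_{k-1})\|_X \le \|f_{k-1}\| - \alpha_k\,\mathrm{Re}\,F_{k-1}(\varphi_k - G_{k-1}) + 2\gamma\,\alpha_k^q \|\varphi_k - G_{k-1}\|_X^q,$$
and combining this with the averaging lower bound and the trivial estimate $\|\varphi_k - G_{k-1}\|_X \le 2$ produces a scalar recursion of the form $a_k \le a_{k-1}(1-\alpha_k) + C\alpha_k^q$ for $a_k := \|f_k\|_X$. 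Optimizing $\alpha_k \asymp 1/k$ and iterating yields the claimed bound $\|f - G_v\|_X \le C(q,\gamma)(v+1)^{1/q - 1}$, and since $G_v \in \Sigma_v(\cD)$ the algorithm delivers the constructive approximant. The main obstacle I anticipate is the averaging step in the complex setting: one must verify that the phase-symmetrized dictionary allows a selection $\varphi_k$ whose real-part pairing with $F_{k-1}$ matches what the real-scalar theory needs, with no degradation of the constants. Once this is handled cleanly, the smoothness recursion and the telescoping estimate are the same as in the real case.
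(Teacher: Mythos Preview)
Your approach is correct and takes a genuinely different route from the paper. The paper reduces to the real case by decomplexification: it writes $X = Y \oplus iY$, splits each dictionary element $g = g^R + ig^I$ and each coefficient $c_k = a_k + ib_k$ into real and imaginary parts, forms two real dictionaries $\cD^r_1 = \{g^R\}$, $\cD^r_2 = \{g^I\}$, and invokes the known real-space bound for each; reassembling costs a factor in the sparsity index (the paper ends up bounding $\sigma_{4v}$ in terms of the real $\sigma_v$'s). Your route is more direct: phase-symmetrize $\cD$ to $\cD^\circ$ so that $A_1(\cD)$ sits inside the closed \emph{real} convex hull of $\cD^\circ\cup\{0\}$, and then run a single WRGA in $X$ itself using $\mathrm{Re}\,F_{k-1}$ as the selection criterion. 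This avoids the splitting and the factor-$4$ loss, yields one explicit greedy approximant in $\Sigma_v(\cD)$, and is essentially the viewpoint the paper acknowledges in its closing remark (Lemma~\ref{NLv} also follows from \cite{VT209}). One small imprecision to fix: the smoothness inequality you wrote omits a factor $\|f_{k-1}\|^{1-q}$ in the last term, since $\eta$ is evaluated at $\|y\|/\|x\|$ rather than $\|y\|$; the correct recursion is $a_k \le a_{k-1}(1-\alpha_k) + C\alpha_k^q a_{k-1}^{1-q}$, and optimizing $\alpha_k$ at each step (rather than the blanket $\alpha_k\asymp 1/k$) still delivers $a_v \le C(q,\gamma)(v+1)^{1/q-1}$ by the standard argument.
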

\begin{proof} In the case of real Banach spaces Lemma \ref{NLv} is known (see, for instance, \cite{VTbook}, p.342). The corresponding bound is provided by the Weak Tchebychev Greedy Algorithm. We derive the complex case from the real one.

By a standard decomplexification process, every complex Banach space $(X, \|\cdot\|_X)$ can be viewed as a direct sum of two identical real Banach spaces representing the ``real'' and ``imaginary'' parts of its vectors. To be precise, let $X_{\mathbb{R}}=(X_{\mathbb{R}},\|\cdot\|_X) $ denote the underlying real Banach space obtained by restricting the scalar multiplication of $X$ to the real  field $\mathbb{R}$. Then
 $X$ can be decomposed as the direct sum $X = Y \oplus iY$, where $Y$ is a  real Banach  subspace of $X_{\mathbb{R}}$. Furthermore,  by the open mapping theorem,  there exists a constant $c > 0$ such that for all $x, y \in Y$,
 $$c (\|x\|_X + \|y\|_X) \leq \|x + iy\|_X \leq \|x\|_X + \|y\|_X.$$

 In this context, every vector $f \in X$ admits a unique representation $$f = f^R + i f^I, $$where  $f^R = \operatorname{Re}(f)\in Y$,  $f^I = \operatorname{Im}(f)\in Y$, and $$\|f\|_X\sim \|f^R\|_X+\|f^I\|_X.$$

 Now consider the systems
$$
\cD^r_1 := \{g^R\, :\, g\in \cD\}\  \ \text{and}\  \  \cD^r_2 := \{g^I\, :\, g\in \cD\}.
$$
For $i=1, 2$, we define  $$X^r_i=\overline{\sp (\cD_i^r)}^{ X_{\mathbb{R}}}$$
 to be the real Banach subspace of $(X_{\mathbb{R}},\|\cdot\|_X)$ spanned by $\cD_i^r$.
 Clearly, for $i=1,2$,
 \[\eta(X^r_i,w) \le \eta(X,w)\   \ \text{ and }\  \  \sup_{g\in \cD_i^r}\|g\|_X \leq C \sup_{g\in \cD}\|g\|_X\leq C.\]
 Thus, by applying the real-space analogue of Lemma \ref{NLv} to the systems $\cD_i^r$ in the real Banach spaces $X_i^r$,  we obtain
\be\label{Nv1}
\sigma_v(A_1(\cD^r_i),\cD^r_i)_{X_{\mathbb{R}}} \le C_1(q,\gamma) (v+1)^{1/q-1},\quad i=1,2.
\ee
Next, let $f\in X$ be such that
$$
f=\sum_{k=1}^\infty c_kg_k,\quad g_k \in \cD,\quad \sum_{k=1}^\infty |c_k| \le 1.
$$
Writing $c_k = a_k +ib_k$ with $a_k, b_k\in\mathbb{R}$, we obtain
$$
f = \sum_{k=1}^\infty (a_kg^R_k -b_kg_k^I) +i \sum_{k=1}^\infty (a_kg^I_k +b_kg_k^R)=: f^R + i f^I.
$$
Clearly,
$$
|a_k|+|b_k| \le \sqrt{2}(a_k^2+b_k^2)^{1/2} =  \sqrt{2}|c_k|.
$$
Therefore,
$$
2^{-1/2}f^R\in \conv(A_1(\cD^r_1),A_1(\cD^r_2))\  \ \text{and}\  \  2^{-1/2}f^I\in \conv(A_1(\cD^r_1,A_1(\cD^r_2)).
$$
It follows that
$$
\sigma_{4v}(A_1(\cD),\cD)_X \le 2\max_{i=1,2}\sigma_v(A_1(\cD^r_i),\cD^r_i)_{X_R}.
$$
This and (\ref{Nv1}) imply Lemma \ref{NLv}.

\end{proof}

Note that the papers \cite{DGHKT} and \cite{VT209} are devoted to the study of greedy algorithms in complex Banach spaces. In particular, Lemma \ref{NLv} follows from Theorem 3.2 of \cite{VT209}.

\section{Some discretization results}
\label{dis}

\subsection{Sampling discretization in finite-dimensional subspaces}
\label{disd}

There are several survey papers on sampling discretization in finite-dimensional subspaces -- \cite{DPTT}, \cite{KKLT}, and \cite{LMT}. We provide here a detailed discussion of only those results which are used in this paper.
It is well known that results on sampling discretization in the $L_2$-norm imply some results on sampling discretization in the $L_\infty$-norm. We will illustrate this phenomenon on some known examples.
Probably, the first example of this type is the multivariate trigonometric polynomials.
By $Q$ we denote a finite subset of $\bbZ^d$, and $|Q|$ stands for the number of elements in $Q$. Let
$$
\cT(Q):= \left\{f: f=\sum_{\bk\in Q}c_\bk e^{i(\bk,\bx)},\  \  c_{\bk}\in\mathbb{C}\right\}.
$$
The following Theorem \ref{TrD} was proved in \cite{VT158}.

\begin{Theorem}[{\cite[Theorem~1.1]{VT158}}] \label{TrD}There are three positive absolute constants $C_1$, $C_2$, and $C_3$ with the following properties: For any $d\in \bbN$ and any $Q\subset \bbZ^d$   there exists a set of  $m \le C_1|Q| $ points $\xi^j\in \bbT^d$, $j=1,\dots,m$ such that for any $f\in \cT(Q)$,
    we have
    \begin{equation}\label{3-1}
    C_2\|f\|_2^2 \le \frac{1}{m}\sum_{j=1}^m |f(\xi^j)|^2 \le C_3\|f\|_2^2.
    \end{equation}
\end{Theorem}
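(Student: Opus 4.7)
The plan is to apply the Marcus--Spielman--Srivastava (MSS) resolution of Weaver's $\mathrm{KS}_2$ conjecture to a sufficiently fine initial cubature for $\mathcal{T}(Q)$, and then extract a constant-factor spectral approximation supported on $O(|Q|)$ points. Set $N := |Q|$ and work in $L_2(\bbT^d, d\mu)$ with $\mu$ the normalized Lebesgue measure on $\bbT^d$. The functions $\phi_\bk(\bx) := e^{i(\bk,\bx)}$, $\bk\in Q$, form an orthonormal basis of $\mathcal{T}(Q)$. Introduce the vector-valued map $\Phi(\bx) := (\phi_\bk(\bx))_{\bk\in Q}\in\bbC^{N}$ and observe the two isotropy-type identities that drive the whole argument: pointwise $\|\Phi(\bx)\|_{\ell_2}^2 = N$, and $\int_{\bbT^d}\Phi(\bx)\Phi(\bx)^*\,d\mu = I_N$. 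With these in hand, the claimed inequality \eqref{3-1} is equivalent to the operator bound $c\, I_N \preceq \frac{1}{m}\sum_{j=1}^m \Phi(\xi^j)\Phi(\xi^j)^* \preceq C\, I_N$ on $\bbC^N$.

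First I would fix a tensor-product uniform grid $\eta^1,\dots,\eta^{m_0}$ on $\bbT^d$ whose spacing is chosen so that the discrete average reproduces the integral of every product $\phi_\bk\overline{\phi_\bl}$ with $\bk,\bl\in Q$ (e.g.\ the trapezoidal rule on a grid whose sidelength exceeds the diameter of $Q$ in each coordinate). This yields $\frac{1}{m_0}\sum_{j=1}^{m_0}\Phi(\eta^j)\Phi(\eta^j)^* = I_N$ exactly, but with $m_0$ possibly far larger than $N$. Rescale to unit-sum vectors $u_j := m_0^{-1/2}\Phi(\eta^j)$ so that $\sum_j u_ju_j^* = I_N$ and $\|u_j\|_{\ell_2}^2 = N/m_0$. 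By taking $m_0$ larger than a fixed multiple of $N$ one makes $\|u_j\|^2 \le \varepsilon$ uniformly, which is the hypothesis of Weaver's $\mathrm{KS}_2$ theorem.

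Next I would invoke MSS in its partition form: for every sufficiently small $\varepsilon$ there is a two-coloring of $\{1,\dots,m_0\}$ into $J^+\sqcup J^-$ with $\bigl\|\sum_{j\in J^{\pm}}u_ju_j^* - \tfrac12 I_N\bigr\|_{\mathrm{op}}\le c_0\sqrt{\varepsilon}$. Keep the side with smaller cardinality, rescale, and iterate $O(\log(m_0/N))$ times; at each step the spectral constants degrade by a bounded multiplicative factor, and after the last iteration one is left with a subset $\xi = \{\xi^j\}_{j=1}^m \subset \{\eta^1,\dots,\eta^{m_0}\}$ of size $m\asymp N$ for which $\tfrac12 I_N \preceq \tfrac{1}{m}\sum_j \Phi(\xi^j)\Phi(\xi^j)^* \preceq \tfrac{3}{2} I_N$ (the precise constants $\tfrac12,\tfrac32$ are chosen to match \eqref{3-1}). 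Translating back to polynomials gives the desired inequality for every $f\in\mathcal{T}(Q)$.

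The main obstacle is the passage from \emph{weighted} spectral sparsification (what MSS most cleanly produces) to the \emph{equal-weight} $1/m$ averages appearing in \eqref{3-1}: after each MSS partition one must argue that the retained vectors, being essentially a uniform slice of the original uniform cubature, continue to carry roughly equal mass, so that rebalancing only costs a universal factor absorbed into $C_2, C_3$. The second, related, technical point is uniformity of $C_1, C_2, C_3$ in $d$ and in the shape of $Q$. This is where the argument genuinely gains: Weaver's theorem is dimension-free and the initial cubature plays only the role of a generic isotropic frame, so the constants depend on neither $d$ nor the geometry of $Q$ beyond $|Q|=N$.
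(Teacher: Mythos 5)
The paper itself does not prove this statement; it is quoted from \cite{VT158}, where the proof is obtained exactly along the route you propose: one reduces to a frame-type operator inequality for the vectors $\Phi(\bx)=(e^{i(\bk,\bx)})_{\bk\in Q}$, starts from an exact cubature on a fine uniform grid, and extracts a subset of $\asymp|Q|$ nodes by iterating the Marcus--Spielman--Srivastava (Weaver $\mathrm{KS}_2$) partition theorem, following the scheme of Nitzan--Olevskii--Ulanovskii. So in spirit your plan coincides with the actual proof of the cited result, and your reduction of \eqref{3-1} to $cI_N\preceq \frac1m\sum_j\Phi(\xi^j)\Phi(\xi^j)^*\preceq CI_N$, as well as the choice of the initial grid (spacing exceeding the diameter of $Q$ in each coordinate gives exact isotropy), are correct.

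There are, however, two places where your write-up has a genuine gap rather than a routine omission. First, the accounting of constants over the $O(\log(m_0/N))$ halving steps: if each step really degraded the spectral bounds by a fixed multiplicative factor, the final constants would grow like that factor to the power $\log(m_0/N)$ and would not be absolute (they would depend on $Q$ and $d$ through $m_0$). What saves the argument is that the error at the $k$-th step is $1\pm c\sqrt{\varepsilon_k}$ with $\varepsilon_k\asymp 2^k\varepsilon$ doubling at each step up to a fixed small threshold, so the accumulated product $\prod_k(1\pm c\sqrt{\varepsilon_k})$ is controlled by a convergent geometric series and stays within absolute bounds; this summability is the heart of the iteration and must be stated. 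Related to this, after the first halving the retained family is no longer exactly isotropic, so Weaver's theorem cannot be reapplied verbatim; one needs either an approximate-isotropy version or the standard renormalization $u_j\mapsto T^{-1/2}u_j$ with $T$ the current frame operator (which keeps the norm bound up to the lower frame constant). Second, the obstacle you single out as the main one --- passing from weighted to equal-weight averages --- is in fact not an obstacle here: since $|e^{i(\bk,\bx)}|=1$, every vector has the identical norm $\|\Phi(\bx)\|_{\ell_2}^2=N$, so no weights ever appear. What you do need, and do not supply, is the trace argument: the trace of the retained half equals $|S|\cdot N/m_0$, and the two-sided operator bounds force $|S|$ to be within absolute factors of half the current cardinality; this is what guarantees both $m\asymp N$ and that the uniform weight $1/m$ matches the accumulated rescaling, yielding \eqref{3-1} with absolute $C_1,C_2,C_3$.
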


In \cite{DPTT} it was shown how Theorem \ref{TrD} implies a result on sampling discretization of the $L_\infty$-norm. Namely, the following Theorem \ref{TrDi} was proved.

\begin{Theorem}[\cite{DPTT}] \label{TrDi} Let $C_1$, $C_2$, and $C_3$ be the  three positive absolute constants from Theorem \ref{TrD}. Then for any $d\in \bbN$ and any $Q\subset \bbZ^d$   there exists a set $\xi$ of  $m \le C_1|Q| $ points $\xi^j\in \bbT^d$, $j=1,\dots,m$, such that for any $f\in \cT(Q)$, \eqref{3-1} holds, and
$$
\|f\|_\infty \le C_2^{-1/2}|Q|^{1/2}\left(\frac{1}{m}\sum_{j=1}^m |f(\xi^j)|^2\right)^{1/2}\le  C_2^{-1/2}|Q|^{1/2}\max_{1\le j\le m}|f(\xi^j)|.
$$
\end{Theorem}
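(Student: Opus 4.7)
The plan is to reduce the $L_\infty$ discretization statement directly to the $L_2$ discretization of Theorem \ref{TrD} by invoking the classical Nikol'skii inequality for trigonometric polynomials supported on $Q$. First, I would take the set $\xi = \{\xi^j\}_{j=1}^m$ with $m \le C_1|Q|$ guaranteed by Theorem \ref{TrD}, so that \eqref{3-1} holds for every $f\in \cT(Q)$. The set is already fixed; nothing additional needs to be constructed.

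Next, for an arbitrary $f=\sum_{\bk\in Q} c_\bk e^{i(\bk,\bx)} \in \cT(Q)$, I would establish the Nikol'skii-type bound
$$
\|f\|_\infty \le |Q|^{1/2}\|f\|_2.
$$
This is a one-line application of Cauchy--Schwarz combined with Parseval's identity: for every $\bx\in\bbT^d$,
$$
|f(\bx)| \le \sum_{\bk\in Q}|c_\bk| \le |Q|^{1/2}\left(\sum_{\bk\in Q}|c_\bk|^2\right)^{1/2} = |Q|^{1/2}\|f\|_2,
$$
so that $\cT(Q)\in \NI(2,\infty,|Q|^{1/2})$ in the notation introduced in Section \ref{def}.

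Now I would chain these two ingredients. The lower half of \eqref{3-1} gives
$$
\|f\|_2 \le C_2^{-1/2}\left(\frac{1}{m}\sum_{j=1}^m |f(\xi^j)|^2\right)^{1/2},
$$
which, multiplied by $|Q|^{1/2}$ and combined with the Nikol'skii inequality above, yields the first claimed inequality
$$
\|f\|_\infty \le C_2^{-1/2}|Q|^{1/2}\left(\frac{1}{m}\sum_{j=1}^m |f(\xi^j)|^2\right)^{1/2}.
$$
The second inequality is immediate from the trivial bound of an average by a maximum: $\frac{1}{m}\sum_{j=1}^m |f(\xi^j)|^2 \le \max_{1\le j\le m}|f(\xi^j)|^2$.

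There is no serious obstacle here; the content of the theorem lies entirely in Theorem \ref{TrD}, and the present statement simply packages that $L_2$ discretization with the Nikol'skii inequality $\|f\|_\infty\le|Q|^{1/2}\|f\|_2$. If anything requires care, it is only matching the normalization conventions so that Parseval's identity gives $\|f\|_2^2 = \sum_{\bk\in Q}|c_\bk|^2$ with the measure $\mu$ on $\bbT^d$ used throughout the paper; once that is fixed, the proof is the two-step chain above.
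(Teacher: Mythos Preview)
Your proposal is correct and follows essentially the same approach as the paper: take the point set from Theorem \ref{TrD}, apply the Nikol'skii inequality $\|f\|_\infty \le |Q|^{1/2}\|f\|_2$, use the lower bound in \eqref{3-1}, and finish with the trivial estimate of an $\ell_2$ average by the maximum. The paper's proof is literally this one-line chain, and your added justification of the Nikol'skii inequality via Cauchy--Schwarz and Parseval is fine.
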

\begin{proof} For the reader's convenience we present the one line proof from \cite{DPTT} here. We use the set of points provided by Theorem \ref{TrD}. Then $m\le C_1|Q|$ and for any $f\in\cT(Q)$ we have
\begin{align*}
\|f\|_\infty & \le |Q|^{1/2}\|f\|_2 \le |Q|^{1/2} C_2^{-1/2} \left(\frac{1}{m}\sum_{j=1}^m |f(\xi^j)|^2\right)^{1/2} \\ & \le |Q|^{1/2} C_2^{-1/2} \max_{1\le j\le m}|f(\xi^j)|.
\end{align*}
\end{proof}

We point out that in the above proof in addition to the sampling discretization result --Theorem \ref{TrD} -- the Nikol'skii inequality $\|f\|_\infty \le  |Q|^{1/2}\|f\|_2$ has been used.
The following  sampling discretization result shows that the Nikol'skii inequality assumption guarantees good discretization inequalities for general subspaces.

\begin{Theorem}[{\cite{LimT}}]\label{LimTT} Let  $\Omega\subset \bbR^d$ be a   nonempty set equipped with  a probability measure $\mu$. Assume that $X_N \in NI(2,\infty, KN^{1/2})$ for some constant $K\ge 1$.
Then there exist absolute  constants $C_1, C_2, C_3>0$  and  a set $\{\xi^j\}_{j=1}^m\subset \Omega$ of $m \le C_1 K^2 N$ points such that for any $f\in X_N$,  we have
\begin{equation*}
C_2 \|f\|_2^2 \le \frac{1}{m}\sum_{j=1}^m |f(\xi^j)|^2 \le C_3 K^2\|f\|_2^2.
\end{equation*}
\end{Theorem}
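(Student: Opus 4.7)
The plan is to combine a random sampling step with a deterministic thinning step. The Nikol'skii hypothesis $\|f\|_\infty \le KN^{1/2}\|f\|_2$ is exactly the input needed to bound the operator norms of the point-evaluation rank-one operators on $X_N$, which in turn drives the probabilistic argument.

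Fix an $L_2(\Omega,\mu)$-orthonormal basis $\{u_i\}_{i=1}^N$ of $X_N$ and, for $\xi\in\Omega$, set $\bk(\xi):=(u_1(\xi),\dots,u_N(\xi))^T\in\mathbb{C}^N$. Since $\sum_i |u_i(\xi)|^2$ equals the squared norm of the evaluation functional $f\mapsto f(\xi)$ on $X_N$, the Nikol'skii assumption gives $\|\bk(\xi)\|_2^2 \le K^2 N$ for all $\xi\in\Omega$, while orthonormality gives $\mathbb{E}_{\xi\sim\mu}\bk(\xi)\bk(\xi)^* = I_N$. Draw $M\asymp K^2 N \log N$ points $\xi^1,\dots,\xi^M$ i.i.d.\ from $\mu$ and apply Matrix Chernoff/Bernstein to the sum of rank-one operators $\frac{1}{M}\bk(\xi^j)\bk(\xi^j)^*$, each of spectral norm at most $K^2N/M$. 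With positive probability one obtains
$$
\tfrac{1}{2} I_N \preceq \tfrac{1}{M}\sum_{j=1}^M \bk(\xi^j)\bk(\xi^j)^* \preceq \tfrac{3}{2} I_N,
$$
equivalently, two-sided $L_2$-discretization with absolute constants for all $f\in X_N$, using $M$ points.

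To remove the logarithmic factor and reach $m\le C_1 K^2 N$, invoke the Marcus--Spielman--Srivastava resolution of Weaver's $\mathrm{KS}_2$ conjecture (or the related thinning result of Nitzan--Olevskii--Ulanovskii). Given the $M$ rank-one operators with small spectral norms $K^2N/M = O(1/\log N)$ whose sum is close to $I_N$, this machinery extracts a subset $J\subset\{1,\dots,M\}$ with $|J|=m\le C_1 K^2 N$ such that $\sum_{j\in J}\bk(\xi^j)\bk(\xi^j)^*$ still admits two-sided spectral control. Renormalizing by $1/m$ preserves the lower bound with an absolute constant $C_2$; the upper bound picks up the oversampling ratio $M/m \asymp \log N$ tempered by the column-norm bound $K^2 N/M$, which accounts for the factor $C_3 K^2$ on the right.

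The main obstacle is the thinning step. One must (i) verify that the column-norm bound $K^2 N/M$ is small enough for the MSS/Weaver machinery to apply, (ii) iterate binary partitions carefully so that the lower spectral bound does not degrade below an absolute constant while cutting $M$ down to $m\asymp K^2 N$, and (iii) ensure that the extracted set is an honest unweighted point set rather than a re-weighted sub-frame, since the target inequality has the uniform normalization $1/m$. In the trigonometric case $K=1$, this recovers the sharp bound $m\le C_1 N$ from Theorem \ref{TrD}.
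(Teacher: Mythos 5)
Your overall architecture — i.i.d. sampling with a matrix Chernoff bound to get $M\asymp K^2N\log N$ points with two-sided spectral control of $\frac1M\sum_j \bk(\xi^j)\bk(\xi^j)^*$, followed by subsampling based on the Marcus--Spielman--Srivastava solution of Weaver's conjecture (in the form used by Nitzan--Olevskii--Ulanovskii) — is the right one. The survey states Theorem \ref{LimTT} without proof, citing \cite{LimT}; the original argument there, like that of the trigonometric Theorem \ref{TrD} from \cite{VT158}, is indeed of this ``oversample, then sparsify via \cite{MSS}'' type, and your first step is complete and correct: $\|\bk(\xi)\|_2^2=\sum_i|u_i(\xi)|^2$ is the squared norm of the evaluation functional on $X_N$, hence at most $K^2N$ by the Nikol'skii hypothesis, and $\mathbb{E}\,\bk\bk^*=I$ by orthonormality.

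The genuine gap is that the thinning step, which you yourself defer as ``the main obstacle,'' is where the entire content of the theorem (the bound $m\le C_1K^2N$, the absolute $C_2$, and the factor $K^2$ in $C_3$) actually lives, and the accounting you offer for it does not hold up. Concretely: (a) your claim that the upper constant comes from ``the oversampling ratio $M/m\asymp\log N$ tempered by the column-norm bound $K^2N/M$'' is arithmetic that yields $O(1)$, not $K^2$; moreover $M/m\asymp\log N$ is unjustified, because in the iterated MSS halving the retained part need not contain half of the points, only at most half. (b) The true bookkeeping is as follows: one iterates the halving, conjugating at step $k$ by the current frame operator, so the relative column norm is $\varepsilon_k=\delta/A_k$ with $\delta=K^2N/M$ and $A_k$ the current lower frame bound; one must show that the cumulative multiplicative losses $\prod_k\bigl(\tfrac12\mp O(\sqrt{\varepsilon_k})\bigr)$ stay comparable to $2^{-k}$ with absolute constants (this works because the $\varepsilon_k$ grow geometrically, so the error series is dominated by its last term, but it has to be proved), and one stops when $\varepsilon_k$ reaches a fixed small threshold, which is exactly what forces $m\le C_1K^2N$. (c) The lower bound on the cardinality of the retained set $J$ — needed to control the $1/m$ normalization in the upper discretization inequality — comes only from the trace argument: $\sum_{j\in J}\|v_j\|^2\ge \operatorname{tr}\bigl(\sum_{j\in J}v_jv_j^*\bigr)\ge NA$ gives $|J|\ge AM/K^2$, and dividing the upper spectral bound by $|J|$ then costs precisely the factor $K^2$ appearing in $C_3K^2$, while the lower inequality keeps an absolute constant since $M/|J|\ge 2^{k}$. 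None of this is carried out in your proposal, so as written it is a correct plan with the decisive step — and the explanation of every constant in the statement — still missing.
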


One can prove discretization results without the Nikol'skii inequality assumption. However, known results in this direction provide discretization with weights.

\begin{Theorem}[{\cite{LimT}}]\label{LimTTw}
Given an arbitrary  complex  linear  subspace $X_N$ of $L_2(\Omega,\mu)$ of dimension $N$,  there exist     a set of $m\leq   C_1'N$ points $\xi^1,\ldots, \xi^m\in\Omega$, and a set of
    nonnegative  weights $w_1,\cdots, w_m\ge 0$  such that
    $$
    c_0'\|f\|_2^2\leq  \sum_{j=1}^m w_j |f(\xi^j)|^2 \leq  C_0'
    \|f\|_2^2,\quad   \forall f\in X_N,
    $$
    where $C_1'$, $c_0'$, $C_0'$ are absolute positive constants.
\end{Theorem}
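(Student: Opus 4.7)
\emph{Step 1: Reduction to an isotropic, uniformly bounded rank-one decomposition.} Fix an $L_2(\Omega,\mu)$-orthonormal basis $\{u_1,\dots,u_N\}$ of $X_N$ and define $v(x):=(u_1(x),\dots,u_N(x))^T\in\bbC^N$. Orthonormality gives $\int_\Omega v(x)v(x)^*\,d\mu(x)=I_N$, so the Christoffel function $K(x):=\|v(x)\|_2^2=\sum_{i=1}^N|u_i(x)|^2$ satisfies $\int_\Omega K\,d\mu=N$. Because $K(x)=0$ forces every $f\in X_N$ to vanish at $x$, I restrict to $S:=\{K>0\}$ and put $d\nu:=(K/N)\,d\mu|_S$, a probability measure. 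The rescaled vectors $\tilde v(x):=\sqrt{N/K(x)}\,v(x)$, $x\in S$, are \emph{uniformly bounded} and still \emph{isotropic}:
$$
\|\tilde v(x)\|_2^2\equiv N,\qquad \int_S \tilde v(x)\tilde v(x)^*\,d\nu(x)=I_N.
$$

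\emph{Step 2: Sparsification.} I would now invoke a continuous Batson--Spielman--Srivastava-type sparsification for this isotropic decomposition: given $I_N=\int \tilde v\tilde v^*\,d\nu$ with $\|\tilde v\|_2^2\le N$ a.e., there are absolute constants $C_1',c_0',C_0'>0$, an integer $m\le C_1'N$, points $\xi^1,\dots,\xi^m\in S$ and nonnegative weights $\tilde w_1,\dots,\tilde w_m\ge 0$ such that, in the semidefinite order,
$$
c_0'\,I_N\ \le\ \sum_{j=1}^m \tilde w_j\,\tilde v(\xi^j)\tilde v(\xi^j)^*\ \le\ C_0'\,I_N.
$$
Since $\tilde v(\xi^j)\tilde v(\xi^j)^*=(N/K(\xi^j))\,v(\xi^j)v(\xi^j)^*$, setting $w_j:=\tilde w_j\cdot N/K(\xi^j)\ge 0$ converts this back to the original basis:
$$
c_0'\,I_N\ \le\ \sum_{j=1}^m w_j\,v(\xi^j)v(\xi^j)^*\ \le\ C_0'\,I_N.
$$

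\emph{Step 3: Translation to the weighted $L_2$-discretization.} For any $f=\sum_i c_i u_i\in X_N$ we have $\|f\|_2^2=\|c\|_{\ell_2}^2$ and $|f(\xi^j)|^2=c^*v(\xi^j)v(\xi^j)^* c$, so sandwiching $c$ between the operator inequalities of Step~2 gives
$$
c_0'\,\|f\|_2^2\ \le\ \sum_{j=1}^m w_j\,|f(\xi^j)|^2\ \le\ C_0'\,\|f\|_2^2,
$$
which is precisely the claimed weighted discretization on $m\le C_1'N$ points with absolute constants.

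\emph{Main obstacle.} Steps~1 and~3 are bookkeeping; the real work is the continuous BSS-type sparsification in Step~2. The original Batson--Spielman--Srivastava theorem is stated for finite sums of rank-one matrices, so either one passes to a suitably fine atomic approximation of $\nu$ and then extracts a subsequential limit of the finite sparsifiers (using that BSS's constants depend only on the ratio $m/N$ and the uniform norm bound, not on the number of atoms), or one cites an already-established infinite-dimensional refinement. The Christoffel rescaling in Step~1 is exactly what makes this possible with absolute constants, because $\|\tilde v(x)\|_2^2\equiv N$ pins down the only parameter that enters the BSS bound, independently of the particular $X_N$, $\Omega$, and $\mu$.
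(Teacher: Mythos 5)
Your overall route---Christoffel-function normalization to put the decomposition in isotropic position, a BSS-type sparsification, and the bookkeeping back to the weighted $L_2$-discretization---is exactly the route behind the cited result: the paper gives no proof of this theorem, it quotes \cite{LimT} and remarks that it is derived from the deep results of \cite{BSS} (see also Section 6 of \cite{DPSTT2}). Your Steps 1 and 3 are correct as written.

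The genuine gap is Step 2, which is the entire content of the theorem, and the fallback you sketch does not work as stated. The weighted Batson--Spielman--Srivastava theorem concerns a \emph{finite} family of rank-one matrices, so one must first replace the continuous decomposition $I_N=\int \tilde v\,\tilde v^{*}\,d\nu$ by a finite one built from point evaluations of functions in $X_N$. Your proposal---approximate $\nu$ by an atomic measure, sparsify each approximation, and pass to a subsequential limit of the point configurations---fails for an arbitrary subspace of $L_2(\Omega,\mu)$: its elements need not be continuous (point values are defined only up to $\mu$-null sets), so $f(\xi^{j,k})\to f(\xi^{j})$ cannot be guaranteed and the two-sided frame inequalities need not survive the limit. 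Moreover, the role you assign to the Christoffel rescaling is off: in the weighted BSS theorem the constants depend only on the oversampling ratio $m/N$, not on $\|\tilde v(x)\|$, so the normalization does not by itself bridge the continuous-to-finite gap (its real use is in the preliminary step below). The standard repair, and the one underlying \cite{LimT} and \cite{DPSTT2}, is to first produce a finite --- possibly very large --- weighted discretization that is exact or near-exact on $X_N$: for instance, an exact cubature $\int_\Omega |f|^{2}\,d\mu=\sum_{j}\lambda_j|f(\xi^{j})|^{2}$ for all $f\in X_N$, obtained by a Carath\'eodory/Tchakaloff-type argument applied to the finite-dimensional space spanned by the products $u_i\overline{u_{i'}}$ (dimension at most $N^{2}$), or a near-exact one obtained by random sampling from the Christoffel-modified measure $d\nu=(K/N)\,d\mu$ with roughly $N\log N$ points via a matrix concentration inequality. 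Only after this finite reduction does one apply the finite weighted BSS theorem to prune the node set down to $m\le C_1'N$ points with absolute constants; with that step inserted, your Steps 1 and 3 complete the proof.
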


Here is a more precise version of Theorem \ref{LimTTw} with the constant $C_1'$ being close to one.

\begin{Theorem}[{\cite{DPSTT2}}]\label{disdT1}If $X_N$ is a real $N$-dimensional subspace of $L_2(\Omega,\mu)$, then for any $b\in (1,2]$, there exist a set of $m\leq bN$ points $\xi^1,\ldots, \xi^m\in\Omega$ and a set of nonnegative  weights $w_j$, $j=1,\ldots, m$  such that
\be\label{disd1}
\|f\|_2\leq \left( \sum_{j=1}^m w_j |f(\xi^j)|^2 \right)^{1/2} \leq  \frac{C}{b-1}  \|f\|_2,\  \qquad \forall f\in X_N,
\ee
where $C>1$ is an absolute constant.
\end{Theorem}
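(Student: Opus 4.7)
The plan is to recast (\ref{disd1}) as a spectral approximation of the identity on $\bbR^N$ and invoke a Batson--Spielman--Srivastava (BSS) sparsification. Pick an $L_2(\Omega,\mu)$-orthonormal basis $\{u_1,\dots,u_N\}$ of the real subspace $X_N$ and set $\bu(\bx):=(u_1(\bx),\dots,u_N(\bx))^T\in\bbR^N$. For $f=\sum_i c_i u_i$ with coefficient vector $\bc\in\bbR^N$, orthonormality gives $\|f\|_2^2=\|\bc\|_2^2$ and $\sum_j w_j|f(\xi^j)|^2=\bc^T A\bc$ where $A:=\sum_j w_j\,\bu(\xi^j)\bu(\xi^j)^T$, so (\ref{disd1}) is equivalent to the L\"owner sandwich
$$
I_N\;\preceq\;A\;\preceq\;\left(\frac{C}{b-1}\right)^2 I_N .
$$
Orthonormality also yields the continuous resolution $\int_\Omega \bu(\bx)\bu(\bx)^T\,d\mu(\bx)=I_N$.

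I would produce such an $A$ by a two-step procedure. \emph{Step A (pre-discretization).} Apply Theorem \ref{LimTTw} to obtain $n\le C_1'N$ points $\bx_i\in\Omega$ and nonnegative weights $\tau_i$ so that $M:=\sum_i\tau_i\,\bu(\bx_i)\bu(\bx_i)^T$ has spectrum in the absolute interval $[c_0',C_0']$. Whitening via $\tilde v_i:=M^{-1/2}\sqrt{\tau_i}\,\bu(\bx_i)$ gives $\sum_i \tilde v_i\tilde v_i^T=I_N$ exactly. \emph{Step B (BSS thinning).} Invoke BSS in its finite form: for any $b\in(1,2]$ there exist nonnegative scalars $s_i$, at most $\lceil bN\rceil$ of them nonzero, with
$$
I_N\;\preceq\;\sum_i s_i\,\tilde v_i\tilde v_i^T\;\preceq\;\left(\frac{\sqrt{b}+1}{\sqrt{b}-1}\right)^2 I_N .
$$
Conjugating back by $M^{1/2}$ yields $M\preceq\sum_i s_i\tau_i\,\bu(\bx_i)\bu(\bx_i)^T\preceq\kappa^2 M$ with $\kappa:=(\sqrt{b}+1)/(\sqrt{b}-1)$; setting $w_j:=(1/c_0')\,s_j\tau_j$ on the $\le bN$ retained indices gives $I_N\preceq A\preceq(C_0'/c_0')\kappa^2\,I_N$. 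Since $(\sqrt{b}-1)^{-1}\asymp (b-1)^{-1}$ as $b\to 1^+$, the spectral ratio is $O((b-1)^{-2})$, which is $(C/(b-1))^2$ for an absolute constant $C$.

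Testing the PSD sandwich against the coefficient vector $\bc$ of an arbitrary $f\in X_N$ and taking square roots immediately delivers (\ref{disd1}). The main obstacle is precisely the passage from the continuous identity $\int\bu\bu^T d\mu=I_N$ to a finite vector family on which BSS operates, while simultaneously preserving (i) the sharp point count $\le bN$, (ii) nonnegativity of weights through whitening and unwhitening, and (iii) the correct $(b-1)^{-1}$ dependence in the upper frame bound. The two-step route above handles these cleanly because Step A fixes the condition number at an absolute constant before BSS is applied, so the $\kappa^2$ blow-up coming from Step B is the \emph{only} $b$-dependent factor. A more direct but technically heavier alternative would invoke a measure-theoretic BSS via Marcus--Spielman--Srivastava interlacing families, bypassing the initial discretization entirely.
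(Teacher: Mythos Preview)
Your proposal is correct and aligned with what the survey indicates: the paper does not supply its own proof of this statement but simply cites \cite{DPSTT2} and remarks that both Theorem~\ref{LimTTw} and Theorem~\ref{disdT1} ``were derived from deep results established in \cite{BSS}.'' Your reduction to a L\"owner sandwich $I_N\preceq\sum_j w_j\,\bu(\xi^j)\bu(\xi^j)^T\preceq (C/(b-1))^2 I_N$ via an orthonormal basis, followed by BSS sparsification, is precisely that mechanism, and your tracking of the $(b-1)^{-1}$ dependence through $\kappa=(\sqrt b+1)/(\sqrt b-1)=(\sqrt b+1)^2/(b-1)$ is accurate.

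One stylistic remark on Step~A. Invoking Theorem~\ref{LimTTw} to manufacture a finite isotropic family is logically sound but a little circuitous, since Theorem~\ref{LimTTw} is itself a BSS corollary. A more self-contained alternative is to note that the identity $I_N=\int_\Omega \bu(\bx)\bu(\bx)^T\,d\mu(\bx)$ places $I_N$ in the closed conical hull of $\{\bu(\bx)\bu(\bx)^T:\bx\in\Omega\}$ inside the $N(N{+}1)/2$-dimensional space of real symmetric matrices; Carath\'eodory for cones then yields finitely many points $\bx_i$ and nonnegative weights $\tau_i$ with $\sum_i\tau_i\,\bu(\bx_i)\bu(\bx_i)^T=I_N$ \emph{exactly}, eliminating the whitening step before BSS. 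Either route delivers the same bound with the same $b$-dependence; your version has the advantage of staying entirely within results already quoted in the survey.
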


Theorems \ref{LimTTw} and \ref{disdT1} were derived from deep results established  in
\cite{BSS}. We refer the reader to \cite[Section 6]{DPSTT2} for a detailed discussion of related results.

  In \cite{KKT} it was shown how Theorem \ref{LimTTw} implies a result on sampling discretization of the $L_\infty$-norm. Namely, the following Theorem~\ref{BP1} was proved.

\begin{Theorem}[\cite{KKT}]\label{BP1} There are two absolute constants $c_1$ and $c_2$ such that for any
$X_N \in NI(2,\infty, M)$  there exists a set of $m\leq c_1N$ points
$\xi^1,\ldots, \xi^m\in\Omega$ with the property: For any $f\in X_N$ we have
\be\label{A-AT2}
\|f\|_\infty \le c_2 M \max_{1\le j\le m} |f(\xi^j)|.
\ee
\end{Theorem}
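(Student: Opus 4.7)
The plan is to mimic the argument that derived Theorem~\ref{TrDi} from Theorem~\ref{TrD}, but with two twists: since the target bound on the number of sample points is $m\lesssim N$ (and not $m\lesssim M^2N$ as Theorem~\ref{LimTT} would give), the appropriate discretization tool is the weighted result Theorem~\ref{LimTTw}; and in order to convert the resulting weighted $L_2$ bound into a supremum bound, I need an a priori upper bound on $\sum_j w_j$, which the theorem does not directly furnish.

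To secure such a bound I would augment the subspace by adjoining the constant function, setting $Y := X_N + \operatorname{span}\{\mathbf{1}\}$, so $\dim Y\le N+1$. Applying Theorem~\ref{LimTTw} to $Y$ produces $m\le C_1'(N+1)\le 2C_1' N$ points $\xi^1,\dots,\xi^m\in\Omega$ and nonnegative weights $w_1,\dots,w_m$ such that
$$
c_0'\|g\|_2^2 \le \sum_{j=1}^m w_j|g(\xi^j)|^2 \le C_0'\|g\|_2^2,\qquad \forall g\in Y.
$$
Since $\mathbf{1}\in Y$ and $\|\mathbf{1}\|_2^2=\mu(\Omega)=1$, substituting $g\equiv 1$ forces the crucial bound $\sum_{j=1}^m w_j\le C_0'$.

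For any $f\in X_N\subset Y$ I would then chain together the Nikol'skii hypothesis, the lower discretization inequality, and the just-obtained weight bound:
$$
\|f\|_\infty^2 \le M^2\|f\|_2^2 \le \frac{M^2}{c_0'}\sum_{j=1}^m w_j|f(\xi^j)|^2 \le \frac{M^2}{c_0'}\Bigl(\sum_{j=1}^m w_j\Bigr)\max_{1\le j\le m}|f(\xi^j)|^2 \le \frac{C_0' M^2}{c_0'}\max_{1\le j\le m}|f(\xi^j)|^2.
$$
Taking square roots yields the claimed inequality with $c_1:=2C_1'$ and $c_2:=\sqrt{C_0'/c_0'}$, both absolute.

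The main conceptual step is the augmentation trick: without it, the weighted discretization supplies only a bound of the form $\|f\|_\infty\le cM(\sum_j w_j)^{1/2}\max_j|f(\xi^j)|$, and there is no obvious way to control $\sum_j w_j$ from the hypotheses on $X_N$ alone (for instance, the Nikol'skii inequality provides only $K(\xi^j,\xi^j)\le M^2$, which gives a lower, not upper, bound on the sum of weights). Adjoining $\mathbf{1}$ pins the total discrete mass to $\mu(\Omega)=1$ up to absolute constants, after which the rest is a one-line Cauchy-style estimate.
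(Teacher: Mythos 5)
Your proof is correct and follows the route the survey itself indicates: Theorem \ref{BP1} is obtained from the weighted $L_2$ discretization of Theorem \ref{LimTTw} combined with the Nikol'skii assumption, exactly as Theorem \ref{TrDi} is derived from Theorem \ref{TrD} (the paper only cites \cite{KKT} for the details). Your augmentation trick --- applying Theorem \ref{LimTTw} to $X_N+\operatorname{span}\{\mathbf{1}\}$ so that testing the upper discretization inequality on $g\equiv 1$ yields $\sum_j w_j\le C_0'$ --- correctly supplies the one step the survey leaves implicit, namely the control of the total weight needed to pass from the weighted $\ell_2$ bound to the $\max$ bound, and all constants remain absolute.
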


Note that in the special case of  $M=KN^{1/2}$, the proof of Theorem \ref{BP1} from \cite{KKT} shows that Theorem \ref{LimTT} implies the following result on simultaneous  sampling discretization of the $L_\infty$ and $L_2$ norms.

\begin{Theorem}\label{BP1a} Let  $\Omega\subset \bbR^d$ be a   nonempty set equipped  with  a probability measure $\mu$. Assume that $X_N \in NI(2,\infty, KN^{1/2})$ for some constant $K\ge 1$. Let $C_1$, $C_2$, $C_3$ denote the three positive absolute  constants from Theorem \ref{LimTT}.
Then  there exists a set $\{\xi^j\}_{j=1}^m\subset \Omega$ of $m \le C_1 K^2 N$ points such that for any $f\in X_N$  we have
\be\label{conddisc}
\|f\|_\infty \le C_2^{-1/2}KN^{1/2}\left(\frac{1}{m}\sum_{j=1}^m |f(\xi^j)|^2\right)^{1/2} \le C_2^{-1/2}KN^{1/2} \max_{1\le j\le m} |f(\xi^j)|,
\ee
and
\be\label{LTD1}
C_2 \|f\|_{L_2(\Omega,\mu)}^2 \le \frac{1}{m}\sum_{j=1}^m |f(\xi^j)|^2 \le C_3 K^2\|f\|_{L_2(\Omega,\mu)}^2.
\ee
\end{Theorem}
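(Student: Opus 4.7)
The plan is to proceed exactly in the spirit of the proof of Theorem \ref{TrDi}, but using Theorem \ref{LimTT} in place of Theorem \ref{TrD} as the ``engine'' that produces the discretization points. So the first step is simply to invoke Theorem \ref{LimTT} with the given subspace $X_N$, which, under the hypothesis $X_N\in NI(2,\infty,KN^{1/2})$, yields a set $\xi=\{\xi^j\}_{j=1}^m\subset \Omega$ with $m\le C_1 K^2N$ such that the two-sided $L_2$ discretization inequality
\[
C_2 \|f\|_2^2 \le \frac{1}{m}\sum_{j=1}^m |f(\xi^j)|^2 \le C_3 K^2\|f\|_2^2
\]
holds for all $f\in X_N$. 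This is already \eqref{LTD1}, so the second claim is immediate.

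For \eqref{conddisc} I would use the Nikol'skii hypothesis and the lower half of the above discretization. Namely, for any $f\in X_N$,
\[
\|f\|_\infty \le KN^{1/2}\|f\|_2 \le KN^{1/2} C_2^{-1/2}\left(\frac{1}{m}\sum_{j=1}^m |f(\xi^j)|^2\right)^{1/2},
\]
where the first inequality is the assumed Nikol'skii inequality for the pair $(2,\infty)$ and the second is the just-established lower $L_2$ discretization. The final bound in \eqref{conddisc} is then obtained by estimating the $\ell_2$-average of the samples by the $\ell_\infty$-maximum,
\[
\left(\frac{1}{m}\sum_{j=1}^m |f(\xi^j)|^2\right)^{1/2}\le \max_{1\le j\le m}|f(\xi^j)|,
\]
which is trivial. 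Chaining these three inequalities gives exactly \eqref{conddisc} with the stated constant $C_2^{-1/2}KN^{1/2}$.

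There is no real obstacle here: the work is already done by Theorem \ref{LimTT}, and the present statement is essentially the observation that a Nikol'skii inequality $\|\cdot\|_\infty\ll N^{1/2}\|\cdot\|_2$ automatically transfers a two-sided $L_2$ sampling discretization into a one-sided $L_\infty$-to-$\ell_\infty$ sampling inequality, at the cost of the Nikol'skii factor. The only thing to be careful about is bookkeeping of the constants: $C_1, C_2, C_3$ in the conclusion must be identified with those supplied by Theorem \ref{LimTT}, and the factor $KN^{1/2}$ in \eqref{conddisc} comes entirely from the Nikol'skii assumption, not from the discretization step. Once this bookkeeping is done, both \eqref{conddisc} and \eqref{LTD1} hold simultaneously on the \emph{same} point set $\xi$, which is the content of the theorem.
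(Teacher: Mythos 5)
Your proposal is correct and follows exactly the paper's intended argument: Theorem \ref{LimTT} supplies the point set with $m\le C_1K^2N$ and the two-sided $L_2$ discretization \eqref{LTD1}, and then the same one-line chain used for Theorem \ref{TrDi} (Nikol'skii inequality, lower discretization bound, and the trivial estimate of the $\ell_2$-average by the maximum) gives \eqref{conddisc} on the same set. No issues.
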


The above Theorems~\ref{BP1} and~\ref{BP1a} are conditional results -- they hold under
the Nikol'skii inequality assumption. Recently, it was understood (see \cite{KPUU2}) that
some of those conditional results can be converted into unconditional ones. This progress is
based on the  result of J. Kiefer and J. Wolfowitz \cite{KW}.

David Krieg pointed out to the second author that there is a known result of J. Kiefer and J. Wolfowitz \cite{KW}
(see also \cite[Section 2]{KPUU2}) on the Nikol'skii type inequalities.

\begin{Theorem}[{\cite{KW}}]\label{KWTh} Let $X_N$ be
a finite-dimensional real subspace of $\cC(\Omega)$. Then
there exists a probability measure $\mu$ on $\Omega$ such that for all
$f\in X_N$ we have
\be\label{KW}
\|f\|_\infty \le \sqrt{N}\|f\|_{L_2(\Omega,\mu)}.
\ee
\end{Theorem}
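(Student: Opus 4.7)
I would fix a basis $\phi_1, \ldots, \phi_N$ of $X_N$ and set $\phi(\bx) := (\phi_1(\bx), \ldots, \phi_N(\bx))^T \in \bbR^N$. For each Borel probability measure $\nu$ on $\Omega$, define the \emph{information matrix}
$$
M(\nu) := \int_\Omega \phi(\bx) \phi(\bx)^T \, d\nu(\bx) \in \bbR^{N \times N}.
$$
Every $f \in X_N$ can be written uniquely as $f = c^T \phi$ with $c \in \bbR^N$, so that $\|f\|_{L_2(\Omega,\nu)}^2 = c^T M(\nu) c$. When $M(\nu)$ is positive definite, the factorization $f(\bx) = (M(\nu)^{1/2} c)^T (M(\nu)^{-1/2} \phi(\bx))$ together with the Cauchy-Schwarz inequality yields
$$
|f(\bx)|^2 \le \|f\|_{L_2(\Omega,\nu)}^2 \cdot d(\bx, \nu), \qquad d(\bx, \nu) := \phi(\bx)^T M(\nu)^{-1} \phi(\bx).
$$
Thus it suffices to exhibit a probability measure $\mu$ on $\Omega$ with $\sup_{\bx \in \Omega} d(\bx, \mu) \le N$.

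The plan is to take $\mu$ to be a \emph{D-optimal design}, i.e., a maximizer of $\nu \mapsto \det M(\nu)$ over all Borel probability measures on $\Omega$. Existence comes from compactness: since $\Omega$ is compact, its set of Borel probability measures is weak-$*$ compact, and each entry of $M(\nu)$ is the integral of the continuous function $\phi_i \phi_j$, so $\nu \mapsto \det M(\nu)$ is weak-$*$ continuous. The supremum is strictly positive: the linear independence of $\phi_1, \ldots, \phi_N$ in $\cC(\Omega)$ yields, by a standard interpolation argument, points $\bx^1, \ldots, \bx^N \in \Omega$ at which $\phi(\bx^1), \ldots, \phi(\bx^N)$ are linearly independent, so $\nu_0 := \frac{1}{N} \sum_{j=1}^N \delta_{\bx^j}$ already satisfies $\det M(\nu_0) > 0$. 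Consequently any maximizer $\mu$ has $M(\mu) > 0$, and $d(\cdot, \mu)$ is well-defined on all of $\Omega$.

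Finally, I would carry out a first-order variational analysis at $\mu$. For fixed $\bx_0 \in \Omega$ and $t \in [0,1]$, set $\mu_t := (1-t)\mu + t \delta_{\bx_0}$, so that $M(\mu_t) = M(\mu) + t\bigl(\phi(\bx_0) \phi(\bx_0)^T - M(\mu)\bigr)$ remains positive definite for small $t \ge 0$. Differentiating $\log \det M(\mu_t)$ at $t = 0^+$ via Jacobi's formula, and invoking optimality of $\mu$, gives
$$
0 \ge \left.\frac{d}{dt}\right|_{t=0^+} \log \det M(\mu_t) = \text{tr}\bigl(M(\mu)^{-1}(\phi(\bx_0) \phi(\bx_0)^T - M(\mu))\bigr) = d(\bx_0, \mu) - N.
$$
Hence $d(\bx_0, \mu) \le N$ for every $\bx_0 \in \Omega$, and combining this with the Cauchy-Schwarz bound above proves $\|f\|_\infty \le \sqrt{N} \|f\|_{L_2(\Omega, \mu)}$ for all $f \in X_N$. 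The main difficulty will be justifying invertibility of $M(\mu)$ so that the logarithmic derivative is legitimate; this is precisely what the auxiliary measure $\nu_0$ delivers, by forcing the D-optimal $\mu$ to lie in the nondegenerate stratum where Jacobi's formula applies.
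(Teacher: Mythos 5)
Your proposal is correct and complete: it is precisely the classical Kiefer--Wolfowitz argument (existence of a D-optimal design by weak-$*$ compactness, nondegeneracy via a discrete measure supported on interpolation points, and the first-order perturbation of $\log\det M(\mu_t)$ toward a Dirac mass giving $\sup_{\bx}\phi(\bx)^T M(\mu)^{-1}\phi(\bx)\le N$, hence the $\sqrt{N}$ Nikol'skii bound). The paper itself does not prove this theorem but imports it from \cite{KW}, and your argument is essentially the proof given in that source, so there is nothing to flag.
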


This and Theorem \ref{BP1} imply the following statement.

\begin{Corollary}[{\cite{KoTe}}]\label{AC2}
For  any real $X_N$, there exists a set of $m\leq C_1N$ points $\xi^1,\ldots, \xi^m\in\Omega$ such that for any $f\in X_N$,
$$
\|f\|_\infty \le C_2\sqrt{N} \max_j |f(\xi^j)|,
$$
where $C_1, C_2>0$ are two absolute constants.
\end{Corollary}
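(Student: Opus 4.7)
The plan is to obtain Corollary \ref{AC2} as an immediate combination of Theorem \ref{KWTh} (Kiefer--Wolfowitz) and Theorem \ref{BP1}. The key observation is that Theorem \ref{BP1} is a \emph{conditional} statement requiring a Nikol'skii inequality $X_N\in NI(2,\infty,M)$ relative to some probability measure $\mu$ on $\Omega$, while Theorem \ref{KWTh} supplies, for free and for any real finite-dimensional $X_N\subset \cC(\Omega)$, a probability measure for which such a Nikol'skii inequality holds with the \emph{optimal} constant $M=\sqrt{N}$.

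First, I would invoke Theorem \ref{KWTh} applied to the given real $N$-dimensional subspace $X_N\subset \cC(\Omega)$. This yields a probability measure $\mu$ on $\Omega$ such that
\[
\|f\|_\infty \le \sqrt{N}\,\|f\|_{L_2(\Omega,\mu)}\qquad \text{for all } f\in X_N,
\]
i.e.\ $X_N\in NI(2,\infty,\sqrt{N})$ with respect to $(\Omega,\mu)$. Next, I would apply Theorem \ref{BP1} to $X_N$ equipped with this measure $\mu$, with the Nikol'skii constant $M=\sqrt{N}$. Theorem \ref{BP1} then produces absolute constants $c_1,c_2>0$ and a set $\{\xi^j\}_{j=1}^m\subset\Omega$ with $m\le c_1N$ such that
\[
\|f\|_\infty \le c_2\sqrt{N}\,\max_{1\le j\le m}|f(\xi^j)|\qquad \text{for all } f\in X_N,
\]
which is precisely the claim with $C_1=c_1$ and $C_2=c_2$.

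There is essentially no obstacle here since both ingredients are available as cited results earlier in the excerpt; the entire content of the corollary is the recognition that the hypothesis ``$X_N\in NI(2,\infty,M)$'' of Theorem \ref{BP1} can always be enforced with $M=\sqrt{N}$ by choosing the ambient measure $\mu$ via Kiefer--Wolfowitz, thereby removing the Nikol'skii assumption and upgrading the conditional discretization statement to an unconditional one. The only thing worth noting is that Theorem \ref{BP1} is applied with respect to the Kiefer--Wolfowitz measure $\mu$ (which depends on $X_N$), rather than an a priori fixed measure on $\Omega$; this is legitimate because the final conclusion concerns only the pointwise values $f(\xi^j)$ and $\|f\|_\infty$, which are independent of the choice of $\mu$.
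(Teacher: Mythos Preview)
Your proposal is correct and matches the paper's own argument exactly: the paper states that Corollary \ref{AC2} follows directly by combining Theorem \ref{KWTh} (Kiefer--Wolfowitz, giving $X_N\in NI(2,\infty,\sqrt{N})$ for a suitable measure $\mu$) with Theorem \ref{BP1}. Your remark that the conclusion is measure-independent, so using the Kiefer--Wolfowitz measure is legitimate, is also apt.
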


Thus, in the case of real subspaces,  Corollary \ref{AC2} is a direct corollary of known results
(Theorems \ref{BP1} and  \ref{KWTh}). A standard simple argument derives from (\ref{KW}) a similar
inequality in the complex case with $\sqrt{N}$ replaced by $\sqrt{2N}$, which makes
Corollary \ref{AC2}  hold in the complex case as well. The authors of \cite{KPUU2}
proved (\ref{KW}) in the complex case, which required a non-trivial argument. Also, a statement  stronger than
Corollary \ref{AC2}  is proved in \cite{KPUU2}. In particular, they replaced $C_1N$ by $2N$ and continuous functions by bounded functions.

\subsection{Universal discretization}
\label{disud}

We now formulate two results on the universal discretization, which are used in this paper. Theorem \ref{IT2} addresses the case $p \in (2, \infty)$, while Theorem \ref{RT2} covers $p \in [1, 2]$. For the detailed discussion of the universal discretization results we refer the reader to the very recent survey paper \cite{DTdiscsurv}.

	\begin{Theorem}[{\cite{DT}}]\label{IT2}   Assume that $\cD_N=\{\ff_j\}_{j=1}^N$ is a uniformly bounded Riesz system  satisfying (\ref{ub}) and \eqref{Riesz} for some constants $0<R_1\leq R_2<\infty$.
		Let $2<p<\infty$ and let  $1\leq u\leq N$ be an integer. 		
		 Then for a large enough constant $C=C(p,R_1,R_2)$ and any $\va\in (0, 1)$,   there exist
		$m$ points  $\xi^1,\cdots, \xi^m\in  \Omega$  with
\be\label{m}
		m\leq C\va^{-7}       u^{p/2} (\log N)^2,
\ee
			such that for any $f\in  \Sigma_u(\cD_N)$,
		\[ (1-\va) \|f\|_p^p \leq \frac   1m \sum_{j=1}^m |f(\xi^j)|^p\leq (1+\va) \|f\|_p^p. \]	
	\end{Theorem}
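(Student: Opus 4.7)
The plan is to prove existence of the desired points by a probabilistic construction: draw $\xi^1,\ldots,\xi^m$ i.i.d.\ from the probability measure $\mu$ and show that, for $m$ as in the statement, the sampling inequality holds simultaneously for every $f\in\Sigma_u(\cD_N)$ with positive probability. Since
\[ \Sigma_u(\cD_N)=\bigcup_{J\subset\{1,\dots,N\},\ |J|=u} X_J,\qquad X_J:=\sp\{\ff_j:j\in J\}, \]
and $\binom{N}{u}\le N^u$, it suffices to prove the discretization for a single fixed $X_J$ with failure probability at most, say, $N^{-2u}$, and then take a union bound over $J$. The factor $u\log N$ produced by this union bound, together with an extra $\log N$ coming from an internal covering/chaining step, accounts for the $(\log N)^2$ appearing in the bound on $m$.

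\textbf{Per-subspace analysis.} Fix $J$ with $|J|=u$ and $f=\sum_{j\in J}c_j\ff_j\in X_J$. Combining $|\ff_j|\le 1$ from \eqref{ub}, the Cauchy--Schwarz inequality, the lower Riesz bound from \eqref{Riesz}, and $\|f\|_2\le\|f\|_p$ (valid for $p\ge 2$ on a probability space), one gets the sparse Nikol'skii inequality
\[ \|f\|_\infty\le\sum_{j\in J}|c_j|\le u^{1/2}\Bigl(\sum_{j\in J}|c_j|^2\Bigr)^{1/2}\le R_1^{-1}u^{1/2}\|f\|_p, \]
so that $X_J\in NI(p,\infty,Cu^{1/2})$ with $C=C(R_1)$, uniformly in $J$. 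For a fixed $f$ with $\|f\|_p=1$, the random variables $Y_j:=|f(\xi^j)|^p-1$ are i.i.d., mean zero, and satisfy $|Y_j|\le C^pu^{p/2}$ and $\bbE|Y_j|^2\le\|f\|_\infty^p\|f\|_p^p\le C^pu^{p/2}$. Bernstein's inequality then yields exponential concentration of the empirical average of order $\exp\bigl(-c\,m\va^2/u^{p/2}\bigr)$, which, matched against a failure target $N^{-2u}$, would suggest $m$ of order $\va^{-2}u^{p/2}\cdot u\log N$ for each individual $f$.

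\textbf{Uniform control.} To pass from this pointwise estimate to a bound uniform in $f\in X_J$ with $\|f\|_p=1$, the straightforward device is a $\delta$-net argument: cover the complex $L_p$-unit sphere of $X_J$ by a net of cardinality at most $(C/\delta)^{2u}$ (since $\dim_{\bbR}X_J=2u$), apply the Bernstein bound at each net point, and control the off-net fluctuation using the elementary inequality $\bigl||a|^p-|b|^p\bigr|\le p(|a|+|b|)^{p-1}|a-b|$ combined with the Nikol'skii ceiling $\|f\|_\infty\le Cu^{1/2}$. Optimizing $\delta$ as a small negative power of $u$ and $\va$, and then union-bounding over the $\binom{N}{u}\le N^u$ choices of $J$, yields a bound of the form $m\le C(p,R_1,R_2)\,\va^{-a}u^{\alpha}(\log N)^2$ for some exponents $a$ and $\alpha$.

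\textbf{Main obstacle.} The hard part is the $u$-dependence: a naive net-plus-Bernstein scheme, as sketched, loses an extra factor of $u$ and produces $u^{p/2+1}$ rather than the desired $u^{p/2}$. Shaving this factor requires either a sharper empirical-process tool (Dudley or generic chaining, Talagrand-type deviation inequalities, or a Rudelson-style selector/matrix concentration argument tailored to the $p>2$ regime) or a nested truncation/moment-interpolation argument to decouple the contribution of the peaks of $|f|^p$ from its bulk before invoking Bernstein. It is this cascade of interpolation/truncation steps, each costing a factor of $\va^{-O(1)}$, that is responsible for the particular exponent $\va^{-7}$ in the statement, and is the main technical obstacle of the proof.
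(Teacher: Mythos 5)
Your proposal does not prove the stated theorem; it proves (at best) a weaker bound, and you say so yourself. The per-subspace scheme --- fix $J$ with $|J|=u$, apply Bernstein at net points of the $L_p$-sphere of $X_J$ using the sparse Nikol'skii bound $\|f\|_\infty\le R_1^{-1}u^{1/2}\|f\|_p$, then union-bound over the $\binom{N}{u}\le N^u$ subspaces --- forces a per-subspace failure probability of order $N^{-cu}$, and since both the sup bound and the variance of $|f(\xi)|^p-\|f\|_p^p$ are of size $u^{p/2}$, this costs $m\gtrsim \va^{-2}u^{p/2}\cdot u\log N$. The resulting $u^{p/2+1}$ is exactly what you concede in your last paragraph, and the passage from $u^{p/2+1}$ to $u^{p/2}$ is not a technical refinement one can wave at ("sharper empirical-process tool", "nested truncation") --- it is the entire content of the theorem, since the crude bound is what any standard net-plus-concentration argument gives. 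A proof must actually carry out a uniform argument over all of $\Sigma_u(\cD_N)$ at once rather than subspace by subspace; listing possible tools that might do this is a statement of the problem, not a solution.

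For comparison: the survey quotes this theorem from \cite{DT} without proof, and the argument there does not use a union bound over individual subspaces at all. It runs through a conditional discretization theorem (in the spirit of \cite{VT159}) in which the number of random points needed to discretize the $L_p$ norm on a class $W\subset\cC(\Omega)$ is controlled by the entropy numbers $\varepsilon_k(W)_{\infty}$ of $W$ in the uniform norm, combined with chaining; the main work is then to estimate $\varepsilon_k$ of the set $\{f\in\Sigma_u(\cD_N):\|f\|_p\le1\}$ in $L_\infty$, which is where the uniform boundedness (\ref{ub}) and the Riesz property (\ref{Riesz}) enter. Treating the whole sparse class through its entropy numbers is precisely what lets the combinatorial factor $\binom{N}{u}$ appear only logarithmically (as powers of $\log N$) instead of as an extra multiplicative $u$, and the exponent $\va^{-7}$ comes out of that chaining machinery, not from a truncation cascade. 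So beyond the admitted gap, your guess about where $\va^{-7}$ comes from also points at the wrong mechanism.
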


  \begin{Theorem}[{\cite{DTM2}}]\label{RT2} Let $1\le p\le 2$. Assume that $ \cD_N=\{\ff_j\}_{j=1}^N\subset \cC(\Omega)$ is a  system  satisfying  conditions  \eqref{ub} and   \eqref{Bessel} for some constant $K\ge1$. Let $\xi^1,\cdots, \xi^m$ be independent
 	random points on $\Omega$  that are  identically distributed  according to  $\mu$.
 	 Then there exist constants  $C=C(p)>1$ and $c=c(p)>0$ such that
 	  given any   integers  $1\leq u\leq N$ and
 	 $$
 	 m \ge  C Ku \log N\cdot (\log(2Ku ))^2\cdot (\log (2Ku )+\log\log N),
 	 $$
 	 the inequalities
 	 \begin{equation}\label{Ex2}
 	 \frac{1}{2}\|f\|_p^p \le \frac{1}{m}\sum_{j=1}^m |f(\xi^j)|^p \le \frac{3}{2}\|f\|_p^p,\   \   \ \forall f\in  \Sigma_u(\cD_N)
 	 \end{equation}
 hold with probability $\ge 1-2 \exp\left( -\frac {cm}{Ku\log^2 (2Ku)}\right)$.
\end{Theorem}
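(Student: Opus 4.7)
Since $\Sigma_u(\cD_N)=\bigcup_{|J|=u} V_J$ with $V_J:=\mathrm{span}\{\varphi_j:j\in J\}$, my plan is to establish the two-sided discretization (\ref{Ex2}) uniformly on each fixed $V_J$ with failure probability small enough to absorb a union bound over the $\binom{N}{u}\le N^u$ choices of $J$; the factor $\log N$ in the hypothesis on $m$ originates from this union bound, while the logarithmic factors in $Ku$ originate from a chaining argument inside a single $V_J$.

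For a fixed $V_J$, the Bessel condition combined with $\|\varphi_j\|_\infty\le 1$ yields a Nikol'skii-type inequality on $V_J$. Writing $f=\sum_{j\in J}a_j\varphi_j$, one has $\|f\|_\infty\le \sum|a_j|\le \sqrt{u}(\sum|a_j|^2)^{1/2}\le \sqrt{Ku}\,\|f\|_2$; combining this with the interpolation bound $\|f\|_2^2\le \|f\|_\infty^{2-p}\|f\|_p^p$ (valid since $p\le 2$) gives
\[
\|f\|_\infty \le (Ku)^{1/p}\|f\|_p, \qquad f\in V_J.
\]
After normalising $\|f\|_p=1$, the random variable $g_f(\xi):=|f(\xi)|^p$ is bounded by $Ku$, has mean $1$, and variance at most $Ku$, so Bernstein's inequality yields the pointwise tail
\[
\mathbb{P}\Bigl(\Bigl|\tfrac{1}{m}\sum_{i=1}^m g_f(\xi^i)-1\Bigr|>\tfrac12\Bigr)\le 2\exp(-cm/(Ku)).
\]

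To upgrade this pointwise estimate to a bound uniform over the $L_p$-unit sphere of $V_J$, I would use a multi-scale chaining argument. The $L_p$-unit ball of $V_J$ lies inside the $L_\infty$-ball of radius $(Ku)^{1/p}$, hence its $L_\infty$ metric entropy at scale $\varepsilon$ is bounded by $u\log((Ku)^{1/p}/\varepsilon)$, and feeding this into a Bernstein-type chaining (or Talagrand's generic chaining together with his concentration inequality for empirical processes) produces a uniform deviation bound on $V_J$ with an exponential tail degraded by logarithmic factors in $Ku$. Combining with the union bound over the $\binom{N}{u}$ choices of $J$ and solving for the value of $m$ at which the total failure probability matches the claimed exponential tail should yield precisely the stated quantitative condition on $m$; the additive factor $\log(2Ku)+\log\log N$ arises from balancing the sub-Gaussian and Bernstein regimes at the appropriate chaining scales.

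The main obstacle I anticipate is the chaining step. A naive per-subspace Bernstein bound followed by a union bound over $J$ tends to produce an extra factor of $u$ in the sample complexity; to reach the linear-in-$u$ bound claimed here, one must exploit the sparsity structure of $\Sigma_u(\cD_N)$ through a global generic chaining that charges only $\log N$ for the combinatorial dimension and uses the Bessel condition to control the $L_2$-geometry of $\Sigma_u(\cD_N)$ at each scale, rather than controlling each $V_J$ separately via its $L_\infty$-geometry. Once a uniform empirical process bound of this strength has been proved, combining it with the Bernstein tail and balancing the parameters to extract the stated form of $m$ is routine.
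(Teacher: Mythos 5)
The statement is quoted in the survey from \cite{DTM2} without a proof, so the comparison must be with the known argument behind it, which runs through entropy numbers of the whole class rather than through subspace counting. Your fixed-$f$ analysis is fine: the Nikol'skii bound $\|f\|_\infty\le (Ku)^{1/p}\|f\|_p$ on $\Sigma_u(\cD_N)$ is correct, and Bernstein then gives the pointwise tail $2\exp(-cm/(Ku))$. The problem is the step you yourself flag as the obstacle. The concrete scheme you describe --- chaining over the $L_p$-unit sphere of a single $u$-dimensional subspace $V_J$ (whose net has cardinality $\exp(C u\log(Ku/\e))$) followed by a union bound over the $\binom{N}{u}\le N^u$ supports --- forces the per-function failure probability to beat $\exp\bigl(Cu\log(Ku)+u\log N\bigr)$, hence requires $m\gg Ku\cdot u(\log N+\log (Ku))$, i.e.\ an extra factor of $u$ compared with the theorem. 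Saying that one should instead do ``a global generic chaining that charges only $\log N$ for the combinatorial dimension'' names the difficulty but does not resolve it; no mechanism is given for why the combinatorial cost of the supports should enter only logarithmically, and no uniform empirical-process bound of the required strength is proved or quoted. As written, the proposal therefore does not establish the stated sample complexity $m\asymp Ku\log N\,(\log(2Ku))^2(\log(2Ku)+\log\log N)$.

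The missing idea is to abandon the subspace-by-subspace viewpoint entirely. From the Bessel condition one gets, for $f=\sum_{j\in J}a_j\ff_j\in\Sigma_u(\cD_N)$ with $\|f\|_p\le 1$, the coefficient bound $\sum_{j\in J}|a_j|\le \sqrt{Ku}\,\|f\|_2\le (Ku)^{1/p}$ (you derive exactly this chain but use it only for the $L_\infty$ bound). Hence the $L_p$-unit ball of $\Sigma_u(\cD_N)$ sits inside $(Ku)^{1/p}$ times the absolutely convex hull $A_1(\cD_N)$ of the uniformly bounded dictionary, whose entropy numbers in the uniform norm decay like $\sqrt{(\log N)/k}$ (up to logarithms) by Maurey-type empirical estimates --- this is where $\log N$ enters linearly rather than multiplied by $u$. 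One then invokes a conditional discretization theorem for $L_p$, $1\le p\le 2$, of the type developed in \cite{VT159}, \cite{DPSTT2}, \cite{KKLT}: a chaining argument with Bernstein's inequality applied at dyadic scales shows that if the uniform-norm entropy numbers of the (uniformly bounded, $L_p$-normalized) class obey such a bound for $k\le m$, then $m$ i.i.d.\ points give the two-sided inequalities with probability $1-2\exp(-cm/(Ku\log^2(2Ku)))$; the logarithmic factors in the admissible $m$ come from that theorem, not from balancing regimes inside a single subspace. Without this convex-hull embedding and the class-level entropy/chaining theorem (or an equivalent substitute), the central uniformity step of your proof is not established.
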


\section{Recovery by $\ell_p$ minimization}
\label{lp}

\subsection{Projections on subspaces}
\label{ps}

Let $X_N$ be an $N$-dimensional subspace of the space of continuous functions $\cC(\Omega)$. As  in Section \ref{RILi}, for a fixed $m$ and a set of $m$ points  $\xi:=\{\xi^\nu\}_{\nu=1}^m\subset \Omega$,  we associate with a function $f\in \cC(\Omega)$ a vector
$$
S(f,\xi) := (f(\xi^1),\dots,f(\xi^m)) \in \bbC^m,
$$
and consider the weighted $\ell_p$ norm
$$
\|S(f,\xi)\|_{p,\bw}:= \left(\sum_{\nu=1}^m w_\nu |f(\xi^\nu)|^p\right)^{1/p},\quad 1\le p<\infty,
$$
with the usual change when $p=\infty$.
Define the best approximation of $f\in L_p(\Omega,\mu)$, $1\le p\le \infty$, by elements of $X_N$ as follows
$$
d(f,X_N)_p := \inf_{g\in X_N} \|f-g\|_p.
$$
It is well known that there exists an element, which we denote by $P_{X_N,p}(f)\in X_N$, such that
$$
\|f-P_{X_N,p}(f)\|_p = d(f,X_N)_p.
$$
The mapping $P_{X_N,p}: L_p(\Omega,\mu) \to X_N$, which may not be unique (in the case of $p=1$ or $p=\infty$), is called the Chebyshev projection.

Following \cite{VT183}, we will prove Theorem \ref{AT1} below under   assumptions {\bf A1} and {\bf A2}.

{\bf A1. Discretization.} Let $1\le p\le \infty$. Suppose that $\xi:=\{\xi^j\}_{j=1}^m\subset \Omega$ is such that for any
$g\in X_N$
$$
\|g\|_p \le D\|S(g,\xi)\|_{p,\bw}
$$
with a positive constant $D$ which may depend on $d$ and $p$.

{\bf A2. Weights.} Suppose that there is a positive constant $W=C(p)$ such that
$\sum_{\nu=1}^m w_\nu \le W$.

Consider the following well known recovery operator (algorithm) (see Algorithm 1 in Section \ref{RI})
$$
\ell p\bw(\xi)(f) := \ell p\bw(\xi,X_N)(f):=\underset{u\in X_N}  {\operatorname{argmin}}\, \|S(f-u,\xi)\|_{p,\bw}.
$$
Note that the above algorithm $\ell p\bw(\xi)$ only uses the function values $f(\xi^\nu)$, $\nu=1,\dots,m$. In the case $p=2$ it is a linear algorithm -- orthogonal projection with respect
to the norm $\|\cdot\|_{2,\bw}$. Therefore, in the case $p=2$ approximation error by the algorithm $\ell 2\bw(\xi)$ gives an upper bound for the recovery characteristic $\ro_m(\cdot, L_2)$. In the case $p\neq 2$ approximation error by the algorithm $\ell p\bw(\xi)$ gives an upper bound for the recovery characteristic $\ro_m^o(\cdot, L_p)$.

\begin{Theorem}[{\cite{VT183}}]\label{AT1} Let  $1\le p<\infty$. Then
 under assumptions {\bf A1} and {\bf A2},  for any $f\in \cC(\Omega)$,  we have
$$
\|f-\ell p\bw(\xi)(f)\|_p \le (2DW^{1/p} +1)d(f, X_N)_\infty.
$$
Furthermore, under assumption {\bf A1} with $p=\infty$,  for any $f\in \cC(\Omega)$,  we have
$$
\|f-\ell \infty(\xi)(f)\|_\infty \le (2D +1)d(f, X_N)_\infty.
$$
\end{Theorem}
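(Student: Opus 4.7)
The plan is the standard ``best-approximant comparison'' argument for projection-type algorithms. Let $g^\ast\in X_N$ realize (or nearly realize) $\|f-g^\ast\|_\infty = d(f,X_N)_\infty =:E$, and decompose
$$
f-\ell p\bw(\xi)(f) = (f-g^\ast) + (g^\ast - \ell p\bw(\xi)(f)),
$$
so that the proof reduces to controlling each summand in the $L_p$ norm. The first term is free: since $\mu$ is a probability measure, $\|f-g^\ast\|_p\le \|f-g^\ast\|_\infty = E$.

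For the second term I would exploit that $h:= g^\ast - \ell p\bw(\xi)(f)\in X_N$, so assumption \textbf{A1} applies and gives $\|h\|_p \le D\|S(h,\xi)\|_{p,\bw}$. Then I would add and subtract $f$ in the sample-space norm, use the triangle inequality in $\|\cdot\|_{p,\bw}$, and invoke the defining minimization property of the algorithm:
$$
\|S(f-\ell p\bw(\xi)(f),\xi)\|_{p,\bw} \le \|S(f-g^\ast,\xi)\|_{p,\bw}.
$$
This yields $\|S(h,\xi)\|_{p,\bw}\le 2\|S(f-g^\ast,\xi)\|_{p,\bw}$. Finally, a crude pointwise bound combined with \textbf{A2} gives
$$
\|S(f-g^\ast,\xi)\|_{p,\bw} \le \|f-g^\ast\|_\infty \Bigl(\sum_{\nu=1}^m w_\nu\Bigr)^{1/p} \le E\, W^{1/p},
$$
so $\|h\|_p \le 2DW^{1/p}E$. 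Combining with the first term produces the claimed constant $2DW^{1/p}+1$.

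For the $p=\infty$ statement the argument is the same, but no weights appear: the role of $\|S(f-g^\ast,\xi)\|_{p,\bw}$ is played by $\max_\nu|f(\xi^\nu)-g^\ast(\xi^\nu)|$, which is trivially bounded by $\|f-g^\ast\|_\infty=E$ with no factor depending on $W$. Hence the constant becomes $2D+1$.

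There is no real obstacle here; the mild subtlety is just to pick $g^\ast$ so that $\|f-g^\ast\|_\infty$ equals (or approximates) $d(f,X_N)_\infty$, which is legitimate because $f\in \cC(\Omega)$ and $X_N$ is finite-dimensional, ensuring the best uniform approximant exists. If one wishes to avoid relying on existence, the same inequality is obtained by applying the argument to any $g^\ast$ with $\|f-g^\ast\|_\infty\le E+\varepsilon$ and then letting $\varepsilon\downarrow 0$. The key conceptual point of the proof is that \textbf{A1} is used to transfer a discrete-norm bound back to the continuous $L_p$-norm, while the minimization property of $\ell p\bw(\xi)$ is used to replace the unknown error of the algorithm by the error of the benchmark $g^\ast$ on the sample set.
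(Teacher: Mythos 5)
Your proposal is correct and follows essentially the same argument as the paper's proof: compare with the best uniform approximant $g$ from $X_N$, use the minimization property of $\ell p\bw(\xi)$ together with the triangle inequality on the sample norm, bound the weighted sample norm via \textbf{A2}, and transfer back to the $L_p$ norm via the discretization assumption \textbf{A1}. The only cosmetic difference is your remark on approximate minimizers, which the paper sidesteps by invoking the Chebyshev projection $P_{X_N,\infty}(f)$ directly.
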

\begin{proof} We give a detailed proof for the case $1\le p<\infty$. The case $p=\infty$ is similar and even simpler. For brevity denote $g:=P_{X_N,\infty}(f)$.
 From the definition of the mapping $P_{X_N,\infty}$,  we obtain
\be\label{A1}
\|f-g\|_p \le \|f-g\|_\infty = d(f, X_N)_\infty.
\ee
Clearly,
$$
\|S(f-g,\xi)\|_\infty \le \|f-g\|_\infty = d(f, X_N)_\infty.
$$
Therefore, by {\bf A2} we get
\be\label{A2}
\|S(f-g,\xi)\|_{p,\bw} \le W^{1/p} \|S(f-g,\xi)\|_\infty \le W^{1/p}d(f, X_N)_\infty.
\ee
Next, by the definition of the algorithm $\ell p\bw(\xi)$ and by \eqref{A2}, we obtain
\be\label{A3}
\|S(f-\ell p\bw(\xi)(f),\xi)\|_{p,\bw} \le \|S(f-g,\xi)\|_{p,\bw} \le W^{1/p}d(f, X_N)_\infty.
\ee
Bounds (\ref{A2}) and (\ref{A3}) imply
\be\label{A4}
\|S(g-\ell p\bw(\xi)(f),\xi)\|_{p,\bw} \le 2W^{1/p}d(f, X_N)_\infty.
\ee
Then, the discretization assumption {\bf A1} implies
\be\label{A5}
\|g-\ell p\bw(\xi)(f)\|_{p} \le 2D W^{1/p}d(f, X_N)_\infty.
\ee
Combining bounds (\ref{A1}) and (\ref{A5}), we conclude
$$
\|f-\ell p\bw(\xi)(f)\|_p \le \|f-g\|_p +\|g-\ell p\bw(\xi)(f)\|_{p}\le (1+2DW^{1/p})d(f, X_N)_\infty,
$$
which completes the proof of Theorem \ref{AT1}.
\end{proof}

The following version of Theorem~\ref{AT1} for the error of $\|f-\ell p\bw(\xi)(f)\|_{\infty}$ under an extra condition on the Nikol'skii inequality was proved in \cite{LMT}.

\begin{Theorem}[{\cite{LMT}}]\label{BT1a} Let $1\le p<\infty$.  Under assumptions {\bf A1}, {\bf A2}, and an extra assumption $X_N\in NI(p,\infty,H)$   for any $f\in \cC(\Omega)$ we have
 $$
\|f-\ell p\bw(\xi)(f)\|_{\infty} \le (2HD W^{1/p} +1)d(f, X_N)_\infty.
$$

\end{Theorem}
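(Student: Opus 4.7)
The plan is to mimic the proof of Theorem \ref{AT1}, inserting one additional Nikol'skii-inequality step at the point where we switch back from the $L_p$ norm to the $L_\infty$ norm. The key observation is that in the proof of Theorem \ref{AT1} the bound $\|g - \ell p\bw(\xi)(f)\|_p \le 2DW^{1/p} d(f,X_N)_\infty$ was obtained purely via the discretization assumption \textbf{A1} and the weight bound \textbf{A2}, and that the difference $g - \ell p\bw(\xi)(f)$ lies entirely in $X_N$. Once we have an $L_p$ bound on an element of $X_N$, the Nikol'skii-type inequality $X_N\in NI(p,\infty,H)$ immediately upgrades it to an $L_\infty$ bound.

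Concretely, let $g := P_{X_N,\infty}(f)$, so that $\|f - g\|_\infty = d(f,X_N)_\infty$. Repeating the chain (\ref{A2})--(\ref{A5}) from the proof of Theorem \ref{AT1} verbatim, I obtain
\[
\|g - \ell p\bw(\xi)(f)\|_p \le 2DW^{1/p} d(f,X_N)_\infty.
\]
Since $g - \ell p\bw(\xi)(f) \in X_N$, the Nikol'skii inequality then gives
\[
\|g - \ell p\bw(\xi)(f)\|_\infty \le H \|g - \ell p\bw(\xi)(f)\|_p \le 2HDW^{1/p}\, d(f,X_N)_\infty.
\]
Finally, the triangle inequality combined with $\|f-g\|_\infty = d(f,X_N)_\infty$ yields
\[
\|f - \ell p\bw(\xi)(f)\|_\infty \le \|f-g\|_\infty + \|g - \ell p\bw(\xi)(f)\|_\infty \le (1 + 2HDW^{1/p}) d(f,X_N)_\infty,
\]
which is exactly the claimed bound.

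There is no real obstacle here: the argument is a one-line enhancement of Theorem \ref{AT1}. The only thing to double-check is that the Nikol'skii inequality is applied to an element of $X_N$, which it is since both $g = P_{X_N,\infty}(f)$ and $\ell p\bw(\xi)(f)$ belong to $X_N$ by construction. No additional hypotheses beyond \textbf{A1}, \textbf{A2}, and $X_N\in NI(p,\infty,H)$ are needed, and the constant $H$ enters multiplicatively exactly where one would expect.
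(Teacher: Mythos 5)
Your proof is correct and is essentially the paper's argument: the same chain (minimizing property of $\ell p\bw(\xi)$, assumptions \textbf{A1}--\textbf{A2}, the Nikol'skii inequality applied to $g-\ell p\bw(\xi)(f)\in X_N$, triangle inequality), only organized around the Chebyshev projection $g=P_{X_N,\infty}(f)$ rather than an arbitrary $g\in X_N$ with a minimization at the end. Both routes yield the identical constant $1+2HDW^{1/p}$.
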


\begin{proof} The proof is simple and goes along the lines of the proof of Theorem~\ref{AT1}. Let $u:= \ell p\bw(\xi)(f)$. For an arbitrary $g\in X_N$, we have the following chain of inequalities
\begin{align*}
\|f-u\|_\infty &\le \|f-g\|_\infty + \|g-u\|_\infty \le \|f-g\|_\infty +H\|g-u\|_p\\
&\le  \|f-g\|_\infty + HD\|S(g-u,\xi)\|_{p,\bw}\\
&\le  \|f-g\|_\infty + HD(\|S(f-g,\xi)\|_{p,\bw}+ \|S(f-u,\xi)\|_{p,\bw})\\
&\le  \|f-g\|_\infty + 2HD\|S(f-g,\xi)\|_{p,\bw}\\
&
\le  \|f-g\|_\infty + 2HDW^{1/p}\|S(f-g,\xi)\|_{\infty} \le (1+ 2HDW^{1/p})\|f-g\|_\infty.
\end{align*}
Minimizing over all $g\in X_N$, we complete the proof.

\end{proof}

The following analog of Theorem~\ref{AT1} with a weaker discretization assumption was proved in \cite{LMT}.

\begin{Theorem}[{\cite[Theorem~5.2]{LMT}}]\label{BT2} Let $p\in [1,\infty)$. Assume that a subspace
    $X_N\subset \cC(\Omega)$ has the property $X_N\in\LD(m,p,\infty,D)$ that is realized
    by a set $\xi=\{\xi^j\}_{j=1}^m$ of $m$ points in $\Omega$; namely,
\be\label{B1}
\|g\|_p \le D\max_{1\le j\le m} |g(\xi^j)|,\  \ \forall g\in X_N.
\ee
Then for any $f\in \cC(\Omega)$,  we have
$$
\|f-\ell \infty(\xi)(f)\|_p \le (2D  +1)d(f, X_N)_\infty.
$$
\end{Theorem}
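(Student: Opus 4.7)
The plan is to mimic the structure of the proof of Theorem \ref{AT1}, with the key adjustment that we use the supremum-sample norm throughout and invoke the one-sided discretization inequality (\ref{B1}) only in the last step. Because $\|S(h,\xi)\|_\infty \le \|h\|_\infty$ holds trivially for every continuous $h$, no weight assumption like \textbf{A2} is needed, and since the LDI only controls $\|\cdot\|_p$ from above by the sampled sup, the final output error is in $L_p$ while the approximation error on the right is in $L_\infty$.

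First I would let $g := P_{X_N,\infty}(f) \in X_N$, so that $\|f-g\|_\infty = d(f,X_N)_\infty$, and abbreviate $u := \ell\infty(\xi)(f)$. Next I would record the two trivial facts $\|f-g\|_p \le \|f-g\|_\infty = d(f,X_N)_\infty$ (since we may assume WLOG that $\mu$ is a probability measure, or this follows directly as $L_\infty$ dominates $L_p$ in the relevant sense) and $\|S(f-g,\xi)\|_\infty \le \|f-g\|_\infty = d(f,X_N)_\infty$. By the defining minimization property of $u$, we then have $\|S(f-u,\xi)\|_\infty \le \|S(f-g,\xi)\|_\infty \le d(f,X_N)_\infty$, and the sample-triangle inequality gives
\[
\|S(g-u,\xi)\|_\infty \le \|S(f-g,\xi)\|_\infty + \|S(f-u,\xi)\|_\infty \le 2\,d(f,X_N)_\infty.
\]

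Since $g-u \in X_N$, I would now apply the hypothesis (\ref{B1}) to obtain
\[
\|g-u\|_p \le D\max_{1\le j\le m}|g(\xi^j)-u(\xi^j)| = D\,\|S(g-u,\xi)\|_\infty \le 2D\,d(f,X_N)_\infty.
\]
Finally, the ordinary triangle inequality in $L_p$ yields
\[
\|f-u\|_p \le \|f-g\|_p + \|g-u\|_p \le (1+2D)\,d(f,X_N)_\infty,
\]
which is the desired bound.

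There is no real obstacle here; the point of the theorem is conceptual rather than technical, namely that the weaker one-sided universal LDI inequality \eqref{I3} (and the correspondingly cheaper discretization sets it permits) already suffices to control the $\ell_\infty$-minimization recovery error, at the cost of using the uniform norm on the right-hand side and of measuring the output error in $L_p$ rather than $L_\infty$. The one subtlety to flag is that the argument uses only that $\|h\|_p \le \|h\|_\infty$ for $h \in \cC(\Omega)$ with $\mu$ a probability measure and that $\|S(h,\xi)\|_\infty \le \|h\|_\infty$, so no Nikol'skii-type or Bessel-type auxiliary hypothesis is required, which explains why this statement is strictly more flexible than Theorem \ref{AT1}.
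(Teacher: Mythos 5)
Your proof is correct and follows essentially the same route the paper uses for this family of results (the paper itself only cites [LMT] for Theorem \ref{BT2}, but its proofs of Theorem \ref{AT1} and Theorem \ref{ubT5} proceed exactly as you do): take the best uniform approximant $g$ from $X_N$, exploit minimality of $\ell\infty(\xi)(f)$ in the sampled sup-norm, pass to $g-u\in X_N$ by the triangle inequality, and then apply the one-sided inequality \eqref{B1} before a final triangle inequality in $L_p$. Your observation that no weight condition \textbf{A2} is needed because $\|S(h,\xi)\|_\infty\le\|h\|_\infty$ and $\|h\|_p\le\|h\|_\infty$ (with $\mu$ a probability measure) is precisely the simplification that makes the $p=\infty$ sampling norm work here.
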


We now make some comments on the above results. Theorem \ref{AT1} guarantees that under assumptions {\bf A1} and {\bf A2},  we have,  for $p\in [1,\infty)$,
\be\label{ps1}
\|\ell p\bw(\xi)(f)\|_p \le (2DW^{1/p} +2)\|f\|_\infty.
\ee
If in addition we assume that $X_N \in NI(p,\infty,H)$, then we obtain
\be\label{ps2}
\|\ell p\bw(\xi)(f)\|_\infty \le H(2DW^{1/p} +2)\|f\|_\infty.
\ee

Thus, we can formulate the following direct corollary of Theorem \ref{AT1}.

\begin{Corollary}\label{psC1} Assume that $X_N\subset \cC(\Omega)$  satisfies assumptions {\bf A1}, {\bf A2} and, in addition,
$X_N \in NI(p,\infty,H)$ with $\|\cdot\|_p := \|\cdot\|_{L_p(\Omega,\mu)}$. Then we have
\be\label{ps3}
\|\ell p\bw(\xi,X_N)(f)\|_\infty \le H(2DW^{1/p} +2)\|f\|_\infty,\qquad \forall f \in \cC(\Omega).
\ee
\end{Corollary}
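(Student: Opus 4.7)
The plan is to chain the three ingredients already assembled just before the corollary's statement: Theorem~\ref{AT1}, a trivial triangle-inequality bound, and the Nikol'skii inequality hypothesis. Since this is explicitly advertised as a direct corollary, I do not anticipate a genuine obstacle; the only thing to do is make the chain of implications precise.

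First I would apply Theorem~\ref{AT1} to the given $f\in \cC(\Omega)$, which under {\bf A1} and {\bf A2} yields
\[
\|f-\ell p\bw(\xi,X_N)(f)\|_p \le (2DW^{1/p}+1)\,d(f,X_N)_\infty.
\]
Since $0\in X_N$, we have $d(f,X_N)_\infty \le \|f\|_\infty$, so the right-hand side is at most $(2DW^{1/p}+1)\|f\|_\infty$. A triangle inequality in $L_p(\Omega,\mu)$ combined with the obvious bound $\|f\|_p\le \|f\|_\infty$ (valid because $\mu$ is a probability measure) then gives
\[
\|\ell p\bw(\xi,X_N)(f)\|_p \le \|f\|_p + \|f-\ell p\bw(\xi,X_N)(f)\|_p \le (2DW^{1/p}+2)\|f\|_\infty,
\]
which is precisely the intermediate inequality~\eqref{ps1}.

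Next I would invoke the assumption $X_N\in NI(p,\infty,H)$. Because $\ell p\bw(\xi,X_N)(f)\in X_N$ by construction, this Nikol'skii-type inequality applies directly to it, yielding
\[
\|\ell p\bw(\xi,X_N)(f)\|_\infty \le H\,\|\ell p\bw(\xi,X_N)(f)\|_p.
\]
Substituting the bound from the previous step produces
\[
\|\ell p\bw(\xi,X_N)(f)\|_\infty \le H(2DW^{1/p}+2)\|f\|_\infty,
\]
which is the desired estimate~\eqref{ps3}. The only ``care'' point to double-check is that the measure normalization in the definition of $\|\cdot\|_p$ indeed permits $\|f\|_p\le \|f\|_\infty$ for $f\in \cC(\Omega)$; since $\mu$ was fixed to be a Borel probability measure at the start of Section~\ref{RI}, this holds without modification.
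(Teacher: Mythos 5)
Your proof is correct and follows exactly the route the paper intends: it chains Theorem~\ref{AT1} with $d(f,X_N)_\infty\le\|f\|_\infty$, the triangle inequality together with $\|f\|_p\le\|f\|_\infty$ (giving (\ref{ps1})), and then the Nikol'skii assumption applied to $\ell p\bw(\xi,X_N)(f)\in X_N$, which is precisely the chain (\ref{ps1})--(\ref{ps2}) stated just before the corollary. Nothing is missing.
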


Recently, the problem of finding good bounds on norms of projectors, acting from $\cC(\Omega)$ to $\cC(\Omega)$, was discussed in \cite{KPUU2}. In particular, projectors, which only use sampling points, were discussed there.
We now present some corollaries of the above results for that setting.
Corollary \ref{psC1} gives a conditional result in the case $p=2$, when the operator $\ell p\bw(\xi,X_N)$ is the linear projector operator. Let us use the known result and make
it unconditional. Take any subspace $X_N\subset \cC(\Omega)$. Set $p=2$ and
apply Theorem \ref{KWTh}. It provides us the probability measure $\mu$ such that
$X_N \in NI(2,\infty,N^{1/2})$. Next, we apply Theorem \ref{LimTT} with $K=1$ to
the subspace $X_N$ and the norm $\|\cdot\|_2 := \|\cdot\|_{L_2(\Omega,\mu)}$. It gives us a set $\{\xi^j\}_{j=1}^m$, $m\le C_1N$, with absolute constant $C_1$ such that assumption {\bf A1} is satisfied with equal weights $\bw_m:=(1/m,\dots,1/m)$.
Therefore, assumption {\bf A2} is satisfied with $W=1$. Finally, we apply Corollary \ref{psC1} and obtain
\be\label{ps4}
\|\ell 2\bw_m(\xi,X_N)\|_{L_\infty\to L_\infty} \le CN^{1/2},
\ee
with an absolute constant $C$.

Note that in addition to (\ref{ps4}), we have the following useful discretization property, which follows from Theorem \ref{LimTT}: For any $f\in X_N$
$$
C_2 \|f\|_2^2 \le \frac{1}{m}\sum_{j=1}^m |f(\xi^j)|^2 \le C_3 \|f\|_2^2,
$$
where $C_2$ and $C_3$ are absolute positive constants.

We can improve the above bound $m\le C_1N$ by a stronger one $m \le bN$, where $b$ is any fixed number from $(1,2]$, if instead of  Theorem \ref{LimTT},
we use Theorem \ref{disdT1}. In this case instead of the equal weights $\bw_m$ we
need to use the weights $\bw$ provided by Theorem \ref{disdT1}.
Note that the authors of \cite{KPUU2} proved an analog of (\ref{ps4}) with $C_1=2$.

The following simple fact is a classical result in approximation theory, which is often used. Let $P\, : \, X\, \to\, X$ be a linear projector on $X_N$ with the property
$\|P\|_{X\to X} \le L$. Then for any $f\in X$,  we have the Lebesgue inequality
\be\label{ps5}
\|f-P(f)\|_X \le (L+1)d(f,X_N)_X.
\ee
Indeed, for any $g\in X_N$, we have
$$
\|f-P(f)\|_X = \|f-g-P(f-g)\|_X \le (L+1)\|f-g\|_X,
$$
which implies (\ref{ps5}).

Thus, (\ref{ps4}) implies that for any $f\in \cC(\Omega)$,  we have
\be\label{ps6}
\|f-\ell 2\bw_m(\xi,X_N)(f)\|_\infty \le (CN^{1/2}+1)d(f,X_N)_\infty.
\ee

Algorithms based on the $\ell_p$ minimization play important role in the sampling recovery
theory. For this reason, the following optimal characteristics were introduced and studied
(see \cite{VT183} for $p=2$).
For a fixed $m$,   define,  for a class $\bF$ of functions on $\Omega$,
$$
\varrho_m^{wls}(\bF,L_p) := \inf_{\bw,\,\xi\subset \Omega,\,X_N} \sup_{f\in \bF} \|f-\ell p \bw(\xi,X_N)(f)\|_p.
$$
The following result was obtained in \cite{VT183}.

\begin{Theorem}[{\cite{VT183}}]\label{VT183T2} There exist two positive absolute constants $b$ and $B$ such that for any   compact subset $\Omega$  of $\bbR^d$, any probability measure $\mu$ on it, and any compact subset $\bF$ of $\cC(\Omega)$,  we have
$$
\ro_{bn}^{wls}(\bF,L_2(\Omega,\mu)) \le Bd_n(\bF,L_\infty).
$$
\end{Theorem}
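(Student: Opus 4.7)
The plan is to combine three ingredients already assembled in the paper: the definition of the Kolmogorov width, the weighted sampling discretization of Theorem \ref{LimTTw}, and the abstract error bound of Theorem \ref{AT1} for weighted least squares. First I would fix an arbitrary $\varepsilon>0$ and use the definition of $d_n(\bF,L_\infty)$ to select an $n$-dimensional subspace $X_n\subset \cC(\Omega)$ with
$$
\sup_{f\in\bF} d(f,X_n)_\infty \le d_n(\bF,L_\infty)+\varepsilon.
$$
(Since $\bF$ is compact in $\cC(\Omega)$, a standard perturbation argument lets us assume $X_n$ consists of continuous functions.) Then I would enlarge this subspace to $X_n' := X_n + \mathrm{span}_{\bbC}\{\mathbf{1}\}$, of dimension $N\le n+1$. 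This enlargement by the constant function is the one small trick needed in the argument.

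Next I would apply Theorem \ref{LimTTw} to the subspace $X_n'\subset L_2(\Omega,\mu)$: it produces $m\le C_1'(n+1)\le 2C_1'n$ sample points $\xi=\{\xi^j\}_{j=1}^m$ and nonnegative weights $\bw=(w_1,\dots,w_m)$ satisfying
$$
c_0'\|g\|_{L_2(\Omega,\mu)}^2 \le \sum_{j=1}^m w_j |g(\xi^j)|^2 \le C_0' \|g\|_{L_2(\Omega,\mu)}^2,\qquad \forall g\in X_n'.
$$
The lower inequality gives assumption {\bf A1} (with $p=2$) on the smaller subspace $X_n\subset X_n'$, with constant $D=1/\sqrt{c_0'}$. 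The upper inequality, applied to the constant function $g\equiv 1\in X_n'$ with $\|1\|_{L_2(\Omega,\mu)}=1$, yields
$$
\sum_{j=1}^m w_j \;=\; \sum_{j=1}^m w_j |1(\xi^j)|^2 \;\le\; C_0',
$$
which is precisely assumption {\bf A2} with $W=C_0'$.

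With these two conditions in hand, Theorem \ref{AT1} applied to the recovery operator $\ell 2\bw(\xi,X_n)$ gives, for every $f\in\cC(\Omega)$,
$$
\|f-\ell 2\bw(\xi,X_n)(f)\|_{L_2(\Omega,\mu)} \le \bigl(2D\sqrt{W}+1\bigr)d(f,X_n)_\infty =\bigl(2\sqrt{C_0'/c_0'}+1\bigr)d(f,X_n)_\infty.
$$
Taking the supremum over $f\in\bF$ and using our choice of $X_n$, I obtain
$$
\sup_{f\in\bF}\|f-\ell 2\bw(\xi,X_n)(f)\|_{L_2(\Omega,\mu)} \le \bigl(2\sqrt{C_0'/c_0'}+1\bigr)\bigl(d_n(\bF,L_\infty)+\varepsilon\bigr).
$$
Since $\ell 2\bw(\xi,X_n)$ is one of the operators admissible in the definition of $\varrho_m^{wls}$ with $m\le 2C_1'n$, and since increasing the number of sample points (with additional zero weights) cannot worsen $\varrho_m^{wls}$, this yields the bound with $b:=2C_1'$ and $B:=2\sqrt{C_0'/c_0'}+1$ after letting $\varepsilon\to 0$.

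The only real obstacle is the verification of {\bf A2}: the weighted discretization theorem by itself does not control $\sum_j w_j$. The remedy is exactly the enlargement $X_n\mapsto X_n+\mathrm{span}\{\mathbf 1\}$, which costs one extra dimension but lets us extract the bound on $\sum_j w_j$ by evaluating the upper discretization inequality on the constant function. Everything else in the proof is a direct application of the results already stated in the excerpt.
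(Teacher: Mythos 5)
Your argument is correct, and it is essentially the proof from \cite{VT183} that the survey's Section \ref{ps} is set up to reproduce: one realizes the Kolmogorov width by an $n$-dimensional subspace of $\cC(\Omega)$, applies the weighted $L_2$ discretization (Theorem \ref{LimTTw}, or Theorem \ref{disdT1}) to that subspace enlarged by the constant function so that testing the upper discretization inequality on $\mathbf 1$ yields assumption \textbf{A2}, and then invokes Theorem \ref{AT1} with $p=2$ to get the weighted least squares error bound, which is admissible in the definition of $\varrho_m^{wls}$ with $m\le C n$ points. The only cosmetic remarks are that the ``perturbation to continuous functions'' aside is unnecessary (the width is taken in $L_\infty(\Omega)=\cC(\Omega)$, so the approximating subspace already consists of continuous functions) and that padding with zero weights to reach exactly $\lfloor bn\rfloor$ points is consistent with how the paper itself treats nonnegative weights.
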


The above theorem is devoted to recovery by weighted least squares algorithms $\ell 2 \bw(\xi)$. We may want to have the recovery algorithm $\ell 2 \bw(\xi)$ to be a classical least squares algorithm, i.e. $\bw=\bw_m:=(1/m,\dots,1/m)$. In this case we introduce the corresponding optimal characteristics as follows (see \cite{VT183})
$$
\varrho_m^{ls}(\bF,L_2) := \inf_{\xi,\,X_N} \sup_{f\in \bF} \|f-\ell 2 \bw_m(\xi,X_N)(f)\|_2
$$
and (see \cite{VT189})
$$
\varrho_m^{lp}(\bF,L_p) := \inf_{\xi,\,X_N} \sup_{f\in \bF} \|f-\ell p \bw_m(\xi,X_N)(f)\|_p.
$$

The reader can find analogs of Theorem \ref{VT183T2} for $\varrho_m^{lp}(\bF,L_p)$ with
the Kolmogorov width $d_n(\bF,L_\infty)$ replaced by the $E$-conditioned Kolmogorov width (see \cite{VT183} and \cite{VT189})
$$
d_N^{E}(\bF,L_p) :=     \inf_{\{u_1,\dots,u_N\}\, \text{satisfies Condition} E}   \sup_{f\in \bF}\inf_{c_1,\dots,c_N}\left\|f-\sum_{i=1}^N c_iu_i\right\|_p
$$
for different conditions $E$.

\subsection{Sparse approximations}

 For brevity denote $L_p(\xi) := L_p(\Omega_m,\mu_m)$, where $\Omega_m=\{\xi^\nu\}_{\nu=1}^m$  and  $\mu_m(\{\xi^\nu\}) =1/m$, $\nu=1,\dots,m$. Let
$B_v(f,\cD_N,L_p(\xi))$ (see Algorithm 2 in Section \ref{RI}) denote the best $v$-term approximation of $f$ in the $L_p(\xi)$ norm with
respect to the system $\cD_N$. Note that $B_v(f,\cD_N,L_p(\xi))$ may not be unique. Obviously,
\be\label{D5}
\|f-B_v(f,\cD_N,L_p(\xi))\|_{L_p(\xi)} = \sigma_v(f,\cD_N)_{L_p(\xi)}.
\ee

We proved in \cite{DTM3} the following theorem.

 \begin{Theorem}[{\cite{DTM3}}]\label{NUT2} Let $1\le p<\infty$ and let $m$, $v$, $N$ be given natural numbers such that $2v\le N$.  Let $\cD_N\subset \cC(\Omega)$ be  a system of $N$ elements. Assume that  there exists a set $\xi:= \{\xi^j\}_{j=1}^m \subset \Omega $, which provides  the  one-sided $L_p$-universal sampling discretization (see Definition \ref{ID1a})
  \be\label{D6}
 \|f\|_p \le D\left(\frac{1}{m} \sum_{j=1}^m |f(\xi^j)|^p\right)^{1/p}, \quad \forall\, f\in \Sigma_{2v}(\cD_N).
\ee
   Then, for   any  function $ f \in \cC(\Omega)$, we have
\be\label{D7}
  \|f-B_v(f,\cD_N,L_p(\xi))\|_{L_p(\Omega,\mu)} \le 2^{1/p}(2D +1) \sigma_v(f,\cD_N)_{L_p(\Omega, \mu_\xi)}
 \ee
 and
 \be\label{D8}
  \|f-B_v(f,\cD_N,L_p(\xi))\|_{L_p(\Omega,\mu)} \le  (2D +1) \sigma_v(f,\cD_N)_\infty.
 \ee
 \end{Theorem}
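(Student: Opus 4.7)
The plan is the standard ``triangle inequality plus discretization on the $2v$-term space'' trick. Write $B:=B_v(f,\cD_N,L_p(\xi))$ and let $g^\ast\in\Sigma_v(\cD_N)$ denote a near-best $v$-term approximant to $f$ measured in whichever norm appears on the right-hand side of the target inequality. The crucial structural fact is that $B-g^\ast\in\Sigma_{2v}(\cD_N)$ because both $B$ and $g^\ast$ are $v$-term linear combinations from $\cD_N$, and $2v\le N$ ensures that our hypothesis \eqref{D6} applies to this difference.

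First I would split
\[
\|f-B\|_{L_p(\Omega,\mu)}\le \|f-g^\ast\|_{L_p(\Omega,\mu)}+\|g^\ast-B\|_{L_p(\Omega,\mu)}.
\]
For the second term, \eqref{D6} applied to $g^\ast-B\in\Sigma_{2v}(\cD_N)$ gives $\|g^\ast-B\|_{L_p(\Omega,\mu)}\le D\|g^\ast-B\|_{L_p(\xi)}$. Then the definition of $B$ as a best $v$-term approximant in $L_p(\xi)$ yields $\|f-B\|_{L_p(\xi)}\le\|f-g^\ast\|_{L_p(\xi)}$, so by the triangle inequality in $L_p(\xi)$,
\[
\|g^\ast-B\|_{L_p(\xi)}\le\|f-g^\ast\|_{L_p(\xi)}+\|f-B\|_{L_p(\xi)}\le 2\|f-g^\ast\|_{L_p(\xi)}.
\]
Combining, we arrive at the master estimate
\[
\|f-B\|_{L_p(\Omega,\mu)}\le \|f-g^\ast\|_{L_p(\Omega,\mu)}+2D\|f-g^\ast\|_{L_p(\xi)}.
\]

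To obtain \eqref{D7}, choose $g^\ast$ to be a best $v$-term approximant to $f$ in $L_p(\Omega,\mu_\xi)$. Using $\mu_\xi=(\mu+\mu_m)/2$, one has the elementary inequalities $\|h\|_{L_p(\Omega,\mu)}\le 2^{1/p}\|h\|_{L_p(\Omega,\mu_\xi)}$ and $\|h\|_{L_p(\xi)}\le 2^{1/p}\|h\|_{L_p(\Omega,\mu_\xi)}$, each obtained by comparing the $p$-th powers. Plugging these into the master estimate gives $\|f-B\|_{L_p(\Omega,\mu)}\le 2^{1/p}(2D+1)\sigma_v(f,\cD_N)_{L_p(\Omega,\mu_\xi)}$, which is \eqref{D7}. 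For \eqref{D8}, choose instead $g^\ast$ to be a best $v$-term approximant to $f$ in the uniform norm; then both $\|f-g^\ast\|_{L_p(\Omega,\mu)}$ and $\|f-g^\ast\|_{L_p(\xi)}$ are bounded by $\|f-g^\ast\|_\infty=\sigma_v(f,\cD_N)_\infty$, and the master estimate collapses to $(2D+1)\sigma_v(f,\cD_N)_\infty$.

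There is no real obstacle here; the only points to be careful about are that the discretization hypothesis is invoked on $2v$-term objects (so we really need $2v\le N$ and the universal discretization inequality over all of $\Sigma_{2v}(\cD_N)$, not merely $\Sigma_v(\cD_N)$), and that if a best $v$-term approximant does not literally exist one should instead work with an $\varepsilon$-almost best $g^\ast$ and let $\varepsilon\to 0$ at the end. Everything else is the triangle inequality.
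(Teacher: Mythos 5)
Your proposal is correct, and it follows essentially the same route the paper uses for this family of results (the survey cites the proof of Theorem \ref{NUT2} to \cite{DTM3}, but its proofs of the closely analogous Theorems \ref{ubT5} and \ref{CGT3} proceed exactly as you do): triangle inequality, the observation that the difference of the two $v$-term approximants lies in $\Sigma_{2v}(\cD_N)$ so that the one-sided discretization applies, near-best minimality of $B_v(f,\cD_N,L_p(\xi))$ in the discrete norm, and the comparisons $\|\cdot\|_{L_p(\Omega,\mu)}\le 2^{1/p}\|\cdot\|_{L_p(\Omega,\mu_\xi)}$, $\|\cdot\|_{L_p(\xi)}\le 2^{1/p}\|\cdot\|_{L_p(\Omega,\mu_\xi)}$ for \eqref{D7} and the trivial bound by the uniform norm for \eqref{D8}. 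The constants you obtain match \eqref{D7} and \eqref{D8} exactly, and your remarks about needing the hypothesis on all of $\Sigma_{2v}(\cD_N)$ and about passing to $\varepsilon$-near-best approximants are the right precautions.
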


The following theorem was proved in \cite{VT203}.

 \begin{Theorem}[{\cite[Theorem 3.3]{VT203}}]\label{NUT3} Let $2\le p<\infty$ and let $m$, $v$, $N$ be given natural numbers such that $2v\le N$.  Let $\cD_N\subset \cC(\Omega)$ be  a system of $N$ elements such that $\cD_N \in NI(2,p,H,2v)$. Assume that  there exists a set $\xi:= \{\xi^j\}_{j=1}^m \subset \Omega $, which provides the one-sided $L_2$-universal sampling discretization
  \be\label{D12}
 \|f\|_2 \le D\left(\frac{1}{m} \sum_{j=1}^m |f(\xi^j)|^2\right)^{1/2}, \quad \forall\, f\in \Sigma_{2v}(\cD_N).
\ee
   Then, for   any  function $ f \in \cC(\Omega)$,  we have (see the definition of $\mu_\xi$ in (\ref{muxi}))
\be\label{D13}
  \|f-B_v(f,\cD_N,L_2(\xi))\|_{L_p(\Omega,\mu)} \le 2^{1/p}(2DH +1) \sigma_v(f,\cD_N)_{L_p(\Omega, \mu_\xi)}
 \ee
 and
 \be\label{D14}
  \|f-B_v(f,\cD_N,L_2(\xi))\|_{L_p(\Omega,\mu)} \le (2DH +1) \sigma_v(f,\cD_N)_\infty.
 \ee

 \end{Theorem}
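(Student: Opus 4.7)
Fix $f\in \cC(\Omega)$, write $u:=B_v(f,\cD_N,L_2(\xi))$, and let $g\in\Sigma_v(\cD_N)$ be an arbitrary $v$-term approximant, so that $u-g\in \Sigma_{2v}(\cD_N)$. The goal is to prove the two stated bounds by a triangle-inequality argument that alternates between $L_p(\Omega,\mu)$, the discretized space $L_2(\xi)$, and the auxiliary space $L_p(\Omega,\mu_\xi)$, then takes the infimum over $g$.

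\textbf{Step 1: reduce to controlling $\|u-g\|_{L_p(\Omega,\mu)}$.} By the triangle inequality,
\[
\|f-u\|_{L_p(\Omega,\mu)}\le \|f-g\|_{L_p(\Omega,\mu)}+\|u-g\|_{L_p(\Omega,\mu)}.
\]
Since $u-g\in \Sigma_{2v}(\cD_N)$, the Nikol'skii hypothesis $\cD_N\in NI(2,p,H,2v)$ gives $\|u-g\|_{L_p(\Omega,\mu)}\le H\|u-g\|_{2}$, and the one-sided $L_2$-usd assumption (\ref{D12}) applied to $u-g\in\Sigma_{2v}(\cD_N)$ yields $\|u-g\|_{2}\le D\|u-g\|_{L_2(\xi)}$. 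Thus $\|u-g\|_{L_p(\Omega,\mu)}\le DH\|u-g\|_{L_2(\xi)}$.

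\textbf{Step 2: use optimality of $u$ in $L_2(\xi)$ and pass to $L_p(\xi)$.} Because $g\in\Sigma_v(\cD_N)$ and $u$ is the best $v$-term $L_2(\xi)$-approximant of $f$,
\[
\|u-g\|_{L_2(\xi)}\le \|f-u\|_{L_2(\xi)}+\|f-g\|_{L_2(\xi)}\le 2\|f-g\|_{L_2(\xi)}.
\]
Since $\mu_m$ is a probability measure and $p\ge 2$, Jensen's inequality gives $\|f-g\|_{L_2(\xi)}\le \|f-g\|_{L_p(\xi)}$. Combining the last two paragraphs,
\[
\|f-u\|_{L_p(\Omega,\mu)}\le \|f-g\|_{L_p(\Omega,\mu)}+2DH\|f-g\|_{L_p(\xi)}.
\]

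\textbf{Step 3: bound the right-hand side by the two target quantities.} For (\ref{D14}) just use $\|f-g\|_{L_p(\Omega,\mu)},\|f-g\|_{L_p(\xi)}\le \|f-g\|_\infty$ and take the infimum over $g\in\Sigma_v(\cD_N)$, obtaining $\|f-u\|_{L_p(\Omega,\mu)}\le (2DH+1)\sigma_v(f,\cD_N)_\infty$. For (\ref{D13}) use the identity
\[
\|f-g\|_{L_p(\Omega,\mu_\xi)}^p=\tfrac12\bigl(\|f-g\|_{L_p(\Omega,\mu)}^p+\|f-g\|_{L_p(\xi)}^p\bigr),
\]
which forces both $\|f-g\|_{L_p(\Omega,\mu)}$ and $\|f-g\|_{L_p(\xi)}$ to be $\le 2^{1/p}\|f-g\|_{L_p(\Omega,\mu_\xi)}$; plugging in and taking the infimum over $g\in\Sigma_v(\cD_N)$ gives the factor $2^{1/p}(2DH+1)$ in front of $\sigma_v(f,\cD_N)_{L_p(\Omega,\mu_\xi)}$.

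\textbf{Main obstacle.} There is no single deep obstruction, but the one non-obvious move is the crossover in Step 2: the discretization hypothesis is $L_2$-type while the conclusion is $L_p$-type, and it is exactly the combination of the Nikol'skii inequality on $\Sigma_{2v}(\cD_N)$ together with the embedding $L_2(\xi)\hookrightarrow L_p(\xi)$ (valid because $p\ge 2$ and $\mu_m$ is a probability measure) that bridges the gap. Once this $L_2\!\to\! L_p$ crossover is in place, the rest is the same triangle-inequality bookkeeping used to derive Theorem~\ref{NUT2}.
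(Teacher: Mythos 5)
Your proof is correct, and it follows essentially the same route that the paper (and the cited source) uses for this family of results: the chain in the proofs of Theorem \ref{ubT5} and Theorem \ref{CGT3} is exactly your combination of the optimality of $B_v$ in the discrete norm, the triangle inequality for $u-g\in\Sigma_{2v}(\cD_N)$, the one-sided $L_2$-usd, the $2v$-term Nikol'skii inequality to pass from $L_2$ to $L_p(\Omega,\mu)$, and the identity $\|h\|_{L_p(\Omega,\mu_\xi)}^p=\tfrac12\|h\|_{L_p(\Omega,\mu)}^p+\tfrac12\|h\|_{L_p(\xi)}^p$ which produces the factor $2^{1/p}$. No gaps; the $L_2(\xi)\hookrightarrow L_p(\xi)$ step via $p\ge2$ and $\mu_m$ being a probability measure is exactly the intended crossover.
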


The following Theorem \ref{ubT5} from \cite{LMT} establishes (\ref{D8}) under somewhat weaker conditions.

\begin{Theorem}[{\cite{LMT}}]\label{ubT5} Let $p\in [1,\infty)$ and
 let $m$, $v$, $N$ be given natural numbers such
 that $2v\le N$.  Let $\cD_N\subset \cC(\Omega)$ be  a system of $N$ elements.
 Assume that  there exists a set $\xi:= \{\xi^j\}_{j=1}^m \subset \Omega $,
     which provides universal LDI$(p,\infty$), (\ref{I3}), for the collection $\cX_{2v}(\cD_N)$. Then for   any  function $
 f \in \cC(\Omega)$ we have
 \be\label{ub17}
  \|f - B_v(f,\cD_N,L_\infty(\xi))\|_p \le  (2D +1) \sigma_v(f,\cD_N)_\infty.
 \ee
 \end{Theorem}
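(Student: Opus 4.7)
The plan is to mirror the proof strategy used for Theorem \ref{NUT2}, but replacing the $L_p(\xi)$ discretization with the LDI$(p,\infty)$ inequality, which matches the fact that the recovery algorithm here minimizes the discrete $\ell_\infty$ norm rather than the discrete $\ell_p$ norm. Concretely, I would fix an arbitrary $g\in \Sigma_v(\cD_N)$ such that $\|f-g\|_\infty$ is arbitrarily close to $\sigma_v(f,\cD_N)_\infty$, and set $u:=B_v(f,\cD_N,L_\infty(\xi))\in\Sigma_v(\cD_N)$.

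First, I would use the definition of $u$ as a minimizer over $\Sigma_v(\cD_N)$ of the discrete sup-norm error, together with the trivial bound $\max_j|f(\xi^j)-g(\xi^j)|\le\|f-g\|_\infty$, to deduce
\[
\max_{1\le j\le m}|f(\xi^j)-u(\xi^j)|\le\|f-g\|_\infty.
\]
Next, since $g-u\in\Sigma_{2v}(\cD_N)$, the universal LDI$(p,\infty)$ assumption \eqref{I3} applies to $g-u$ and yields
\[
\|g-u\|_p\le D\max_{1\le j\le m}|g(\xi^j)-u(\xi^j)|\le D\bigl(\|f-g\|_\infty+\max_j|f(\xi^j)-u(\xi^j)|\bigr)\le 2D\|f-g\|_\infty.
\]
Finally, since $\mu$ is a probability measure, $\|f-g\|_p\le\|f-g\|_\infty$, so by the triangle inequality
\[
\|f-u\|_p\le\|f-g\|_p+\|g-u\|_p\le(1+2D)\|f-g\|_\infty.
\]
Taking the infimum over $g\in\Sigma_v(\cD_N)$ then gives \eqref{ub17}.

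There is no real obstacle here: the only structural input beyond the triangle inequality is the fact that the difference of two $v$-term approximants lies in $\Sigma_{2v}(\cD_N)$, which is precisely the collection of subspaces for which the universal LDI hypothesis is assumed, so $2v$ (rather than $v$) is the exact right sparsity level in the hypothesis. The improvement over Theorem \ref{NUT2} is only notational/structural: we avoid the factor $2^{1/p}$ and the $L_p(\Omega,\mu_\xi)$ bound because the algorithm controls $\max_j|f(\xi^j)-u(\xi^j)|$ directly by $\|f-g\|_\infty$, without needing a discrete $\ell_p$-to-$\ell_\infty$ passage. The assumption is correspondingly weaker in one direction (one-sided, and only $L_\infty$ on the right) but stronger in another (the right-hand side loses the $p$-averaging), which explains why only the $\sigma_v(f,\cD_N)_\infty$-type bound \eqref{ub17} survives and why an analogue of \eqref{D7} would not follow from this weaker hypothesis.
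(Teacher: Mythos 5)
Your proof is correct and follows essentially the same route as the paper: bound the discrete sup-norm error of the minimizer $u=B_v(f,\cD_N,L_\infty(\xi))$ by that of a (near-)best $v$-term $L_\infty$-approximant, note that the difference lies in $\Sigma_{2v}(\cD_N)$ so the universal LDI$(p,\infty)$ applies, and finish by the triangle inequality. The only cosmetic difference is that you use a near-best approximant $g$ and take an infimum at the end, whereas the paper works directly with the best $L_\infty$-approximant $h$; the argument is otherwise identical.
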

 \begin{proof} We derive  (\ref{ub17}) from the following obvious relation:
 \be\label{ub15}
 \|f-B_v(f,\cD_N,L_\infty(\xi))\|_{L_\infty(\xi)} = \sigma_v(f,\cD_N )_{L_\infty(\xi)}.
 \ee
  Clearly,
$$
\sigma_v(f,\cD_N)_{L_\infty(\xi)} \le \sigma_v(f,\cD_N )_\infty.
$$
For brevity denote $u:=B_v(f,\cD_N,L_\infty(\xi))$ and let $h$ be the best
     $L_\infty$-approximation to $f$ from $\Sigma_v(\cD_N)$.   Then (\ref{ub15}) implies
$$
\|h - u\|_{L_\infty(\xi)} \le \|f-h\|_{L_\infty(\xi)} +\|f-u\|_{L_\infty(\xi)}
 \le 2\sigma_v(f,\cD_N)_\infty.
$$
Using that $h - u \in \Sigma_{2v}(\cD_N)$, by discretization (\ref{I3}),  we
conclude that
\be\label{ub18}
\|h - u\|_{L_p(\Omega,\mu)} \le 2D \sigma_v(f,\cD_N)_\infty.
\ee
Finally,
$$
\|f-u\|_{L_p(\Omega,\mu)} \le \|f-h\|_{L_p(\Omega,\mu)} + \|h - u\|_{L_p(\Omega,\mu)}.
$$
This and (\ref{ub18}) prove (\ref{ub17}).

\end{proof}

The above discussed algorithms $B_v(f,\cD_N,L_p(\xi))$ only use information on the sampling vector $S(f,\xi)$ and, therefore, they can be used for proving the upper bounds
for $\rho_m^o(\bF,L_p)$. The algorithms $B_v(f,\cD_N,L_p(\xi))$ can be defined as follows (see (\ref{Alg2}))
$$
L^s(\xi,f) :=   \underset {L\in \cX_v(\cD_N)} {\operatorname{argmin}} \|S(f-\ell p(\xi,L)(f),\xi)\|_p,
$$
\be\label{Alg2a}
  B_v(f,\cD_N,L_p(\xi)):= \ell p(\xi,L^s(\xi,f))(f).
\ee
The following analog of this algorithm was studied in \cite{DTM1} and \cite{VT198}. We give its definition in somewhat more general terms.
We define this algorithm for a collection $\cX:= \{X(n)\}_{n=1}^k$ of finite-dimensional subspaces as follows:
$$
n(\xi,f) :=   \underset {1\leq n\leq k} {\operatorname{argmin}}\|f-\ell p(\xi,X(n))(f)\|_p,
$$
\be\label{I4}
  \ell p(\xi,\cX)(f):= \ell p(\xi,X(n(\xi,f)))(f).
\ee

The following result was proved in \cite{DTM3}.

 \begin{Theorem}[{\cite{DTM3}}]\label{ubT3} Let $m$, $v$, $N$ be given positive integers  such that $v\le N$.  Let  $\cX:= \{X(n)\}_{n=1}^k$ be a collection of finite-dimensional subspaces.   Assume that  there exists a set $\xi:= \{\xi^j\}_{j=1}^m  $ of $m$ points in $\Omega$,  which provides the {\it one-sided $L_p$-universal sampling discretization}
   \be\label{D6ub}
 \|f\|_p \le D\left(\frac{1}{m} \sum_{j=1}^m |f(\xi^j)|^p\right)^{1/p}, \quad \forall\, f\in \bigcup_{n=1}^k X(n),
\ee
  for the collection $\cX$.
   Then for   any  function $ f \in \cC(\Omega)$,  we have
\be\label{I5}
  \|f-\ell p(\xi,\cX)(f)\|_p \le 2^{1/p}(2D +1) \min_{1\le n\le k}d(f, X(n))_{L_p(\Omega, \mu_\xi)}
 \ee
 and
 \be\label{I6}
  \|f-\ell p(\xi,\cX)(f)\|_p \le  (2D +1) \min_{1\le n\le k}d(f, X(n))_{\infty}.
 \ee
 \end{Theorem}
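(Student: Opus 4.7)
The plan is to reduce the theorem to the single-subspace estimate in the spirit of Theorem~\ref{AT1}, applied one subspace at a time, and then combine via the argmin defining $\ell p(\xi,\cX)$. More explicitly: for each fixed $n \in \{1,\ldots,k\}$ I would first show that $u_n := \ell p(\xi, X(n))(f)$ itself satisfies the analogs of (\ref{I5}) and (\ref{I6}) with the minimum replaced by the $n$-th term, and then invoke the fact that $\|f - u_{n^*}\|_p$ is minimal over $n$, where $n^* := n(\xi, f)$ is the index selected in (\ref{I4}).

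For the per-subspace step, fix $n$ and an arbitrary $g \in X(n)$. The $L_p(\xi)$-minimality of $u_n$ in $X(n)$ gives $\|f - u_n\|_{L_p(\xi)} \le \|f - g\|_{L_p(\xi)}$, hence $\|g - u_n\|_{L_p(\xi)} \le 2\|f - g\|_{L_p(\xi)}$ by the triangle inequality. Since $g - u_n \in X(n) \subset \bigcup_{k'} X(k')$, assumption (\ref{D6ub}) transfers this into the continuous norm as $\|g - u_n\|_p \le 2D\,\|f - g\|_{L_p(\xi)}$, and one more triangle inequality yields
\[
\|f - u_n\|_p \le \|f - g\|_p + 2D\,\|f - g\|_{L_p(\xi)}.
\]
For (\ref{I5}), I would use the identity $\|h\|_{L_p(\mu_\xi)}^p = \tfrac{1}{2}\bigl(\|h\|_p^p + \|h\|_{L_p(\xi)}^p\bigr)$ that is built into the definition (\ref{muxi}) to bound each of the two right-hand terms by $2^{1/p}\|f - g\|_{L_p(\mu_\xi)}$, and then take the infimum over $g \in X(n)$. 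For (\ref{I6}), I would instead use the cruder $\|f - g\|_p, \|f - g\|_{L_p(\xi)} \le \|f - g\|_\infty$ and take the infimum over $g \in X(n)$. Finally, since $\|f - \ell p(\xi,\cX)(f)\|_p = \min_n \|f - u_n\|_p$ by (\ref{I4}), taking the minimum over $n$ in the per-subspace bounds produces both (\ref{I5}) and (\ref{I6}) with the stated constants.

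The main technical content sits in the per-subspace step, which is essentially the proof of Theorem~\ref{AT1} rerun with the $\mu_\xi$ identity in place of the $L_\infty$ domination of the error by the best approximation. The only point that needs care is to keep the difference $g - u_n$ inside a single $X(n)$ so that the one-sided discretization (\ref{D6ub}) can be invoked; this is automatic because both $g$ and $u_n$ are taken from the same $X(n)$. Combining across the collection is then effortless, precisely because the algorithm is defined by a minimization in the same norm $\|\cdot\|_p$ that appears on the left of (\ref{I5})--(\ref{I6}).
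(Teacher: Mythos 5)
Your proof is correct and follows essentially the same route as the paper's arguments for the companion results (compare the proofs of Theorems \ref{AT1}, \ref{ubT5}, and \ref{CGT3}): discrete $\ell_p$-minimality of $\ell p(\xi,X(n))(f)$, a triangle inequality keeping $g-u_n$ inside $X(n)$ so that (\ref{D6ub}) applies, and the identity $\|h\|_{L_p(\mu_\xi)}^p=\tfrac12\bigl(\|h\|_{L_p(\mu)}^p+\|h\|_{L_p(\mu_m)}^p\bigr)$ (resp.\ the bound by $\|f-g\|_\infty$) before taking the infimum over $g$ and the minimum over $n$ via (\ref{I4}). The constants $2^{1/p}(2D+1)$ and $(2D+1)$ come out exactly as stated, so nothing is missing.
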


 {\bf Comment \ref{lp}.1.} First of all, we note that the algorithm $B_v(\cdot,\cD_N,L_p(\xi))$ only uses
the function values at points $\xi$ and the algorithm $\ell p(\xi,\cX)$ uses an extra information for choosing the $n(\xi,f)$. Second, Theorems \ref{NUT2} and \ref{ubT3} (in the case $\cX=\cX_v(\cD_N)$) are very similar: We require one-sided universal discretization
for $\Sigma_{2v}(\cD_N)$ in Theorem \ref{NUT2} and require one-sided universal discretization for $\Sigma_{v}(\cD_N)$ in Theorem \ref{ubT3} (in the case $\cX=\cX_v(\cD_N)$). However, this makes a big difference in the case of general collections
$\cX$. It is demonstrated in \cite{VT198} on the example of a collection of subspaces of the multivariate trigonometric polynomials with frequencies from  parallelepipeds of approximately the same volume. The recovery results are obtained in \cite{VT198} for the algorithm $\ell \infty(\xi,\cX)$ but not for an analog of the algorithm $B_v(\cdot,\cD_N,L_\infty(\xi))$.

\section{Recovery by greedy algorithms}
\label{gr}

\subsection{WOMP}\label{womp}

The Weak Orthogonal Matching Pursuit (WOMP) is a greedy algorithm defined with respect to a dictionary (or system) $\mathcal{D} \subset H$ in a Hilbert space $H$, which is equipped with the inner product $\langle \cdot, \cdot \rangle$ and the induced norm $\|\cdot\|_H$. This algorithm is also frequently referred to as the Weak Orthogonal Greedy Algorithm (see, e.g., \cite{VTbook}).\\

{\bf Weak Orthogonal Matching Pursuit (WOMP).} Let $\cD=\{g\}$ be a system of  nonzero elements in a Hilbert space  $H$ such that $\sup_{g\in\cD}\|g\|_H\le 1$.
 Let $\tau:=\{t_k\}_{k=1}^\infty\subset (0, 1]$ be a given  sequence of weakness parameters.
Given  $f_0\in H$, we define  a sequence  $\{f_k\}_{k=1}^\infty\subset H$ of residuals  for $k=1,2,\cdots$  inductively  as follows:
\begin{enumerate}[\rm (1)]
	\item
 $ g_k\in \cD$  is any  element  satisfying
$$
|\langle f_{k-1},g_k\rangle | \ge t_k
\sup_{g\in \cD} |\langle f_{k-1},g\rangle |.
$$

\item  Let  $H_k := \sp \{g_1,\dots,g_k\}$, and define
$G_k^\tau(\cdot, \cD)$ to be the orthogonal projection operator from $H$   onto the space $H_k$ .

\item   Define the residual after the $k$th iteration of the algorithm by
\begin{equation*}
f_k := f_0-G_k^\tau(f_0, \cD).
\end{equation*}
\end{enumerate}

When $t_k=1$ for all $k \ge 1$, the WOMP reduces to the Orthogonal Matching Pursuit (OMP). In this paper, we restrict our focus to the case where $t_k = t \in (0, 1]$ for all $k \ge 1$. We denote by $\{G_k^t(\cdot, \mathcal{D})_H\}_{k=1}^\infty$ the WOMP  defined with respect to the weakness parameter $t$ and the system $\mathcal{D}$ in a Hilbert space $H$.
While the elements $g_k$ selected in each step of the algorithm may not be unique, all results presented below are independent of the specific choice of $g_k$. For a comprehensive treatment of greedy approximation theory, we refer the reader to \cite{VTbook}.

 {\bf UP($u,D$). ($u,D$)-unconditional property.}  Let $u, D$ be integers such that $1\leq u\leq D$.
  We say that a system $\cD=\{\vi_i\}_{i\in I}$ of  elements
 in a Hilbert space $H=(H, \|\cdot\|)$
  is ($u,D$)-unconditional with  constant $U>0$ if for any $A\subset I$ and $J\subset I\setminus A$ such that $|A|\leq u$ and  $|A|+|J| \le D$, and for any $f=\sum_{i\in A} c_i \vi_i\in \Sigma_u(\cD)$,  we have
\be\label{UP}
\|f\|\leq U\, \inf_{g\in V_J(\cD)} \|f-g\|,
\ee
where $ V_J(\cD):=\sp\{\vi_i:\  \ i\in J\}$.


 \begin{Theorem}[{\cite[Corollary I.1]{LT}}]\label{PT1} Let $\cD$ be a dictionary in a Hilbert space $H=(H, \|\cdot\|)$ having  the property  {\bf UP($u,D$)}  with   constant $U>0$, where  $u, D$ are integers such that  $1\leq u\leq D$.   Let $f\in H$, and  $t\in (0, 1]$.
  Then there exists a positive constant $c_\ast:=c(t,U)$  depending only on $t$ and  $U$ such that for every integer
   \[ 1\leq v \leq \min\Big\{ u, \  \frac D {1+c_\ast}\Big\},\]
   we have
   \[ \sigma_{\left \lceil{c_\ast v}\right\rceil}(f, \cD)_H \leq \Big\| f- G_{\left \lceil{c_\ast v}\right\rceil}^t(f, \cD)_H\Big\|\le C\sigma_v(f_0,\cD)_H,\]
   where  $C>1$ is an absolute constant, and $\{G_k^t(\cdot, \mathcal{D})_H\}_{k=1}^\infty$ denotes the WOMP  defined with respect to the weakness parameter $t$ and the system $\mathcal{D}$ in  $H$.

\end{Theorem}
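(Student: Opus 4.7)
The lower inequality $\sigma_{\lceil c_*v\rceil}(f,\cD)_H \le \|f - G^t_{\lceil c_*v\rceil}(f,\cD)_H\|$ is automatic because $G^t_k(f,\cD)_H \in \Sigma_k(\cD)$. So all of the work is in establishing the upper bound $\|f_N\|_H \le C\,\sigma$ for $N := \lceil c_*v\rceil$, where $\sigma := \sigma_v(f,\cD)_H$ and $f_k := f - G^t_k(f,\cD)_H$. Fix a best $v$-term approximant $\phi = \sum_{i \in T} a_i \vi_i$ with $|T|\le v$ and $\|f-\phi\|_H=\sigma$, and write $\Lambda_k$ for the indices selected by WOMP through step $k$, $H_k = \sp\{\vi_i : i\in\Lambda_k\}$, $A_k := T\setminus\Lambda_k$, $\phi_k := \sum_{i\in A_k}a_i\vi_i$, and $h_k := (I-P_{H_k})\phi$. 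Since $\phi-\phi_k \in V_{T\cap\Lambda_k}(\cD)\subset H_k$ and $f_k\perp H_k$, one has $h_k = (I-P_{H_k})\phi_k$ and
$$
\|f_k - h_k\|_H = \|(I-P_{H_k})(f-\phi)\|_H \le \sigma.
$$

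The argument combines two ingredients. First, the weak selection rule together with orthogonal projection onto $H_k \supset H_{k-1}$ gives the standard WOMP energy decrement
$$
\|f_{k-1}\|_H^2 - \|f_k\|_H^2 \ge t^2 \sup_{g\in\cD}|\langle f_{k-1},g\rangle|^2.
$$
Second, provided $v + k - 1 \le D$, apply the $(u,D)$-unconditional hypothesis with $A = A_{k-1}$, $J = \Lambda_{k-1}$ to get $\|\phi_{k-1}\|_H \le U\|h_{k-1}\|_H$, and combine this with the identity
$$
\langle f_{k-1}, \phi_{k-1}\rangle = \langle f_{k-1}, \phi\rangle \ge \|f_{k-1}\|_H^2 - \sigma\|f_{k-1}\|_H
$$
(obtained from $\phi-\phi_{k-1}\in H_{k-1}$ and $f_{k-1}\perp H_{k-1}$), the $\ell_\infty$-$\ell_1$ bound $|\langle f_{k-1},\phi_{k-1}\rangle| \le \|a\|_1 \sup_{g\in\cD}|\langle f_{k-1},g\rangle|$, and the single-element consequence of UP (applied with $A=\{i\}$ and $J=(A_{k-1}\cup\Lambda_{k-1})\setminus\{i\}$) giving $\|a\|_\infty \le U\|h_{k-1}\|_H$. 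The resulting selection-quality lower bound has the form
$$
\sup_{g\in\cD}|\langle f_{k-1},g\rangle| \ge \frac{c_1(U)}{v}\bigl(\|f_{k-1}\|_H - c_2\sigma\bigr),
$$
valid once $\|f_{k-1}\|_H \ge B_0\sigma$ with $B_0 = B_0(U)$.

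Iterating these two ingredients to reach $\|f_N\|_H \le C\sigma$ within $N = \lceil c_*v\rceil$ steps is the remaining task. Substitution produces a recursion of the form
$$
\|f_k\|_H^2 \le \|f_{k-1}\|_H^2 - \eta(t,U)\,v^{-2}\,(\|f_{k-1}\|_H - c_2\sigma)^2,
$$
which iterated naively creates a spurious $\log(\|f\|_H/\sigma)$ factor in the required iteration count --- not allowed, because $c_*$ must depend only on $t$ and $U$. To eliminate the logarithmic factor one invokes the Livshitz--Temlyakov dichotomy, splitting the WOMP iterations into those that select an index from $T$ (at most $|T|\le v$ of them) and those that do not. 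On the second type, $\phi_k = \phi_{k-1}$ and hence $\|h_k\|_H \ge \|\phi_{k-1}\|_H/U$ stays bounded below, forcing a residual decrement per step proportional to a fixed quantity of order $\|\phi\|_H^2/v$. Summing over the at least $(c_* - 1)v$ iterations of the second type and choosing $c_* = c_*(t,U)$ large enough forces $\|f_N\|_H \le C\sigma$.

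The principal obstacle is this dichotomy-based iteration step. Extracting an inner-product lower bound of the correct order from UP alone --- without Riesz-type lower bounds on $\Sigma_v(\cD)$ --- is already delicate, but the main subtlety lies in arranging the iteration so that the final count depends on $t$ and $U$ only (and not on $\|f\|_H/\sigma$). The combinatorial-analytic separation of ``good'' from ``bad'' iterations, and the observation that $\|\phi_k\|_H$ is stationary on bad iterations, is the technical heart of \cite{LT} and the most subtle piece of the argument.
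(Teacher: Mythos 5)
The survey itself gives no proof of Theorem \ref{PT1}: it is quoted verbatim from \cite[Corollary I.1]{LT}, so the relevant comparison is with the Livshitz--Temlyakov argument. Your preparatory steps are fine: the lower inequality is indeed trivial, the energy decrement $\|f_{k-1}\|^2-\|f_k\|^2\ge t^2\sup_{g}|\langle f_{k-1},g\rangle|^2$ is standard, the identity $\langle f_{k-1},\phi_{k-1}\rangle=\langle f_{k-1},\phi\rangle\ge\|f_{k-1}\|^2-\sigma\|f_{k-1}\|$ is correct, and your single-index application of {\bf UP} (with $A=\{i\}$, $J=(A_{k-1}\setminus\{i\})\cup\Lambda_{k-1}$) legitimately gives $\|a\|_\infty\le U\|h_{k-1}\|$ for a normalized dictionary. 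But the decisive step --- getting from the per-step estimate to $\lceil c_*(t,U)\,v\rceil$ iterations with an absolute constant $C$ --- is not carried out, and the mechanism you sketch does not work. First, quantitatively: with only $\|a\|_{\ell_1}\le v\|a\|_\infty\le vU\|h_{k-1}\|$ your decrement is of order $v^{-2}(\|f_{k-1}\|-c\sigma)^2$, which even without any logarithm forces $\asymp v^2$ iterations to reduce the residual by a constant factor; no counting of ``good'' versus ``bad'' steps can repair a per-step bound that is a factor $v$ too weak. The genuine argument needs an $\ell_2$-type coefficient bound ($\|a\|_{\ell_1}\le\sqrt{v}\,\|a\|_{\ell_2}$ with $\|a\|_{\ell_2}\lesssim\|h_{k-1}\|$), which {\bf UP} does not yield through the single-index trick, and extracting it is part of the work in \cite{LT}, not a detail.

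Second, the dichotomy you invoke is not the \cite{LT} device and, as stated, is incorrect. On steps that select an index outside $T$ you do get $\phi_k=\phi_{k-1}$ and $\|h_k\|\ge\|\phi_{k-1}\|/U$, but a \emph{lower} bound on $\|h_k\|$ works against you: $\|h_{k-1}\|$ sits in the denominator of your selection-quality estimate, so it cannot ``force a residual decrement of order $\|\phi\|^2/v$''. Moreover $\|\phi_{k-1}\|$ can be a tiny piece of $\phi$ after earlier selections, and the true decrement available at step $k$ is governed by $\|f_{k-1}\|$ and $\sigma$, a quantity that shrinks along the iteration --- which is precisely why the naive recursion produces $\log(\|f\|/\sigma)$. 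What \cite{LT} (following Zhang's analysis of OMP under RIP) actually does is stratify the support of the competitor into blocks according to the size of the coefficients, prove that within $O(|T'|)$ iterations the residual is reduced by a fixed factor relative to a benchmark that discards the small-coefficient tail (absorbing it into the error), and then sum the resulting geometric series over blocks; this is how both the logarithm and the extra factor of $v$ are avoided. Since your proposal explicitly defers exactly this step (``the technical heart of \cite{LT}''), what you have proved is a Lebesgue-type inequality after $C(t,U)\,v^2\log(\|f\|_H/\sigma_v(f,\cD)_H)$ iterations, not the stated theorem.
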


  We will  consider the Hilbert space $L_2(\Omega_m,\mu_m)$
 instead of  $L_2(\Omega,\mu)$,
where $\Omega_m=\{\xi^\nu\}_{\nu=1}^m$ is a set of points that provides a good universal discretization,  and $\mu_m$ is the uniform probability measure on $\Omega_m$, i.e.,
$\mu_m\{ \xi^\nu\} =1/m$, $\nu=1,\dots,m$.    Let $\cD_N(\Omega_m)$ denote  the restriction
of a system  $\cD_N$ on the set  $\Omega_m$.
Theorem \ref{PT2} below  guarantees that the simple greedy algorithm WOMP gives the corresponding Lebesgue-type inequality in the norm $L_2(\Omega_m,\mu_m)$, and hence
 provides good sparse recovery. For simplicity, we denote $L_2(\Omega_m,\mu_m)$ as $L_2^m$ when the underlying set $\Omega_m$ is clear from the context.
 Theorem \ref{PT2} was derived in \cite{DTM2} from Theorem \ref{PT1}.

\begin{Theorem}[{\cite{DTM2}}]\label{PT2}  Let  $\cD_N=\{\vi_j\}_{j=1}^N$ be  a uniformly bounded Riesz basis  in $L_2(\Omega, \mu)$  satisfying  \eqref{ub} and \eqref{Riesz} for some constants $0<R_1\leq R_2<\infty$.
Let $\Omega_m=\{\xi^1,\cdots, \xi^m\}$ be a finite subset of $\Omega$  that provides the $L_2$-universal sampling discretization (\ref{ud}) for the collection
$\cX_u(\cD_N)$ and a given integer $1\leq u\leq N$.    Then  given each weakness parameter $t\in (0, 1]$,   there exists a constant   $c_\ast=c(t,R_1,R_2)\ge 1$
depending only on  $t$ and the constants $R_1$ and $R_2$
 such that for any integer $0\leq v\leq u/(1+c_\ast)$ and any $f\in \cC(\Omega)$,
 we have
 \be\label{mp}
\sigma_{\lceil c_\ast v\rceil}(f,\cD_N)_{L_2^m}\leq \Big\|f-G_{\lceil c_\ast v\rceil}^t(f,\cD)_{L_2^m}  \Big\|_{L_2^m} \le C\sigma_v(f,\cD_N)_{L_2^m},
\ee
and
\be\label{mp2}
\Big\|f-G_{\lceil c_\ast v\rceil}^t(f,\cD)_{L_2^m}  \Big\|_{L_2(\Omega,\mu)}
 \le C\sigma_v(f,\cD_N)_{L_\infty(\Omega,\mu)},
\ee
 where $C>1$ is an  absolute constant,
  and $\{G_k^t(\cdot, \mathcal{D})_{L_2^m}\}_{k=1}^\infty$ denotes the WOMP  defined with respect to the weakness parameter $t$ and the discrete system $\mathcal{D}(\Omega_m)$ in  the Hilbert space $L_2^m=L_2(\Omega_m,\mu_m)$.

 \end{Theorem}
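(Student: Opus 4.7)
\medskip
\noindent\textbf{Proof plan.} The plan is to apply Theorem \ref{PT1} to the restricted system $\cD_N(\Omega_m)$ in the discrete Hilbert space $L_2^m=L_2(\Omega_m,\mu_m)$, and then transfer the resulting Lebesgue-type inequality from $L_2^m$ back to $L_2(\Omega,\mu)$ via the two-sided universal discretization (\ref{ud}). The left-hand inequality in (\ref{mp}) is immediate since $G_{\lceil c_\ast v\rceil}^t(f,\cD)_{L_2^m}\in\Sigma_{\lceil c_\ast v\rceil}(\cD_N)$, so the substance lies in the upper bounds in (\ref{mp}) and (\ref{mp2}).

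\smallskip
The first step is to verify that $\cD_N(\Omega_m)$, viewed as a subset of $L_2^m$, satisfies the unconditional property UP$(u,u)$ with constant $U=\sqrt{3}\,R_2/R_1$. Fix disjoint $A,J\subset\{1,\dots,N\}$ with $|A|\le u$ and $|A|+|J|\le u$, and let $f=\sum_{i\in A}c_i\vi_i$, $g=\sum_{i\in J}b_i\vi_i$. Since $f-g\in\Sigma_u(\cD_N)$, (\ref{ud}) gives $\|f-g\|_{L_2^m}^2\ge \tfrac12\|f-g\|_{L_2(\Omega,\mu)}^2$, and combining with the Riesz lower bound (\ref{Riesz}) yields $\|f-g\|_{L_2^m}^2\ge (R_1^2/2)\sum_{i\in A}|c_i|^2$. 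Similarly $f\in\Sigma_u(\cD_N)$, so (\ref{ud}) and the Riesz upper bound give $\|f\|_{L_2^m}^2\le(3R_2^2/2)\sum_{i\in A}|c_i|^2$. Dividing yields $\|f\|_{L_2^m}\le \sqrt{3}(R_2/R_1)\|f-g\|_{L_2^m}$. Noting that $\|\vi_j\|_{L_2^m}\le\|\vi_j\|_\infty\le 1$ by (\ref{ub}), Theorem \ref{PT1} applied in $H=L_2^m$ with UP$(u,u)$ produces a constant $c_\ast=c(t,R_1,R_2)$ such that for every $1\le v\le u/(1+c_\ast)$,
\[
\bigl\|f-G_{\lceil c_\ast v\rceil}^t(f,\cD)_{L_2^m}\bigr\|_{L_2^m}\le C\,\sigma_v(f,\cD_N(\Omega_m))_{L_2^m}=C\,\sigma_v(f,\cD_N)_{L_2^m},
\]
which is the upper bound in (\ref{mp}); the case $v=0$ is trivial.

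\smallskip
For (\ref{mp2}), set $P:=G_{\lceil c_\ast v\rceil}^t(f,\cD)_{L_2^m}$, fix $\varepsilon>0$, and pick $g_\varepsilon\in\Sigma_v(\cD_N)$ with $\|f-g_\varepsilon\|_\infty\le \sigma_v(f,\cD_N)_\infty+\varepsilon$. Then $g_\varepsilon-P\in\Sigma_{v+\lceil c_\ast v\rceil}(\cD_N)\subset\Sigma_u(\cD_N)$, since $v+\lceil c_\ast v\rceil\le u$ whenever $v\le u/(1+c_\ast)$ (enlarging $c_\ast$ slightly if necessary to absorb the ceiling). By (\ref{ud}),
\[
\|g_\varepsilon-P\|_{L_2(\Omega,\mu)}\le\sqrt{2}\,\|g_\varepsilon-P\|_{L_2^m}\le\sqrt{2}\bigl(\|f-g_\varepsilon\|_{L_2^m}+\|f-P\|_{L_2^m}\bigr),
\]
and then $\|f-P\|_{L_2^m}\le C\,\sigma_v(f,\cD_N)_{L_2^m}\le C\|f-g_\varepsilon\|_{L_2^m}\le C\|f-g_\varepsilon\|_\infty$ via (\ref{mp}), together with the triangle inequality $\|f-P\|_{L_2(\Omega,\mu)}\le\|f-g_\varepsilon\|_\infty+\|g_\varepsilon-P\|_{L_2(\Omega,\mu)}$ and $\varepsilon\to 0$, yields (\ref{mp2}). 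The principal technical point is Step 1 — verifying that the Riesz property transfers from $L_2(\Omega,\mu)$ to the discrete norm $L_2^m$ with a uniform constant across all subspaces in $\cX_u(\cD_N)$, which is precisely the role of the $L_2$-universal sampling discretization hypothesis; the remaining steps are routine invocations of Theorem \ref{PT1} and of the discretization inequality in its other direction.
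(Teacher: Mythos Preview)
Your proof is correct and follows essentially the same approach as the paper: the paper states that Theorem~\ref{PT2} was derived in \cite{DTM2} from Theorem~\ref{PT1}, and your argument---transferring the Riesz/unconditionality property to $L_2^m$ via the two-sided discretization (\ref{ud}), applying Theorem~\ref{PT1} there, and then passing back to $L_2(\Omega,\mu)$ through the discretization inequality and a best $L_\infty$-approximant---is exactly the pattern the paper exhibits explicitly in the analogous proof of Theorem~\ref{CGT2} (with Proposition~\ref{CGP1} playing the role of your Step~1 in the Banach-space setting).
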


 Here is an extension of Theorem \ref{PT2} to the case of $L_p(\Omega,\mu)$, $p\in [2,\infty)$, under an extra Nikol'skii-type inequality condition.

 \begin{Theorem}[{\cite[Theorem 3.1]{VT203}}]\label{NUT1}  Let  $\cD_N=\{\vi_j\}_{j=1}^N$ be  a uniformly bounded Riesz system  in $L_2(\Omega, \mu)$  satisfying  \eqref{ub} and \eqref{Riesz} for some constants $0<R_1\leq R_2<\infty$.
Let $\Omega_m=\{\xi^1,\cdots, \xi^m\}$ be a finite subset of $\Omega$  that provides the $L_2$-universal sampling discretization for the collection
$\cX_u(\cD_N)$ with  $1\leq u\leq N$.   Assume in addition that $\cD_N \in NI(2,p,H,u)$ for some $p\in [2,\infty)$. Then, for a given  weakness parameter $0<t\leq 1$,   there exists a constant integer  $c_\ast=c(t,R_1,R_2)\ge 1$ depending only on $t$ and the constants $R_1$ and $R_2$  such that for any integer $0\leq v\leq u/(1+c_\ast)$ and any $f\in \cC(\Omega)$,  we have
 \be\label{mpn}
\sigma_{\lceil c_\ast v\rceil}(f,\cD_N)_{L_2^m}\leq \Big\|f-G_{\lceil c_\ast v\rceil}^t(f,\cD)_{L_2^m}  \Big\|_{L_2^m} \le C\sigma_v(f,\cD_N)_{L_2^m},
\ee
and
\be\label{mp3n}
\Big\|f-G_{\lceil c_\ast v\rceil}^t(f,\cD)_{L_2^m} \Big\|_{L_p(\Omega,\mu)}
 \le H C\sigma_v(f,\cD_N)_{L_p(\Omega,\mu_\xi)},
\ee
 where $C>1$ is an  absolute constant,
  and $\{G_k^t(\cdot, \mathcal{D})_{L_2^m}\}_{k=1}^\infty$ denotes the WOMP  defined with respect to the weakness parameter $t$ and the discrete system $\mathcal{D}(\Omega_m)$ in  the Hilbert space $L_2^m=L_2(\Omega_m,\mu_m)$, and
  $\mu_\xi$ is defined in (\ref{muxi}).


 \end{Theorem}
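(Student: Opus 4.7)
The plan is to derive this from Theorem \ref{PT2} together with the extra Nikol'skii inequality, following a standard four-step pattern: the triangle inequality around a best $v$-term approximation, discretization to pass between $L_2$ and $L_2^m$, and Nikol'skii to pass from $L_2$ up to $L_p$.

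First, inequality (\ref{mpn}) is essentially the statement of Theorem \ref{PT2} specialized to the discrete Hilbert space $L_2^m=L_2(\Omega_m,\mu_m)$, because WOMP in $L_2^m$ only sees the restriction $\cD_N(\Omega_m)$ and the universal discretization hypothesis gives $\cD_N(\Omega_m)$ the Riesz-basis constants (up to factors depending on $R_1,R_2$). So I would quote (\ref{mpn}) directly from Theorem \ref{PT2} with the same constant $c_\ast=c(t,R_1,R_2)$.

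For (\ref{mp3n}), let $\tilde f:=G_{\lceil c_\ast v\rceil}^t(f,\cD)_{L_2^m}$, and let $g_v\in\Sigma_v(\cD_N)$ be a best $v$-term approximation to $f$ in $L_p(\Omega,\mu_\xi)$, so that $\|f-g_v\|_{L_p(\Omega,\mu_\xi)}=\sigma_v(f,\cD_N)_{L_p(\Omega,\mu_\xi)}$. Write
\[
\|f-\tilde f\|_{L_p(\Omega,\mu)}\le \|f-g_v\|_{L_p(\Omega,\mu)}+\|g_v-\tilde f\|_{L_p(\Omega,\mu)}.
\]
Because $\mu\le 2\mu_\xi$, the first term is bounded by $2^{1/p}\sigma_v(f,\cD_N)_{L_p(\Omega,\mu_\xi)}$, which is already of the required form. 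For the second term, observe that $g_v-\tilde f\in\Sigma_{v+\lceil c_\ast v\rceil}(\cD_N)\subset\Sigma_u(\cD_N)$ provided $v\le u/(1+c_\ast)$; hence the hypothesis $\cD_N\in NI(2,p,H,u)$ gives $\|g_v-\tilde f\|_{L_p(\Omega,\mu)}\le H\|g_v-\tilde f\|_{L_2(\Omega,\mu)}$.

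Next I would push $\|g_v-\tilde f\|_{L_2(\Omega,\mu)}$ down to $L_2^m$. Since $g_v-\tilde f\in\Sigma_u(\cD_N)$, the $L_2$-universal sampling discretization (\ref{ud}) yields $\|g_v-\tilde f\|_{L_2(\Omega,\mu)}\le \sqrt 2\,\|g_v-\tilde f\|_{L_2^m}$. By the triangle inequality and (\ref{mpn}),
\[
\|g_v-\tilde f\|_{L_2^m}\le \|f-g_v\|_{L_2^m}+\|f-\tilde f\|_{L_2^m}\le (1+C)\|f-g_v\|_{L_2^m},
\]
using that $g_v$ is a candidate $v$-term approximant, so $\sigma_v(f,\cD_N)_{L_2^m}\le\|f-g_v\|_{L_2^m}$. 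Finally, since $\mu_m\le 2\mu_\xi$ and $\|\cdot\|_{L_2(\mu_\xi)}\le\|\cdot\|_{L_p(\mu_\xi)}$ for a probability measure with $p\ge 2$, we get $\|f-g_v\|_{L_2^m}\le\sqrt 2\,\sigma_v(f,\cD_N)_{L_p(\Omega,\mu_\xi)}$. Chaining these bounds and using $H\ge 1$ absorbs all numerical constants into a single $HC$.

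The one place requiring care, and which I would expect to be the only real obstacle, is matching the sparsity budget: ensuring that $g_v-\tilde f$ lies in $\Sigma_u(\cD_N)$ so that both (\ref{ud}) and the $u$-term Nikol'skii inequality $NI(2,p,H,u)$ apply. This is precisely why the constraint $v\le u/(1+c_\ast)$ is imposed in the statement, and once this is tracked the rest is a routine triangle-inequality bookkeeping that converts (\ref{mpn}) into the $L_p$ Lebesgue-type bound (\ref{mp3n}) with constant $HC$.
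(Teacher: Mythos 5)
Your proposal is correct and follows essentially the same route as the paper's treatment: \eqref{mpn} is quoted from Theorem \ref{PT2}, and \eqref{mp3n} is obtained by the triangle inequality around a (near-)best $v$-term approximant in $L_p(\Omega,\mu_\xi)$, the $u$-term Nikol'skii inequality applied to $g_v-\tilde f\in\Sigma_u(\cD_N)$, the $L_2$-universal discretization \eqref{ud}, and then \eqref{mpn}, which is exactly the bookkeeping pattern the paper itself uses in the proofs of Theorems \ref{CGT2} and \ref{CGT3}. Your sparsity check is also the right one, and it closes because $c_\ast$ is an integer, so $v+\lceil c_\ast v\rceil=(1+c_\ast)v\le u$.
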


\subsection{WCGA}

We give the  definition of  the Weak Chebyshev Greedy Algorithm (WCGA) in a Banach space,  which  was introduced in \cite{T1}  as a generalization  of the Weak Orthogonal Matching Pursuit (WOMP).
To be more precise,
let $X^\ast$ denote the  dual of a Banach space $X$.
For a nonzero element $g\in X$,  we denote by  $F_g $  a norming (peak) functional for $g$,  that is,  an element  $F_g\in X^\ast$ satisfying
$$
\|F_g\|_{X^*} =1,\qquad F_g(g) =\|g\|_X.
$$
The existence of such a functional is guaranteed by the Hahn-Banach theorem.

Now we can define the WCGA as follows.\\

{\bf Weak Chebyshev Greedy Algorithm (WCGA).}  Let
$\tau := \{t_k\}_{k=1}^\infty$ be a given weakness sequence of  positive numbers $\leq 1$. Let $\cD=\{g\}\subset X$ be a system of nonzero elements in $X$ such that $\|g\|\le 1$ for $g\in\cD$.
Given  $f_0\in X$, we define  the elements  $f_m\in X$ and $\phi_m\in \cD$  for $m=1,2,\cdots$ inductively   as follows:

\begin{enumerate}[\rm (1)]

	\item  $\phi_m  \in \cD$ is any element satisfying
	$$
	|F_{f_{m-1}}(\phi_m)| \ge t_m\sup_{g\in\cD}  | F_{f_{m-1}}(g )|.
	$$
	
	\item  Define
	$$
	\Phi(m) := \sp \{\phi_1,\cdots, \phi_m\},
	$$
	and let $G_m(f_0, \cD)_X$  be the best approximant to $f_0$ from  the space $\Phi(m)$; that is,
	$$
	G_m(f_0, \cD)_X :=\underset {G\in \Phi(m)} {\operatorname{argmin}}\|f_0-G\|_X.
	$$
	
	\item  Define
	$$
	f_m := f_0-G_m(f_0, \cD)_X.
	$$
	
\end{enumerate}

\begin{Remark}\label{CGR1} We defined the WCGA for a system satisfying an extra condition $\|g\|\le 1$ for $g\in\cD$. Clearly, realizations of the WCGA for a new system $\cD^B := \{Bg, \, g\in \cD\}$, where $B$ is a positive number, coincide with those for the system $\cD$. This means that the restriction $\|g\|\le 1$ can be replaced by the restriction $\|g\|\le B$ with some positive number $B$.
\end{Remark}

We  will only consider the WCGA for  the case where $t_k=t\in (0, 1]$ for all $k\ge 1$. For a system $\cD\subset X$, and  a weakness parameter $t\in (0, 1]$, we denote the corresponding WCGA  as $\{G_m^t(\cdot, \mathcal{D})_X\}_{m=1}^\infty$.
We also point out that in the case when $X$ is a Hilbert space,  the WCGA coincides with the well known  WOMP, which  is very popular in signal processing, and in particular, in compressed sensing. In approximation theory the WOMP is also called the Weak Orthogonal Greedy Algorithm (WOGA).

Note that by definition,  $G_k^t(f_0, \cD)_X\in\Sigma_k(\cD)$. Consequently,
\[ \sigma_k(f_0,\cD)_X\leq \|f_k\|_X=\|f_0-G_k^t(f_0,\cD)_X\|_X.\]
Under certain conditions on the system $\cD$ and the Banach space $X$,
an inverse inequality  holds as well. This is established  in
 Theorem \ref{CGT1} below, which  was proved  for real Banach spaces in \cite{VT144} (see also \cite{VTbookMA}, Section 8.7, Theorem 8.7.17, p.431) and for complex Banach spaces in \cite{DGHKT}. Although the theorem was originally stated for dictionaries, the proof remains valid for a general system $\mathcal{D}$.

 Recall that the $(v, S)$-incoherence property of a system $\mathcal{D} \subset X$, where $v$ and $S$ are integers such that $1 \leq v \leq S$, is given in Definition \ref{ID2}. It follows directly from this definition that the $(v, S)$-incoherence property with parameters $V > 0$ and $r > 0$ implies the $(v', S)$-incoherence property with the same parameters for any integer  $1 \leq v' \leq v$.

 \begin{Theorem}[{\cite[Theorem 2.7]{VT144}, \cite[Theorem 8.7.17]{VTbookMA}}]\label{CGT1} Let $X$ be a Banach space satisfying the following condition for some parameter $1<q\le 2$ and constant $\gamma>0$:  $$\eta(X, w)\le \gamma w^q, \  \ \forall w>0.$$
 Suppose that $\cD\subset X$  is a system   having the  ($v_0,S$)-incoherence property  with parameters   $V>0$ and $r>0$, where $v_0$, $S$ are integers such that $1\leq v_0\leq S$. Let  $f\in X$, and let $t\in (0, 1]$ be a given weakness parameter.  Then there exists a constant $C(q)>0$ depending only on $q$ such that
 $$
\sigma_{u}(f,\cD)_X\leq \|f-G_{u}^t(f,\cD)_X\|_X \le C\sigma_v(f,\cD)_X
$$
for every integer $1\leq v\leq v_0$ satisfying $u+v\leq S$, where
  \[ u=u(v,t):=\left\lceil C(q)\gamma^{\frac{1}{q-1}}  \Big(\frac  {V\cdot v^r}  t\Big)^{q'}\ln (V\cdot v)\right\rceil,\   \ q':=\frac{ q}{q-1}, \]
    $C>1$ is an absolute constant, and $\{ G_k^t(\cdot,\cD)_X\}_{k=1}^\infty$ denotes the WCGA with respect to the system $\cD$ and the  weakness parameter $t$.
\end{Theorem}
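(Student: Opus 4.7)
The plan is to combine two standard ingredients from the theory of the WCGA in uniformly smooth Banach spaces: a smoothness-based one-step reduction for $\|f_m\|_X$, and an incoherence-based lower bound on the norming functional at a dictionary element. Set $f_0 = f$, $f_m = f - G_m^t(f,\cD)_X$, $\Phi(m) = \sp\{\phi_1,\dots,\phi_m\}$, and let $\Gamma(m)$ index the atoms chosen through step $m$. Let $g_v = \sum_{i \in A} c_i \vi_i \in \Sigma_v(\cD)$ be a best $v$-term approximant with $|A|\le v$ and $\|f - g_v\|_X = \sigma := \sigma_v(f,\cD)_X$.

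First, I would record the decay lemma. From $\|h+wg\|_X + \|h-wg\|_X \le 2\|h\|_X(1 + \eta(X, w/\|h\|_X))$ for $\|g\|_X \le 1$, combined with $\|h+wg\|_X \ge F_h(h+wg) = \|h\|_X + w F_h(g)$, one obtains $\|h - wg\|_X \le \|h\|_X - w F_h(g) + 2\gamma w^q \|h\|_X^{1-q}$. Optimizing in $w \ge 0$ and using $\|f_m\|_X \le \inf_w \|f_{m-1} - w\phi_m\|_X$ (since $G_m$ is the Chebyshev best approximation from $\Phi(m)$) together with $|F_{f_{m-1}}(\phi_m)| \ge t \sup_{g\in\cD} |F_{f_{m-1}}(g)|$ yields
\begin{equation*}
\|f_m\|_X \le \|f_{m-1}\|_X \Bigl(1 - c_1(q)\gamma^{-1/(q-1)}\, t^{q'}\bigl(\sup_{g\in\cD} |F_{f_{m-1}}(g)|\bigr)^{q'}\Bigr), \qquad q' := q/(q-1).
\end{equation*}

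Second, I would extract the incoherence-based lower bound. Since $X$ is uniformly smooth and $G_{m-1}$ is Chebyshev-best in $\Phi(m-1)$, the norming functional $F_{f_{m-1}}$ annihilates $\Phi(m-1)$; in particular $F_{f_{m-1}}(G_{m-1}) = 0$ and $F_{f_{m-1}}(\vi_i)=0$ for every $i \in \Gamma(m-1)$. Consequently
\begin{equation*}
F_{f_{m-1}}(g_v - G_{m-1}) = F_{f_{m-1}}(f_{m-1}) - F_{f_{m-1}}(f - g_v) \ge \|f_{m-1}\|_X - \sigma.
\end{equation*}
Writing $g_v - G_{m-1} = \sum_{i \in B} d_i \vi_i$ with $B := A \cup \Gamma(m-1)$, the inequality $|F_{f_{m-1}}(g_v - G_{m-1})| \le \bigl(\sum_{i \in A} |d_i|\bigr)\max_{i \in A}|F_{f_{m-1}}(\vi_i)|$ combined with the $(v,S)$-incoherence \eqref{incoh} applied to $A \subset B$ (permissible as long as $|B| \le v + m-1 \le S$) gives $\sum_{i \in A} |d_i| \le V v^r \|g_v - G_{m-1}\|_X \le V v^r(\|f_{m-1}\|_X + \sigma)$, so that
\begin{equation*}
\sup_{g\in\cD}|F_{f_{m-1}}(g)| \ge \frac{\|f_{m-1}\|_X - \sigma}{V v^r (\|f_{m-1}\|_X + \sigma)}.
\end{equation*}

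Third, I would iterate. While $\|f_{m-1}\|_X \ge 2\sigma$ the preceding bound is at least $1/(3Vv^r)$, and the decay lemma yields the geometric contraction $\|f_m\|_X \le \|f_{m-1}\|_X(1-\kappa)$ with $\kappa := c_2(q)\gamma^{-1/(q-1)}\, t^{q'}(3Vv^r)^{-q'}$. A crude iteration gives $\|f_u\|_X \le 2\sigma$ after $u \asymp \kappa^{-1}\log(\|f\|_X/\sigma)$ steps, which has the correct algebraic shape but the wrong logarithm. To replace the $f$-dependent factor $\log(\|f\|_X/\sigma)$ by the uniform $\ln(Vv)$ demanded by the theorem, one partitions the algorithm's run into dyadic stages in which $\|f_m\|_X$ drops by a factor of $2$, re-applies Step~2 at each stage with the sharper current value of $\|g_v - G_{m-1}\|_X$, and tracks $|A \setminus \Gamma(m)|$ combinatorially to bound the number of essential stages by $O(\ln(Vv))$; summing the $O(\kappa^{-1})$ iterations per stage then produces $u(v,t)$ of the advertised form.

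The main obstacle is precisely this last refinement: bounding the number of dyadic stages by $\ln(Vv)$ uniformly in $f$. The argument, carried out in \cite{VT144}, exploits the fact that $|A| \le v$ limits how many atoms of $A$ the algorithm can leave unpicked, so that after $O(\kappa^{-1}\ln(Vv))$ iterations the residual is forced below $2\sigma$ independently of $\|f\|_X$. Finally, the complex-scalar case is handled by the real/imaginary decomposition used in the proof of Lemma~\ref{NLv}, at the cost of harmless absolute constants.
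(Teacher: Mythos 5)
You should first note that the survey does not prove Theorem \ref{CGT1} at all: it is imported from \cite[Theorem 2.7]{VT144} and \cite[Theorem 8.7.17]{VTbookMA} (real scalars), with the complex case attributed to \cite{DGHKT}. So your sketch has to stand on its own, and its first two steps do: the smoothness-based one-step decay $\|f_m\|_X\le\|f_{m-1}\|_X\bigl(1-c(q)\gamma^{-1/(q-1)}t^{q'}\bigl(\sup_{g\in\cD}|F_{f_{m-1}}(g)|\bigr)^{q'}\bigr)$, and the lower bound $\sup_{g\in\cD}|F_{f_{m-1}}(g)|\ge(\|f_{m-1}\|_X-\sigma)/\bigl(Vv^r(\|f_{m-1}\|_X+\sigma)\bigr)$ obtained from $F_{f_{m-1}}\perp\Phi(m-1)$ together with the $(v_0,S)$-incoherence applied to $A\subset A\cup\Gamma(m-1)$ (this is exactly where the hypothesis $u+v\le S$ is consumed), are the standard opening of Temlyakov's argument and are carried out correctly.

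The gap is the decisive third step. Replacing the $f$-dependent count $\kappa^{-1}\log(\|f\|_X/\sigma)$ by the uniform $u\asymp\gamma^{1/(q-1)}(Vv^r/t)^{q'}\ln(Vv)$ is precisely the content of the theorem, and you do not prove it: you explicitly defer it to \cite{VT144}. Moreover, the mechanism you sketch — dyadic stages in which the residual norm $\|f_m\|_X$ halves, with Step~2 re-applied using the ``sharper'' current value of $\|g_v-G_{m-1}\|_X$ — cannot deliver the result: the number of such halvings is again of order $\log(\|f\|_X/\sigma)$, and $\|g_v-G_{m-1}\|_X\ge\|f_{m-1}\|_X-\sigma$ is never essentially smaller than the current residual, so nothing improves from stage to stage. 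The case $\sigma_v(f,\cD)_X=0$ makes the obstruction vivid: the theorem then asserts exact recovery after $u$ iterations with $u$ independent of $f$, which no argument based on geometric decay of the residual toward a multiple of $\sigma$ can yield. The actual proof runs the bootstrap on the sparse approximant rather than on the residual norm: one controls the $\ell_1$-mass of the coefficients of $g_v$ supported outside $\Gamma(m)$ (equivalently, forces the algorithm to capture the atoms of $A$, after which $\|f_m\|_X\le\|f_0-g_v\|_X$ because $g_v\in\Phi(m)$ and $G_m$ is a Chebyshev projection), showing that this mass drops by a constant factor every $O\bigl(\gamma^{1/(q-1)}(Vv^r/t)^{q'}\bigr)$ iterations; since the incoherence property bounds the relevant mass ratios by a power of $Vv$, only $O(\ln(Vv))$ such blocks are needed, independently of $f$. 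Supplying that recursion is what your proposal lacks. A smaller inaccuracy: the complex case does not follow from the decomplexification trick of Lemma \ref{NLv}, since the WCGA run over complex scalars is not two real WCGA runs; this is why the survey cites \cite{DGHKT} separately for it.
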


We will apply Theorem \ref{CGT1} to the discretized version of the given system $\cD_N = \{g_i\}_{i=1}^N\subset \cC(\Omega)$.  Typically, instead of the space $L_p(\Omega,\mu)$, we consider the space $L_p(\Omega_m,\mu_m)$
where $\Omega_m=\{\xi^\nu\}_{\nu=1}^m$ is from Definition \ref{ID1} and  $\mu_m(\xi^\nu) =1/m$, $\nu=1,\dots,m$. Let $\cD_N(\Omega_m)$ be the restriction
of $\cD_N$ on the set $\Omega_m$. Here and elsewhere in the paper,  we  use the notation $\Omega_m$ to denote the set
$\xi$ in order to emphasize that the set $\xi$ plays the role of a new domain $\Omega_m$ consisting of $m$ points instead of
the original domain $\Omega$.

\begin{Proposition}[{\cite{VT202}}]\label{CGP1} Let $1\le p<\infty$, and let
$v, u, S$ be given integers such that $1\le v\leq u\leq S \leq N$. Assume that  $\cD_N = \{g_i\}_{i=1}^N\subset \cC(\Omega)$ is a system  that has   the    ($v,S$)-incoherence property in the space $L_p(\Og, \mu)$ for some  parameters   $V>0$ and $r>0$. Assume in addition that there exists a finite set $\Og_m:=\{\xi^1,\cdots, \xi^m\}\subset \Og$ which provides  the $L_p$-universal sampling discretization \eqref{ud} for
  the collection $\cX_u(\cD_N)$. Then the system $\cD_N(\Omega_m)$ has the ($v,u$)-incoherence property with  parameters $2^{1/p}V$ and $r$, and moreover,
\be\label{CG3}
\|g_i\|_{L_p(\Omega_m,\mu_m)} \le (3/2)^{1/p} \|g_i\|_p,\quad i=1,\dots,N.
\ee
\end{Proposition}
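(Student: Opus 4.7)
The plan is to derive both assertions directly from the two-sided discretization inequality
$$
\frac{1}{2}\|f\|_p^p \le \|f\|_{L_p(\Omega_m,\mu_m)}^p \le \frac{3}{2}\|f\|_p^p,\qquad f\in\bigcup_{V\in\cX_u(\cD_N)}V,
$$
that the set $\Omega_m$ is assumed to provide for the collection $\cX_u(\cD_N)$. In particular, since every sum $f=\sum_{i\in B} c_i g_i$ with $|B|\le u$ belongs to some subspace in $\cX_u(\cD_N)$, it automatically satisfies these inequalities, and each individual $g_i$ does as well (it lies in a one-dimensional subspace of $\cX_u(\cD_N)$, as $1\le v\le u$).

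For the incoherence claim, I would fix $A\subset B\subset \{1,\dots,N\}$ with $|A|\le v$ and $|B|\le u$, and arbitrary coefficients $\{c_i\}_{i\in B}$. Set $f:=\sum_{i\in B} c_i g_i$. Since $|B|\le u\le S$, the hypothesis that $\cD_N$ has the $(v,S)$-incoherence property with parameters $V,r$ in $L_p(\Omega,\mu)$ yields
$$
\sum_{i\in A}|c_i|\le V|A|^r \|f\|_{L_p(\Omega,\mu)}.
$$
Applying the lower half of the $L_p$-universal sampling discretization inequality to $f$ gives $\|f\|_{L_p(\Omega,\mu)}\le 2^{1/p}\|f\|_{L_p(\Omega_m,\mu_m)}$, and chaining these two bounds produces
$$
\sum_{i\in A}|c_i|\le 2^{1/p}V|A|^r\left\|\sum_{i\in B} c_i g_i\right\|_{L_p(\Omega_m,\mu_m)},
$$
which is exactly the $(v,u)$-incoherence property of $\cD_N(\Omega_m)$ in $L_p(\Omega_m,\mu_m)$ with parameters $2^{1/p}V$ and $r$.

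For the pointwise norm bound (\ref{CG3}), I would simply apply the upper half of the discretization inequality to the single element $g_i\in \Sigma_1(\cD_N)\subset \bigcup_{V\in\cX_u(\cD_N)}V$, which gives
$$
\|g_i\|_{L_p(\Omega_m,\mu_m)}^p\le \tfrac{3}{2}\|g_i\|_p^p,
$$
and take $p$-th roots. There is essentially no serious obstacle here; the proposition is a clean bookkeeping step that transfers the incoherence property from the continuous setting to its discretized counterpart via the universal sampling discretization hypothesis. The only point worth being careful about is to check that the relevant sums indeed live in some element of $\cX_u(\cD_N)$, so that the discretization inequality is legally applicable, which is guaranteed by the assumption $|B|\le u$.
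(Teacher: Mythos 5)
Your argument is correct and is exactly the standard transfer argument behind this proposition (which the survey itself only cites from \cite{VT202} without reproducing a proof): the lower half of the universal discretization inequality converts the $(v,S)$-incoherence in $L_p(\Omega,\mu)$ into $(v,u)$-incoherence for $\cD_N(\Omega_m)$ with the extra factor $2^{1/p}$, and the upper half applied to each single $g_i$ gives \eqref{CG3}. The only cosmetic point is the phrase about $g_i$ lying in ``a one-dimensional subspace of $\cX_u(\cD_N)$'': what is meant (and what is needed) is that $\sp\{g_i\}$, like any span of at most $u$ elements of $\cD_N$, is contained in some $V_J(\cD_N)$ with $|J|=u$, so the discretization inequality applies.
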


We now prove a conditional result on the sampling recovery by the WCGA. We call it {\it conditional} because Theorem \ref{CGT2} below is proved under two conditions on the system $\cD_N$, which are non-trivial and non-standard conditions.

\begin{Theorem}[{\cite{VT202}}]\label{CGT2} Let $1<p<\infty$,  $q:= \min(p,2)$ and  $q' := q/(q-1)$. Let
$v_0, u, S$ be given integers such that $1\le v_0\leq u\leq S \leq N$.
Let $\cD_N = \{g_i\}_{i=1}^N\subset \cC(\Omega)$ be a system that  has the ($v_0,S$)-incoherence property in the space $L_p(\Omega, \mu)$ with  parameters $V, r>0$. Assume  that there exists a finite set $\xi=\Og_m=\{\xi^1,\cdots, \xi^m\}\subset \Og$ which provides  the $L_p$-universal sampling discretization \eqref{ud} for
  the collection $\cX_u(\cD_N)$.
Then given each weakness parameter $t\in (0, 1]$, there exists a constant  $c=C(t,p)\ge 1$ depending only on $t$ and $p$ such that for any positive integer $1\leq v\leq v_0$  satisfying
 $$
  v+a(v)\leq u\   \ \text{with }\  \  a(v):=\lceil c(2V\cdot v^r)^{q'}(\ln (2Vv)) \rceil,
 $$
 and for any  $f_0\in \cC(\Omega)$, we have
 \be\label{mpc}
\sigma_{a(v)}(f_0,\cD_N)_{L_p^m}\leq \Big\|f_0-G_{a(v)}^t(f_0, \cD)_{L_p^m}\Big\|_{L_p^m} \le C\sigma_v(f_0,\cD_N)_{L_p^m},
\ee
and
\be\label{mp2c}
\Big\|f_0-G_{a(v)}^t(f_0, \cD)_{L_p^m}\Big\|_{L_p(\Omega, \mu)} \le C\sigma_v(f_0,\cD_N)_{L_\infty(\Og,\mu)},
\ee
where $C\ge 1$ is an absolute constant, $L_p^m$ denotes the $L_p$-space
 $L_p(\Omega_m,\mu_m)$, and $\{G_{k}^t(\cdot, \cD)_{L_p^m}\}_{k=1}^\infty$ denotes the
 WCGA defined  with respect to the  weakness parameter $t\in (0, 1]$  and  the system  $\cD_N(\Omega_m)$ in the space $L_p^m=L_p(\Omega_m,\mu_m)$. \end{Theorem}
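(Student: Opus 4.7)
The plan is to transfer the problem from the continuous setting $L_p(\Omega,\mu)$ to the discrete setting $L_p^m=L_p(\Omega_m,\mu_m)$, apply the known Lebesgue-type inequality for the WCGA (Theorem \ref{CGT1}) there, and then lift the resulting bound back to $L_p(\Omega,\mu)$ using the universal sampling discretization.

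First, I would feed the hypotheses into Proposition \ref{CGP1}. It tells us that the restricted system $\cD_N(\Omega_m)$ inherits the $(v,u)$-incoherence property with parameters $2^{1/p}V$ and $r$ in $L_p^m$, and that $\|g_i\|_{L_p^m}\le (3/2)^{1/p}\|g_i\|_{L_p(\Omega,\mu)}$, so by Remark \ref{CGR1} the WCGA in $L_p^m$ can be run against this system. Next, since $1<p<\infty$, the space $L_p^m$ is a complex uniformly smooth Banach space whose modulus of smoothness satisfies $\eta(L_p^m,w)\le \gamma w^q$ for $q=\min(p,2)$ and a constant $\gamma=\gamma(p)$ by (\ref{CG2}). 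Applying Theorem \ref{CGT1} with $X=L_p^m$, system $\cD_N(\Omega_m)$, incoherence parameters $(2^{1/p}V,r)$, and weakness parameter $t$, we obtain a constant $c=C(t,p)\ge 1$ such that with $a(v):=\lceil c(2Vv^r)^{q'}\ln(2Vv)\rceil$ satisfying $v+a(v)\le u$, the Lebesgue-type inequality
\[
\sigma_{a(v)}(f_0,\cD_N)_{L_p^m}\le \bigl\|f_0-G_{a(v)}^t(f_0,\cD)_{L_p^m}\bigr\|_{L_p^m} \le C\,\sigma_v(f_0,\cD_N)_{L_p^m}
\]
holds, which is (\ref{mpc}). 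Here the routine bookkeeping absorbs the constants $\gamma$, $C(q)$, and the factor $2^{1/p}$ into $c$.

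For (\ref{mp2c}) I would use a best-approximant swap. Let $h\in\Sigma_v(\cD_N)$ be a best $v$-term approximation to $f_0$ in the uniform norm, so that $\|f_0-h\|_\infty=\sigma_v(f_0,\cD_N)_\infty$. Write $f_0-G_{a(v)}^t(f_0,\cD)_{L_p^m}=(f_0-h)+(h-G_{a(v)}^t(f_0,\cD)_{L_p^m})$. The first summand is trivially bounded in $L_p(\Omega,\mu)$ by $\sigma_v(f_0,\cD_N)_\infty$. For the second, $h-G_{a(v)}^t(f_0,\cD)_{L_p^m}$ lies in $\Sigma_{v+a(v)}(\cD_N)\subset \Sigma_u(\cD_N)$, so the universal sampling discretization (\ref{ud}) yields
\[
\bigl\|h-G_{a(v)}^t(f_0,\cD)_{L_p^m}\bigr\|_{L_p(\Omega,\mu)} \le 2^{1/p}\bigl\|h-G_{a(v)}^t(f_0,\cD)_{L_p^m}\bigr\|_{L_p^m}.
\]
A triangle inequality plus (\ref{mpc}) and the chain $\sigma_v(f_0,\cD_N)_{L_p^m}\le \|f_0-h\|_{L_p^m}\le \|f_0-h\|_\infty =\sigma_v(f_0,\cD_N)_\infty$ then controls the right side by a constant multiple of $\sigma_v(f_0,\cD_N)_\infty$, giving (\ref{mp2c}).

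The main obstacle is conceptual rather than computational: one has to verify that the hypotheses of Theorem \ref{CGT1} actually survive the passage to the discrete space $L_p^m$. This requires simultaneously (i) the correct modulus-of-smoothness constant $q=\min(p,2)$ in $L_p^m$ uniformly in $m$, (ii) the preservation of the incoherence property with comparable parameters (given by Proposition \ref{CGP1}), and (iii) ensuring the depth $a(v)$ is controlled so that $v+a(v)\le u$ still holds after all constants are absorbed. Once these pieces are aligned, (\ref{mpc}) is immediate and (\ref{mp2c}) follows from the swap argument above.
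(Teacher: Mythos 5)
Your proposal is correct and follows essentially the same route as the paper: Proposition \ref{CGP1} plus the modulus-of-smoothness bound (\ref{CG2}) to apply Theorem \ref{CGT1} in $L_p^m$ for (\ref{mpc}) (the paper merely splits $2\le p<\infty$ and $1<p\le 2$ to name $\gamma$ explicitly), and then the same best-uniform-approximant swap, with $h-G_{a(v)}^t(f_0,\cD)_{L_p^m}\in\Sigma_u(\cD_N)$ lifted via the universal discretization (\ref{ud}), for (\ref{mp2c}). No gaps.
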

 \begin{proof} Theorem \ref{CGT2} is a corollary of Theorem \ref{CGT1} and Proposition \ref{CGP1}. We begin with a proof of (\ref{mpc}).
 Consider separately two cases (I) $2\le p<\infty$ and (II) $1<p\le 2$.

 {\bf Case (I) $2\le p<\infty$.} By (\ref{CG2}) we have $\eta(L_p,w)\le (p-1)w^2/2,\  \ w>0.$ In our case
 $q=q'=2$. We set $c_\ast=C(t,p) := C(t,(p-1)/2,2)$, where $C(t,\gamma,q)$ is from Theorem \ref{CGT1}. Proposition \ref{CGP1} guarantees that we can apply Theorem \ref{CGT1} to the system
 $\cD_N(\Omega_m)$ in the space $L_p(\Omega_m,\mu_m)$. This gives us inequality (\ref{mpc}).

 {\bf Case (II) $1<p\le 2$.} By (\ref{CG2}) in this case we have $\eta(L_p,w)\le w^p/p,\  \ w>0.$ Also, $q=p$ and $q'=p/(p-1)=p'$ defined in Theorem \ref{CGT1}. We set $c=C(t,p) := C(t,1/p,p)$, where $C(t,\gamma,q)$ is from Theorem \ref{CGT1}. Proposition \ref{CGP1} guarantees that we can apply Theorem \ref{CGT1} to the system
 $\cD_N(\Omega_m)$ in the space $L_p(\Omega_m,\mu_m)$. This gives us inequality (\ref{mpc}).

We now derive (\ref{mp2c}) from (\ref{mpc}).   Clearly,
$$
\sigma_v(f_0,\cD_N(\Omega_m))_{L_p(\Omega_m,\mu_m)} \le \sigma_v(f_0,\cD_N )_\infty.
$$
Let $f\in \Sigma_v(\cD_N)$ be such that  $\|f_0-f\|_\infty = \sigma_v(f_0,\cD_N)_\infty$. Let us set $v':= \lceil c (2V)^{q'}(\ln (2Vv)) v^{rq'}\rceil$ for brevity. Then (\ref{mpc}) implies
$$
\|f - G^t_{v'}(f_0,\cD_N(\Omega_m))\|_{L_p(\Omega_m,\mu_m)} \le \|f-f_0\|_{L_p(\Omega_m,\mu_m)} +\|f_{v'}\|_{L_p(\Omega_m,\mu_m)}
$$
$$
\le (1+C)\sigma_v(f_0,\cD_N)_\infty.
$$
Using that $f - G_{v'}^t(f_0,\cD_N(\Omega_m)) \in \Sigma_u(\cD_N)$, by discretization (\ref{ud}) we
conclude that
\be\label{ub3}
\|f - G_{v'}^t(f_0,\cD_N(\Omega_m))\|_{L_p(\Omega,\mu)} \le 2^{1/p}(1+C)\sigma_v(f_0,\cD_N)_\infty.
\ee
Finally,
$$
\|f_{v'}\|_{L_p(\Omega,\mu)} \le \|f-f_0\|_{L_p(\Omega,\mu)} + \|f - G_{v'}^t(f_0,\cD_N(\Omega_m))\|_{L_p(\Omega,\mu)}.
$$
This and (\ref{ub3}) prove (\ref{mp2c}).

\end{proof}

\begin{Theorem}\label{CGT3}
	Under the conditions of Theorem \ref{CGT2}, we have
	\be\label{mp3}
	\Big\|f_0-G_{a(v)}^t(f_0, \cD)_{L_p^m}\Big\|_{L_p(\Omega, \mu)} \le C' \sigma_v(f_0,\cD_N)_{L_p(\Omega, \mu_\xi)},
	\ee
	where   $a(v)$ is    from Theorem \ref{CGT2}, $C'$ is a positive absolute constant, and
	$$
	\mu_\xi := \frac {\mu+\mu_m}{2}=\frac{1}{2} \mu + \frac{1}{2m} \sum_{j=1}^m \delta_{\xi^j}.
	$$
\end{Theorem}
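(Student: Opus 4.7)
The plan is to adapt the argument that derives \eqref{mp2c} from \eqref{mpc} in Theorem \ref{CGT2}, but now using the observation that the measure $\mu_\xi = (\mu+\mu_m)/2$ controls both $\mu$ and $\mu_m$ simultaneously. Specifically, for any measurable function $h$ we have
$$
\|h\|_{L_p(\Omega,\mu)}^p \le 2\|h\|_{L_p(\Omega,\mu_\xi)}^p \quad \text{and} \quad \|h\|_{L_p(\Omega_m,\mu_m)}^p \le 2\|h\|_{L_p(\Omega,\mu_\xi)}^p,
$$
so $\mu_\xi$ is simply a convenient common dominating measure that lets us replace the strong $L_\infty$ norm used in \eqref{mp2c} by this weaker integral norm.

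First, I would pick $f\in\Sigma_v(\cD_N)$ to be a near-best $v$-term approximant of $f_0$ in $L_p(\Omega,\mu_\xi)$, so that $\|f_0-f\|_{L_p(\Omega,\mu_\xi)}\le (1+\epsilon)\sigma_v(f_0,\cD_N)_{L_p(\Omega,\mu_\xi)}$. The second inequality above implies
$$
\sigma_v(f_0,\cD_N)_{L_p(\Omega_m,\mu_m)} \le \|f_0-f\|_{L_p(\Omega_m,\mu_m)} \le 2^{1/p}\|f_0-f\|_{L_p(\Omega,\mu_\xi)}\le 2^{1/p}(1+\epsilon)\sigma_v(f_0,\cD_N)_{L_p(\Omega,\mu_\xi)}.
$$
Combining this with \eqref{mpc} from Theorem \ref{CGT2}, and writing $g:=G^t_{a(v)}(f_0,\cD)_{L_p^m}$, I obtain
$$
\|f_0-g\|_{L_p(\Omega_m,\mu_m)} \le C\,\sigma_v(f_0,\cD_N)_{L_p(\Omega_m,\mu_m)} \le 2^{1/p}C(1+\epsilon)\sigma_v(f_0,\cD_N)_{L_p(\Omega,\mu_\xi)}.
$$

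Second, I would move back to the continuous norm $L_p(\Omega,\mu)$ by exploiting that $g-f\in \Sigma_{a(v)+v}(\cD_N)\subset \Sigma_u(\cD_N)$ and invoking the $L_p$-universal sampling discretization \eqref{ud} for the collection $\cX_u(\cD_N)$, which gives $\|g-f\|_{L_p(\Omega,\mu)}\le 2^{1/p}\|g-f\|_{L_p(\Omega_m,\mu_m)}$. Triangle inequality in $L_p(\Omega_m,\mu_m)$ together with the previous display and the estimate $\|f_0-f\|_{L_p(\Omega_m,\mu_m)}\le 2^{1/p}(1+\epsilon)\sigma_v(f_0,\cD_N)_{L_p(\Omega,\mu_\xi)}$ controls $\|g-f\|_{L_p(\Omega_m,\mu_m)}$ by a multiple of $\sigma_v(f_0,\cD_N)_{L_p(\Omega,\mu_\xi)}$. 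Finally, one more triangle inequality
$$
\|f_0-g\|_{L_p(\Omega,\mu)} \le \|f_0-f\|_{L_p(\Omega,\mu)} + \|f-g\|_{L_p(\Omega,\mu)}
$$
together with $\|f_0-f\|_{L_p(\Omega,\mu)}\le 2^{1/p}\|f_0-f\|_{L_p(\Omega,\mu_\xi)}$ yields \eqref{mp3} with an absolute constant $C'$, after letting $\epsilon\to 0$.

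There is no real obstacle here; the argument is essentially bookkeeping with the three norms $\|\cdot\|_{L_p(\Omega,\mu)}$, $\|\cdot\|_{L_p(\Omega_m,\mu_m)}$, and $\|\cdot\|_{L_p(\Omega,\mu_\xi)}$. The only subtle point is that one must select the near-best approximant with respect to the $\mu_\xi$-norm (not the $L_\infty$-norm as in the proof of \eqref{mp2c}) and then verify that the element $g-f$ still lies in $\Sigma_u(\cD_N)$, so that the universal discretization hypothesis applies. This is guaranteed by the constraint $v+a(v)\le u$ already imposed in Theorem \ref{CGT2}.
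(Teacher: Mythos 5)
Your proposal is correct and follows essentially the same route as the paper's proof: both pick a (near-)best $v$-term approximant with respect to $\mu_\xi$, use the dominations $\mu,\mu_m\le 2\mu_\xi$, apply the Lebesgue-type inequality \eqref{mpc} of Theorem \ref{CGT2} in $L_p(\Omega_m,\mu_m)$, and pass back to $L_p(\Omega,\mu)$ via the universal discretization applied to the difference lying in $\Sigma_{v+a(v)}(\cD_N)\subset\Sigma_u(\cD_N)$. The only difference is cosmetic bookkeeping (you split by the triangle inequality in $\mu$ and $\mu_m$, while the paper first passes to $\mu_\xi$ and splits there), and your handling of the key subtle points (choice of approximant in the $\mu_\xi$-norm and the sparsity constraint $v+a(v)\le u$) matches the paper.
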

\begin{proof}
	For convenience, we use the notation $\|\cdot\|_{p,\nu}$  to denote the norm of $L_p$ defined with respect to a measure $\nu$ on $\Omega$.
	Let $g\in \Sigma_v(\cD_N)$ be such that  $\|f_0-g\|_{p,\mu_\xi} = \sigma_v(f_0,\cD_N)_{p,\mu_\xi}$.
Let $$v':=a(v)=\lceil c(2V)^{q'} (\ln (2Vv)) v^{rq'}\rceil.$$
	Then
	\begin{align*}
	\|f_{v'} \|_{p,\mu}&\leq
	2^{1/p} \|f_0 - G_{v'}^t(f_0,\cD_N)_{L_p^m}\|_{p,\mu_\xi}\\
	&\leq 2^{1/p}\|f_0-g\|_{p,\mu_\xi}+2^{1/p} \|g- G_{v'}^t(f_0,\cD_N)_{L_p^m}\|_{p,\mu_\xi}\\
	&\leq 2^{1/p} \sigma_v(f_0,\cD_N)_{p,\mu_\xi}+ 2^{1/p}\|g- G_{v'}^t(f_0,\cD_N)_{L_p^m}\|_{p,\mu_\xi}.
	\end{align*}
	Since
	\[ g-G_{v'}^t(f_0,\cD_N)_{L_p^m}\in \Sigma_{v+v'} (\cD_N) \subset \Sigma_u(\cD_N),\]
	it follows by the $L_p$-usd assumption that
	\begin{align*}
	\|g-& G_{v'}^t(f_0,\cD_N)_{L_p^m}\|_{p,\mu_\xi}\leq C_1 \|g- G_{v'}^t(f_0,\cD_N)_{L_p^m}\|_{p,\mu_m}\\
	&	\leq C_1\|f_0-g\|_{p,\mu_m}+C_1 \|f_0- G_{v'}^t(f_0,\cD_N)_{L_p^m}\|_{p,\mu_m}\\
	&\leq 2^{1/p}C_1 \|f_0-g\|_{p,\mu_\xi}+ C_1 \|f_{v'} \|_{p,\mu_m},\end{align*}
	which, by Theorem \ref{CGT2}, is estimated by
	\begin{align*}
	&\leq C_2  \sigma_v(f_0,\cD_N)_{p,\mu_\xi}+ C_3\sigma_v(f_0,\cD_N(\Omega_m))_{p,\mu_m}\leq C' \sigma_v(f_0,\cD_N)_{p,\mu_\xi}.
	\end{align*}

\end{proof}

\section{Recovery on classes with structural condition}
\label{str}

 \subsection{Recovery on classes with conditions on the cubic decompositions}

For the reader's convenience we give a detailed definition of the classes $\bA^r_\bt(\Psi)$, which were introduced and studied in \cite{VT202}. For a given $1\le p\le \infty$,
let $\Psi =\{\psi_{\bk}\}_{\bk \in \bbZ^d}$  be a system in the space $L_p(\Omega,\mu)$ such that $\psi_\bk \in \cC(\Omega)$ and  $\|\psi_\bk\|_p \le B$ for all  $\bk\in\bbZ^d$. We consider functions representable in the form of an  absolutely convergent series
\be\label{Irepr}
f = \sum_{\bk\in\bbZ^d} a_\bk(f)\psi_\bk \quad\  \ \text{with}\  \  \sum_{\bk\in\bbZ^d} |a_\bk(f)|<\infty.
\ee
For $\bt \in (0,1]$ and $r>0$,  consider the class $\bA^r_\bt(\Psi)$ of all functions $f$ which   have representations (\ref{Irepr}) satisfying the  conditions
\be\label{IAr}
  \left(\sum_{\lfloor2^{j-1}\rfloor\le \|\bk\|_\infty <2^j} |a_\bk(f)|^\bt\right)^{1/\bt} \le 2^{-rj},\quad j=0,1,\dots  .
\ee

 The following Theorem \ref{RT1} was proved in \cite{VT202}.

  \begin{Theorem}[{\cite{VT202}}]\label{RT1} Let $1<p<\infty$, $r>0$, $\bt\in (0,1]$. Assume that $\sup_{\bk\in\bbZ^d}\|\psi_\bk\|_{L_p(\Omega,\mu)} \le B$ for some constant $B\ge 1$. Then there exist two positive constants $c^*=c(r,\bt,p,d)$ and $C=C(r,\bt,p,d)$ such that
 for any integer $v\in \bbN$,  there is an integer  $J\in\bbN$,  independent of the measure $\mu$, such that $2^J \le v^{c^*}$, and  the system $$\Psi_J := \Big\{\psi_\bk:\  \bk\in\bbZ^d,\  \|\bk\|_\infty <2^J\Big\},$$
 we have
 $$
 \sigma_v(\bA^r_\bt(\Psi),\Psi_J)_{L_p(\Omega,\mu)} \le CBv^{1/q-1/\bt-r/d},
 $$
 where  $q:= \min\{p,2\}$.
 Moreover, this bound is provided by a simple greedy algorithm.
 \end{Theorem}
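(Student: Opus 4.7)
My plan is to decompose $f$ dyadically via its given representation, truncate at a carefully chosen level $J$, and sparsely approximate each retained dyadic shell with a budget-allocated portion of the $v$ available terms. Fix a representation $f=\sum_{\bk\in\bbZ^d}a_\bk(f)\psi_\bk$ witnessing $f\in\bA^r_\bt(\Psi)$ and write $f=\sum_{j\ge 0}f_j$ with $f_j:=\sum_{\bk\in S_j}a_\bk(f)\psi_\bk$, where $S_j:=\{\bk:\lfloor 2^{j-1}\rfloor\le\|\bk\|_\infty<2^j\}$. Since $\bt\in(0,1]$ forces $\|a\|_{\ell^1}\le\|a\|_{\ell^\bt}$ for any finite sequence $a$, condition \eqref{IAr} yields $\sum_{\bk\in S_j}|a_\bk(f)|\le 2^{-rj}$, and hence $\|f_j\|_p\le B\cdot 2^{-rj}$ by the triangle inequality. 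Set $\alpha:=1/\bt-1/q>0$ and pick $J:=\lceil c\log_2 v\rceil$ with $c:=(\alpha+r/d)/r$; this makes the tail satisfy $\|\sum_{j>J}f_j\|_p\le CB\cdot 2^{-rJ}\le CB\cdot v^{1/q-1/\bt-r/d}$, and yields $2^J\le v^{c^*}$ with $c^*:=c$, so all approximants can be taken from $\Psi_J$.

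Next I establish the per-shell sparse approximation estimate
$$
\sigma_{v_j}(f_j,\Psi)_{L_p(\Omega,\mu)}\le CB\cdot 2^{-rj}\cdot v_j^{1/q-1/\bt},\qquad 1\le v_j\le|S_j|.
$$
For $\bt=1$ this is immediate from Lemma \ref{NLv} applied to the normalised element $f_j/(B\cdot 2^{-rj})\in A_1(\{B^{-1}\psi_\bk\}_{\bk\in S_j})$, using that $L_p$ has modulus of smoothness of power type $q=\min(p,2)$. For $\bt\in(0,1)$ I invoke the standard two-stage argument: the decreasing rearrangement of $(|a_\bk(f)|)_{\bk\in S_j}$ satisfies $|a_{(k)}|\le 2^{-rj}k^{-1/\bt}$, so after peeling off the top $v_j$ coefficients the remainder has $\ell^1$-coefficient norm at most $C_\bt\cdot 2^{-rj}\cdot v_j^{1-1/\bt}$; applying Lemma \ref{NLv} to this scaled $A_1$-element produces $v_j$ further greedily chosen terms with error at most $C\cdot B\cdot C_\bt\cdot 2^{-rj}\cdot v_j^{1-1/\bt}\cdot v_j^{1/q-1}$, and relabelling $2v_j\mapsto v_j$ gives the displayed bound.

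Then I allocate the budget across shells: set $j^*:=\lfloor\log_2(v/2)/d\rfloor$ and
$$
v_j:=|S_j|\text{ for }0\le j\le j^*,\qquad v_j:=\lceil A\cdot 2^{-rj/(\alpha+1)}\rceil\text{ for }j^*<j\le J,
$$
with $A:=(v/2)\cdot 2^{rj^*/(\alpha+1)}/C_0$ chosen to ensure $\sum_j v_j\le v$. Low shells are reproduced exactly; on high shells the per-shell bound from the previous paragraph simplifies to $CB\cdot A^{-\alpha}\cdot 2^{-rj/(\alpha+1)}$, and summing the resulting geometric series yields
$$
\sum_{j^*<j\le J}CB\cdot 2^{-rj}\cdot v_j^{-\alpha}\;\le\;CB\cdot A^{-\alpha}\cdot 2^{-rj^*/(\alpha+1)}\;\asymp\;CB\cdot v^{-\alpha}\cdot 2^{-rj^*}\;\asymp\;CB\cdot v^{1/q-1/\bt-r/d},
$$
where I have used $2^{j^*d}\asymp v$. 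Combined with the tail estimate from the first paragraph this gives the desired bound, and the construction is realised by shell-wise thresholding of the top coefficients followed by a weak greedy (WCGA/WOMP-type) step on each residual, which justifies the ``simple greedy algorithm'' claim.

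The main obstacle is the per-shell estimate when $\bt<1$: the improvement from the generic $v_j^{1/q-1}$ bound of Lemma \ref{NLv} to $v_j^{1/q-1/\bt}$ is precisely what makes the exponent in the final bound correct, and it hinges on the two-stage thresholding-plus-greedy argument together with the rearrangement bound $|a_{(k)}|\le 2^{-rj}k^{-1/\bt}$. A minor verification needed in the third step is that the Lagrange-optimal $v_j$ is decreasing in $j$ while $|S_j|$ is increasing, so the box constraint $v_j\le|S_j|$ is automatic on high shells and merely forces the ``exact'' regime on the small shells below $j^*$, where the two allocation formulas match at $j=j^*$ by design.
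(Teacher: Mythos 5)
Your argument is correct and follows essentially the same route as the cited proof from \cite{VT202}: dyadic shell decomposition of the representation, truncation at $J\asymp\log_2 v$, per-shell thresholding of the largest coefficients combined with the constructive $A_1$-approximation bound of Lemma \ref{NLv} in the uniformly smooth space $L_p$ (with $q=\min(p,2)$), and a geometrically decaying budget allocation across shells. The only needed adjustments are constant-level bookkeeping: with $j^*=\lfloor\log_2(v/2)/d\rfloor$ the exact shells can consume roughly $2^{d-1}v$ terms, and $2^{\lceil c\log_2 v\rceil}$ can exceed $v^{c}$, so $j^*$ and $c^*$ must be shifted by $d$-dependent constants, which only changes $C$ and $c^*$ and not the structure of the proof.
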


 The following Theorem \ref{RT3a} was proved in \cite{VT203}.

\begin{Theorem}[{\cite{VT203}}]\label{RT3a}  Let $\Psi\subset L_2(\Omega,\mu)$ be  a uniformly bounded Riesz system satisfying (\ref{ub}) and (\ref{Riesz}) for some constants $0<R_1\leq R_2<\infty$. Let $v\in\mathbb{N}$, and  $u := \lceil (1+c_\ast)v\rceil$, where $c_\ast$ is the constant  from Theorem \ref{NUT1}.
Assume  that $\Psi \in NI(2,p,H,u)$ for some constants  $2\le p<\infty$ and  $H>0$.  Then for any $r>0$ and $\bt\in (0,1]$, there exist positive constants $c'=c'(r,\bt,p,R_1,R_2,d)$ and $C'=C'(r,\bt,p,d)$ such that
 \begin{equation}\label{R4s}
 \varrho_{m}^{o}(\bA^r_\bt(\Psi),L_p(\Omega,\mu)) \le C' H v^{1/2 -1/\bt -r/d}
\end{equation}
for any integer $m$ satisfying
$$
m\ge c'    v  (\log(2v ))^4 .
$$
Moreover, this bound is provided by the WOMP.
\end{Theorem}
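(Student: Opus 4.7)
The plan is to combine three ingredients from the preceding sections: the best $v$-term approximation bound of Theorem~\ref{RT1} for the class $\bA^r_\bt(\Psi)$, the $L_2$-universal sampling discretization of Theorem~\ref{RT2}, and the Lebesgue-type inequality for the WOMP in Theorem~\ref{NUT1}. Each of the three theorems is tailor-made to handle one of the three steps of the argument, so the task reduces to verifying that their hypotheses can be arranged to mesh together consistently and that the resulting number of samples is of the advertised order $v(\log(2v))^4$.

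First, I would apply Theorem~\ref{RT1} to produce the finite subsystem $\Psi_J := \{\psi_\bk : \|\bk\|_\infty < 2^J\}$ of cardinality $N := |\Psi_J| \le v^{c^\ast d}$. Crucially, the integer $J$ furnished by that theorem is \emph{independent of the measure}, so I may apply it simultaneously with the two measures $\mu$ and $\mu_\xi = (\mu + \mu_m)/2$. Since $\|\psi_\bk\|_\infty \le 1$ by \eqref{ub}, the constant $B$ in Theorem~\ref{RT1} is bounded by $1$ in both cases, and since $q := \min(p,2) = 2$, I obtain, for every $f \in \bA^r_\bt(\Psi)$,
\begin{equation*}
\sigma_v(f,\Psi_J)_{L_p(\Omega,\mu)} \ll v^{1/2 - 1/\bt - r/d}, \qquad \sigma_v(f,\Psi_J)_{L_p(\Omega,\mu_\xi)} \ll v^{1/2 - 1/\bt - r/d}.
\end{equation*}

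Second, I would use Theorem~\ref{RT2} with $p=2$ and the target sparsity $u = \lceil (1+c_\ast) v\rceil$. The Riesz condition~\eqref{Riesz} supplies a Bessel constant $K$ depending only on $R_1$, and the theorem produces a set $\xi = \{\xi^j\}_{j=1}^m$ providing $L_2$-universal sampling discretization for $\Sigma_u(\Psi_J)$ as soon as
\begin{equation*}
m \ge CKu\log N \cdot (\log(2Ku))^2 \cdot (\log(2Ku) + \log\log N) \asymp v(\log(2v))^4,
\end{equation*}
where the final estimate uses $N \le v^{c^\ast d}$ and $u \asymp v$.

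Third, I would apply Theorem~\ref{NUT1} to the system $\Psi_J$ on this sample set $\xi$. The required hypotheses are immediate: $\Psi_J$ is a uniformly bounded Riesz system (as a subset of $\Psi$), the $L_2$-universal sampling discretization for $\cX_u(\Psi_J)$ is in place, and $\Psi_J \in NI(2,p,H,u)$ follows from $\Psi \in NI(2,p,H,u)$ since $\Sigma_u(\Psi_J) \subset \Sigma_u(\Psi)$. Inequality~\eqref{mp3n} together with the bound from the first step then yields
\begin{equation*}
\bigl\|f - G^t_{\lceil c_\ast v\rceil}(f,\Psi_J)_{L_2^m}\bigr\|_{L_p(\Omega,\mu)} \le HC\,\sigma_v(f,\Psi_J)_{L_p(\Omega,\mu_\xi)} \ll H\, v^{1/2 - 1/\bt - r/d}.
\end{equation*}
Since the WOMP on the discretized system $\Psi_J(\Omega_m)$ uses only the sample vector $S(f,\xi)$, this is a bona fide nonlinear sampling recovery algorithm, so it controls $\varrho_m^o(\bA^r_\bt(\Psi),L_p(\Omega,\mu))$ by $C'Hv^{1/2-1/\bt-r/d}$, as claimed.

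The delicate point is the bookkeeping of the three measures $\mu$, $\mu_m$, $\mu_\xi$: the Lebesgue-type inequality in Theorem~\ref{NUT1} relates the $\mu$-error of the WOMP approximant to a best $v$-term error measured in the intermediate $\mu_\xi$-norm, so the structural approximation bound from Theorem~\ref{RT1} must be invoked specifically in the $\mu_\xi$-norm, while the universal discretization is an $\mu_m$-statement. What makes everything cohere is the measure-independence of $J$ in Theorem~\ref{RT1}: a single subsystem $\Psi_J$ serves simultaneously for the sparse approximation in $L_p(\mu)$ and $L_p(\mu_\xi)$ and for the discretization input to the WOMP.
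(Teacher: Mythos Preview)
Your proposal is correct and follows essentially the same approach as the paper's own proof: first use Theorem~\ref{RT1} to pin down the finite subsystem $\Psi_J$ (with $J$ independent of the measure), then invoke Theorem~\ref{RT2} with $p=2$ and $K=R_1^{-2}$ to obtain the sample set $\xi$ of size $\asymp v(\log(2v))^4$, and finally feed everything into Theorem~\ref{NUT1} and re-apply Theorem~\ref{RT1} in the $L_p(\Omega,\mu_\xi)$-norm. Your emphasis on the measure-independence of $J$ as the linchpin that lets the $\mu$, $\mu_m$, and $\mu_\xi$ bookkeeping close up is exactly the point the paper stresses as well.
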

\begin{proof} First, we apply Theorem \ref{RT1} in the space $L_p(\Omega,\mu)$ with  $B=1$, $2\le p<\infty$ and
$q=2$. As in Theorem
\ref{RT1}, consider the system $$\Psi_J := \Big\{\psi_\bk:\  \bk\in\bbZ^d,\ \ \|\bk\|_\infty <2^J \Big\}, $$
where $J\in\bbN$ does not depend on $\mu$, and satisfies  $2^J \le v^{c^*}$.
 By Theorem \ref{NUT1} with $\cD_N = \Psi_J$ and Theorem \ref{RT2} with $p=2$ and $K=R_1^{-2}$  there exist $m$ points  $\xi^1,\cdots, \xi^m\in  \Omega$  with
\be\label{mbound}
		m\leq C       u (\log N)^4\leq C u (\log v)^4,
\ee
such that  for any given $f_0\in \cC(\Omega)$,  the WOMP with weakness parameter $t$ applied to $f_0$ with respect to the system  $\cD_N(\Omega_m)$ in the space $L_2(\Omega_m,\mu_m)$ provides the following bound for
any integer $1\le v \le u/(1+c_\ast)$:
\be\label{R5}
	\|f_{\lceil{c_\ast v}\rceil} \|_{L_p(\Omega,\mu)} \le C'H \sigma_v(f_0,\cD_N)_{L_p(\Omega, \mu_\xi)}.
	\ee
In order to bound the right side of (\ref{R5}) we apply Theorem \ref{RT1} in the space $L_p(\Omega, \mu_\xi)$.
For that it is sufficient to check that $\|\psi_\bk\|_{L_p(\Omega, \mu_\xi)} \le 1$, $\bk\in\bbZ^d$. This follows from the assumption that $\Psi$ satisfies (\ref{ub}). Thus, by Theorem \ref{RT1},  we obtain that for $f_0 \in \bA^r_\bt(\Psi)$,
\be\label{R6}
 \sigma_v(f_0,\Psi_J)_{L_p(\Og, \mu_\xi)} \le Cv^{1/2 -1/\bt-r/d}.
 \ee
  Combining (\ref{R6}), (\ref{R5}), and taking into account (\ref{mbound}),
   we complete the proof.

\end{proof}

\begin{Corollary}[{\cite{VT203}}]\label{RC1}  Assume that $\Psi$ is a uniformly bounded Riesz system (in the space  $L_2(\Omega,\mu)$) satisfying (\ref{ub}) and (\ref{Riesz}) for some constants $0<R_1\leq R_2<\infty$.
Let $2\le p<\infty$  and $r>0$.
 There exist constants $c=c(r,\bt,p,R_1,R_2,d)$ and $C=C(r,\bt,p,d)$ such that   we have the bound
 \begin{equation}\label{R4w}
 \varrho_{m}^{o}(\bA^r_\bt(\Psi),L_p(\Omega,\mu)) \le C  v^{1-1/p -1/\bt -r/d}
\end{equation}
for any $m$ satisfying
$$
m\ge c   v  (\log(2v ))^4 .
$$
Moreover, this bound is provided by the WOMP.
\end{Corollary}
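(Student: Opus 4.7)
The plan is to obtain Corollary \ref{RC1} as a direct consequence of Theorem \ref{RT3a} by exhibiting a suitable Nikol'skii-type constant $H$ for the Riesz system $\Psi$ on the class of $u$-term linear combinations. More precisely, Theorem \ref{RT3a} already yields the bound $\varrho_m^o(\bA^r_\bt(\Psi),L_p(\Omega,\mu))\le C'H v^{1/2-1/\bt-r/d}$ under the assumption $\Psi\in NI(2,p,H,u)$ with $u=\lceil(1+c_\ast)v\rceil$, so all one needs to do is feed in a sharp value of $H$ depending on $u$ and unwind the arithmetic.

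The first step is to verify the $u$-term Nikol'skii inequality for the pair $(2,\infty)$. For any $f=\sum_{i\in A}c_i\psi_i\in\Sigma_u(\Psi)$ with $|A|\le u$, condition (\ref{ub}) and the Cauchy--Schwarz inequality give
\[
\|f\|_\infty\le\sum_{i\in A}|c_i|\le u^{1/2}\Bigl(\sum_{i\in A}|c_i|^2\Bigr)^{1/2},
\]
and the lower Riesz bound in (\ref{Riesz}) implies $\bigl(\sum_{i\in A}|c_i|^2\bigr)^{1/2}\le R_1^{-1}\|f\|_2$. Hence $\Psi\in NI(2,\infty,R_1^{-1}u^{1/2},u)$. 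Interpolating via the log-convexity inequality $\|f\|_p\le\|f\|_\infty^{1-2/p}\|f\|_2^{2/p}$ for $2\le p<\infty$, one obtains
\[
\|f\|_p\le R_1^{-(1-2/p)}u^{(1/2)(1-2/p)}\|f\|_2 = C_1(R_1,p)\,u^{1/2-1/p}\|f\|_2,
\]
so $\Psi\in NI(2,p,H,u)$ with $H=C_1(R_1,p)\,u^{1/2-1/p}$.

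The second step is to apply Theorem \ref{RT3a} with this $H$ and $u=\lceil(1+c_\ast)v\rceil\asymp v$. This yields, for $m\ge c'v(\log(2v))^4$ (with $c'$ absorbing the dependence on $r,\bt,p,R_1,R_2,d$ coming from Theorem \ref{RT3a}),
\[
\varrho_m^o(\bA^r_\bt(\Psi),L_p(\Omega,\mu))\le C'\,C_1(R_1,p)\,u^{1/2-1/p}\,v^{1/2-1/\bt-r/d}\le C\,v^{1-1/p-1/\bt-r/d},
\]
which is exactly the asserted bound. The final observation is that the algorithm providing the bound in Theorem \ref{RT3a} is the WOMP, so the same is true here.

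There is no real obstacle: the only substantive ingredient is the Nikol'skii estimate for sparse combinations of a bounded Riesz system, which follows immediately from Cauchy--Schwarz together with the Riesz lower bound; everything else is a direct substitution into Theorem \ref{RT3a}. The only bookkeeping point worth checking is that the logarithmic overhead in $m$ does not accumulate extra factors when $u$ is taken proportional to $v$, which it does not because $\log N$ appearing inside Theorem \ref{RT3a}'s proof is controlled by $\log v$ through the bound $2^J\le v^{c^*}$ from Theorem \ref{RT1}.
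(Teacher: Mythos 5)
Your derivation is correct and is essentially the intended one: Corollary \ref{RC1} is meant to follow from Theorem \ref{RT3a} once one checks that a uniformly bounded Riesz system satisfies $\Psi\in NI(2,p,H,u)$ with $H\ll u^{1/2-1/p}$, which you establish exactly as expected via condition (\ref{ub}) with Cauchy--Schwarz, the lower Riesz bound in (\ref{Riesz}), and the log-convexity inequality $\|f\|_p\le\|f\|_\infty^{1-2/p}\|f\|_2^{2/p}$, and then substitute $u=\lceil(1+c_\ast)v\rceil\asymp v$. The only cosmetic point is that your $H$ and the relation $u\asymp v$ make the final constant depend on $R_1,R_2$ (through $R_1^{-(1-2/p)}$ and $c_\ast$), whereas the statement writes $C=C(r,\bt,p,d)$; this is an imprecision of the quoted statement rather than a gap in your argument.
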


We derived Theorem \ref{RT3a} from Theorem \ref{NUT1}. We now formulate an analog of Theorem \ref{RT3a}, which can be derived from Theorem \ref{NUT3} in the same way as Theorem \ref{RT3a} has been derived from Theorem \ref{NUT1}.

\begin{Theorem}[{\cite{VT203}}]\label{RT4} Let $2\le p<\infty$ and let $m$, $v$ be given natural numbers.    Let $\Psi$ be  a uniformly bounded Bessel system satisfying (\ref{ub}) and (\ref{Bessel}) such that $\Psi \in NI(2,p,H,2v)$.
There exist constants $c=c(r,\bt,p,K,d)$ and $C=C(r,\bt,p,d)$ such that   we have the bound
 \begin{equation}\label{R4ssa}
 \varrho_{m}^{o}(\bA^r_\bt(\Psi),L_p(\Omega,\mu)) \le CH v^{1/2 -1/\bt -r/d}
\end{equation}
for any $m$ satisfying
$$
m\ge c   v  (\log(2v ))^4 .
$$
 \end{Theorem}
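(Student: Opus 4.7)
The plan is to mimic almost verbatim the proof of Theorem~\ref{RT3a}, replacing the WOMP-based Lemma (Theorem~\ref{NUT1}) with the $B_v$-based Lemma (Theorem~\ref{NUT3}). The key observation is that, since $\Psi$ is only assumed Bessel (not Riesz), we can no longer use the WOMP Lebesgue-type inequality, but Theorem~\ref{NUT3} needs \emph{only} the one-sided $L_2$-universal discretization together with the $(2,p)$-Nikol'skii condition $\cD_N\in NI(2,p,H,2v)$, which matches the hypotheses of the theorem.

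First, I would apply Theorem~\ref{RT1} in the space $L_p(\Omega,\mu)$ (which is legitimate since $\|\psi_\bk\|_\infty\le 1$ and hence $\|\psi_\bk\|_p\le 1$) to produce the finite cubic truncation
$$
\Psi_J := \{\psi_\bk:\|\bk\|_\infty < 2^J\},\quad 2^J\le v^{c^*},
$$
so that $N:=|\Psi_J|\le v^{c^* d}$ and $\log N \ll \log v$. Since $\Sigma_{2v}(\Psi_J)\subset\Sigma_{2v}(\Psi)$, the assumption $\Psi\in NI(2,p,H,2v)$ transfers to $\Psi_J\in NI(2,p,H,2v)$, and the Bessel condition on $\Psi$ is inherited by $\Psi_J$ with the same constant $K$.

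Next, apply Theorem~\ref{RT2} with $p=2$ to the system $\Psi_J$: this yields a set $\xi=\{\xi^1,\dots,\xi^m\}$ providing $L_2$-universal sampling discretization for the collection $\cX_{2v}(\Psi_J)$ (in particular the weaker one-sided $L_2$ discretization required by Theorem~\ref{NUT3}), provided
$$
m \ge C K (2v)\log N \cdot \bigl(\log(2K\cdot 2v)\bigr)^2 \cdot \bigl(\log(2K\cdot 2v)+\log\log N\bigr) \asymp v(\log v)^4.
$$
Now Theorem~\ref{NUT3}, applied with $\cD_N=\Psi_J$, gives for every $f\in\cC(\Omega)$
$$
\|f-B_v(f,\Psi_J,L_2(\xi))\|_{L_p(\Omega,\mu)}\le 2^{1/p}(2D H+1)\,\sigma_v(f,\Psi_J)_{L_p(\Omega,\mu_\xi)}.
$$

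Finally, for $f\in\bA^r_\bt(\Psi)$ I would bound the right-hand side by applying Theorem~\ref{RT1} a second time, but now in the space $L_p(\Omega,\mu_\xi)$; this is permissible because $\|\psi_\bk\|_\infty\le 1$ still gives $\|\psi_\bk\|_{L_p(\Omega,\mu_\xi)}\le 1$, so Theorem~\ref{RT1} applies with $B=1$ and yields
$$
\sigma_v(f,\Psi_J)_{L_p(\Omega,\mu_\xi)} \le C v^{1/q-1/\bt-r/d} = C v^{1/2-1/\bt-r/d}
$$
(using $q=\min(p,2)=2$). Since $B_v(f,\Psi_J,L_2(\xi))$ uses only the sample vector $S(f,\xi)$, it is an admissible nonlinear recovery map in the definition of $\varrho_m^o$, and combining the two displays gives the asserted estimate. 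The only real subtleties are routine bookkeeping: verifying that the Nikol'skii and Bessel hypotheses descend to the truncation $\Psi_J$, that the polynomial bound $2^J\le v^{c^*}$ absorbs $\log N$ into $\log v$ in the discretization count, and that $\mu_\xi$ does not spoil the applicability of Theorem~\ref{RT1} in step four; none of these should present a serious obstacle.
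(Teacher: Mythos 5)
Your proposal is correct and follows exactly the route the paper intends: the paper states that Theorem \ref{RT4} "can be derived from Theorem \ref{NUT3} in the same way as Theorem \ref{RT3a} has been derived from Theorem \ref{NUT1}", and your argument is precisely that derivation — truncate via Theorem \ref{RT1}, discretize via Theorem \ref{RT2} with $p=2$ (using the Bessel constant $K$, giving $m\asymp v(\log(2v))^4$), apply the $B_v$-based Lebesgue-type inequality of Theorem \ref{NUT3} with the inherited $NI(2,p,H,2v)$ condition, and bound $\sigma_v(f,\Psi_J)_{L_p(\Omega,\mu_\xi)}$ by a second application of Theorem \ref{RT1} with $B=1$. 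The bookkeeping points you flag (transfer of the Nikol'skii and Bessel conditions to $\Psi_J$, $\log N\ll\log v$, admissibility of $B_v(\cdot,\Psi_J,L_2(\xi))$ for $\varrho_m^o$) are handled correctly.
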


In the special case when $\Psi = \cT^d$ is the trigonometric system, Corollary \ref{RC1} gives
 \begin{equation}\label{R7}
 \varrho_{m}^{o}(\bA^r_\bt(\cT^d),L_p(\bbT^d)) \ll  v^{1-1/p -1/\bt -r/d} \quad\text{for}\quad  m\gg    v  (\log(2v ))^4.
\end{equation}

It is pointed out in \cite{DTM2} that known results on the RIP for the trigonometric system can be used
for improving results on the universal discretization in the $L_2$ norm in the case of the trigonometric system. We explain that in more detail.
 Let $M\in \bbN$ and $d\in \bbN$. Define $\Pi(M) := [-M,M]^d$ to be the $d$-dimensional cube. Consider the system $\cT^d(M) := \cT^d(\Pi(M))$ of functions $e^{i(\bk,\bx)}$, $\bk \in \Pi(M)$ defined on $\bbT^d =[0,2\pi)^d$. Then $\cT^d(M)$ is an orthonormal system in $L_2(\bbT^d,\mu)$ with $\mu$ being the normalized Lebesgue measure on $\bbT^d$. The cardinality of this system is $N(M):= |\cT^d(M)| = (2M+1)^d$. We are interested in bounds on $m(\cX_v(\cT^d(M)),2)$ in a special case, when $M\le v^c$ with some constant $c$, which may depend on $d$. Then Theorem \ref{RT2} with $p=2$
 gives
 \be\label{DT1}
 m(\cX_v(\cT^d(v^c)),2) \le C(c,d) v (\log (2v))^4.
 \ee
  It is  stated in \cite{DTM2} that the known results of \cite{HR} and \cite{Bour} allow us to improve the bound (\ref{DT1}):
 \be\label{HR1}
 m(\cX_v(\cT^d(v^c)),2) \le C(c,d) v (\log (2v))^3.
 \ee
 This, in turn, implies the following estimate
  \begin{equation}\label{R9}
 \varrho_{m}^{o}(\bA^r_\bt(\cT^d),L_p) \ll  v^{1-1/p -1/\bt -r/d} \quad\text{for}\quad  m\gg    v  (\log(2v ))^3.
\end{equation}

\subsection{Lower bounds}
\label{Lb}

Let us discuss lower bounds for the nonlinear characteristic $\varrho_m^o(\bA^r_\bt(\Psi),L_p)$.
We will do it in the special case when $\Psi$ is the trigonometric system $\cT^d := \{e^{i(\bk,\bx)}\}_{\bk\in \bbZ^d}$. For $\bN = (N_1,\dots,N_d)\in\mathbb{N}_0^d$, define
$$
\cT(\bN,d) := \left\{\sum_{\bk\in\mathbb{Z}^d:  -\bN\leq \bk\leq \bN} c_j e^{i(\bk,\bx)}:\  \ c_j\in\mathbb{C}\right\},\quad \vartheta(\bN) :=\prod_{j=1}^d (2N_j+1),
$$
where for $\mathbf{k}, \mathbf{n} \in \mathbb{Z}^d$, we write $\mathbf{k} \leq \mathbf{n}$ to mean that $k_j \leq n_j$ for all $1 \leq j \leq d$. Clearly,
\[\dim \cT(\bN,d)=\vartheta(\bN).\]
In this section $\Omega = \bbT^d$ and $\mu$ is the normalized Lebesgue measure on $\bbT^d$.

\begin{Lemma}[{\cite[Lemma 4.1]{VT203}}]\label{NLL1} Let $1\le q\le p\le \infty$ and let  $\cT(\bN,d)_q$ denote the unit $L_q$-ball of the subspace $\cT(\bN,d)$. Then  for any positive integer  $m\le \vartheta(\bN)/2$, we have
$$
\varrho_m^o(\cT(2\bN,d)_q,L_p) \ge c(d)\vartheta(\bN)^{1/q-1/p}  .
$$
\end{Lemma}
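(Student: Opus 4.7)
I begin with the standard fooling-pair argument: if $h\in\cT(2\bN,d)$ vanishes on $\xi:=\{\xi^j\}_{j=1}^m$ and $\|h\|_q\le 2$, then $f:=h/2$ and $g:=-h/2$ are both in the class $\cT(2\bN,d)_q$, agree on $\xi$, and satisfy $\|f-g\|_p=\|h\|_p$; any recovery map would give the same output on $f$ and $g$, so
$$\varrho_m^o(\cT(2\bN,d)_q,L_p)\ \ge\ \tfrac12 \sup\bigl\{\|h\|_p : h\in\cT(2\bN,d),\ \|h\|_q\le 2,\ h|_\xi=0\bigr\}.$$
The task thus reduces to producing such an $h$ with $\|h\|_p\ge c(d)\vartheta(\bN)^{1/q-1/p}$. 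An auxiliary dimension count drives the construction: since evaluation at $m$ points is a linear map on $\cT(\bN,d)$ (of dimension $\vartheta(\bN)$) and $m\le\vartheta(\bN)/2$, the subspace $W:=\{p\in\cT(\bN,d):p|_\xi=0\}$ satisfies $\dim W\ge\vartheta(\bN)/2$.

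\textbf{Fej\'er--Riesz squaring of a reproducing-kernel function.} Let $K_W(\bx,\by)$ denote the $L_2$-reproducing kernel of $W$. Since $\int_{\bbT^d}K_W(\bx,\bx)\,d\bx=\dim W\ge\vartheta(\bN)/2$, one can pick $\bx_0\in\bbT^d$ with $K_W(\bx_0,\bx_0)\ge\vartheta(\bN)/2$. Set $p_0:=K_W(\cdot,\bx_0)\in W$ and $h:=|p_0|^2=p_0\overline{p_0}$. Then $h\ge 0$, $h\in\cT(2\bN,d)$ (the spectrum of $p_0\overline{p_0}$ lies in $[-2\bN,2\bN]^d$), and $h(\xi^j)=0$ because $p_0(\xi^j)=0$. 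Since $\|h\|_r=\|p_0\|_{2r}^2$, the whole argument will reduce to proving
$$ \|p_0\|_r\ \asymp_d\ \vartheta(\bN)^{1-1/r}\quad\text{for all}\ r\in[1,\infty],$$
from which $\|h\|_r\asymp_d\vartheta(\bN)^{2-1/r}$; after rescaling $h$ so that $\|h\|_q\le 2$, the bound $\|h\|_p\gtrsim_d\vartheta(\bN)^{1/q-1/p}$ follows at once.

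\textbf{Two-sided norm equivalence for $p_0$ via Bernstein.} The upper bound is routine: Cauchy--Schwarz for $K_W$ and the operator inequality $K_W\le K_{\cT(\bN,d)}=D_\bN$ (applied diagonally) give $\|p_0\|_\infty\le\vartheta(\bN)^{1/2}K_W(\bx_0,\bx_0)^{1/2}\le\vartheta(\bN)$, and then Nikol'skii's inequality on $\cT(\bN,d)$ (or log-convexity of $r\mapsto\log\|p_0\|_r$ between $L_2$ and $L_\infty$) yields $\|p_0\|_r\le C(d)\vartheta(\bN)^{1-1/r}$ for all $r$. The matching \emph{lower} bound is the main obstacle and the crux of the plan: using the ``peak'' value $p_0(\bx_0)=K_W(\bx_0,\bx_0)\ge\vartheta(\bN)/2$ together with Bernstein's inequality $\|\partial_j p_0\|_\infty\le N_j\|p_0\|_\infty\le CN_j\vartheta(\bN)$, I would show $|p_0|\ge\vartheta(\bN)/4$ throughout the anisotropic box
$$ B_0:=\prod_{j=1}^d\bigl\{|x_j-(\bx_0)_j|\le c(d)/N_j\bigr\}, $$
provided $c(d)>0$ is small enough (so that $\sum_j N_j\cdot c(d)/N_j\cdot\|p_0\|_\infty\le\vartheta(\bN)/4$). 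Since $|B_0|\asymp_d 1/\vartheta(\bN)$, it then follows that
$$ \|p_0\|_r^r\ \ge\ (\vartheta(\bN)/4)^r\,|B_0|\ \gtrsim_{d,r}\ \vartheta(\bN)^{r-1},$$
yielding $\|p_0\|_r\gtrsim_d \vartheta(\bN)^{1-1/r}$ and completing the proof.
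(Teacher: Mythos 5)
Your argument is correct, but it constructs the fooling function differently from the paper. Both proofs start from the same fooling-pair reduction and the observation that the null space $T(\xi)=\{g\in\cT(\bN,d):g|_\xi=0\}$ is large; from there the paper takes an arbitrary $g_\xi\in T(\xi)$ with $\|g_\xi\|_\infty=|g_\xi(\bx^*)|=1$ and multiplies it by the Fej\'er kernel, $f=g_\xi\,\cK_\bN(\cdot-\bx^*)$, so that the $L_q$ upper bound comes for free from $\|f\|_q\le\|g_\xi\|_\infty\|\cK_\bN\|_q\ll\vartheta(\bN)^{1-1/q}$, the peak value $|f(\bx^*)|\gg\vartheta(\bN)$ comes from $\cK_\bN(0)$, and the $L_p$ lower bound is then a one-line application of the Nikol'skii inequality on $\cT(2\bN,d)$. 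You instead square the reproducing kernel of the null space, $h=|K_W(\cdot,\bx_0)|^2$, locating the peak point by the trace identity $\int K_W(\bx,\bx)\,d\mu=\dim W\ge\vartheta(\bN)/2$ (this is where you genuinely use $m\le\vartheta(\bN)/2$, whereas the paper's construction only needs $T(\xi)\neq\{0\}$), controlling $\|p_0\|_\infty\le\vartheta(\bN)$ by kernel domination plus Cauchy--Schwarz, and proving the $L_r$ lower bound by a Bernstein-box argument around $\bx_0$ --- which is in effect a self-contained proof of the direction of Nikol'skii's inequality that the paper simply cites. Your route buys a nonnegative, completely explicit extremal function and avoids Fej\'er-kernel norm estimates; the paper's route is shorter because the known bounds $\|\cK_\bN\|_q\ll\vartheta(\bN)^{1-1/q}$ and the Nikol'skii inequality absorb all the work. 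Two small points to tidy up: the claimed equivalence $\|p_0\|_r\asymp_d\vartheta(\bN)^{1-1/r}$ cannot hold as stated for all $r\in[1,\infty]$ (the upper bound via interpolation between $L_2$ and $L_\infty$ only covers $r\ge2$, and for $r<2$ it can fail for a general null space), but this is harmless since you only ever need $r=2q\ge2$ and $r=2p\ge2$; and the box $\prod_j\{|x_j-(\bx_0)_j|\le c(d)/N_j\}$ needs the trivial modification $c(d)/\max(N_j,1)$ (or the whole circle in that coordinate) when some $N_j=0$, after which $|B_0|\asymp_d\vartheta(\bN)^{-1}$ still holds.
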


\begin{proof} Given a set  $\xi=\{\xi^1,\dots,\xi^m\}\subset \bbT^d := [0,2\pi)^d$  of $m$ points, we consider the subspace
$$
T(\xi) := \{f\in \cT(\bN,d):\, f(\xi^\nu) =0,\quad \nu=1,\dots,m\}.
$$
Let  $g_\xi\in T(\xi)$ be such that $|g_\xi(\bx^*)|=\|g_\xi\|_\infty=1$ for a point  $\bx^*$. For the further argument we need some classical trigonometric polynomials. Recall that the univariate Fej\'er kernel of order $j - 1$ is given by
$$
\mathcal K_{j} (x) := \sum_{|k|\le j} \bigl(1 - |k|/j\bigr) e^{ikx}
=\frac{(\sin (jx/2))^2}{j (\sin (x/2))^2}.
$$
The Fej\'er kernel is an even nonnegative trigonometric
polynomial of order $j-1$.  It satisfies the obvious relations
\be\label{FKm}
\| \mathcal K_{j} \|_1 = 1, \qquad \| \mathcal K_{j} \|_{\infty} = j.
\ee
Let $\cK_\bj(\bx):= \prod_{i=1}^d \cK_{j_i}(x_i)$ be the $d$-variate Fej\'er kernels for $\bj = (j_1,\dots,j_d)\in \mathbb{N}_0^d$ and $\bx=(x_1,\dots,x_d)$.
Define
\be\label{Rf}
f(\bx) := g_\xi(\bx)\cK_\bN(\bx-\bx^*),
\ee
where $\bN = (N_1,\dots,N_d)$.
 Then $f\in \cT(2\bN,d)$, $f(\xi^\nu)=0$, $\nu=1,\dots,m$, and
\be\label{R19}
\|f\|_q \le \|g_\xi\|_\infty \|\cK_\bN\|_q \le C_1(d)\vartheta(\bN)^{1-1/q}.
\ee
At the last step we used the known bound for the $L_q$ norm of the Fej\'er kernel (see \cite{VTbookMA}, p.83, (3.2.7)). By (\ref{Rf}) we get
\be\label{R20}
|f(\bx^*)| \ge C_2(d) \vartheta(\bN).
\ee
By the Nikol'skii inequality for the $\cT(2\bN,d)$ (see  \cite{VTbookMA}, p.90, Theorem 3.3.2),  we obtain
from (\ref{R20})
\be\label{R21}
\|f\|_p \ge C_3(d) \vartheta(\bN)^{1-1/p}.
\ee

Let $\cM$ be an arbitrary  mapping from $\bbC^m$ to $L_p$, and let  $g_0 := \cM(\mathbf 0)$. Set
$\tilde f:=f/\|f\|_q$.
Then
$$
\Big\|\tilde f-g_0\Big\|_p +\Big\|-\tilde f - g_0\Big\|_p \ge 2\Big\|\tilde f\Big\|_p\ge 2C(d) \vartheta(\bN)^{\frac 1q-\frac 1p},
$$
where the last step uses (\ref{R19}) and  (\ref{R21}). It follows that
\[ \max\Big\{ \|\tilde f-g_0\|_p, \|-\tilde f-g_0\|_p\Big\} \ge C(d) \vartheta(\bN)^{\frac 1q-\frac 1p}.\]
 Since  both $\pm \tilde f$  belong to $\cT(2\bN,d)_q$, and
\[ \cM \Big(  \tilde f(\xi^1), \cdots, \tilde f(\xi^m)\Big)= \cM \Big(  -\tilde f(\xi^1), \cdots, -\tilde f(\xi^m)\Big)=\cM(\mathbf 0)=g_0,\] this completes the proof of Lemma \ref{NLL1}.
\end{proof}

Lemma \ref{NLL1} implies the following lower bound for the classes $\bA^r_\bt(\cT^d)$.

\begin{Proposition}\label{NLP1a} For $\bt\in (0,1]$ and $r>0$ we have for $2\le p \le \infty$
\be\label{NL1b}
\varrho_m^o(\bA^r_\bt(\cT^d),L_p) \gg m^{1-1/p-1/\bt-r/d}.
\ee
\end{Proposition}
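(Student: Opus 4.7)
\begin{proofof}{Proposition \ref{NLP1a} (sketch)}
The plan is to reduce the lower bound to the polynomial ball case already handled by Lemma \ref{NLL1}. The main task is to exhibit, for each $m$, a scaled trigonometric-polynomial ball sitting inside $\bA^r_\bt(\cT^d)$ whose dimension is $\asymp m$. Let $J\in\bbN$ be the smallest integer with $2^{Jd}\ge 2m$, so that $2^J\asymp m^{1/d}$, and set $\bN=(N,\dots,N)$ with $N=2^{J-1}$. Then $\cT(2\bN,d)$ consists of polynomials with frequencies in $\|\bk\|_\infty\le 2N$, and $\vartheta(\bN)\asymp 2^{Jd}\ge 2m$, so Lemma \ref{NLL1} is applicable with $q=2$.

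The key embedding step is to find the sharp scaling factor $A>0$ such that $A\cdot\cT(2\bN,d)_2\subset \bA^r_\bt(\cT^d)$. For $f\in\cT(2\bN,d)$ with $\|f\|_2\le 1$ and any dyadic block $j$, the block is empty when $2^{j-1}>2N$, and otherwise has cardinality $\asymp 2^{jd}$. Since $\bt\in(0,1]\le 2$, the standard $\ell_\bt\hookrightarrow\ell_2$ embedding combined with Parseval gives
\[
  \left(\sum_{2^{j-1}\le \|\bk\|_\infty<2^j}|a_\bk(f)|^\bt\right)^{1/\bt}
  \le C(d)\, 2^{jd(1/\bt-1/2)}\|f\|_2 .
\]
To enforce the defining condition (\ref{IAr}) after rescaling by $A$, I would require $A\cdot 2^{jd(1/\bt-1/2)}\le 2^{-rj}$ for all relevant $j$. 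The binding constraint occurs at $j\asymp J$, giving $A\asymp 2^{-J(r+d(1/\bt-1/2))}$.

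With this choice, $A\cdot\cT(2\bN,d)_2\subset \bA^r_\bt(\cT^d)$, and the monotonicity of $\varrho_m^o$ under inclusion together with its homogeneity in the class yields
\[
  \varrho_m^o(\bA^r_\bt(\cT^d),L_p)\ \ge\ A\cdot \varrho_m^o(\cT(2\bN,d)_2,L_p).
\]
Applying Lemma \ref{NLL1} with $q=2$ (noting $m\le\vartheta(\bN)/2$ by construction), the right side is bounded below by $c(d)\, A\cdot \vartheta(\bN)^{1/2-1/p}\asymp 2^{-J(r+d/\bt-d/2)}\cdot 2^{Jd(1/2-1/p)}$. Collecting exponents gives $2^{Jd(1-1/p-1/\bt-r/d)}$, which equals $m^{1-1/p-1/\bt-r/d}$ up to a multiplicative constant thanks to $2^{Jd}\asymp m$.

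The main obstacle is the embedding step: one must check that the $\ell_\bt$-normalization built into (\ref{IAr}) is indeed implied, on each dyadic block, by $\ell_2$-normalization with the loss factor $2^{jd(1/\bt-1/2)}$, and then verify that minimizing the admissible $A$ over all $j\le J+1$ really does give the binding bound at $j=J$ (which it does, since $r+d(1/\bt-1/2)>0$). Everything else—monotonicity of $\varrho_m^o$, its homogeneity, and the choice of $J$—is routine.
\end{proofof}
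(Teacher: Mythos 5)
Your proposal is correct and follows essentially the same route as the paper: embed a suitably rescaled $L_2$-ball of trigonometric polynomials of dimension $\asymp m$ into $\bA^r_\bt(\cT^d)$ via the H\"older ($\ell_\bt$ versus $\ell_2$) estimate on the Fourier coefficients, then invoke Lemma \ref{NLL1} with $q=2$. The paper states this more tersely (one global H\"older bound with factor $(2N+1)^{d(1/\bt-1/2)}$ and the appeal to Lemma \ref{NLL1}), while you spell out the blockwise constraint, the binding scale $A\asymp 2^{-J(r+d(1/\bt-1/2))}$, and the exponent bookkeeping — the substance is the same.
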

\begin{proof} Take $n\in\bbN$ and  set $N:=2^n-1$, $\bN := (N,\dots,N)$.
Then for $f\in \cT(\bN,d)_2$ we have by the H{\"o}lder inequality with parameter $2/\bt$
$$
  \left(\sum_{\bk: \|\bk\|_\infty\le N} |\hat f(\bk)|^\bt\right)^{1/\bt} \le (2N+1)^{d(1/\bt-1/2)} \left(\sum_{\bk: \|\bk\|_\infty\le N} |\hat f(\bk)|^2\right)^{1/2}
$$
$$
  =(2N+1)^{d(1/\bt-1/2)}\|f\|_2  .
$$
This bound and Lemma \ref{NLL1} with $q=2$ imply (\ref{NL1b}).
\end{proof}

We now show how Theorem \ref{NUT2} can be used to show  that Theorem \ref{IT2} cannot be substantially improved in the sense of relations between $m$ and $u$.

\begin{Proposition}[{\cite[Proposition 4.1]{VT203}}]\label{NLP1} Let $\bN=(N_1,\cdots, N_d)\in\bbN_0^d$ and define the trigonometric system
$$
\cD_N = \Big\{e^{i(\bk,\bx)}:\  \ \bk\in\bbZ^d,\   -2\bN\leq \bk\leq 2\bN  \Big\},
$$
where the cardinality of the system  is given by
\[ N=|\cD_N|=\vartheta(2\bN)=\prod_{j=1}^d (4N_j+1).\]
Let $p\in (2,\infty)$. Assume that there exists a set $\xi=\{\xi^1,\cdots, \xi^m\}\subset \bbT^d$ of $m\leq  \vartheta(\bN)/2$ points  that provides the one-sided $L_p(\bbT^d)$-universal sampling discretization (\ref{D6}) with  constant $D>0$  for  the collection  $\cX_{2v}(\cD_N)$ and some integer $1\leq v<N$. Then
$$
v \leq  C(d,p)(2D+1)^2 (\vartheta(\bN))^{2/p},
$$
where $C(d,p)>0$ is a constant depending only on $p$ and $d$.
\end{Proposition}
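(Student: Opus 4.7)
I would prove the proposition by combining the Fejér-kernel construction from the proof of Lemma \ref{NLL1} with the nonlinear recovery bound in Theorem \ref{NUT2}. Because $\dim\cT(\bN,d) = \vartheta(\bN) > m$, the subspace $T(\xi):=\{h\in\cT(\bN,d):\,h(\xi^\nu)=0,\ \nu=1,\dots,m\}$ is nonzero. Pick a nonzero $g_\xi\in T(\xi)$ normalized so that $\|g_\xi\|_\infty = |g_\xi(\bx^*)|=1$ for some $\bx^*\in\bbT^d$, and set
\[
f(\bx):=g_\xi(\bx)\,\cK_\bN(\bx-\bx^*).
\]
Then $f\in\cT(2\bN,d)=\sp(\cD_N)$, $f(\xi^\nu)=0$ for every $\nu$, and $|f(\bx^*)|=\vartheta(\bN)$. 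The Nikol'skii inequality for $\cT(2\bN,d)$ then gives $\|f\|_p \ge c_1(d)\vartheta(\bN)^{1-1/p}$, exactly as in the proof of Lemma \ref{NLL1}.

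Next I would invoke Theorem \ref{NUT2}. Since $f$ vanishes on $\xi$, the sample vector $S(f,\xi)$ is zero, so the zero polynomial attains zero error in the discrete norm $L_p(\xi)$; hence $B_v(f,\cD_N,L_p(\xi))=0$. The hypothesis that $\xi$ provides the one-sided $L_p$-universal sampling discretization \eqref{D6} for $\cX_{2v}(\cD_N)$ with constant $D$ is exactly the assumption of Theorem \ref{NUT2}, so that theorem yields
\[
c_1(d)\,\vartheta(\bN)^{1-1/p}\;\le\;\|f\|_p\;\le\;(2D+1)\,\sigma_v(f,\cD_N)_\infty.
\]

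The key technical step is then to produce a matching upper bound for $\sigma_v(f,\cD_N)_\infty$ of the order $\vartheta(\bN)\,v^{-1/2}$, which is precisely what is needed to read off the exponent $2/p$. The natural plan is: (i) apply the Maurey-type sparse approximation estimate, Lemma \ref{NLv}, to the trigonometric system in $L_p(\bbT^d)$ (which is uniformly smooth of power type $q=2$ when $p\ge 2$) to get $\sigma_v(f,\cT^d)_p \le C(p)\|f\|_A v^{-1/2}$; (ii) exploit the product structure $f=g_\xi\cdot\cK_\bN(\cdot-\bx^*)$, together with $\|\cK_\bN\|_A=\cK_\bN(0)=\vartheta(\bN)$ and $\|g_\xi\|_A\le\vartheta(\bN)^{1/2}\|g_\xi\|_2\le\vartheta(\bN)^{1/2}$, to control $\|f\|_A$; and (iii) convert the $L_p$ estimate into an $L_\infty$ estimate via the Nikol'skii inequality applied to the residual $f-f_v\in\cT(2\bN,d)$. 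Substituting the resulting bound for $\sigma_v(f,\cD_N)_\infty$ into the displayed inequality and isolating $v$ yields $v\le C(d,p)(2D+1)^2\,\vartheta(\bN)^{2/p}$.

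The main obstacle is precisely the sharp execution of step (iii): carried out naively, $\|f\|_A\le\|g_\xi\|_A\|\cK_\bN\|_A\le\vartheta(\bN)^{3/2}$ combined with the crude Nikol'skii factor $\vartheta(2\bN)^{1/p}$ loses $\vartheta(\bN)^{1+2/p}$ relative to the target. To recover the optimal exponent one must exploit, in a single combined estimate, the rapid decay of the Fourier coefficients of $\cK_\bN$ (so that the top $v$ coefficients carry most of the $\ell_1$-mass of $\hat f$) and the fact that the residual lies in the fixed space $\cT(2\bN,d)$, on which the Nikol'skii constant enters in a sharper way than the product of separate estimates suggests. This delicate sparse-approximation bound for products of the form $g\cdot\cK_\bN(\cdot-\bx^*)$ is the technical heart of the argument; the rest is a direct algebraic manipulation.
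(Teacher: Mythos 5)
Your construction of $f=g_\xi\cdot\cK_\bN(\cdot-\bx^*)$, the observation that $B_v(f,\cD_N,L_p(\xi))=0$, and the lower bound $\|f\|_p\ge c_1(d)\vartheta(\bN)^{1-1/p}$ all coincide with the paper's argument. The gap is in what follows: you invoke the uniform-norm bound (\ref{D8}) of Theorem \ref{NUT2} and then need $\sigma_v(f,\cD_N)_\infty\lesssim \vartheta(\bN)v^{-1/2}$, which you do not prove — you yourself note that the naive execution loses a factor $\vartheta(\bN)^{1+2/p}$, and the proposed repair (exploiting the decay of the Fej\'er coefficients "in a single combined estimate") is left entirely vague. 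It is doubtful it can be carried out: the spectrum of $g_\xi$ is controlled only through $\|g_\xi\|_\infty=1$, so convolving with the Fej\'er coefficients does not localize the $\ell_1$-mass of $\hat f$; and $L_\infty$ is not uniformly smooth, so Maurey-type $v$-term bounds for the trigonometric system in the uniform norm carry an extra logarithmic factor, which would already spoil the clean exponent $2/p$. Passing through Nikol'skii on the residual, as in your step (iii), reintroduces a factor $\vartheta(\bN)^{1/p}$ and at best yields $v\lesssim(2D+1)^2\vartheta(\bN)^{4/p}$.

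The paper's proof avoids this entirely by using the other inequality of Theorem \ref{NUT2}, namely (\ref{D7}), which bounds the recovery error by $\sigma_v(f,\cD_N)_{L_p(\bbT^d,\mu_\xi)}$ rather than by $\sigma_v(f,\cD_N)_\infty$. Since $\mu_\xi$ is a probability measure and the exponentials are bounded by $1$, the space $L_p(\bbT^d,\mu_\xi)$ is uniformly smooth of power type $2$ for $p\ge 2$, so Lemma \ref{NLv} (inequality (\ref{R1})) applies directly and gives $\sigma_v(f,\cD_N)_{L_p(\bbT^d,\mu_\xi)}\le C(d,p)\,\|f\|_A\,(v+1)^{-1/2}$, with no Nikol'skii conversion and no logarithm. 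Moreover, the $A$-norm is estimated not by the product rule (your $\vartheta(\bN)^{3/2}$) but by Cauchy--Schwarz on $f$ itself: $\|f\|_A\le\vartheta(2\bN)^{1/2}\|f\|_2\le C(d)\vartheta(\bN)$, using $\|f\|_2\le\|g_\xi\|_\infty\|\cK_\bN\|_2\lesssim\vartheta(\bN)^{1/2}$. Combining $\vartheta(\bN)^{1-1/p}\lesssim(2D+1)\vartheta(\bN)(v+1)^{-1/2}$ then yields $v+1\le C(d,p)(2D+1)^2\vartheta(\bN)^{2/p}$ exactly. This switch from the uniform norm to $L_p(\mu_\xi)$ is precisely the idea (Step 4 in the Introduction) that your proposal is missing, so as written the argument does not close.
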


\begin{proof} By Theorem \ref{NUT2}, the  inequality (\ref{D7}) holds for any $f\in\cC(\bbT^d)$. We take the function $f$ from the proof of Lemma \ref{NLL1} defined in (\ref{Rf}). By the definition of $f$ we get $f(\xi^j)=0$ for all $j=1,\dots,m$. Therefore, $B_v(f,\cD_N,L_p(\xi))=0$ and by (\ref{R21}),  we obtain
\be\label{D9}
 \|f-B_v(f,\cD_N,L_p(\xi))\|_p \ge C_3(d)\vartheta(\bN)^{1-1/p}.
\ee
On the other hand,  using (\ref{R19}) with $q=2$,  we get
\be\label{D10}
\sum_{\bk\in \bbZ^d} |\hat f(\bk)| \le \vartheta(2\bN)^{1/2} \|f\|_2^{1/2}  \le C_1(d)\vartheta(\bN).
\ee
By (\ref{CG2}),  we obtain that $$\eta(L_p(\bbT^d,\mu_\xi),w) \le (p-1)w^2/2,\  \ \forall w>0.$$
It is known (see, for instance, \cite{VTbook}, p.342, and Lemma \ref{NLv} above) that for any dictionary $\cD = \{g\}$, $\|g\|_X \le 1$ in a Banach  space $X$ with $\eta(X,w) \le \gamma w^q$, $1<q\le 2$, we have
 \be\label{R1}
 \sigma_v(A_1(\cD),\cD)_X \le C(q,\gamma)(v+1)^{1/q -1}.
 \ee
 Here
 $$
 A_1(\cD) := \left\{f:\, f=\sum_{i=1}^\infty a_ig_i,\quad g_i\in \cD,\quad \sum_{i=1}^\infty |a_i| \le 1 \right\}.
 $$

We now apply the
inequality (\ref{R1}) and  (\ref{D10}) to obtain
\be\label{D11}
\sigma_v(f,\cD_N)_{L_p(\bbT^d, \mu_\xi)} \le C_1(d,p)\vartheta(\bN) (v+1)^{-1/2}.
\ee
Substituting (\ref{D9}) and (\ref{D11}) into (\ref{D7}) we find
$$
v+1 \le C(d,p)(2D+1)^2 \vartheta(\bN)^{2/p}
$$
with some positive constant $C(d,p)$. We choose this constant $C(d,p)$ and complete the proof.
\end{proof}

\subsection{Recovery on classes with conditions on the hyperbolic cross decompositions}

In this section, we expand upon the classes $\bW^{a,b}_{A_\beta}(\Psi)$ introduced in Subsection \ref{RIfc}, which  were initially defined  for the trigonometric system $\Psi = \cT^d$ with $\beta=1$.  We now give  the definition for a general system $\Psi=\{\psi_{\bk}\}_{\bk\in\bbZ^d}$ of functions on a probability space $(\Omega, \mu)$.

Let $\mathbf{s}=(s_1, \dots, s_d) \in \mathbb{N}_0^d$ be a vector of non-negative integers. We define the block of indices $\rho(\mathbf{s})$ as
$$\rho(\mathbf{s}) := \bigl\{ \mathbf{k} \in \mathbb{Z}^d : \lfloor 2^{s_j-1} \rfloor \le |k_j| < 2^{s_j}, \quad j=1, \dots, d \bigr\}.$$For a function $f$ representable by the absolutely convergent series\begin{equation}\label{R9a}f = \sum_{\mathbf{k}\in\bbZ^d} a_{\mathbf{k}}(f) \psi_{\mathbf{k}}   \quad \text{with}\  \  \sum_{\mathbf{k}\in\bbZ^d} |a_{\mathbf{k}}(f)| < \infty,\end{equation}we define  $$\delta_{\mathbf{s}}(f, \Psi) := \sum_{\mathbf{k} \in \rho(\mathbf{s})} a_{\mathbf{k}}(f) \psi_{\mathbf{k}}, \    \  \quad f_j := \sum_{\bs\in\bbN_0^d: \  \|\mathbf{s}\|_1 = j} \delta_{\mathbf{s}}(f, \Psi), \quad j \in \mathbb{N}_0,$$ and for  $\beta \in (0, 1]$,
 $$
 |f|_{A_\beta(\Psi)} := \left( \sum_{\mathbf{k}\in\bbZ^d} |a_{\mathbf{k}}(f)|^\beta \right)^{1/\beta}.
 $$
 For parameters $a \in \mathbb{R}_+$ and $b \in \mathbb{R}$, the class $\mathbf{W}^{a,b}_{A_\beta}(\Psi)$ consists of functions $f$ possessing a representation \eqref{R9a} such that the hyperbolic layers satisfy
 \begin{equation}\label{R10}
 |f_j|_{A_\beta(\Psi)} \le 2^{-aj}(\bar{j})^{(d-1)b}, \quad j \in \mathbb{N}_0,
 \end{equation}
 where  $ \bar{j} := \max(j, 1)$.

 These classes were originally introduced for the trigonometric system with $\beta=1$ in \cite{VT150}, with the general definition for any $\Psi$ and $\beta=1$  later appearing in \cite{DTM2}. For simplicity, we denote $\mathbf{W}^{a,b}_{A}(\Psi) := \mathbf{W}^{a,b}_{A_1}(\Psi)$.

The following result from \cite{VT205} establishes upper bounds for the optimal nonlinear recovery of the classes $\mathbf{W}^{a,b}_{A_\beta}(\Psi)$ in the $L_p$ norm.

\begin{Theorem}[{\cite{VT205}}]\label{MT1}  Assume that $\Psi$ is a uniformly bounded Riesz system in   $L_2(\Omega,\mu)$ satisfying  the uniform bound condition (\ref{ub}) and the Riesz condition (\ref{Riesz}) with constants $0<R_1\leq R_2<\infty$.
Let $2\le p<\infty$  and $a>0$.
 There exist positive constants $c=c(a,b,\bt,p,R_1,R_2,d)$ and $C=C(a,b,\bt,p,d)$ such that  the following bound holds:
 \begin{equation}\label{R4w'}
 \varrho_{m}^{o}(\bW^{a,b}_{A_\bt}(\Psi),L_p(\Omega,\mu)) \le C  v^{1-1/p -1/\bt -a}  (\log(2v))^{(d-1)(a+b)}
\end{equation}
for any number of samples  $m$ satisfying
$$
m\ge c   v  (\log(2v ))^4 .
$$
Furthermore, this recovery bound is constructively achieved by the Weak Orthogonal Matching Pursuit (WOMP) algorithm.
\end{Theorem}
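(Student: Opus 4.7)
The plan is to imitate the proof of Theorem~\ref{RT3a} with its key ingredient Theorem~\ref{RT1} replaced by a hyperbolic-layer analog tailored to the class $\bW^{a,b}_{A_\bt}(\Psi)$.

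\textbf{Step 1 (best $v$-term approximation on the hyperbolic cross).} I would first establish the following hyperbolic counterpart of Theorem~\ref{RT1}: for any probability measure $\nu$ on $\Omega$ with $\sup_{\bk}\|\psi_\bk\|_{L_p(\Omega,\nu)}\le B$, and every $v\in\bbN$, there is an integer $J$ (independent of $\nu$) with $2^J J^{d-1}\le v^{c^\ast}$ such that, writing $\Psi_J^{\mathrm{hyp}}:=\{\psi_\bk:\bk\in\bigcup_{\|\bs\|_1\le J}\rho(\bs)\}$,
\begin{equation*}
\sigma_v\bigl(\bW^{a,b}_{A_\bt}(\Psi),\,\Psi_J^{\mathrm{hyp}}\bigr)_{L_p(\Omega,\nu)} \le CB\,v^{1/q-1/\bt-a}(\log(2v))^{(d-1)(a+b)},\quad q=\min(p,2),
\end{equation*}
with the bound attained by a constructive greedy algorithm. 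The extremal configuration fills each hyperbolic layer $j$ with $n_j\asymp 2^j j^{d-1}$ equal-magnitude coefficients of size $M_j n_j^{-1/\bt}$, where $M_j:=2^{-aj}\bar j^{(d-1)b}$: greedy retains whole layers in order, and the first partially covered layer occurs at $J'\asymp\log v$ (characterized by $\sum_{j\le J'}n_j\asymp v$), contributing an $L_2$ residual that matches the claimed bound after tracking both the geometric factor $2^{J'(1/2-1/\bt-a)}$ and the polynomial factor $(J')^{(d-1)(b+1/2-1/\bt)}$.

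\textbf{Step 2 (WOMP plus universal discretization).} Setting $\cD_N:=\Psi_J^{\mathrm{hyp}}$ (so $N\le v^{c^\ast}$ and $\log N\asymp\log v$) and $u:=\lceil(1+c_\ast)v\rceil$ with $c_\ast$ from Theorem~\ref{NUT1}, Theorem~\ref{RT2} applied with $p=2$, $K=R_1^{-2}$ produces a point set $\xi=\Omega_m$ of size $m\le c\,v(\log(2v))^4$ that provides $L_2$-universal sampling discretization for $\cX_u(\cD_N)$. The Nikol\-skii hypothesis $\Psi\in NI(2,p,H,u)$ required by Theorem~\ref{NUT1} holds with $H\le Cu^{1/2-1/p}\le Cv^{1/2-1/p}$, via the standard bound $\|f\|_\infty\le u^{1/2}\|f\|_2$ for $u$-sparse combinations of a uniformly bounded Riesz system followed by $L_2$--$L_\infty$ interpolation. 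Theorem~\ref{NUT1} then yields, for every $f\in\cC(\Omega)$,
\begin{equation*}
\bigl\|f-G_{\lceil c_\ast v\rceil}^t(f,\cD)_{L_2^m}\bigr\|_{L_p(\Omega,\mu)} \le CH\,\sigma_v(f,\cD_N)_{L_p(\Omega,\mu_\xi)}.
\end{equation*}
Applying the lemma of Step~1 with $\nu=\mu_\xi$ (for which $\|\psi_\bk\|_{L_p(\Omega,\mu_\xi)}\le 1$ follows from~(\ref{ub})) controls the right-hand side by $Cv^{1/2-1/\bt-a}(\log(2v))^{(d-1)(a+b)}$, and combining with the bound on $H$ delivers the claimed rate.

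\textbf{Main obstacle.} The crux is Step~1, and in particular producing the precise log exponent $(d-1)(a+b)$. For $L_2$ with orthonormal $\Psi$ the proof is accessible: the constraint $|f_j|_{A_\bt}\le M_j$ caps at $T^{-\bt}M_j^\bt$ the number of coefficients in layer $j$ exceeding any threshold $T$, and a crossover/Lagrange argument identifies a critical hyperbolic level $J_T\asymp\log(1/T)$, producing sorted-coefficient decay of the form $|a_k^\ast|\lesssim k^{-(1/\bt+a)}$ with logarithmic corrections; summing the $\ell_2$ tail then gives the stated exponents. For general $L_p$ one must invoke the Banach-space greedy machinery of Lemma~\ref{NLv} (with its complex-valued and $A_\bt$ extensions) applied layer by layer, combined with the truncation bound $\bigl\|\sum_{j>J}f_j\bigr\|_{L_p(\Omega,\nu)}\le B\sum_{j>J}M_j$, which for $c^\ast$ large enough is absorbed into the target rate. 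The polynomial factor $\bar j^{(d-1)b}$ of $M_j$ enters the log exponent through the identities $v\asymp 2^J J^{d-1}$ and $M_J=2^{-aJ}\bar J^{(d-1)b}$ at the critical level $J\asymp\log v$, yielding precisely the combined exponent $(d-1)(a+b)$.
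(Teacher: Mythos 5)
Your proposal is correct and follows essentially the same route as the paper: your Step~1 is precisely the hyperbolic-cross sparse approximation result (Theorem~\ref{NT0} and Lemma~\ref{NL1}, from \cite{VT205}), and your Step~2 reproduces the paper's derivation via Theorem~\ref{NUT1} combined with Theorem~\ref{RT2} (i.e.\ the proof of Theorem~\ref{RT3}, mirroring that of Theorem~\ref{RT3a}), followed by the reduction from the conditional result to Theorem~\ref{MT1} using the $u$-term Nikol'skii bound $H\ll u^{1/2-1/p}$ valid for uniformly bounded Riesz systems. Your heuristic accounting of the exponents, including the logarithmic factor $(\log(2v))^{(d-1)(a+b)}$ and the sample budget $m\asymp v(\log(2v))^4$, matches the paper's.
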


In the specific case where the system $\Psi$ is the trigonometric system $\cT^d$, the logarithmic factor in the required number of samples $m$ can be slightly improved.

\begin{Theorem}[{\cite{VT205}}]\label{MT2}  Assume that $\Psi$ is the  $d$-variate trigonometric system $\cT^d$. Let $2\le p<\infty$  and $a>0$.
 There exist positive constants $c=c(a,b,\bt,p,d)$ and $C=C(a,b,\bt,p,d)$ such that the following recovery bound holds:
 \begin{equation}\label{Mtr}
 \varrho_{m}^{o}(\bW^{a,b}_{A_\bt}(\cT^d),L_p(\bbT^d)) \le C  v^{1-1/p -1/\bt -a}  (\log(2v))^{(d-1)(a+b)}
\end{equation}
for any number of samples $m$ satisfying
$$
m\ge c   v  (\log(2v ))^3 .
$$
Moreover, this bound is achieved constructively by the Weak Orthogonal Matching Pursuit (WOMP).
\end{Theorem}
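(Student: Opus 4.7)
The plan is to follow the same template as the proof of Theorem~\ref{MT1}, with the one change that in the universal sampling discretization step I invoke the sharper bound~\eqref{HR1} specific to the trigonometric system in place of the general bound from Theorem~\ref{RT2}; this saves one logarithmic factor in the required sample count $m$. I will combine three ingredients: (i) a hyperbolic-cross truncation of $f$; (ii) an improved best $v$-term estimate $\sigma_v(f,\cD_N)_{L_p(\bbT^d,\mu_\xi)} \ll v^{1/2-1/\bt-a}(\log 2v)^{(d-1)(a+b)}$ exploiting the hyperbolic layer structure of the class; (iii) the WOMP Lebesgue-type inequality of Theorem~\ref{NUT1}.

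First I fix $J\in\bbN$ so that $2^J \asymp v^{c^*}$ for a sufficiently large constant $c^*=c^*(a,b,\bt,p,d)$ (to be determined by the tail estimate below), set $G_J := \bigcup_{\|\bs\|_1\le J}\rho(\bs)$, and take $\cD_N := \{e^{i(\bk,\cdot)} : \bk\in G_J\}$. Since $G_J\subset\Pi(2^J)$ and $2^J\le v^{c^*}$, bound~\eqref{HR1} supplies a set $\xi=\{\xi^j\}_{j=1}^m$ with $m\le C(c^*,d)\,v(\log 2v)^3$ providing $L_2$-universal sampling discretization for $\cX_u(\cD_N)$, where $u=\lceil(1+c_*)v\rceil$ and $c_*$ is the constant from Theorem~\ref{NUT1}. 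The trigonometric system is orthonormal ($R_1=R_2=1$) and bounded by $1$, and Hausdorff--Young duality gives $\|g\|_p \le u^{1/2-1/p}\|g\|_2$ for every $u$-term trigonometric polynomial $g$ and every $p\ge 2$, so $\cD_N\in NI(2,p,u^{1/2-1/p},u)$ with $H \asymp v^{1/2-1/p}$.

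The heart of the argument is the $v$-term approximation estimate in $L_p(\bbT^d,\mu_\xi)$. Writing $f=\sum_{j\ge 0}f_j$ for the hyperbolic-layer decomposition, I split at two scales as $f = U + W + T_Jf$, with $U = \sum_{j\le j_*}f_j$, $W = \sum_{j_*<j\le J}f_j$, $T_Jf = \sum_{j>J}f_j$, where $j_*$ is chosen so that $|G_{j_*}|\asymp 2^{j_*}j_*^{d-1}\le v/2$; i.e.\ $j_* = \lfloor\log_2 v - (d-1)\log_2\log(2v)\rfloor - c_1$. The term $U$ is taken exactly using at most $v/2$ dictionary elements. The tail is controlled by $\|T_Jf\|_{L_p(\mu_\xi)}\le\sum_{j>J}|f_j|_{A_1}\ll 2^{-aJ}J^{(d-1)b}$, which is absorbed into the target error by choosing $c^*$ large. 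For $W$, the frequency-disjointness of the layers gives $|W|_{A_\bt}^\bt=\sum_{j_*<j\le J}|f_j|_{A_\bt}^\bt\ll 2^{-a\bt j_*}j_*^{(d-1)b\bt}$, hence $|W|_{A_\bt}\ll 2^{-aj_*}j_*^{(d-1)b}\asymp v^{-a}(\log 2v)^{(d-1)(a+b)}$. Invoking the $A_\bt$-Stechkin-type estimate---the extension of Lemma~\ref{NLv} to $A_\bt(\cD)$ classes in complex uniformly smooth Banach spaces (here $L_p(\bbT^d,\mu_\xi)$ with $\eta(w)\le(p-1)w^2/2$), realized constructively by a weak greedy algorithm (cf.~\cite{DGHKT}, \cite{VT209})---with $w=\lceil v/2\rceil$ terms yields
$$
\sigma_{\lceil v/2\rceil}(W,\cT^d)_{L_p(\mu_\xi)} \le C\,w^{1/2-1/\bt}\,|W|_{A_\bt} \ll v^{1/2-1/\bt-a}(\log 2v)^{(d-1)(a+b)},
$$
and combining the three contributions gives $\sigma_v(f,\cD_N)_{L_p(\mu_\xi)} \ll v^{1/2-1/\bt-a}(\log 2v)^{(d-1)(a+b)}$.

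Plugging this into Theorem~\ref{NUT1} with the data above yields
$$
\bigl\|f - G^t_{\lceil c_* v\rceil}(f,\cD)_{L_2^m}\bigr\|_{L_p(\bbT^d)} \le C\,H\,\sigma_v(f,\cD_N)_{L_p(\mu_\xi)} \ll v^{1-1/p-1/\bt-a}(\log 2v)^{(d-1)(a+b)},
$$
which is the claimed bound, and it is produced constructively by the WOMP. The main obstacle is the $A_\bt$-Stechkin estimate used for $W$: Lemma~\ref{NLv} as stated only covers $A_1(\cD)$, so for $\bt\in(0,1)$ one must rely on its algorithmic (greedy) extension in complex uniformly smooth Banach spaces from the references above in order to reach the sharper rate $w^{1/2-1/\bt}$ and to keep the whole recovery procedure realized by the WOMP.
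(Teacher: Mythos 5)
Your proposal is correct and follows essentially the same route as the paper: Theorem~\ref{MT2} is obtained from the proof template of Theorems~\ref{MT1}/\ref{RT3} by replacing the general universal discretization step (Theorem~\ref{RT2}) with the RIP-based trigonometric bound \eqref{HR1}, and then combining a hyperbolic-cross sparse approximation estimate in $L_p(\bbT^d,\mu_\xi)$ (your layer splitting plus Stechkin-type argument re-derives Lemma~\ref{NL1}/Theorem~\ref{NT0}) with the WOMP Lebesgue-type inequality of Theorem~\ref{NUT1} and the $u$-term Nikol'skii inequality $H\asymp v^{1/2-1/p}$. One small remark: the constructive realization of the $\sigma_v$-estimate for $W$ that you flag as the main obstacle is not actually needed, since Theorem~\ref{NUT1} only requires the existence of a good $v$-term approximant in $L_p(\Omega,\mu_\xi)$, while the recovery algorithm itself is the WOMP run in $L_2^m$.
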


Let $Q$ be a finite subset of  $\mathbb Z^d$. We  define the subspace $\Psi(Q)$ as
$$
\Psi(Q) :=\left\{ \sum_{\mathbf k\in Q}a_{\mathbf k}
\psi_\bk:\  \ a_{\mathbf k}\in\mathbb{C}   \right\} .
$$
The proof of Theorem \ref{MT1} is based on the following sparse approximation result.
\begin{Theorem}[{\cite{VT205}}]\label{NT0}   Let $1<p<\infty$, $a>0$ and  $b\in \bbR$. Assume that $\|\psi_\bk\|_{L_p(\Omega,\mu)} \le 1$ for all  $\bk\in\bbZ^d$. Then there exist  constants $c=c(a,\bt,d,p)$ and  $C=C(a,b,\bt,d,p)$, such that,  for any $v\in \bbN$,  there exists  a subset  $Q\subset \bbZ^d$ with  $|Q| \le v^{c}$ that is independent of the measure $\mu$, for which
there exists
a constructive method $A_{v,\mu}$,  based on greedy algorithms, that provides a $v$-term approximant
from $\Psi(Q)$  such that  for any  $f\in \bW^{a,b}_{A_\bt}(\Psi)$,
$$
\|f-A_{v,\mu}(f)\|_{L_p(\Omega,\mu)} \le C   v^{-a+1/q-1/\bt} (\log(2 v))^{(d-1)(a+b)},
$$
where $q:=\min(p,2)$.
\end{Theorem}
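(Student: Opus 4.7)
The idea is to construct the approximant layer-by-layer. Decompose $f=\sum_{j\ge 0}f_j$ with $f_j=\sum_{\|\bs\|_1=j}\delta_\bs(f,\Psi)$, and fix the dictionary index set
\[
Q := \bigcup_{\|\bs\|_1\le J}\rho(\bs),\qquad J := \lceil C_1\log_2 v\rceil,
\]
with $C_1=C_1(a,\bt,p,d)$ chosen below. Since $J$ depends only on $a,\bt,p,d$, so does $Q$, and $|Q|\asymp 2^J J^{d-1}\le v^c$ for a suitable $c=c(a,\bt,p,d)$.

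\textbf{Per-layer approximation.} Since $\bt\le 1$ and $\|\psi_\bk\|_p\le 1$, every layer satisfies $\|f_j\|_p\le|f_j|_{A_1(\Psi)}\le|f_j|_{A_\bt(\Psi)}\le M_j:=2^{-aj}(\bar j)^{(d-1)b}$. For each $j\le J$ we build an approximant $A_j\in\Sigma_{v_j}(\Psi(Q))$ in two steps, where $v_j$ is the budget allocated to layer $j$ with $\sum_j v_j\le v$. Step (i), Stechkin truncation: keep the $n_j:=\lfloor v_j/2\rfloor$ largest coefficients of $f_j$ to form $g_j$; the standard $\bt$-Stechkin estimate for $\bt$-summable sequences yields $|f_j-g_j|_{A_1(\Psi)}\ll M_j n_j^{1-1/\bt}$. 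Step (ii), greedy compression: since $\eta(L_p,w)\le C_p w^q$ with $q=\min(p,2)$, Lemma \ref{NLv} applied in $L_p(\Omega,\mu)$ to the dictionary $\Psi(Q)$ yields a constructive greedy $m_j$-term approximant $w_j\in\Sigma_{m_j}(\Psi(Q))$ of $f_j-g_j$ (with $m_j:=v_j-n_j$) satisfying
\[
\|f_j-g_j-w_j\|_{L_p(\Omega,\mu)}\ll |f_j-g_j|_{A_1(\Psi)}\,m_j^{1/q-1}\ll M_j\, v_j^{1/q-1/\bt}.
\]
Setting $A_j:=g_j+w_j$ (or $A_j:=f_j$ with $v_j=N_j:=|\bigcup_{\|\bs\|_1=j}\rho(\bs)|\asymp 2^j j^{d-1}$ when the trivial bound is smaller) gives the per-layer estimate $\|f_j-A_j\|_p\le C M_j v_j^{1/q-1/\bt}$.

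\textbf{Budget allocation and error.} We now minimize $\sum_j M_j v_j^{-w}$ with $w:=1/\bt-1/q>0$, subject to $\sum_j v_j\le v$ and $v_j\le N_j$. The unconstrained Lagrangian minimizer is $v_j=\mu M_j^{1/(w+1)}$; since this exceeds $N_j$ for small $j$ when $v$ is large, the optimal policy is full inclusion $v_j=N_j$ for $j\le j^*$ and Lagrangian allocation for $j>j^*$, where $j^*$ is the continuity threshold $N_{j^*}=\mu M_{j^*}^{1/(w+1)}$. The budget constraint then forces $v\asymp N_{j^*}$, so $j^*\asymp\log v$ and $M_{j^*}\asymp v^{-a}(\log v)^{(d-1)(a+b)}$. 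A short computation using the geometric decay of $M_j^{1/(w+1)}$ gives the total layered error
\[
\sum_{j>j^*}M_j v_j^{-w}\;\asymp\;\mu^{-w}M_{j^*}^{1/(w+1)}\;\asymp\;M_{j^*}N_{j^*}^{-w}\;\asymp\; v^{-a-w}(\log v)^{(d-1)(a+b)},
\]
which equals the target $v^{-a+1/q-1/\bt}(\log v)^{(d-1)(a+b)}$. Taking $C_1>1+(w+1)/a$ ensures the discarded tail $\|\sum_{j>J}f_j\|_p\ll M_J$ is dominated by this bound.

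\textbf{Main obstacle.} The principal technical difficulty is the constrained Lagrangian optimization and the careful bookkeeping of the two competing scales, the growing layer dimensions $N_j\asymp 2^j j^{d-1}$ and the decaying layer masses $M_j=2^{-aj}(\bar j)^{(d-1)b}$, whose interplay produces precisely the logarithmic exponent $(d-1)(a+b)$. A secondary concern is keeping the set $Q$ independent of $\mu$ and the map $A_{v,\mu}$ constructive: the Stechkin selection uses only the coefficients $a_\bk(f)$ of $f$, while the measure $\mu$ enters only through the greedy step of Lemma \ref{NLv}, which is itself constructive (for instance via the WCGA in $L_p(\Omega,\mu)$).
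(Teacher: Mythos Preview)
The paper does not reproduce the proof of Theorem \ref{NT0} (it is cited from \cite{VT205}), but your argument is correct and is precisely the method used there and in the parallel Theorem \ref{RT1}: layer decomposition $f=\sum_j f_j$, Stechkin thresholding to pass from the $\ell_\bt$ bound $|f_j|_{A_\bt}\le M_j$ to an $\ell_1$ bound, greedy compression via Lemma \ref{NLv} to obtain $\|f_j-A_j\|_p\ll M_j v_j^{1/q-1/\bt}$, and geometric budget allocation with full inclusion below the threshold $j^*$ determined by $N_{j^*}\asymp v$. The emergence of the logarithmic exponent $(d-1)(a+b)$ from $M_{j^*}\asymp v^{-a}(\log v)^{(d-1)(a+b)}$ (using $2^{j^*}(j^*)^{d-1}\asymp v$) is exactly right.
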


The following Lemma \ref{NL1} is a corollary of Theorem \ref{NT0}.

\begin{Lemma}[{\cite{VT205}}]\label{NL1}  Assume that $\Psi = \{\psi_{\mathbf{k}}\}_{\mathbf{k}\in\mathbb{Z}^d}$ is a uniformly bounded system of functions on $\Omega$ such that
\be\label{R8}
|\psi_\bk(\bx)| \le 1,\quad \forall \bx \in \Omega, \quad\forall \bk \in \bbZ^d.
\ee
Let $1<p<\infty$, $a>0$, $b\in \bbR$, and $v\in\bbN$. There exist two constants $c^*=c(a,\bt,d,p)$, $C=C(a,b,\bt,d,p)$, and a finite set $Q\subset \bbZ^d$ with cardinality $|Q| \le v^{c^*}$,
such that for any probability measure $\mu$ and any finite sample  set $\xi\subset \Omega$   there exists
a constructive method $A_{v,\xi}$ based on greedy algorithms  providing a $v$-term approximant
from the subspace $\Psi(Q)$  with the bound that   for $f\in \bW^{a,b}_{A_\bt}(\Psi)$,
$$
\|f-A_{v,\xi}(f)\|_{L_p(\Omega,\mu_\xi)} \le C   v^{-a+1/q-1/\bt} (\log(2 v))^{(d-1)(a+b)},
$$
where $q := \min(p, 2)$ and $\mu_\xi$ is the measure defined in \eqref{muxi}.

\end{Lemma}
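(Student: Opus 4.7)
The statement is essentially a reformulation of Theorem \ref{NT0} with the fixed measure $\mu$ replaced by the data-dependent probability measure $\mu_\xi = (\mu+\mu_m)/2$. The entire plan, therefore, is to check that Theorem \ref{NT0} is applicable when we substitute $\mu_\xi$ for $\mu$, and then to rename the resulting algorithm $A_{v,\mu_\xi}$ as $A_{v,\xi}$.

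\textbf{Step 1: $\mu_\xi$ is a probability measure.} Both $\mu$ and $\mu_m = \frac{1}{m}\sum_{j=1}^m \delta_{\xi^j}$ are Borel probability measures on $\Omega$, hence so is their convex combination $\mu_\xi$. In particular, $L_p(\Omega,\mu_\xi)$ is a well-defined Banach space for each $p\in[1,\infty)$.

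\textbf{Step 2: The normalization hypothesis of Theorem \ref{NT0} transfers.} Theorem \ref{NT0} requires $\|\psi_\bk\|_{L_p(\Omega,\nu)} \le 1$ for all $\bk\in\bbZ^d$, where $\nu$ is the measure used to form the $L_p$ norm. Under the pointwise bound (\ref{R8}), we have
\[
\|\psi_\bk\|_{L_p(\Omega,\nu)} = \left(\int_\Omega |\psi_\bk(\bx)|^p\, d\nu(\bx)\right)^{1/p} \le 1
\]
for \emph{every} probability measure $\nu$ on $\Omega$; in particular for $\nu=\mu_\xi$. Moreover, the class $\bW^{a,b}_{A_\bt}(\Psi)$ is defined purely through the coefficients $a_\bk(f)$ in the expansion (\ref{R9a}), so its definition is independent of any reference measure.

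\textbf{Step 3: Apply Theorem \ref{NT0} with $\mu$ replaced by $\mu_\xi$.} By Theorem \ref{NT0} there exist constants $c^*=c(a,\bt,d,p)$ and $C=C(a,b,\bt,d,p)$, and, for each $v\in\bbN$, a subset $Q\subset\bbZ^d$ with $|Q|\le v^{c^*}$ \emph{that is independent of the reference measure}, together with a constructive greedy-based method $A_{v,\mu_\xi}$ producing a $v$-term approximant from $\Psi(Q)$ such that for every $f\in\bW^{a,b}_{A_\bt}(\Psi)$,
\[
\|f-A_{v,\mu_\xi}(f)\|_{L_p(\Omega,\mu_\xi)} \le C\,v^{-a+1/q-1/\bt}(\log(2v))^{(d-1)(a+b)},
\]
with $q=\min(p,2)$. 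Setting $A_{v,\xi} := A_{v,\mu_\xi}$ completes the proof. The fact that the set $Q$ does not depend on the measure is what allows $|Q|\le v^{c^*}$ with a constant $c^*$ depending only on $(a,\bt,d,p)$ (not on $\xi$), and this is the one point that genuinely needs to be read off from the statement of Theorem \ref{NT0}; everything else is a bookkeeping observation. There is no substantive obstacle, since the problem reduces to verifying that the hypotheses of the already-proven theorem hold uniformly in the choice of probability measure, which is immediate from the pointwise uniform bound (\ref{R8}).
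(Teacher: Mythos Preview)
Your proof is correct and matches the paper's approach exactly: the paper simply states that Lemma~\ref{NL1} is a corollary of Theorem~\ref{NT0}, and your argument spells out precisely why---the pointwise bound (\ref{R8}) ensures $\|\psi_\bk\|_{L_p(\Omega,\mu_\xi)}\le 1$ for any probability measure, and the set $Q$ in Theorem~\ref{NT0} is explicitly independent of the measure.
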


The following Theorem \ref{RT3} is a variant of Theorem \ref{MT1} with an extra assumption on the Nikol'skii inequality.

 \begin{Theorem}[{\cite{VT205}}]\label{RT3}  Assume that $\Psi$ is a uniformly bounded Riesz system (in the space  $L_2(\Omega,\mu)$) satisfying (\ref{ub}) and (\ref{Riesz}) for some constants $0<R_1\leq R_2<\infty$.
Let $2\le p<\infty$  and $a>0$. For any $v\in\bbN$ denote $u := \lceil (1+c_\ast)v\rceil$, where $c_\ast$ is from Theorem \ref{NUT1}.  Assume in addition that $\Psi \in NI(2,p,H,u)$ with $p\in [2,\infty)$.
 There exist constants $c'=c'(a,b,\bt,p,R_1,R_2,d)$ and $C'=C'(a,b,\bt,p,d)$ such that
 \begin{equation}\label{R4sa}
 \varrho_{m}^{o}(\bW^{a,b}_{A_\bt}(\Psi),L_p(\Omega,\mu)) \le C' H v^{1/2 -1/\bt -a}  (\log(2v))^{(d-1)(a+b)}.
\end{equation}
for any $m$ satisfying
$$
m\ge c'    v  (\log(2v ))^4 .
$$
Moreover, this bound is provided by the WOMP.
\end{Theorem}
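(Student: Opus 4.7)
The plan is to mirror the proof of Theorem \ref{RT3a}, replacing the cubic-block approximation result (Theorem \ref{RT1}) with its hyperbolic-layer analog, Lemma \ref{NL1}. Three ingredients will be combined: Lemma \ref{NL1} supplies a universal finite-dimensional subspace $\Psi(Q)$ of polynomial-in-$v$ size containing a good $v$-term approximant in $L_p(\Omega,\mu_\xi)$ for every $f\in \bW^{a,b}_{A_\bt}(\Psi)$; Theorem \ref{RT2} supplies universal $L_2$-sampling discretization for the collection $\cX_u(\Psi(Q))$; and Theorem \ref{NUT1} supplies the conditional Lebesgue-type inequality for the WOMP in the Hilbert space $L_2^m$ that converts the $L_p(\Omega,\mu_\xi)$-approximation error into an $L_p(\Omega,\mu)$-recovery error, at the price of the Nikol'skii factor $H$.

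Concretely, first I would apply Lemma \ref{NL1} with parameters $(a,b,\bt,d,p)$ to extract a constant $c^\ast=c(a,\bt,d,p)$ and a set $Q\subset \bbZ^d$ with $|Q|\le v^{c^\ast}$, both independent of the measure. Next, setting $u:=\lceil (1+c_\ast)v\rceil$ with $c_\ast$ taken from Theorem \ref{NUT1}, I would apply Theorem \ref{RT2} with $p=2$ and $K=R_1^{-2}$ to the Riesz subsystem $\Psi(Q)$ of cardinality $N\le v^{c^\ast}$; this produces a sample set $\xi=\{\xi^1,\dots,\xi^m\}\subset\Omega$ of size
\[
m \le CK u(\log N)(\log(2Ku))^2(\log(2Ku)+\log\log N) \lesssim v(\log(2v))^4
\]
providing $L_2$-universal sampling discretization for $\cX_u(\Psi(Q))$. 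Then Theorem \ref{NUT1}, applied with $\cD_N=\Psi(Q)$ and using that the assumption $\Psi\in NI(2,p,H,u)$ restricts to the subsystem $\Psi(Q)$, yields for every $f\in \cC(\Omega)$
\[
\bigl\|f - G_{\lceil c_\ast v\rceil}^t(f,\Psi(Q))_{L_2^m}\bigr\|_{L_p(\Omega,\mu)} \le HC\,\sigma_v(f,\Psi(Q))_{L_p(\Omega,\mu_\xi)}.
\]
Finally, restricting to $f\in \bW^{a,b}_{A_\bt}(\Psi)$ and bounding the right-hand side via the constructive $v$-term approximant $A_{v,\xi}(f)\in \Psi(Q)$ from Lemma \ref{NL1} (with $q=\min(p,2)=2$ since $p\ge 2$) produces the target bound $C'H\,v^{1/2-1/\bt-a}(\log(2v))^{(d-1)(a+b)}$.

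The one delicate point I expect to be the main obstacle is that the universal set $Q$ has to be fixed \emph{before} the sample set $\xi$ is chosen, since $\xi$ is itself produced by discretizing $\Psi(Q)$, yet the approximation bound is subsequently needed in the data-dependent measure $\mu_\xi$. This is precisely the content of Lemma \ref{NL1}: the set $Q$ is independent of the measure, and the approximation estimate holds uniformly over every probability measure on $\Omega$ — this universality of the approximating subspace is the non-trivial input that closes the loop. Once it is in place, the remainder is bookkeeping of exponents and polylogarithmic factors against the sample budget $m\lesssim v(\log(2v))^4$; because the WOMP uses only the sample vector $S(f,\xi)$, the resulting algorithm is constructive and delivers the claimed upper bound on $\varrho_m^o(\bW^{a,b}_{A_\bt}(\Psi),L_p(\Omega,\mu))$.
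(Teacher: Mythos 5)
Your proposal is correct and follows essentially the same route as the paper: it is the exact analogue of the survey's proof of Theorem \ref{RT3a}, with the cubic-block approximation result (Theorem \ref{RT1}) replaced by the measure-independent hyperbolic-layer approximation of Lemma \ref{NL1}/Theorem \ref{NT0}, combined with Theorem \ref{RT2} (for the $L_2$-universal discretization of $\cX_u(\Psi(Q))$ with $m\lesssim v(\log(2v))^4$) and Theorem \ref{NUT1} (the WOMP Lebesgue-type inequality with the Nikol'skii factor $H$). Your identification of the key point — that $Q$ is fixed independently of the measure, so the $L_p(\Omega,\mu_\xi)$-approximation bound remains valid after the data-dependent sample set $\xi$ is chosen — is exactly how the paper closes the argument.
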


Note that Theorem \ref{RT3} implies Theorem \ref{MT1} (see \cite{VT205}). The following Theorem \ref{RT4a} provides bound (\ref{R4sa}) under weaker conditions on $\Psi$ with the help of other than WOMP algorithm.

\begin{Theorem}[{\cite{VT205}}]\label{RT4a} Let $2\le p<\infty$ and let $m$, $v$ be given natural numbers.    Let $\Psi$ be  a uniformly bounded Bessel system satisfying (\ref{ub}) and (\ref{Bessel}) such that $\Psi \in NI(2,p,H,2v)$.
There exist constants $c=c(a,b,\bt,p,K,d)$ and $C=C(a,b,\bt,p,d)$ such that
 \begin{equation}\label{R4ss}
 \varrho_{m}^{o}(\bW^{a,b}_{A_\bt}(\Psi),L_p(\Omega,\mu)) \le CH v^{1/2 -1/\bt -a}  (\log(2v))^{(d-1)(a+b)}
\end{equation}
for any $m$ satisfying
$$
m\ge c   v  (\log(2v ))^4 .
$$
 \end{Theorem}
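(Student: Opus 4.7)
The plan is to derive Theorem \ref{RT4a} from Theorem \ref{NUT3} by exactly the same assembly that yields Theorem \ref{RT3} from Theorem \ref{NUT1}, substituting the best $v$-term algorithm $B_v(\cdot,\cD_N,L_2(\xi))$ for the WOMP and exploiting the fact that the universal $L_2$ discretization of Theorem \ref{RT2} only requires the Bessel property, not a two-sided Riesz bound. The sparse approximation input comes from Lemma \ref{NL1}, and the discretization input from Theorem \ref{RT2} with $p=2$.

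Step one: apply Lemma \ref{NL1} with parameters $a,b,\bt,p$ to obtain a set $Q\subset\bbZ^d$ with $|Q|\le v^{c^*}$, where $c^*=c^*(a,\bt,d,p)$ is independent of the underlying measure. Since $p\ge 2$ gives $q=\min(p,2)=2$, for every probability measure on $\Omega$ and every sample set $\xi\subset\Omega$ there is a constructive $v$-term approximant $A_{v,\xi}(f)\in\Psi(Q)$ satisfying
\[
\sigma_v(f,\Psi(Q))_{L_p(\Omega,\mu_\xi)} \le Cv^{-a+1/2-1/\bt}(\log(2v))^{(d-1)(a+b)}, \quad f\in\bW^{a,b}_{A_\bt}(\Psi).
\]

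Step two: set $\cD_N:=\{\psi_\bk:\bk\in Q\}$, so that $N\le v^{c^*}$. Uniform boundedness and the Bessel inequality descend from $\Psi$ to $\cD_N$, and since $\Sigma_{2v}(\cD_N)\subset\Sigma_{2v}(\Psi)$ the hypothesis $\Psi\in \NI(2,p,H,2v)$ gives $\cD_N\in \NI(2,p,H,2v)$. Apply Theorem \ref{RT2} with $p=2$ and $u=2v$ to $\cD_N$: since $\log N\le c^*\log v$, the required sample budget $m\ge CKu\log N\cdot(\log(2Ku))^2\cdot(\log(2Ku)+\log\log N)$ reduces to $m\ge cv(\log(2v))^4$ for a suitable $c=c(a,b,\bt,p,K,d)$. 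With positive probability, hence for some deterministic choice, one obtains a set $\xi=\{\xi^1,\dots,\xi^m\}$ providing $L_2$-universal sampling discretization on $\Sigma_{2v}(\cD_N)$, and in particular the one-sided version (\ref{D12}) with $D=\sqrt{2}$.

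Step three: apply Theorem \ref{NUT3} to $\cD_N$, $\xi$, and the ambient Nikol'skii constant $H$, obtaining for every $f\in\cC(\Omega)$
\[
\|f - B_v(f,\cD_N,L_2(\xi))\|_{L_p(\Omega,\mu)} \le 2^{1/p}(2\sqrt{2}H+1)\,\sigma_v(f,\cD_N)_{L_p(\Omega,\mu_\xi)}.
\]
Since $B_v(f,\cD_N,L_2(\xi))$ depends on $f$ only through the sample vector $S(f,\xi)$, it is an admissible nonlinear sampling recovery operator. Combining with the sparse-approximation bound from step one yields the advertised estimate on $\varrho_m^o(\bW^{a,b}_{A_\bt}(\Psi),L_p(\Omega,\mu))$, with the $H$ factor entering linearly as desired. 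The one mildly delicate point is the logarithmic bookkeeping, namely verifying that $\log N\le c^*\log v$ and $\log(2Ku)\le C\log v$ produce the clean bound $cv(\log(2v))^4$ once $c^*$ and the $K$-dependence are absorbed into the constant $c=c(a,b,\bt,p,K,d)$; this is routine. The conceptual content is that control of sparse approximation in the measure $\mu_\xi$ (the content of Lemma \ref{NL1}), together with the Bessel-only universal discretization of Theorem \ref{RT2} and the transfer mechanism of Theorem \ref{NUT3}, gives the recovery bound without any Riesz assumption on $\Psi$.
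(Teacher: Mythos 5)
Your proposal is correct and takes essentially the same route the paper intends for Theorem \ref{RT4a}: combine the sparse approximation bound in $L_p(\Omega,\mu_\xi)$ from Lemma \ref{NL1} (with $q=\min(p,2)=2$) with the Bessel-only universal $L_2$ discretization of Theorem \ref{RT2} for the system $\Psi(Q)$, and transfer via Theorem \ref{NUT3} applied to the sample-based algorithm $B_v(\cdot,\cD_N,L_2(\xi))$, which is admissible for $\varrho_m^o$ since it uses only the sample vector. This mirrors exactly the paper's indicated derivation (the analogue of obtaining Theorem \ref{RT3} from Theorem \ref{NUT1}, with Theorem \ref{NUT3} replacing the WOMP-based step), including the logarithmic bookkeeping giving $m\ge c\,v(\log(2v))^4$.
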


 Similarly to the above discussion after Theorem \ref{RT4} in the special case $\Psi = \cT^d$ we can improve the restriction $m\ge c   v  (\log(2v ))^4$ to $m\ge c   v  (\log(2v ))^3$. This gives
 Theorem \ref{MT2}.

 \subsection{Recovery on general classes $\bA^{r,b}_\bt(\Psi,\cG)$}
 \label{rgc}

 In this subsection we discuss recent results from \cite{STT} on sampling recovery on general classes $\bA^{r,b}_\bt(\Psi,\cG)$. Note, that a special case of these classes, namely classes $\bA^r_\bt(\Psi,\cG)$, were defined in subsection \ref{RIfc}.  The following collection of classes was introduced
in \cite{VT203}. Given a system $\Psi = (\psi_\bk)_{\bk \in \bbZ^d}$,
consider a sequence of subsets
$\cG:=\{G_j\}_{j=1}^\infty$, $G_j \subset \bbZ^d$, such that
\be\label{rgcDi4}
  G_1\subset G_2\subset \cdots \subset G_j\subset G_{j+1}
  \subset \cdots,\quad \bigcup_{j=1}^\infty G_j =\bbZ^d,\quad G_0:=\emptyset\,,
\ee
and the functions representable in the form of absolutely convergent series
\be\label{rgcDirepr}
    f = \sum_{\bk\in\bbZ^d} a_\bk(f)\psi_\bk,
    \qquad \sum_{\bk\in\bbZ^d} |a_\bk(f)|<\infty.
\ee
For $\bt \in (0,1]$, $r>0$, and $b\in \bbR$
define the class $\bA^{r,b}_\bt(\Psi,\cG)$ as the functions $f$
which have representations (\ref{rgcDirepr}) satisfying the following
conditions
\be\label{rgcDiAr}
  \Big(\sum_{  \bk\in G_j\setminus G_{j-1}} |a_\bk(f)|^\bt\Big)^{1/\bt}
\le 2^{-rj} j^b,\quad j=1,2,\dots.
\ee

Two special cases of these classes have already been discussed above in this section.
In \cite{VT202} and \cite{VT203},
the behavior of $\varrho_m^o(\bF,L_p)$ for the classes
$\bA^r_\bt(\Psi):= \bA^{r,0}_\bt(\Psi,\cG^c)$ was studied, with
$\cG^c$ being dyadic cubes,
$$
   G^c_j := \{\bk: \,  \|\bk\|_\infty <2^j\}, \quad j \in \bbN\,.
$$

The second special case, for the classes
$\bW^{a,b}_{A_\bt}(\Psi):= \bA^{a,b}_\bt(\Psi,\cG^h)$
with $\cG^h$ being dyadic hyperbolic crosses, was
studied in \cite{VT205}.
In that case, with $\mathbf s=(s_1,\dots,s_d ) \in \bbN^d$,
the sets $\cG^h$ of hyperbolic crosses are defined as follows
$$
   \rho(\mathbf s) := \bigl\{ \mathbf k\in\mathbb Z^d:[ 2^{s_j-1}]
\le |k_j| < 2^{s_j},\quad j=1,\dots,d \bigr\},
   \quad G^h_j := \bigcup_{\|\bs\|_1 \le j}\rho(\mathbf s).
$$

The classes $\bA^r_\bt(\Psi)$,
with the sequence $ \cG^c:=\{G_j^c\}_{j=1}^\infty$ of dyadic cubes
(the index $c$ stands for {\it cubes}), are
related to the classical Sobolev-Nikol'skii-Besov classes
(for approximation in these classes see, for instance, \cite{VTbookMA}, Ch.3).
The classes  $\bW^{a,b}_{A_\bt}(\Psi)$, with the sequence
$ \cG^h:=\{G_j^h\}_{j=1}^\infty$ of dyadic hyperbolic crosses
(the index $h$ stands for {\it hyperbolic}) are related to the classical
classes with mixed smoothness
(for approximation in these classes see, for instance,
\cite{VTbookMA}, Ch.4--7).

It is well known that the study of the Sobolev-Nikol'skii-Besov classes
and classes with mixed smoothness requires different techniques.
Surprisingly, papers \cite{VT203} and \cite{VT205} showed that we can
use the same techniques in the study of both $\bA^r_\bt(\Psi)$ and
$\bW^{a,b}_{A_\bt}(\Psi)$ classes. This motivated us to conduct the
corresponding research in the general setting of
classes $ \bA^{r,b}_\bt(\Psi,\cG)$.
It turns out that the technique developed in \cite{VT203} and \cite{VT205}
allows us to treat the classes $ \bA^{r,b}_\bt(\Psi,\cG)$ under a minor
restriction on the sequence of index sets $\cG$.
This minor restriction is the following condition on
the system $\cG$.

\medskip
{\bf Regularity condition RC($\th,\theta'$).}
We say that a system $\cG$ satisfies regularity condition
{\bf RC($\th,\theta'$)}
if there exist positive
numbers $\theta > \theta' > 0$ such that for any $j \in \bbN$
we have
\be\label{sa11}
   2^{\theta'} \le \frac{|G_{j+1}|}{|G_j|} \le 2^{\theta}\,,
   \quad j \in \bbN\,, \qquad |G_0| = 1\,.
\ee
This condition means that cardinalities of the index sets $\{G_j\}_{j\in\bbN}$
grow exponentially,
with a possibly varying rate which is however bounded
from below and from above.

\medskip
For a given $v \in \bbN$, define $n:=n(v)\in\bbN$ through
the following inequalities.
\be\label{sa2}
   |G_{n-1}| < v \le |G_n|.
\ee
Then, under the regularity condition
on the growth of cardinalities of $\{G_j\}$ as above,
  the following results were proved in \cite{STT}.
(See \cite{STT}
about a slightly more general condition under which these results are
still true.)

 The first result   shows the rate of nonlinear
$v$-term approximation of the classes $\bA^{r,b}_\bt(\Psi,\cG)$.

\begin{Theorem}\label{InT2}
Let $1<p<\infty$, $p^*:= \min\{p,2\}$, $r>0$, $b\in \bbR$, $\bt\in (0,1]$.
Assume that

1) $\|\psi_\bk\|_{L_p(\Omega,\mu)} \le B$, $\bk\in\bbZ^d$,
with some probability measure $\mu$ on $\Omega$,

2) the system $\cG$ satisfies condition {\bf RC($\th,\th'$)}.

\noindent
Then there exist two constants $c=c(r,b,\bt,p,\th,\th')$
and $C=C(r,b,\bt,p,\th, B)$ such that
for any $v\in \bbN$ there is a $J\in\bbN$, $|G_J| \le v^c$
with the property 
$$
   \sigma_v(\bA^{r,b}_\bt(\Psi,\cG),\Psi_J)_{L_p(\Omega,\mu)}
\le Cv^{1/p^* -1/\bt}2^{-rn(v)}n(v)^b,
    \quad \Psi_J := \{\psi_\bk\}_{\bk\in G_J}.
$$
Moreover, this bound is provided by a simple greedy algorithm.
\end{Theorem}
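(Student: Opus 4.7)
\medskip
\noindent\textbf{Proof plan for Theorem \ref{InT2}.} The plan is to decompose $f$ along the layers of $\cG$ and combine three ingredients: an exact representation of a low-frequency part, a constructive sparse approximation of a middle part via a greedy algorithm, and a crude triangle-inequality bound on a high-frequency tail. Writing $f=\sum_{j\ge 1}f_j$ with $f_j:=\sum_{\bk\in G_j\setminus G_{j-1}}a_\bk(f)\psi_\bk$, the hypothesis (\ref{rgcDiAr}) gives $\|a(f_j)\|_{\ell_\bt}\le 2^{-rj}j^b$. Given $v\in\bbN$, set $n:=n(v)$ and pick two scales $n_0\le J$, both comparable to $n$: let $n_0$ be the largest integer with $|G_{n_0}|\le v/3$, and set $J:=\lceil Kn\rceil$ for a constant $K=K(r,\bt,p,\th)$ fixed below. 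The regularity condition RC$(\th,\th')$ forces $n-n_0\le 2+\log_2 3/\th'$, so $n_0\asymp n$, and also $2^n\le 2v^{1/\th'}$, so $|G_J|\le 2^{\th J}\le 2^{\th K}v^{\th K/\th'}\ll v^{c}$ for $c:=1+\th K/\th'$, independent of $\mu$.

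Decomposing $f=f^{(1)}+f^{(2)}+f^{(3)}$ with $f^{(1)}:=\sum_{j\le n_0}f_j$, $f^{(2)}:=\sum_{n_0<j\le J}f_j$, $f^{(3)}:=\sum_{j>J}f_j$, I handle the three pieces separately. The part $f^{(1)}$ lies in $\sp\Psi_{G_{n_0}}\subset \sp\Psi_J$ and uses only $|G_{n_0}|\le v/3$ elements, contributing no error. For the tail, since $\bt\le 1$ gives $\ell_1\subset\ell_\bt$, the triangle inequality yields
\[\|f^{(3)}\|_p\le B\sum_{j>J}\|a(f_j)\|_{\ell_1}\le B\sum_{j>J}2^{-rj}j^b\ll B\cdot 2^{-rJ}J^b,\]
and choosing $K$ so that $r(K-1)\ge \th(1/\bt-1/p^*)$ makes this $\ll B\cdot v^{1/p^*-1/\bt}2^{-rn}n^b$, using $n\ge(\log_2 v)/\th$. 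For the middle part I combine its layers to obtain
\[\|a(f^{(2)})\|_{\ell_\bt}^\bt\le\sum_{j>n_0}(2^{-rj}j^b)^\bt\ll 2^{-rn_0\bt}n_0^{b\bt},\]
hence $\|a(f^{(2)})\|_{\ell_\bt}\ll 2^{-rn_0}n_0^b\asymp 2^{-rn}n^b$. A Stechkin--DeVore--Temlyakov type inequality in $L_p$, applied to the sub-dictionary $B^{-1}\Psi_{G_J\setminus G_{n_0}}$ (contained in the unit ball of $L_p$), then produces via the WOMP/WCGA an approximant from $\Sigma_{\lfloor v/3\rfloor}(\Psi_J)$ whose $L_p$-error is $\ll B\cdot 2^{-rn}n^b\cdot v^{1/p^*-1/\bt}$. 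Summing the three contributions gives the required bound uniformly over $f\in\bA^{r,b}_\bt(\Psi,\cG)$, and the construction is algorithmic because the middle step is.

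The main obstacle is the Stechkin--DeVore--Temlyakov inequality for $\bt\in(0,1]$, since Lemma \ref{NLv} as stated only covers $A_1(\cD)$ (the $\bt=1$ case) and yields exponent $1/p^*-1$. I would derive the required exponent $1/p^*-1/\bt$ by decomposing the coefficients of $f^{(2)}$ into dyadic magnitude layers $\Lambda_k:=\{\bk:2^{-k-1}<|a_\bk(f^{(2)})|\le 2^{-k}\}$, noting that $|\Lambda_k|\le 2^{(k+1)\bt}\|a(f^{(2)})\|_{\ell_\bt}^\bt$; applying Lemma \ref{NLv} to each (rescaled) layer with its own budget $v_k$; and then optimizing $\sum_k v_k=\lfloor v/3\rfloor$ by a Lagrange-type argument that trades the per-layer rates $(v_k+1)^{1/p^*-1}$ against the cardinalities $|\Lambda_k|$ to produce the exponent $1/p^*-1/\bt$. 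Everything else is routine bookkeeping of constants depending only on $r,b,\bt,p,\th,\th',B$, matching the dependencies stated in the theorem.
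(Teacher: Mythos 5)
The survey does not actually contain a proof of Theorem \ref{InT2}: it is quoted from \cite{STT}, just as the analogous cube and hyperbolic-cross results (Theorems \ref{RT1} and \ref{NT0}) are quoted without proof, so I can only measure your plan against the methodology those results rest on — and it matches it: keep a low block of layers exactly, approximate an active block by a constructive greedy method, and discard a deep tail, with all scales tied to $n(v)$ via RC($\th,\th'$). Your bookkeeping is sound: the choice of $n_0$ (gap $n-n_0<2+\log_2 3/\th'$), the tail cutoff $J=\lceil Kn\rceil$ with $r(K-1)\ge\th(1/\bt-1/p^*)$, the budget split $v/3+v/3\le v$, and $|G_J|\le v^c$ (up to trivial small-$v$ adjustments) all check out. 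The crux you correctly isolate is the Stechkin--DeVore--Temlyakov-type bound $\sigma_m(g,\Psi_J)_{L_p}\ll B\,\|a(g)\|_{\ell_\bt}\,m^{1/p^*-1/\bt}$ for $\bt\in(0,1]$; your dyadic-magnitude-layer allocation does deliver it (allocating $v_k\asymp v\,2^{-\delta(k-k_0)}$ beyond the level $2^{k_0\bt}\asymp v$ with $\delta(1-1/p^*)<1-\bt$ gives exactly the exponent $1/p^*-1/\bt$, the case $\bt=1$ being Lemma \ref{NLv} itself), but a shorter standard route is: keep the $\lfloor v/6\rfloor$ largest coefficients exactly, use $|a_{k_j}|\le j^{-1/\bt}\|a\|_{\ell_\bt}$ to see that the remainder has $\ell_1$-norm $\ll\|a\|_{\ell_\bt}v^{1-1/\bt}$ when $\bt<1$, and apply Lemma \ref{NLv} to that remainder; this is the device behind Theorems \ref{RT1} and \ref{NT0}. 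Two cosmetic points: your constant $C$ acquires a dependence on $\th'$ through the factor $2^{r(n-n_0)}$, whereas the theorem states $C=C(r,b,\bt,p,\th,B)$; and $|G_J|\le 2^{\th J}$ uses the ratio bound down to the first index, which is only implicit in RC($\th,\th'$). Neither affects the substance of the argument.
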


The second result   gives the rate of the optimal
sampling recovery for these classes.

\begin{Theorem}\label{InT1}
Let $2\le p<\infty$ and let $m$, $v$ be given natural numbers.

1) Let $\Psi$ be  a uniformly bounded Bessel system satisfying (\ref{ub}) and (\ref{Bessel}).

2) Assume in addition that the system $\cG$ satisfies condition {\bf RC}($\th,\th'$).

\noindent
Then there exist constants $c=c(r,\bt,p,K,\th,\th')$ and $C=C(r,\bt,p,\th,\th')$ such that   we have the bound
 \begin{equation}\label{rgcInt1}
 \varrho_{m}^{o}(\bA^{r,b}_\bt(\Psi,\cG),L_p(\Omega,\mu)) \le C v^{1-1/p -1/\bt}   2^{-rn(v)}n(v)^b
\end{equation}
for any $m$ satisfying
$$
m\ge c   v  (\log(2v ))^4 .
$$
\end{Theorem}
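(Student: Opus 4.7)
The plan is to mirror the scheme that gives Theorem \ref{RT4a} in the hyperbolic setting, but with two modifications: invoke the general sparse approximation bound of Theorem \ref{InT2} instead of its hyperbolic specialization, and \emph{derive} the Nikol'skii-type inequality needed by Theorem \ref{NUT3} from uniform boundedness together with the Bessel property (\ref{Bessel}), rather than imposing it as a hypothesis. The whole argument is a pincer: one sparse approximation estimate in $L_p(\Omega,\mu)$ to select a universal dictionary $\Psi_J$, one $L_2$-universal discretization to pass to a finite point set, and one further sparse approximation estimate in $L_p(\Omega,\mu_\xi)$ to control the residual.

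In detail, I first apply Theorem \ref{InT2} (noting $p^\ast=2$ since $p\ge 2$) to produce a finite index set $G_J\subset\bbZ^d$, independent of the underlying probability measure, with $|G_J|\le v^c$. Second, for any $f=\sum_{\bk\in A}a_\bk\psi_\bk$ with $|A|\le u$, Cauchy--Schwarz and (\ref{Bessel}) yield
$$\|f\|_\infty\le\sum_{\bk\in A}|a_\bk|\le u^{1/2}\Bigl(\sum_{\bk\in A}|a_\bk|^2\Bigr)^{1/2}\le(Ku)^{1/2}\|f\|_2,$$
and the log-convexity bound $\|f\|_p\le\|f\|_2^{2/p}\|f\|_\infty^{1-2/p}$ then gives $\|f\|_p\le(Ku)^{1/2-1/p}\|f\|_2$, i.e.\ $\Psi_J\in NI(2,p,(2Kv)^{1/2-1/p},2v)$. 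Third, Theorem \ref{RT2} applied with $p=2$ to the finite system $\Psi_J$ (of cardinality $\le v^c$), with $u=2v$, supplies a sample set $\xi=\{\xi^j\}_{j=1}^m$ of size $m\le c'v(\log(2v))^4$ providing $L_2$-universal discretization on the collection $\cX_{2v}(\Psi_J)$. Feeding this into Theorem \ref{NUT3} yields, for every $f\in\cC(\Omega)$,
$$\|f-B_v(f,\Psi_J,L_2(\xi))\|_{L_p(\Omega,\mu)}\le C\, v^{1/2-1/p}\,\sigma_v(f,\Psi_J)_{L_p(\Omega,\mu_\xi)}.$$

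Finally, I bound the right-hand side by applying Theorem \ref{InT2} a second time, this time in the probability space $(\Omega,\mu_\xi)$: the uniform bound $|\psi_\bk(\bx)|\le 1$ forces $\|\psi_\bk\|_{L_p(\Omega,\mu_\xi)}\le 1$, so for $f\in\bA^{r,b}_\bt(\Psi,\cG)$ I obtain $\sigma_v(f,\Psi_J)_{L_p(\Omega,\mu_\xi)}\le C v^{1/2-1/\bt}2^{-rn(v)}n(v)^b$. Multiplying the two estimates gives the required $v^{1-1/p-1/\bt}2^{-rn(v)}n(v)^b$. The main technical point is to guarantee that the set $G_J$ produced by Theorem \ref{InT2} does not depend on the probability measure, so that the same $G_J$ is usable both with $\mu$ and with $\mu_\xi$; this is the analogue of the explicit universality noted in Theorem \ref{RT1} and implicit in Lemma \ref{NL1}, and it is precisely what makes the two-measure argument close up cleanly.
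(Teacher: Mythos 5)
Your argument is correct and follows essentially the same route the paper uses for the analogous results (the proof of Theorem \ref{RT3a} and its Bessel-system variants, Theorems \ref{RT4}, \ref{RT4a}, and Corollary \ref{RC1}): a sparse approximation bound (here Theorem \ref{InT2}) to fix a measure-independent dictionary $\Psi_J$ with $|G_J|\le v^{c}$, Theorem \ref{RT2} with $p=2$ to get $L_2$-universal discretization on $\cX_{2v}(\Psi_J)$ with $m\ll v(\log(2v))^4$ points, Theorem \ref{NUT3} with the $u$-term Nikol'skii constant $H\asymp (Kv)^{1/2-1/p}$ that you correctly derive from (\ref{ub}) and (\ref{Bessel}), and a second application of the sparse approximation bound in $L_p(\Omega,\mu_\xi)$, which multiplies out to the claimed exponent $1-1/p-1/\bt$. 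The one point that must remain explicit is the one you flag yourself: the index set $G_J$ in Theorem \ref{InT2} has to be independent of the underlying probability measure (exactly as stated in Theorem \ref{RT1} and Lemma \ref{NL1}), since this is what lets the same $\Psi_J$ serve both $\mu$ and $\mu_\xi$, precisely as in the paper's proof of Theorem \ref{RT3a}.
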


 Note, that under a stronger restriction on the system $\Psi$, when we require that $\Psi$ is  a uniformly bounded Riesz system satisfying (\ref{ub}) and (\ref{Riesz}), we can somewhat improve the bound for $m$ to $m\ge c   v  (\log(2v ))^3$
   and guarantee that the estimate (\ref{rgcInt1}) is provided by a greedy-type algorithm.

If $\theta=\theta'=d$, then
$$
   n(v) = \Big\lfloor\frac{\log v}{d}\Big\rfloor,
   \quad 2^{-rn(v)} \asymp v^{-r/d}
$$
and the estimate in Theorem \ref{InT2} becomes
$$
   \sigma_v(\bA^{r,b}_\bt(\Psi,\cG),\Psi_J)_{L_p(\Omega,\mu)}
\le Cv^{1/p^* -1/\bt - r/d} \ln^b v,
    \qquad \Psi_J := \{\psi_\bk\}_{\bk\in G_J}\,.
$$

\section{Recovery on classes with smoothness condition}
\label{sm}

We begin with the definition of classes $\bW^r_q$ and very brief comments on the univariate case.
In the univariate case, for $r>0$, let
\be\label{sr7}
F_r(x):= 1+2\sum_{k=1}^\infty k^{-r}\cos (kx-r\pi/2)
\ee
and in the multivariate case, for $\bx=(x_1,\dots,x_d)$, let
$$
F_r(\bx) := \prod_{j=1}^d F_r(x_j).
$$
Denote
$$
\bW^r_q := \{f:f=\varphi\ast F_r,\quad \|\varphi\|_q \le 1\},
$$
where
$$
(\varphi \ast F_r)(\bx):= (2\pi)^{-d}\int_{\bbT^d} \ff(\by)F_r(\bx-\by)d\by.
$$
The classes $\bW^r_q$ are classical classes of functions with {\it dominated mixed derivative}
(Sobolev-type classes of functions with mixed smoothness).
The reader can find results on approximation properties of these classes in the books \cite{VTbookMA} and \cite{DTU}.

Let $I_n$ and $R_n$ be the recovery operators defined in the beginning of Section \ref{RI} and $W^r_q$ stands for $\bW^r_q$ in the univariate case.

\begin{Theorem}[{\cite{VT51}}]\label{univarR} Let  $1\le q,p \le \infty$ and $r>1/q$. Then
$$
 \varrho_{4m}(W^r_q,L_p) \asymp \sup_{f\in W^r_q}\|f-R_m(f)\|_p \asymp m^{-r+(1/q-1/p)_+}.
$$
In the case  $1<p<\infty$ the above estimates are valid for the operator  $I_m$ instead of the operator  $R_m$.
\end{Theorem}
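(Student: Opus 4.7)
The plan is to establish the two asymptotic equivalences in Theorem \ref{univarR} by combining the approximation properties of $W^r_q$ with the operator bounds $\|R_mV_s\|_{L_p\to L_p}\le C(s/m)^{1/p}$ and $\|I_mV_s\|_{L_p\to L_p}\le C(p)(s/m)^{1/p}$ stated at the start of the introduction. Since $R_m$ is a linear algorithm using $4m$ samples, the inequality $\varrho_{4m}(W^r_q,L_p)\le \sup_{f\in W^r_q}\|f-R_m(f)\|_p$ is automatic, so it is enough to prove a matching upper bound for $R_m$ and a matching lower bound for $\varrho_{4m}$.

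For the upper bound I would use the dyadic Vall\'ee Poussin decomposition $f=V_m(f)+\sum_{\nu\ge 1}\eta_\nu$ with $\eta_\nu:=V_{2^\nu m}(f)-V_{2^{\nu-1}m}(f)\in \cT(2^{\nu+1}m-1)$. Jackson's inequality $\|f-V_n(f)\|_q\ll n^{-r}$ for $f\in W^r_q$ gives $\|\eta_\nu\|_q\ll (2^\nu m)^{-r}$, and Nikol'skii upgrades this to $\|\eta_\nu\|_p\ll (2^\nu m)^{-r+(1/q-1/p)_+}$. Because $V_{2^{\nu+1}m}$ is the identity on $\eta_\nu$, the operator bound yields $\|R_m\eta_\nu\|_p\ll 2^{\nu/p}\|\eta_\nu\|_p$ in the range $p\ge q$; in the complementary range $p\le q$ I would apply the same bound in $L_q$ and use $\|\cdot\|_p\le\|\cdot\|_q$ on the probability space $\bbT$ to get $\|R_m\eta_\nu\|_p\le \|R_m\eta_\nu\|_q\ll 2^{\nu/q}\|\eta_\nu\|_q$. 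The boundary term $V_m(f)-R_mV_m(f)$ equals $(V_m(f)-S_m(f))-R_m(V_m(f)-S_m(f))$, whose annular frequency content $m<|k|<2m$ is controlled by the same Jackson/Nikol'skii estimate together with the $s=2m$ instance of the operator bound. Summing the resulting geometric series, which converges precisely when $r>1/q$, produces the desired bound $m^{-r+(1/q-1/p)_+}$.

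For the lower bound I would adapt the Fej\'er-kernel construction from Lemma \ref{NLL1}. Given any $4m$ points $\xi^1,\dots,\xi^{4m}$, a dimension count in $\cT(2m)$ (dimension $4m+1$) provides a nonzero $g_\xi\in\cT(2m)$ vanishing at every $\xi^j$ with $|g_\xi(x^*)|=\|g_\xi\|_\infty=1$ at some $x^*$. Setting $f(x):=c\,g_\xi(x)\cK_m(x-x^*)\in\cT(3m)$ and using $\|\cK_m\|_\infty=m$ and $\|\cK_m\|_q\asymp m^{1-1/q}$, Bernstein gives $\|f\|_{W^r_q}\ll m^{r+1-1/q}|c|$ while Nikol'skii in $\cT(3m)$ forces $\|f\|_p\gg m^{1-1/p}|c|$. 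Choosing $c$ so that $\|f\|_{W^r_q}\le 1$ yields $\|f\|_p\gg m^{-r+1/q-1/p}$, which matches the target whenever $p\ge q$. In the regime $p<q$, where the target is $m^{-r}$, I would replace the single bump by a multi-bump construction: a pigeonhole over $\sim 8m$ arcs of length $\pi/(4m)$ produces $\sim m$ arcs free of sample points, and placing a smooth rescaled bump in each gives, after renormalization in $W^r_q$, a disjoint-support function with $\|f\|_p\gg m^{-r}=m^{-r+(1/q-1/p)_+}$. The $\pm f$ symmetry argument from the proof of Lemma \ref{NLL1} then transfers both constructions into a lower bound on $\varrho_{4m}^o(W^r_q,L_p)\le\varrho_{4m}(W^r_q,L_p)$. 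The interpolation operator $I_m$ is handled identically for $1<p<\infty$ using its operator bound; the excluded endpoints $p\in\{1,\infty\}$ reflect the unbounded Lebesgue constants of the Dirichlet kernel.

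The main obstacle is the boundary layer $V_m(f)\in\cT(2m-1)$, which $R_m$ does not reproduce: it has to be peeled off as the annular piece $V_m(f)-S_m(f)$ and estimated via the operator bound for $V_{2m}$, with absolute constants kept separate from the geometric decay in $\nu$. A related subtlety is that the series over $\nu$ is summable only at the threshold $r>1/q$, which is the very hypothesis of the theorem and forces the switch between the $L_p\to L_p$ and $L_q\to L_q$ instances of the operator bound according to whether $p\ge q$ or $p\le q$.
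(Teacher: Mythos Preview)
The paper does not supply its own proof of Theorem~\ref{univarR}; the result is quoted from \cite{VT51} and only the ingredients (the reproduction property $R_n|_{\cT(n)}=\mathrm{Id}$ and the bounds $\|R_nV_s\|_{L_p\to L_p}\le C(s/n)^{1/p}$, $\|I_nV_s\|_{L_p\to L_p}\le C(p)(s/n)^{1/p}$) appear in the introduction. Your upper-bound scheme---dyadic telescoping $f=V_m(f)+\sum_{\nu\ge1}(V_{2^\nu m}-V_{2^{\nu-1}m})(f)$, then Jackson, Nikol'skii, the operator bound, and the split into $p\ge q$ versus $p<q$---is exactly the classical route behind \cite{VT51} and is correct; the geometric series converges precisely when $r>1/q$, as you observe.

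One small repair is needed for the boundary term at the endpoints $q\in\{1,\infty\}$: the Fourier partial-sum operator $S_m$ is not uniformly bounded on $L_q$ there, so the estimate $\|V_m(f)-S_m(f)\|_q\ll m^{-r}$ is not immediate. Replace $S_m(f)$ by any best $L_q$-approximant $t_m\in\cT(m)$; then $V_m(f)-t_m\in\cT(2m-1)$, one still has $R_m t_m=t_m$, and $\|V_m(f)-t_m\|_q\le\|V_m(f)-f\|_q+E_m(f)_q\ll m^{-r}$, after which the $s=2m$ instance of the operator bound applies exactly as you intend. Your lower bounds are fine: the Fej\'er-bump construction for $p\ge q$ is the univariate instance of the argument in Lemma~\ref{NLL1}, and for $p<q$ either your multi-bump construction or the cruder inequality $\varrho_{4m}(W^r_q,L_p)\ge d_{4m}(W^r_q,L_p)\asymp m^{-r}$ already gives the target.
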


\subsection{Nonlinear recovery}
\label{sms1}

The authors of \cite{JUV} (see Corollary 4.16 in v3) proved the following bound for $1<q<2$, $r>1/q$ and  $m\ge c(r,d,q) v(\log(2v))^3$:
\begin{equation}\label{H6}
\varrho_{m}^{o}(\bW^{r}_q,L_2(\bbT^d)) \le C(r,d,q)  v^{-r+1/q-1/2} (\log v)^{(d-1)(r+1-2/q)+1/2}.
\end{equation}
The authors of  \cite{DTM2} proved the following bound
\be\label{H7}
\varrho_{m}^{o}(\bW^{r}_q,L_2(\bbT^d)) \le C'(r,d,q)  v^{-r+1/q-1/2} (\log v)^{(d-1)(r+1-2/q)}
\end{equation}
provided that
\be\label{H8}
m\ge c'(r,d,q) v(\log(2v))^3.
\ee

In the above mentioned results the sampling recovery in the $L_2$ norm has been studied. The technique, which was used in the proofs of the bounds (\ref{H6}) and (\ref{H7}),  is heavily based on the fact that we approximate in the $L_2$ norm.  The following bounds
have been proved in \cite{KoTe1}.

 \begin{Theorem}[{\cite{KoTe1}}]\label{HT2}  Let $1<q\le 2\le p<\infty$.  There exist two constants $c=c(r,d,p,q)$ and $C=C(r,d,p,q)$ such that     for $r>1/q$,
 \begin{equation}\label{H9}
  \varrho_{m}^{o}(\bW^{r}_q,L_p(\bbT^d)) \le C   v^{-r+1/q-1/p} (\log v)^{(d-1)(r+1-2/q)}
\end{equation}
   for any $v\in\bbN$ and any $m$ satisfying
$$
m\ge c  v(\log(2v))^3.
$$
\end{Theorem}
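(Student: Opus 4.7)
\medskip\noindent\textbf{Proof plan.} I will follow the template of Theorems \ref{RT3a} and \ref{RT4a}, specialized to the mixed-smoothness class $\bW^r_q$ and the trigonometric system $\cT^d$. The four ingredients are: (i) a sharp sparse $v$-term trigonometric approximation bound for $\bW^r_q$ in $L_p$ that is uniform in the underlying measure; (ii) the universal $L_2$-sampling discretization (\ref{HR1}) for the trigonometric system, requiring only $m\ll v(\log v)^3$ samples; (iii) the sparse Nikol'skii inequality $\|g\|_p\le u^{1/2-1/p}\|g\|_2$ for $u$-sparse trigonometric polynomials; and (iv) the Lebesgue-type inequality for the WOMP supplied by Theorem \ref{NUT1}. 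The central input is the sparse approximation bound: for $1<q\le 2\le p<\infty$ and $r>1/q$ there exist $c^{\ast}=c^{\ast}(r,p,q,d)$ and $C=C(r,p,q,d)$ such that for every $v\in\bbN$ one can choose a finite set $Q_v\subset\bbZ^d$ (independent of the measure) with $|Q_v|\le v^{c^{\ast}}$ satisfying
\[
   \sigma_v\bigl(\bW^r_q,\cT^d(Q_v)\bigr)_{L_p(\bbT^d,\nu)} \le C\, v^{-r+1/q-1/2}(\log v)^{(d-1)(r+1-2/q)}
\]
uniformly in all probability measures $\nu$ on $\bbT^d$. Uniformity is automatic because $|\psi_\bk|\le 1$, so $\|\psi_\bk\|_{L_p(\nu)}\le 1$ for every $\nu$. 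This bound is the $L_p$ analogue of the $L_2$ estimate implicit in (\ref{H7}); it is obtained by organizing $f=\sum_j f_j$ along the dyadic hyperbolic Fourier layers, bounding the Wiener norm $\|f_j\|_A\ll j^{d-1}2^{-j(r-1/q)}$ via the mixed-smoothness Nikol'skii embedding together with Hausdorff--Young, and then combining exact (linear) approximation on the low-frequency layers with layerwise greedy approximation (Lemma \ref{NLv}, in which $q=2$ since $p\ge 2$) on the intermediate layers, omitting the tail.

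With this sparse bound in hand, the theorem is obtained by a single application of Theorem \ref{NUT1}. Taking $Q_v$ as above and $u:=\lceil(1+c_\ast)v\rceil$ (where $c_\ast$ is the constant supplied by Theorem \ref{NUT1}), the HR-type bound (\ref{HR1}) produces a point set $\xi=\{\xi^j\}_{j=1}^m\subset\bbT^d$ of cardinality $m\le C\,v(\log v)^3$ that provides $L_2$-universal sampling discretization for the collection $\cX_u(\cT^d(Q_v))$. The system $\cT^d(Q_v)$ is orthonormal in $L_2(\bbT^d)$ ($R_1=R_2=1$) and uniformly bounded by $1$. Moreover, every $u$-sparse trigonometric polynomial $g$ satisfies $\|g\|_\infty\le u^{1/2}\|g\|_2$ by Cauchy--Schwarz on its support, hence $\|g\|_p\le u^{1/2-1/p}\|g\|_2$ by interpolation; that is, $\cT^d(Q_v)\in NI(2,p,Cv^{1/2-1/p},u)$. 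Theorem \ref{NUT1} then yields
\[
   \bigl\|f-G^t_{\lceil c_\ast v\rceil}\bigl(f,\cT^d(Q_v)\bigr)_{L_2^m}\bigr\|_{L_p(\bbT^d)} \le C\,v^{1/2-1/p}\,\sigma_v\bigl(f,\cT^d(Q_v)\bigr)_{L_p(\bbT^d,\mu_\xi)}.
\]
Substituting the sparse bound with $\nu=\mu_\xi$ gives the advertised rate $v^{-r+1/q-1/p}(\log v)^{(d-1)(r+1-2/q)}$, and since the WOMP depends on $f$ only through $S(f,\xi)$, the bound transfers to $\varrho_m^o(\bW^r_q,L_p(\bbT^d))$.

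The principal obstacle is establishing the sparse approximation bound with the sharp logarithmic exponent $(d-1)(r+1-2/q)$. A naive layerwise greedy strategy that relies only on the Wiener-norm estimate $\|f_j\|_A$ for each hyperbolic layer incurs an additional polylogarithmic loss relative to the target exponent; producing the sharp exponent requires the careful balance of exact approximation on the low-frequency layers, greedy approximation on the intermediate layers, and complete omission of the tail that underpins the $L_2$ result (\ref{H7}) of \cite{DTM2}. A secondary technical point is that the construction and its bound must be uniform in $\nu$ so that they can be applied with the hybrid measure $\mu_\xi=(\mu+\mu_m)/2$; this uniformity follows because Lemma \ref{NLv} and the Wiener-norm bound depend on the system only through the uniform normalization $\sup_\bk\|\psi_\bk\|_{L_p(\nu)}\le 1$, which the trigonometric system satisfies in every probability measure.
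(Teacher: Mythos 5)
Your overall architecture is the paper's own: a measure-uniform sparse $v$-term bound with respect to the trigonometric system, the $L_2$-universal discretization (\ref{HR1}) with $m\ll v(\log(2v))^3$, the $u$-term Nikol'skii inequality $\|g\|_p\le u^{1/2-1/p}\|g\|_2$ for $u$-sparse trigonometric polynomials, and the Lebesgue-type inequality for the WOMP (Theorem \ref{NUT1}). The survey derives Theorem \ref{HT2} in exactly this way, only packaged through the structural classes: the embedding $\bW^r_q\hookrightarrow \bW^{a,b}_A$ with $a=r-1/q$, $b=1-1/q$ (Proposition \ref{InP1}, equivalently (\ref{sr8}) followed by (\ref{sr5})) and then Theorem \ref{MT2} (or Theorems \ref{srT2}--\ref{srT3}). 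Your assembly step, the choice $H\asymp v^{1/2-1/p}$, and the transfer to $\varrho_m^o$ are all fine.

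The genuine gap is in your justification of the central sparse approximation bound. You base it on the layer estimate $\|f_j\|_A\ll j^{d-1}2^{-j(r-1/q)}$. That estimate corresponds to $b=1$ in the scale $\bW^{a,b}_A$ and is the one appropriate for $\bH^r_q$, where the blocks $\delta_\bs(f)$ with $\|\bs\|_1=j$ can only be bounded individually. The machinery you invoke (exact approximation of low layers, layerwise greedy approximation via Lemma \ref{NLv} on intermediate layers, dropping the tail --- i.e. the proof of Theorem \ref{NT0}/Lemma \ref{NL1}) converts a layer bound $\|f_j\|_A\ll 2^{-aj}j^{(d-1)b}$ into the logarithmic exponent $(d-1)(a+b)$ and nothing better; no balancing of the layers recovers logarithmic factors that are not present in the layer bound itself. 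So from $j^{d-1}$ you would only obtain $(\log v)^{(d-1)(r+1-1/q)}$, the $\bH^r_q$-type rate (\ref{In6}), not the claimed sharp $(\log v)^{(d-1)(r+1-2/q)}$. The missing ingredient is the sharper known inequality (\ref{sr5}): for $1<q\le 2$, $\|f_j\|_A\le C(d,q)2^{j/q}j^{(d-1)(1-1/q)}\|f_j\|_q$, which exploits the $\ell_q$-summability of the dyadic blocks inside a hyperbolic layer (Littlewood--Paley for $q\le 2$) and, combined with $\|f_j\|_q\ll 2^{-rj}$ from (\ref{sr8}), gives $b=1-1/q$ and hence exactly the exponent $(d-1)(r+1-2/q)$. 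With that single correction your route coincides with the paper's. (A side remark: the paper points out that \cite{KoTe1} itself used the algorithm $B_v(\cdot,\cD_N,L_2(\xi))$ rather than the WOMP; your WOMP-based version matches the survey's own derivation via Theorem \ref{MT2} and is equally legitimate.)
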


First, we discuss the above results and then explain how Theorem \ref{HT2} can be derived from results of Section \ref{str}.

Theorem \ref{HT2} extends existing bounds for $p=2$ to the broader range $p \in [2, \infty)$.
Let us make some comments on bounds (\ref{H6}) and (\ref{H7}). First of all, these bounds were proved by different methods. Bound (\ref{H6})
was proved in \cite{JUV} utilizing powerful results from compressed sensing theory, whereas  bound (\ref{H7}) is based on the  universal discretization of the $L_2$ norm, with a  proof centered on a greedy-type algorithm-- the Weak Orthogonal Matching Pursuit (WOMP).  Second, the WOMP produces an approximant that is sparse with respect to the trigonometric system, whereas the method in (\ref{H6}) does not guarantee such sparsity. Third, The algorithm in \cite{JUV} requires specific information about the function class (in this case, $\bW^{r}_q$), necessitating an algorithm tailored to each specific class. Conversely, the WOMP does not require prior knowledge of the function class to achieve bound (\ref{H7}), making it universal for the collection of classes $\bW^{r}_q$.
Finally, bound (\ref{H7}) is slightly sharper than (\ref{H6}).

The paper \cite{DTM3} is devoted to generalization of results of \cite{DTM2}, where recovery
in the norm $L_2$ was studied, to the case of $L_p$ spaces with $p\in [2,\infty)$. Instead of
universal discretization of the $L_2$ norm the universal discretization of the $L_p$ norm was used. The Weak Orthogonal Matching Pursuit was replaced by its generalization to the case of Banach spaces -- the Weak Chebyshev Greedy Algorithm (WCGA). Also, in addition to the WCGA a recovery algorithm based on the $\ell_p$ minimization (see $B_v(\cdot,\cD_N,L_p(\xi))$ above) was defined and studied. However, application of results from \cite{DTM3} to the sampling recovery of classes $\bW^{r}_q$ in the $L_p$ norm with $p\in (2,\infty)$ gives weaker than in Theorem \ref{HT2} bounds. It is related to the use of universal discretization of the $L_p$ norm.

The authors of \cite{KoTe1} used the recovery algorithm $B_v(\cdot,\cD_N,L_2(\xi))$, universal discretization of the $L_2$ norm, and the $u$-term Nikol'skii inequality.
On one hand the proof of Theorem \ref{HT2} goes along the lines of the corresponding proof
in \cite{DTM2}. On the other hand in the proof of Theorem \ref{HT2} they used a new concept
of the $u$-term Nikol'skii inequality, known deep results on sparse approximation in the
$L_p$ norm for $p\in [2,\infty)$, and a different recovery algorithm.

Denote
$$
\bW^{a,b}_A:= \bW^{a,b}_{A_1}(\cT^d).
$$
We now present some direct corollaries of Theorem \ref{MT2} with $\bt=1$  for other classes, which are better known
 than classes $\bW^{a,b}_A$. The following classes were introduced and studied in \cite{VT152} (see also \cite{VTbookMA}, p.364)
$$
\bW^{a,b}_q:=\{f: \|f_j\|_q \le 2^{-aj}(\bar j)^{(d-1)b},\quad \bar j:=\max(j,1),\quad j\in \bbN_0\}.
$$
The known results (see \cite{VTmon}, Ch.1, Theorem 2.2 and \cite{VTbookMA}, Theorem 9.1.10, p.454) imply that for $1<q\le 2$
\be\label{sr5}
\|f_j\|_A \le C(d,q)2^{j/q} j^{(d-1)(1-1/q)} \|f_j\|_q .
\ee
The inequality (\ref{sr5}) implies that for $1< q \le 2$ the class $\bW^{a,b}_q$ is embedded into
$\bW^{a',b'}_A$ with $a'= a-1/q$, $b'= b+1-1/q$. Therefore, Theorem \ref{MT2} implies the
following bounds for the classes $\bW^{a,b}_q$.

\begin{Theorem}[{\cite{KoTe1}}]\label{srT3}  Let $1<q\le 2\le p<\infty$.  There exist two constants $c=c(a,d,p,q)$ and $C=C(a,b,d,p,q)$ such that,
for $a>1/q$, we have the bound
 \begin{equation}\label{sr6}
  \varrho_{m}^{o}(\bW^{a,b}_q,L_p(\bbT^d)) \le C   v^{-a+1/q-1/p} (\log v)^{(d-1)(a+b+1-2/q)}
\end{equation}
for any $v\in\bbN$ and any $m$ satisfying
$$
m\ge c  v(\log(2v))^3.
$$
\end{Theorem}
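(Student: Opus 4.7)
The plan is to obtain Theorem \ref{srT3} as a direct corollary of Theorem \ref{MT2} (with $\bt=1$) by means of the embedding $\bW^{a,b}_q \hookrightarrow \bW^{a',b'}_A$ for appropriate parameters $a', b'$. This embedding is exactly what inequality (\ref{sr5}) provides, so the only genuine work is to track the exponents.

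First I would verify the embedding. Given $f \in \bW^{a,b}_q$, the blocks $f_j = \sum_{\|\bs\|_1 = j} \delta_\bs(f)$ satisfy $\|f_j\|_q \le 2^{-aj}(\bar j)^{(d-1)b}$. Applying the $A$-norm / $L_q$-norm comparison (\ref{sr5}), valid for $1 < q \le 2$, I get
\[
\|f_j\|_A \le C(d,q)\, 2^{j/q} j^{(d-1)(1-1/q)} \|f_j\|_q \le C(d,q)\, 2^{-(a-1/q)j} (\bar j)^{(d-1)(b+1-1/q)}.
\]
Thus $\bW^{a,b}_q \subset C_1 \cdot \bW^{a',b'}_A$ with $a' := a - 1/q$ and $b' := b + 1 - 1/q$. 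The hypothesis $a > 1/q$ ensures $a' > 0$, which is the assumption required in Theorem \ref{MT2}.

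Next I would invoke Theorem \ref{MT2} in the case $\bt = 1$, $\Psi = \cT^d$. It yields, for $p \in [2,\infty)$ and any $v \in \bbN$,
\[
\varrho_m^o(\bW^{a',b'}_A(\cT^d), L_p(\bbT^d)) \le C\, v^{1 - 1/p - 1 - a'} (\log(2v))^{(d-1)(a'+b')} = C\, v^{-a'-1/p}(\log(2v))^{(d-1)(a'+b')},
\]
provided $m \ge c v (\log(2v))^3$. Substituting $a' = a - 1/q$ and $b' = b + 1 - 1/q$ gives exponent $-a + 1/q - 1/p$ on $v$ and $(d-1)(a + b + 1 - 2/q)$ on $\log v$, which is exactly (\ref{sr6}). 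Since $\varrho_m^o$ is monotone under inclusion of classes (up to absorbing the constant $C_1$ into $C$), this finishes the proof.

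The steps are all routine; there is no real obstacle once one has Theorem \ref{MT2} and the $A$-norm block estimate (\ref{sr5}) in hand. The only point worth checking carefully is that the required sampling budget in Theorem \ref{MT2} is $m \ge c v (\log(2v))^3$ (the improved logarithmic power for $\Psi = \cT^d$), which matches the claim in Theorem \ref{srT3}; using the weaker Theorem \ref{MT1} instead would yield $(\log(2v))^4$ and be insufficient.
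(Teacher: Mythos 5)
Your proposal is correct and follows exactly the route the paper takes: the embedding $\bW^{a,b}_q \hookrightarrow \bW^{a',b'}_A$ with $a'=a-1/q$, $b'=b+1-1/q$ obtained from (\ref{sr5}), followed by an application of Theorem \ref{MT2} with $\bt=1$, including the same bookkeeping of exponents and the same $m \ge c\,v(\log(2v))^3$ sampling budget.
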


Finally, let us consider the classical classes $\bW^r_q$, that have already been defined above.
It is well known (see, for instance, \cite{VTmon}, Ch.2, Theorem 2.1 and \cite{VTbookMA}, Theorem 4.4.9, p.174)
that for $f\in \bW^r_q$, $1<q<\infty$, one has
\be\label{sr8}
\|f_j\|_q \le C(d,q,r)2^{-jr},\quad j\in \bbN.
\ee
In particular, bounds (\ref{sr8}) imply that the class $\bW^r_q$ is embedded in the class $\bW^{r,0}_q$ for $1<q<\infty$.  Therefore, Theorem \ref{srT3} implies Theorem \ref{HT2}.

{\bf Some lower bounds.} The classes $\bW^r_q$ are classical classes of functions with {\it dominated mixed derivative}
(Sobolev-type classes of functions with mixed smoothness).
The reader can find results on approximation properties of these classes in the books \cite{VTbookMA} and \cite{DTU}.

 We now proceed to the definition of the classes $\bH^r_p$, which is based on the mixed differences. In this subsection we discuss results for these classes.

\begin{Definition}\label{InD2}
Let  $\btt =(t_1,\dots,t_d )$ and $\Delta_{\btt}^l f(\bx)$
be the mixed $l$-th difference with
step $t_j$ in the variable $x_j$, that is
$$
\Delta_{\btt}^l f(\bx) :=\Delta_{t_d,d}^l\dots\Delta_{t_1,1}^l
f(x_1,\dots ,x_d ) .
$$
Let $e$ be a subset of natural numbers in $[1,d ]$. We denote
$$
\Delta_{\btt}^l (e) :=\prod_{j\in e}\Delta_{t_j,j}^l,\qquad
\Delta_{\btt}^l (\varnothing) := Id \,-\, \text{identity operator}.
$$
We define the class $\bH_{q,l}^r B$, $l > r$, as the set of
$f\in L_q$ such that for any $e$
\be\label{B7a}
\bigl\|\Delta_{\btt}^l(e)f(\bx)\bigr\|_q\le B
\prod_{j\in e} |t_j |^r .
\ee
In the case $B=1$ we omit it. It is known (see, for instance, \cite{VTbookMA}, p.137) that the classes $\bH^r_{q,l}$ with different $l>r$ are equivalent. So, for convenience, we fix one $l= [r]+1$ and omit $l$ from the notation.
\end{Definition}

It is well known that in the univariate case ($d=1$) the approximation properties of the above $\bW^r_q$ and $\bH^r_q$ classes
are similar. It is also well known that in the multivariate case ($d\ge 2$) asymptotic characteristics (for instance, Kolmogorov widths, entropy numbers, best hyperbolic cross trigonometric approximations and others) have different rate of decay in
the majority of cases. Recently, a new scale of classes has been introduced and studied (see classes $\mathbf{W}^{a,b}_q$ and $\mathbf{W}^{a,b}_{A_\beta}(\Psi)$ defined above). It turns out that this scale is
convenient for simultaneous analysis of optimal sampling recovery of both the $\bW^r_q$ and the $\bH^r_q$ classes. We discuss here the case, when $\Psi =\cT^d$, and remind that   $\bW^{a,b}_A:= \bW^{a,b}_{A_1}(\cT^d)$. The following embedding result  easily  follows   from known results (see \cite{VT210}).

\begin{Proposition}\label{InP1} We have for $r>1/q$
\be\label{In1}
\bW^r_q \hookrightarrow \bW^{a,b}_A \quad \text{with} \quad a=r-1/q,\, b=1-1/q,\quad 1<q\le 2;
\ee
\be\label{In2}
\bH^r_q \hookrightarrow \bW^{a,b}_A \quad \text{with} \quad a=r-1/q,\, b=1,\quad 1\le q\le 2.
\ee
\end{Proposition}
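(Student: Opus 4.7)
The plan is to reduce both embeddings to sharp bounds on the $A$-norm of each dyadic hyperbolic-cross block $f_j$, using either the already-recorded inequality (5.5) or a direct Cauchy--Schwarz/Nikol'skii argument on each individual $\delta_{\bs}(f)$. For (5.10), the ingredients are all present in the excerpt: by (5.8), any $f\in \bW^r_q$ with $1<q<\infty$, $r>1/q$, satisfies $\|f_j\|_q \le C(d,q,r)\,2^{-rj}$, and by (5.5), $\|f_j\|_A \le C(d,q)\,2^{j/q}(\bar j)^{(d-1)(1-1/q)}\|f_j\|_q$. Multiplying these two gives
$$\|f_j\|_A \le C\, 2^{-(r-1/q)j}(\bar j)^{(d-1)(1-1/q)},$$
which is exactly the defining inequality of $\bW^{r-1/q,\,1-1/q}_A$, proving (5.10).

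For (5.11) the class $\bH^r_q$ only supplies the block-wise bound $\|\delta_\bs(f)\|_q \le C(r)\,2^{-r\|\bs\|_1}$, so the plan is to estimate $\|\delta_\bs(f)\|_A$ on each single block and then sum over all $\bs$ with $\|\bs\|_1=j$. Cauchy--Schwarz together with Parseval give $\|\delta_\bs(f)\|_A \le |\rho(\bs)|^{1/2}\|\delta_\bs(f)\|_2 \le C\,2^{\|\bs\|_1/2}\|\delta_\bs(f)\|_2$, and since $\delta_\bs(f)$ is a trigonometric polynomial supported in a coordinate box of total size $\asymp 2^{\|\bs\|_1}$, the Nikol'skii inequality yields $\|\delta_\bs(f)\|_2 \le C\,2^{\|\bs\|_1(1/q-1/2)}\|\delta_\bs(f)\|_q$ for $1\le q\le 2$. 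Concatenating,
$$\|\delta_\bs(f)\|_A \le C\,2^{\|\bs\|_1/q}\|\delta_\bs(f)\|_q \le C\,2^{-(r-1/q)\|\bs\|_1}.$$
The supports $\rho(\bs)$ are pairwise disjoint and $\#\{\bs\in\bbN_0^d:\|\bs\|_1 = j\}\asymp j^{d-1}$, so summing gives
$$\|f_j\|_A = \sum_{\|\bs\|_1 = j}\|\delta_\bs(f)\|_A \le C\,(\bar j)^{d-1}\,2^{-(r-1/q)j},$$
which is exactly the defining inequality of $\bW^{r-1/q,\,1}_A$, proving (5.11).

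The one point requiring a little care is the endpoint $q=1$ of (5.11), because (5.5) is stated only for $1<q\le 2$ and the underlying Littlewood--Paley machinery degenerates at $q=1$; a direct application of the strategy used for (5.10) therefore fails. The block-by-block Cauchy--Schwarz/Nikol'skii argument above is designed precisely to sidestep this, since both ingredients it relies upon remain valid all the way down to $q=1$. Once the block-wise estimate is in hand, the remainder is routine bookkeeping of exponents.
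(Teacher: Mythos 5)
Your derivation of (\ref{In1}) is correct and is essentially the route the paper itself records just before Theorem \ref{srT3} (combine (\ref{sr5}) with (\ref{sr8})); note the paper gives no self-contained proof of Proposition \ref{InP1} and simply defers to \cite{VT210}. Your argument for (\ref{In2}) is also fine for $1<q\le 2$, where the input $\|\delta_\bs(f)\|_q \le C(r,d,q)\,2^{-r\|\bs\|_1}$ for $f\in\bH^r_q$ is the classical Littlewood--Paley-type characterization. The genuine gap is precisely at the endpoint $q=1$ that you claim to have sidestepped: there the block bound you take as ``supplied by the class'' is false, not merely unproved by your two surviving ingredients. The projection $\delta_\bs$ onto a dyadic block has $L_1\to L_1$ norm $\asymp \prod_{i=1}^d \bar s_i$, and from $f\in\bH^r_1$ one can only conclude $\|\delta_\bs(f)\|_1 \ll 2^{-r\|\bs\|_1}\prod_{i=1}^d \bar s_i$. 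This loss is real: already for $d=1$ and $0<r<1$ the function $f=c\,2^{-rn}\cK_{2^n}$ ($\cK_N$ the Fej\'er kernel) lies in $\bH^r_1$ with an absolute constant, while $\delta_n(\cK_{2^n})=\tfrac12\bigl(\cV_{2^{n-1}}-\cD_{2^{n-1}-1}\bigr)$, so $\|\delta_n(f)\|_1\asymp n\,2^{-rn}$. Feeding the correct $q=1$ block bound into your chain gives $\|f_j\|_A \ll 2^{(1-r)j}\sum_{\|\bs\|_1=j}\prod_i\bar s_i \asymp 2^{(1-r)j}\,\bar j^{\,2d-1}$, i.e. a logarithmic exponent $2d-1$ instead of the required $(d-1)b=d-1$, so you do not reach $b=1$ this way.

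The $q=1$ case can be repaired by avoiding $\|\delta_\bs(f)\|_1$ altogether and estimating Fourier coefficients directly from the definition (\ref{B7a}): since $\widehat{\Delta^l_{\btt}(e)f}(\bk)=\hat f(\bk)\prod_{j\in e}\bigl(e^{ik_jt_j}-1\bigr)^l$, choosing $e=\{j:\ s_j\ge 1\}$ and $t_j=\pi 2^{-s_j}$ (so that $|e^{ik_jt_j}-1|\ge \sqrt 2$ for $2^{s_j-1}\le |k_j|<2^{s_j}$) yields $|\hat f(\bk)|\le C(r,l,d)\,2^{-r\|\bs\|_1}$ for all $\bk\in\rho(\bs)$, whence $\|\delta_\bs(f)\|_A\le |\rho(\bs)|\max_{\bk\in\rho(\bs)}|\hat f(\bk)|\ll 2^{(1-r)\|\bs\|_1}$; summing over the $\asymp \bar j^{\,d-1}$ blocks with $\|\bs\|_1=j$ gives $\|f_j\|_A\ll 2^{-(r-1)j}\bar j^{\,d-1}$, which is exactly the case $q=1$ of (\ref{In2}). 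This coefficient route only produces the exponent corresponding to $q=1$, so it complements rather than replaces your Nikol'skii argument, which should be kept for $1<q\le 2$. With that split your proof of the proposition is complete; as it stands, the endpoint you explicitly singled out as handled is the one place where the argument breaks.
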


In \cite{KoTe1} Theorem \ref{HT2} was derived from the embedding (\ref{In1}) and the following result for the $\bW^{a,b}_A$ classes (see \cite{KoTe1}, Theorem 3.3, Remark 3.1, and Proposition 3.1).

\begin{Theorem}[{\cite{KoTe1}}]\label{srT2}  Let $p\in [2,\infty)$. There exist two constants $c(a,p,d)$ and $C(a,b,p,d)$
such that we have the bound
\begin{equation}\label{sr3}
 \varrho_{m}^{o}(\bW^{a,b}_A,L_p(\bbT^d)) \le C(a,b,p,d)  v^{-a-1/p} (\log (2v))^{(d-1)(a+b)}
\end{equation}
    for any $v\in\bbN$ and any $m$ satisfying
$$
m\ge c(a,d,p) v(\log(2v))^3.
$$
\end{Theorem}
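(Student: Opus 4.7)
The plan is to recognize Theorem \ref{srT2} as the specialization $\bt=1$ and $\Psi = \cT^d$ of Theorem \ref{MT2}. Indeed, by definition $\bW^{a,b}_A = \bW^{a,b}_{A_1}(\cT^d)$, so substituting $\bt=1$ into the exponent of (\ref{Mtr}) yields
$$
1 - \tfrac{1}{p} - \tfrac{1}{\bt} - a = -a - \tfrac{1}{p},
$$
while the logarithmic factor $(\log(2v))^{(d-1)(a+b)}$ and the sample budget $m \ge c v(\log(2v))^3$ carry over verbatim. This is exactly (\ref{sr3}).

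To reconstruct Theorem \ref{MT2} at $\bt = 1$ from the building blocks already assembled in the paper, I would follow a three-step scheme. \emph{First}, apply Theorem \ref{NT0} with $\bt=1$, $q = \min(p,2) = 2$, and $\Psi = \cT^d$: this supplies a finite index set $Q \subset \bbZ^d$ with $|Q| \le v^{c^*}$, independent of the measure, together with a constructive greedy $v$-term approximant from $\cT^d(Q)$ whose error is $\ll v^{-1/2-a}(\log(2v))^{(d-1)(a+b)}$ in $L_p(\bbT^d,\mu_\xi)$ for an arbitrary sample-augmented measure $\mu_\xi$ of the form (\ref{muxi}).

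\emph{Second}, invoke universal sampling discretization for trigonometric subspaces. The Haviv--Regev / Bourgain RIP-type bound recorded in (\ref{HR1}) produces a sample set $\xi$ of cardinality $m \ll v(\log(2v))^3$ that $L_2$-universally discretizes the collection $\cX_u(\cT^d(Q))$ with $u := \lceil(1+c_\ast)v\rceil$. This is the sole ingredient responsible for the $\log^3$ (rather than $\log^4$) exponent in the hypothesis of (\ref{sr3}); without the trigonometric-specific RIP, only Theorem \ref{RT3}'s $\log^4$ would be available.

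\emph{Third}, bridge from $L_2$ to $L_p$ through the $u$-term Nikol'skii inequality and WOMP. For $u$-sparse trigonometric polynomials one has $\|f\|_\infty \le u^{1/2}\|f\|_2$, hence $\|f\|_p \le \|f\|_\infty^{1-2/p}\|f\|_2^{2/p} \le u^{1/2-1/p}\|f\|_2$, so $\cT^d(Q) \in \NI(2,p,H,u)$ with $H \asymp v^{1/2-1/p}$. Theorem \ref{NUT1} applied to WOMP run on the restricted system $\cT^d(Q)(\Omega_m)$ in the Hilbert space $L_2(\Omega_m,\mu_m)$ then yields an $L_p(\bbT^d)$-recovery bound of order
$$
H \cdot v^{-1/2-a}(\log(2v))^{(d-1)(a+b)} \asymp v^{-a-1/p}(\log(2v))^{(d-1)(a+b)},
$$
as required. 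The only delicate point, should one carry out the argument from scratch rather than quote Theorem \ref{MT2}, is ensuring that the Nikol'skii factor $H$ combines with the $\bt=1$ sparse-approximation exponent $-1/2-a$ to produce precisely $-a-1/p$; this is exactly the single-line computation above, and it cannot break because the two Hilbert-space $1/2$'s (from sparse approximation and from the Nikol'skii bridge) cancel identically.
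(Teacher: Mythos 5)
Your proposal is correct and takes essentially the route the survey itself indicates: \eqref{sr3} is exactly Theorem \ref{MT2} specialized to $\bt=1$ (since $\bW^{a,b}_A=\bW^{a,b}_{A_1}(\cT^d)$), and your reconstruction of that theorem—Lemma \ref{NL1} for the sparse approximation in $L_p(\bbT^d,\mu_\xi)$, the RIP-based universal $L_2$ discretization \eqref{HR1} responsible for the $\log^3$ factor, the $u$-term Nikol'skii constant $H\asymp v^{1/2-1/p}$ for sparse trigonometric polynomials, and the WOMP Lebesgue-type inequality of Theorem \ref{NUT1}—is the scheme the paper outlines, with the exponent bookkeeping done correctly. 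The only inessential difference is that the cited source \cite{KoTe1} runs the same scheme with the recovery algorithm $B_v(\cdot,\cD_N,L_2(\xi))$ via Theorem \ref{NUT3} instead of WOMP via Theorem \ref{NUT1}, which does not affect the resulting bound.
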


Theorem \ref{srT2} and embedding (\ref{In2}) imply the following analog of the bound (\ref{H9}) for the $\bH^r_q$ classes. There exist two constants $c=c(r,d,p,q)$ and $C=C(r,d,p,q)$ such that     we have the bound for $r>1/q$
 \begin{equation}\label{In6}
  \varrho_{m}^{o}(\bH^{r}_q,L_p(\bbT^d)) \le C   v^{-r+1/q-1/p} (\log (2v))^{(d-1)(r+1-1/q)}
\end{equation}
   for any $v\in\bbN$ and any $m$ satisfying
$$
m\ge c  v(\log(2v))^3.
$$
However, we point out that the bound (\ref{In6}) is weaker than the corresponding known bound for the linear recovery
(see section Discussion in \cite{KoTe1}).

The following lower bound for the $\bH^r_q$ classes is the main result of \cite{KoTe1}.

\begin{Theorem}\label{qpT1} For  $1\le q\le p< \infty$, $p>1$, $r>1/q$, we have
$$
\varrho_m^o(\bH^r_q,L_p) \ge c(d)m^{-r +1/q-1/p}  (\log m)^{(d-1)/p}  .
$$
\end{Theorem}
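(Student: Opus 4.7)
The proof proceeds by a multi-scale ``fooling function'' argument generalizing Lemma~\ref{NLL1} to the hyperbolic-cross structure characteristic of $\bH^r_q$. It suffices to construct a single $f\in C\bH^r_q$ with $f(\xi^j)=0$ for all $j=1,\ldots,m$ and $\|f\|_p\gtrsim(\log m)^{(d-1)/p}\,m^{-r+1/q-1/p}$; the symmetrization step at the end of Lemma~\ref{NLL1}, applied to $\pm f/C$ and an arbitrary map $\cM:\bbC^m\to L_p$, then delivers the lower bound on $\varrho_m^o(\bH^r_q,L_p)$.

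\paragraph{Construction.}
Set $n:=\lceil\log_2(2m)\rceil\asymp\log m$ and $Q_n:=\{\bs\in\mathbb{N}_0^d:\|\bs\|_1=n\}$, so $|Q_n|\asymp n^{d-1}\asymp(\log m)^{d-1}$. For each $\bs\in Q_n$ the dyadic partition of $\bbT^d$ into $2^n$ rectangular boxes of side lengths $2\pi\cdot 2^{-s_j}$ contains at least $2^{n-1}$ boxes missing every sample. Fix a nonnegative $\varphi\in C_c^\infty((0,1))$; for each $\bs$ pick an empty box $I_\bs$ uniformly at random and place in it the tensor-product bump
\[
g_\bs(\bx):=\prod_{j=1}^d\varphi\!\big(2^{s_j}x_j/(2\pi)-k_{\bs,j}\big).
\]
With independent random signs $\epsilon_\bs\in\{\pm 1\}$ and the normalization $\Lambda:=c_0\,2^{n(r-1/q)}$, set
\[
f:=\Lambda^{-1}\sum_{\bs\in Q_n}\epsilon_\bs\,g_\bs,
\]
so that $f(\xi^j)=0$ automatically. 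A direct tensor-product computation yields $\|g_\bs\|_p\asymp 2^{-n/p}$ and the per-scale difference bound
\[
\|\Delta_\btt^l(e)\,g_\bs\|_q\le C\,2^{-n/q}\prod_{j\in e}\min\!\big(|t_j 2^{s_j}|^l,1\big),\qquad \forall\,\btt\in\bbR^d,\ e\subseteq\{1,\ldots,d\}.
\]

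\paragraph{Verifications and main obstacle.}
Membership $f\in C\bH^r_q$ holds for \emph{any} realization of $(I_\bs,\epsilon_\bs)$: the triangle inequality gives
\[
\|\Delta_\btt^l(e)f\|_q\le C\Lambda^{-1}2^{-n/q}\sum_{\bs\in Q_n}\prod_{j\in e}\min\!\big(|t_j 2^{s_j}|^l,1\big),
\]
and a case analysis stratified by whether $\sum_j(-\log_2|t_j|)$ is smaller than, equal to, or larger than $n$ shows that the supremum of the right-hand side divided by $\prod_{j\in e}|t_j|^r$ is attained at the critical scale $|t_j|\asymp 2^{-s_j}$, where the sum reduces to $O(1)$; this forces precisely $\Lambda\asymp 2^{n(r-1/q)}$. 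For the $L_p$ lower bound, Khintchine's inequality yields $\mathbb{E}_\epsilon\|f\|_p^p\asymp\Lambda^{-p}\|(\sum_\bs|g_\bs|^2)^{1/2}\|_p^p$. Since $P(\bx\in I_{\bs'})\le 2^{1-n}$ for every fixed $\bx$ and $\bs'$, independence across scales and a first-moment computation give $\sum_{\bs'\ne\bs}\mathbb{E}|I_\bs\cap I_{\bs'}|=o(|I_\bs|)$, so the supports $\{I_\bs\}$ are ``essentially'' disjoint; the pointwise bound $(\sum_\bs|g_\bs|^2)^{p/2}\ge\sum_\bs|g_\bs|^p\mathbf{1}_{U_\bs}$, with $U_\bs:=I_\bs\setminus\bigcup_{\bs'\ne\bs}I_{\bs'}$, therefore delivers
\[
\mathbb{E}_{I,\epsilon}\|f\|_p^p\gtrsim\Lambda^{-p}\sum_{\bs\in Q_n}\|g_\bs\|_p^p\,(1-o(1))\asymp\Lambda^{-p}\,n^{d-1}\,2^{-n}.
\]
Selecting a realization attaining this expected value produces $\|f\|_p\gtrsim\Lambda^{-1}n^{(d-1)/p}2^{-n/p}\asymp(\log m)^{(d-1)/p}m^{-r+1/q-1/p}$, as required. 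The central technical step is the critical-scale analysis of $\sum_{\bs\in Q_n}\prod_{j\in e}\min(|t_j 2^{s_j}|^l,1)$: one must verify that its supremum over $(\btt,e)$, divided by $\prod_{j\in e}|t_j|^r$, is of order $2^{n(r-1/q)}$ with an \emph{absolute} constant, with no polylogarithmic blow-up from the cardinality $|Q_n|\asymp n^{d-1}$; otherwise the $(\log m)^{(d-1)/p}$ factor gained in the $L_p$ estimate would be cancelled upon division by $\Lambda$.
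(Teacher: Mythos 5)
Your proposal cannot be checked against an in-paper argument: the survey states Theorem \ref{qpT1} as the main result of \cite{KoTe1} and does not reproduce its proof, so I can only assess your argument on its own terms; it is, in any case, the natural multi-scale extension of the fooling-function proof of Lemma \ref{NLL1}, and I find it correct. The two pillars both check out. For the membership step, your triangle-inequality bound reduces everything to the inequality
\begin{equation*}
\sum_{\bs\in Q_n}\ \prod_{j\in e}\min\bigl(|t_j|^l 2^{s_j l},1\bigr)\ \le\ C(r,l,d)\,2^{rn}\prod_{j\in e}|t_j|^r ,\qquad \forall\,\btt,\ \forall\, e\subseteq\{1,\dots,d\},
\end{equation*}
and this is indeed true with a constant independent of $n$: writing $|t_j|\asymp 2^{-m_j}$, the terms that are not geometrically small force $s_j\gtrsim m_j$ for $j\in e$, the number of $\bs\in Q_n$ at ``cost'' $k=l\sum_{j\in e}(m_j-s_j)_+$ grows only polynomially in $k$ (and the free coordinates $j\notin e$ contribute only polynomial multiplicity, which is absorbed by the geometric decay), while the available slack on the right-hand side is exponential, $2^{r(n-\sum_{j\in e}m_j)}$; so no $n^{d-1}$ loss occurs and $\Lambda\asymp 2^{n(r-1/q)}$ suffices, as you assert. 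For the $L_p$ step, your first-moment bound $P(\bx\in I_{\bs'})\le 2^{1-n}$ and the disjointness of the sets $U_\bs$ give $\mathbb{E}\int_{U_\bs}|g_\bs|^p\ge (1-n^{d-1}2^{1-n})\|g_\bs\|_p^p$, which yields the claimed $\|f\|_p\gtrsim \Lambda^{-1}n^{(d-1)/p}2^{-n/p}$ for some realization; note that the random signs and Khintchine's inequality are superfluous here, since on $U_\bs$ one has $f=\pm\Lambda^{-1}g_\bs$ pointwise, so $\|f\|_p^p\ge \Lambda^{-p}\sum_{\bs}\int_{U_\bs}|g_\bs|^p$ deterministically and only the random choice of boxes is needed. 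The symmetrization endgame with $\pm f/C$ and $\cM(\mathbf 0)$ is exactly as in Lemma \ref{NLL1}. Two minor points: the membership verification is stated at plan level and should be written out along the lines above (it is the only genuinely delicate step, as you correctly flag), and the constant you obtain depends on $r,q,p,d$ rather than on $d$ alone as in the theorem's wording, which is consistent with what is actually needed.
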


  Note that a new nontrivial feature of Theorem \ref{qpT1} is the logarithmic factor $(\log m)^{(d-1)/p}$, which shows that some logarithmic in $m$ factor is needed.

In \cite{KoTe1}   the following lower bound was derived from the known results developed in numerical integration.

\begin{Proposition}\label{q1P2} We have for $r>0$
$$
\varrho_m^o(\bH^r_\infty,L_1) \gg m^{-r}(\log m)^{d-1}.
$$
\end{Proposition}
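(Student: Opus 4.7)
The plan is to reduce this lower bound to the classical numerical integration (cubature) lower bound for the class $\bH^r_\infty$. The reduction rests on two elementary ingredients: the $L_1$ norm dominates the absolute integral, and the class $\bH^r_\infty$ is centrally symmetric.

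First I would write $I(h):=\int_{\bbT^d} h\, d\mu$ for the normalized Lebesgue integral on $\bbT^d$, and note that $\|h\|_1 \ge |I(h)|$ for every $h\in L_1$. Hence, for any mapping $\cM\colon \bbC^m\to L_1(\bbT^d)$ and any point set $\xi=\{\xi^1,\dots,\xi^m\}\subset \bbT^d$,
$$
\|f-\cM(f(\xi^1),\dots,f(\xi^m))\|_1\ge |I(f)-I(\cM(f(\xi^1),\dots,f(\xi^m)))|,\qquad f\in \bH^r_\infty.
$$
Applying this to both $\pm f$ for $f\in\bH^r_\infty$ vanishing at every $\xi^j$ (so that $\cM(\pm f(\xi))=\cM(\mathbf 0)=:g_0$), the triangle inequality $|I(f)-I(g_0)|+|I(f)+I(g_0)|\ge 2|I(f)|$ forces
$$
\sup_{f\in\bH^r_\infty}\|f-\cM(f(\xi))\|_1 \ge \sup\{|I(f)|:f\in\bH^r_\infty,\ f(\xi^j)=0,\ j=1,\dots,m\}.
$$
Taking the infimum over $\cM$ and $\xi$ yields
$$
\varrho_m^o(\bH^r_\infty,L_1) \ge \inf_{\xi}\sup\{|I(f)|:f\in\bH^r_\infty,\ f(\xi^j)=0\}.
$$

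Second, I would identify the right-hand side with the optimal cubature error on $\bH^r_\infty$. Since $\bH^r_\infty$ is the closed unit ball of a Banach space (via the norm built from the mixed differences in Definition \ref{InD2}, including the $\|f\|_\infty$ bound corresponding to $e=\emptyset$), the Hahn--Banach theorem gives the standard duality
$$
\sup\{|I(f)|:f\in\bH^r_\infty,\ f(\xi^j)=0\}=\inf_{\lambda_1,\dots,\lambda_m\in\bbC}\sup_{f\in\bH^r_\infty}\Bigl|I(f)-\sum_{j=1}^m\lambda_j f(\xi^j)\Bigr|,
$$
i.e., the norm of $I$ restricted to the kernel of the evaluation map equals the distance of $I$ to the span of the point-evaluation functionals. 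Combining the two displays,
$$
\varrho_m^o(\bH^r_\infty,L_1) \ge \inf_{\xi,\lambda}\sup_{f\in\bH^r_\infty}\Bigl|I(f)-\sum_{j=1}^m\lambda_j f(\xi^j)\Bigr|,
$$
and the right-hand side is the optimal cubature error on $\bH^r_\infty$, which by classical Bykovskii--Temlyakov type results is $\gg m^{-r}(\log m)^{d-1}$ (see the detailed treatment in \cite{VTbookMA} and \cite{DTU}).

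The genuinely substantive step is the classical cubature lower bound on $\bH^r_\infty$, a deep result whose proof relies on the construction of appropriate ``fooling'' bump functions distributed over a dyadic collection of boxes, pigeonholed against the $m$ sample points. The present reduction from nonlinear sampling recovery in $L_1$ to numerical integration is then a routine symmetry-and-duality argument as described above.
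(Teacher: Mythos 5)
Your reduction is correct and is essentially the route the paper takes: the survey states that this bound was derived in \cite{KoTe1} from known lower bounds in numerical integration, which is exactly your chain $\varrho_m^o(\bH^r_\infty,L_1)\ge\inf_\xi\sup\{|I(f)|:f\in\bH^r_\infty,\ f(\xi^j)=0\}\ge$ optimal cubature error $\gg m^{-r}(\log m)^{d-1}$, obtained via the standard symmetry argument for centrally symmetric classes plus duality (and the classical Temlyakov-type cubature lower bound for $\bH^r_\infty$, whose proofs in fact produce fooling functions vanishing at the nodes, so the Hahn--Banach step can even be bypassed).
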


\subsection{Linear recovery}
\label{sms2}

We begin with brief comments on the classical Smolyak recovery operators. For a detailed discussion of these and related operators, we refer the reader  to the books \cite{VTbookNS}, \cite{VTbookMA}, and \cite{DTU}. Let $\{R_n\}_{n=1}^\infty$ be the sequence of sampling recovery operators defined in \eqref{rco}. For each $i=1, \dots, d$, let $R_n^i$ denote the operator $R_n$ acting specifically on the variable $x_i$. We define the univariate difference operator as
$$
 \Delta_s^i := R_{2^s}^i - R_{2^{s-1}}^i,\  \ s\in \bbN_0,\  \ \quad R_{1/2} =0,
$$
and for a multi-index $\mathbf{s} = (s_1, \dots, s_d) \in \mathbb{N}_0^d$, we define the multivariate operator
$$
 \Delta_\bs := \prod_{i=1}^d \Delta_{s_i}^i.
$$
The corresponding Smolyak operator is defined as
$$
 T_n := \sum_{\bs\in\bbN_0^d:\  \|\bs\|_1\le n} \Delta_\bs.
$$
The operator $T_n$ utilizes $m$ function values, where the number of samples satisfies $$m \ll \sum_{k=1}^n 2^k k^{d-1} \ll 2^n n^{d-1}.$$

In 1960, S. Smolyak established the following error bound in the uniform norm:
$$
 \sup_{f\in\bW^r_\infty} \|f-T_n f\|_\infty \ll 2^{-rn}n^{d-1},\quad r>0.
$$
This result was subsequently extended to $L_p$ norms with $p < \infty$ in  \cite{VT23}:
$$
 \sup_{f\in\bW^r_p} \|f-T_nf\|_p \ll 2^{-rn}n^{d-1},\quad r>1/p.
$$
Further developments include the following results from \cite{VT51}:
$$
 \varrho_m(\bW^r_2)_\infty \asymp m^{-r+1/2} (\log m)^{r(d-1)},\quad r>1/2.
$$
In this case, the order of optimal recovery is attained by the Smolyak operator $T_n$ with an appropriately chosen $n$.
Additionally, for $q \in (1, \infty)$ and $r>\frac 1q$, we have (\cite{VT51})
$$
  \sup_{f\in \bW^r_q}\|f-T_n(f)\|_\infty \asymp 2^{-(r-1/q)n}n^{(d-1)(1-1/q)}.
$$

In the preceding Subsections, the discussion focused on the study of the nonlinear characteristic $\varrho_m^o(W, L_p)$. However, the majority of established results in optimal sampling recovery deal with  linear recovery methods. We now provide brief comments on recent advancements in this area and refer the reader to  the books \cite{DTU}, \cite{VTbookMA} and  the survey paper \cite{KKLT}  for a comprehensive discussion of previous results in this direction.

For special sets $\bF$ in the setting of reproducing kernel Hilbert spaces,  the following inequality was established  (see \cite{DKU}, \cite{NSU}, \cite{KU}, and \cite{KU2}):
\be\label{H5a}
\ro_{cn}(\bF,L_2) \le \left(\frac{1}{n}\sum_{k\ge n} d_k (\bF, L_2)^2\right)^{1/2},
\ee
where $c>0$ is  an absolute constant.  Here, $d_k (\bF, L_2)$ denotes  the Kolmogorov $k$-width of $\bF$ in
the space $L_2$.

Established results for the Kolmogorov widths $d_k (\bW^r_p, L_2)$ (see, for instance, \cite{VTbookMA}, p.216) asserts that for  $1<q\le 2$ with  $r > 1/q-1/2$ and for  $2<q<\infty$ with $r> 0$, the following asymptotic holds:
\be\label{H6a}
d_k(\bW^r_q,L_2) \asymp \left(\frac{(\log k)^{d-1})}{k}\right)^{r-(1/q-1/2)_+},
\ee
where $(a)_+ := \max(a,0)$.
When combined with \eqref{H5a}, this yields the following upper bounds for $1 < q \le \infty$ and $r > \max(1/q, 1/2)$:
$$
 \ro_m(\bW^r_q,L_2) \ll  \left(\frac{(\log m)^{d-1})}{m}\right)^{r-(1/q-1/2)_+}.
$$
These bounds solve the problem on  the correct decay orders for the sequence $\rho_m(\mathbf{W}^r_q, L_2)$ in the case when  $1 < q < \infty$ and $r > \max(1/q, 1/2)$, as it is evident that  $ \ro_m(\bW^r_q,L_2) \ge  d_m(\bW^r_q,L_2)$. It was observed in \cite{JUV} that for $q<2$ and sufficiently large $d$,  the upper bound (\ref{H6}) and the known lower bound from (\ref{H6a}) imply that $$\ro_m^o(\bW^r_q,L_2) =o(\ro_m(\bW^r_q,L_2)).$$
This demonstrates that in those cases, nonlinear methods achieve a better  rate of sampling recovery than  linear methods.

In the  case of our interest where $1 < q < 2 < p < \infty$, the correct order of $\ro_m(\bW^r_q,L_p)$ remains unknown. Known results utilizing Smolyak point sets provide the following upper bound for $1 < q < p < \infty$ and $r > 1/q$ (see, for instance, \cite{VTbookMA}, p. 308):
\be\label{Sm}
 \ro_m(\bW^r_q,L_p) \ll  \left(\frac{(\log m)^{d-1})}{m}\right)^{r-1/q+1/p}(\log m)^{(d-1)/p}.
 \ee
A comparison between \eqref{Sm} and the bounds established in Theorem \ref{HT2} reveals that for certain parameters $d$ and $p$, nonlinear recovery methods provide a better error rate than the linear recovery bound given by \eqref{Sm}.

The above inequality (\ref{H5a}) can only be used when the series on its right side converges. In cases where the corresponding series diverges,
the following  alternative inequality established in  \cite{VT183} (see also Theorem \ref{VT183T2} above) becomes particularly useful:  There exist two positive absolute constants $b$ and $B$ such that for any compact subset $W$ of $\cC(\Omega)$, we have
$$
\ro_{bn}(W,L_2) \le Bd_n(W,L_\infty).
$$
This inequality, combined with the upper bounds for the Kolmogorov widths obtained in \cite{TU1},
gives the following bounds for sampling recovery (see \cite{TU1}): for $2<q\le\infty$ and $1/q<r<1/2$,
\be\label{H3}
 \ro_m(\bW^r_q,L_2) \ll m^{-r} (\log m)^{(d-2)(1-r)+1}.
\ee

\section{Concluding remarks}
\label{Con}

Sampling recovery is an actively developing area of research. It has its roots in the classical problem of interpolation of functions. Traditionally, this area of research belongs to approximation theory and harmonic analysis. Recent developments in this area show its close connections with other areas of research. We demonstrate in this survey that important progress in sampling recovery was achieved by using recent deep breakthrough results in discretization and sparse approximation in Banach spaces. In turn, discretization results use fundamental results from finite dimensional geometry obtained in the papers \cite{BSS} and \cite{MSS}. Sparse approximation is the central topic in compressed sensing and in greedy approximation (see the books \cite{FR}, \cite{VTbook}, and \cite{VTbookMA}).

This survey complements the existing surveys on discretization and sampling recovery \cite{DPTT}, \cite{KKLT}, \cite{LMT}, and \cite{DTdiscsurv}. The area is actively developing and we were not able to discuss in detail all recent results on the topic. We tried to focus on a systematic presentation of new results directly related to results obtained in our group. We did not include open problems in this survey.
We refer the reader to the surveys \cite{DPTT}, \cite{KKLT}, \cite{LMT}, where a number of open problems are formulated.
Recently, there were important steps made in sampling recovery (see Steps 1-4 mentioned at the end of Section \ref{RI})
and we discussed them in detail here. The recent progress in sampling recovery shows the direction of its development -- from special problems to more and more general settings and problems. For instance, this includes steps from the univariate smoothness classes to the multivariate smoothness classes and to the multivariate classes with structural conditions.

\newpage

  \Addresses

 \end{document}